\begin{document}

\title{Exact threshold for approximate ellipsoid fitting of random points}
\date{\today}
\author{Afonso S.\ Bandeira$^\star$, Antoine Maillard$^{\circ, \star, \diamond}$}
\maketitle

{\let\thefootnote\relax\footnote{
    \noindent
$\circ$ Inria Paris, DI ENS, PSL University, Paris, France. \\
$\star$ Department of Mathematics, ETH Z\"urich, Switzerland.\\
$\diamond$ To whom correspondence shall be sent: \href{mailto:antoine.maillard@inria.fr}{antoine.maillard@inria.fr}.
}}
\setcounter{footnote}{0}

\begin{abstract}
    We consider the problem $\mathrm{(P)}$ of exactly fitting an ellipsoid (centered at $0$) to $n$ standard Gaussian random vectors in $\bbR^d$, as $n, d \to \infty$ with $n / d^2 \to \alpha > 0$.
    This problem is conjectured to undergo a sharp transition: with high probability, $\mathrm{(P)}$ has a solution if $\alpha < 1/4$, while $\mathrm{(P)}$ has no solutions
    if $\alpha > 1/4$.
    So far, only a trivial bound $\alpha > 1/2$ is known to imply the absence of solutions, while the sharpest results on the positive side assume $\alpha \leq \eta$ (for $\eta > 0$ a 
    small constant) to prove that $\mathrm{(P)}$ is solvable.
    In this work we show a universality property for the minimal fitting error achievable by ellipsoids: 
    we show that, to leading order, it coincides with the minimal error in a so-called ``Gaussian equivalent'' problem, 
    for which the satisfiability transition can be rigorously analyzed.
    Our main results follow from this finding, and they are twofold.
    On the positive side, we prove that if $\alpha < 1/4$, there exists an ellipsoid fitting all the points
    up to a small error, and that the lengths of its principal axes are bounded above and below.
    On the other hand, for $\alpha > 1/4$, we show that achieving small fitting error is not possible 
    if the length of the ellipsoid's shortest axis does not approach $0$ as $d \to \infty$ (and in particular there does not exist any 
    ellipsoid fit whose shortest axis length is bounded away from $0$ as $d \to \infty$).
    To the best of our knowledge, our work is the first rigorous result characterizing the expected phase transition in ellipsoid fitting at $\alpha = 1/4$.
    In a companion non-rigorous work, the second author and D.\ Kunisky give a general analysis of ellipsoid fitting using the replica method 
    of statistical physics, which inspired the present work.  
\end{abstract}

\section{Introduction and main results}\label{sec:introduction}
\subsection{The ellipsoid fitting conjecture}

\noindent
We consider the \emph{random ellipsoid fitting problem}: given $n$ random standard Gaussian vectors in dimension $d$, 
when do they all lie on the boundary of a (centered) ellipsoid?
Formally, we define an ellipsoid fit using the set $\mcS_d$ of $d \times d$ real symmetric matrices, as follows.
\begin{definition}[Ellipsoid fit]\label{def:ellipsoid_fit}
    \noindent
    Let $x_1, \cdots, x_n \in \bbR^d$. We say that $S \in \mcS_d$ is an \emph{ellipsoid fit} for $(x_\mu)_{\mu=1}^n$ 
    if it satisfies:
    \begin{equation}\label{eq:def_P}
   ({\rm P}) \, : \, \begin{dcases} 
    &x_\mu^\T S x_\mu = d \, \textrm{ for all } \mu \in \{1, \cdots, n\}, \\ 
    &S \succeq 0.
   \end{dcases}  
\end{equation}
\end{definition}
\noindent
In Definition~\ref{def:ellipsoid_fit}, the matrix $S \succeq 0$ defines the ellipsoid $\Sigma \coloneqq \{x \in \bbR^d \, : \, x^\T S x = d\}$.
Geometrically speaking, the eigenvectors of $S$ give the directions of the principal axes of the ellipsoid, while 
its eigenvalues $(\lambda_i)_{i=1}^d$ are related to the lengths $(r_i)_{i=1}^d$ of its principal (semi-)axes by
$r_i = \sqrt{d} \lambda_i^{-1/2}$.

\myskip 
\textbf{Scaling --}
In what follows, we will rather refer to the rescaled quantities $r'_i = r_i / \sqrt{d}$ as the lengths of the ellipsoid axes, 
effectively rescaling distances so that the sphere of radius $\sqrt{d}$ (with $S = \Id_d$) has all (semi-)axes of length $1$.
In particular, the lengths of the ellipsoid's longest and shortest axis are then
respectively $\lambda_{\min}(S)^{-1/2}$ and $\lambda_{\max}(S)^{-1/2}$.

\myskip
We are interested in finding an ellipsoid fit to a set of \emph{random} points $x_1, \cdots, x_n \iid \mcN(0, \Id_d)$.
The question of the existence of such an ellipsoid arose first in \cite{saunderson2011subspace,saunderson2012diagonal,saunderson2013diagonal}, 
which conjectured the following (see e.g.\ Conjecture~2 in \cite{saunderson2013diagonal} or Conjecture~1.1 in \cite{potechin2023near}).
\begin{conjecture}[The ellipsoid fitting conjecture]\label{conj:ellipsoid_fitting}
    \noindent
    Let $n, d \geq 1$, and $x_1, \cdots, x_n$ be drawn i.i.d. from $\mcN(0, \Id_d)$.
    Let 
    \begin{equation*}
        p(n, d) \coloneqq \bbP[\exists S \in \mcS_d \textrm{ an ellipsoid fit for }(x_\mu)_{\mu=1}^n].
    \end{equation*}
    For any $\eps > 0$, the following holds:
    \begin{align}
        \label{eq:conj_ef_positive}
        \limsup_{d \to \infty} \frac{n}{d^2} \leq \frac{1 - \eps}{4} &\Rightarrow \lim_{d \to \infty} p(n, d) = 1, \\ 
        \label{eq:conj_ef_negative}
        \liminf_{d \to \infty} \frac{n}{d^2} \geq \frac{1 + \eps}{4} &\Rightarrow \lim_{d \to \infty} p(n, d) = 0. 
    \end{align}
\end{conjecture}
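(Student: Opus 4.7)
My plan is to turn the 0/1 feasibility question into two complementary quantitative problems and attack them separately, then unify via a Gaussian-equivalent reduction.

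For the positive side \eqref{eq:conj_ef_positive}, I would look for fits of the form $S = \Id_d + M$ with $\|M\|_{\rm op} < 1$, so that $S \succeq 0$ is automatic. The constraints become $\langle M, x_\mu x_\mu^\T\rangle = d - \|x_\mu\|^2$, a linear system of $n$ equations in the $d(d+1)/2$-dimensional space $\mcS_d$; for $n \leq (1-\eps)d^2/4$ it is heavily underdetermined, and I would pick $M$ as the minimum Frobenius-norm solution, which admits the closed form $M = \sum_\mu \lambda_\mu^\star x_\mu x_\mu^\T$ with $\lambda^\star = A^{-1} b$, for $A_{\mu\nu} = \langle x_\mu x_\mu^\T, x_\nu x_\nu^\T\rangle$ and $b_\mu = d - \|x_\mu\|^2$. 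The substantive step is to show $\|M\|_{\rm op} < 1$ with high probability, which reduces to spectral estimates on the Wishart-like Gram matrix $A$ and to controlling the alignment of the coefficient vector $A^{-1} b$ with the high-operator-norm directions of $\mcS_d$.

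For the negative side \eqref{eq:conj_ef_negative}, I would appeal to a theorem of alternatives over the PSD cone: either $(\mathrm{P})$ is feasible, or there exist multipliers $(\lambda_\mu)$ such that $\sum_\mu \lambda_\mu x_\mu x_\mu^\T \prec (\sum_\mu \lambda_\mu) \Id_d$ strictly. Equivalently, I would lower-bound the minimum fitting error $\min_{S \succeq 0} \sum_\mu (x_\mu^\T S x_\mu / d - 1)^2$ and show it remains bounded below by a positive constant when $\alpha > 1/4$, using Gordon's comparison inequality or a CGMT-type minimax reformulation whose critical dimension count yields the exponent $1/4$.

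The unifying technical tool, hinted at by the abstract, is a Gaussian-equivalent replacement: interpolate each $x_\mu x_\mu^\T - \Id_d$ with a centered Gaussian symmetric matrix of matching second moments (essentially a suitably normalized GOE contribution plus a scalar identity shift) and prove via a Lindeberg swap that the minimum fitting error is preserved to leading order. On the Gaussian side the satisfiability threshold can be computed exactly by CGMT, reproducing $\alpha = 1/4$ as the critical ratio of constraints to effective symmetric degrees of freedom; transferring back via universality then closes both implications.

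The main obstacle I anticipate is the universality step in the presence of the conic constraint $S \succeq 0$. Lindeberg swaps behave well for smooth functions of independent coordinates, but here the quantity to be preserved is the infimum of a quadratic form over a non-smooth cone, and the threshold $1/4$ is sharp, leaving no room for loose bounds. A smoothed penalty reformulation of the feasibility error together with precise quantitative control of its minimizer will likely be needed, and this — rather than either of the two calculational steps above — is where I expect the real technical difficulty to lie.
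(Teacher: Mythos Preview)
The statement you are attempting to prove is Conjecture~\ref{conj:ellipsoid_fitting}, which the paper explicitly does \emph{not} prove; its main results (Theorems~\ref{thm:main_positive_side} and~\ref{thm:main_negative_side}) are strictly weaker, and Section~\ref{subsec:generalizations} is devoted to explaining why the gap to the full conjecture remains open. Your overall strategy --- replace $x_\mu x_\mu^\T - \Id_d$ by a matching-moment Gaussian via a Lindeberg/interpolation swap, then analyze the Gaussian side by Gordon/CGMT --- is precisely the paper's approach. But the paper shows this route yields only: for $\alpha<1/4$, existence of $S$ with $\Sp(S)\subseteq[\lambda_-,\lambda_+]$ achieving arbitrarily small (not zero) fitting error; for $\alpha>1/4$, non-existence of fits with \emph{bounded operator norm}. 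Neither closes the conjecture. The reason is that universality is established only at the level of the leading-order ground-state energy (Proposition~\ref{prop:universality_gs}), with $\smallO_d(1)$ error and no subleading control, so one cannot pass from ``error $\leq\eps$ for every fixed $\eps>0$ after $d\to\infty$'' to ``error $=0$''; and the transfer requires the domain to sit in an operator-norm ball $B_\op(C_0)$, so on the negative side one only rules out bounded fits (see hypotheses~\ref{hyp:1},~\ref{hyp:2}).

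Your positive-side warm-up --- the minimum-Frobenius-norm perturbation $M=\sum_\mu\lambda_\mu^\star x_\mu x_\mu^\T$ of the identity --- is exactly the explicit ansatz analyzed in \cite{potechin2023near,kane2023nearly,bandeira2023fitting,hsieh2023ellipsoid,tulsiani2023ellipsoid}; these works, and the non-rigorous analysis in \cite{maillard2024fitting}, indicate that this estimator succeeds only up to some constant strictly below $1/4$, so it cannot deliver \eqref{eq:conj_ef_positive}. You correctly identify the smoothing of the conic constraint in the universality step as the crux, but your expectation that a smoothed penalty plus ``precise quantitative control of its minimizer'' will close both implications is exactly what the paper attempts and finds insufficient: the limits $\eps\to 0$ and $d\to\infty$ would need to be exchanged, which current universality estimates do not permit.
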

\noindent
Informally, Conjecture~\ref{conj:ellipsoid_fitting} predicts a sharp transition for the existence of an ellipsoid fit 
in the regime $n / d^2 \to \alpha > 0$ exactly at $\alpha = 1/4$.

\subsection{Related works}

\noindent
Conjecture~\ref{conj:ellipsoid_fitting} was first stated and studied in the series of works \cite{saunderson2011subspace,saunderson2012diagonal,saunderson2013diagonal}, where it arose as being connected to 
the decomposition of a (random) data matrix $M$ as $M = L + D$, with $L \succeq 0$ being low-rank, and $D$ a diagonal matrix.
Connections to other problems throughout theoretical computer science have since then been unveiled,
such as certifying a lower bound on the discrepancy of a random matrix using a canonical semidefinite relaxation \cite{saunderson2012diagonal,potechin2023near}, 
overcomplete independent component analysis \cite{podosinnikova2019overcomplete}, or Sum-of-Squares lower bound for the Sherrington-Kirkpatrick Hamiltonian \cite{ghosh2020sum}.
We refer the reader to the detailed expositions of \cite{potechin2023near,maillard2024fitting} 
on the connections of ellipsoid fitting to theoretical computer science and machine learning.

\myskip
Interestingly, Conjecture~\ref{conj:ellipsoid_fitting} arose both from numerical evidence%
\footnote{Given $(x_\mu)_{\mu=1}^n$, eq.~\eqref{eq:def_P} is a convex problem (it is an example of a semidefinite program, or SDP), which is efficiently solvable when solutions exist.}
and the remark that $d^2/4$ is known to be the statistical dimension (or squared Gaussian width) of $\mcS_d^+$, the set of positive semidefinite matrices \cite{chandrasekaran2012convex,amelunxen2014living}.
As such, if one replaces eq.~\eqref{eq:def_P} by 
\begin{equation}\label{eq:def_P_Gaussian}
    ({\rm P}_{\Gauss}) \, : \, \begin{dcases} 
    &\Tr[S G_\mu] = d \, \textrm{ for all } \mu \in \{1, \cdots, n\}, \\ 
    &S \succeq 0,
\end{dcases}
\end{equation}
in which $(G_\mu)_{\mu=1}^n$ are (independent) standard Gaussian matrices, 
Conjecture~\ref{conj:ellipsoid_fitting} provably holds for $(\rm P_\Gauss)$.
The crucial property of $(\rm P_\Gauss)$ behind this result is that the affine subspace 
$\{S \in \mcS_d \, : \, (\Tr[S G_\mu] = d)_{\mu=1}^n\}$ is randomly oriented, \emph{uniformly} in all directions.
Although this motivation for the conjecture was known,
our work is (to the best of our knowledge) 
the first mathematically rigorous approach to leverage the connection between $(\rm P)$ and 
$(\rm P_\Gauss)$.

\myskip
Indeed, previous progress on Conjecture~\ref{conj:ellipsoid_fitting} has mostly focused on proving the existence of a fitting ellipsoid
using an ansatz solution:
the first line of eq.~\eqref{eq:def_P} defines an affine subspace $V$ of symmetric matrices of codimension $n$, 
so one can study a well-chosen $S^\star \in V$,
and argue that for small enough $n$ it satisfies $S^\star \succeq 0$ with high probability as $d \to \infty$.
Various such constructions have been used, and we summarize in Fig.~\ref{fig:summary} the current rigorous progress on the ellipsoid fitting conjecture that arose from these approaches.
\begin{figure}[t]
  \centering
\begin{tikzpicture}

  \draw[->,thick,line width = 1.5pt] (-2.3,0) -- (11.25,0) node[below=5pt,font=\large] {$n$};

  \pgfmathsetmacro\tickPositionA{-1.9}
  \pgfmathsetmacro\tickPositionE{-0.25}
  \pgfmathsetmacro\tickPositionF{1.75}
  \pgfmathsetmacro\tickPositionB{4}
  \pgfmathsetmacro\tickPositionC{7}
  \pgfmathsetmacro\tickPositionD{10}
  \def\tickLabels{{{"$d^{6/5-\eps}$","\phantom{}\cite{saunderson2012diagonal}"},{"$d^{3/2-\eps}$","\phantom{}\cite{ghosh2020sum}"},{"$d^2/\plog(d)$","\phantom{}\cite{potechin2023near,kane2023nearly}"},{"$d^2/C$","\phantom{}\cite{bandeira2023fitting,hsieh2023ellipsoid,tulsiani2023ellipsoid}"}, {"$d^2/4$", "\phantom{}"}, {"$d^2/2$","(Trivial)"}}}

  \foreach \pos [count=\i] in {\tickPositionA, \tickPositionE, \tickPositionF, \tickPositionB, \tickPositionC, \tickPositionD} {
    \draw[line width=1.5pt] (\pos,0.1) -- (\pos,-0.1) node[below] {\pgfmathparse{\tickLabels[\i-1][0]}\pgfmathresult};
    \node at (\pos,-0.7) [below, anchor=north] {\pgfmathparse{\tickLabels[\i-1][1]}\pgfmathresult};
  }

  \fill[green, opacity=0.4] (-2.5,0.2) rectangle (\tickPositionB, -0.1);
  \fill[yellow, opacity=0.4] (\tickPositionB,0.2) rectangle (\tickPositionC, -0.1);
  \fill[orange, opacity=0.4] (\tickPositionC,0.2) rectangle (\tickPositionD, -0.1);
  \fill[red, opacity=0.4] (\tickPositionD,0.2) rectangle (11.1, -0.1);
%
  \node[green!75!black, anchor=south, font = \small] at ({(-2 + \tickPositionB)/2},0.65) {SAT};
  \node[green!75!black, anchor=south, font = \small] at ({(-2 + \tickPositionB)/2},0.25) {(rigorous)};
  \node[yellow!80!black, anchor=south, font = \small] at ({\tickPositionB+(\tickPositionC-\tickPositionB)/2},0.65) {SAT};
  \node[yellow!80!black, anchor=south, font = \small] at ({\tickPositionB+(\tickPositionC-\tickPositionB)/2},0.25) {(conjecture)};
  \node[orange!100!black, anchor=south,font = \small] at ({\tickPositionC+(\tickPositionD-\tickPositionC)/2},0.65) {UNSAT};
  \node[orange!100!black, anchor=south,font = \small] at ({\tickPositionC+(\tickPositionD-\tickPositionC)/2},0.25) {(conjecture)};
  \node[red!100!black, anchor=south,font = \small] at ({\tickPositionD+(11.25-\tickPositionD)/2},0.65) {UNSAT};
  \node[red!100!black, anchor=south,font = \small] at ({\tickPositionD+(11.25-\tickPositionD)/2},0.25) {(rigorous)};

  \end{tikzpicture}
\caption{
\label{fig:summary}    
A summary of the current state of the ellipsoid fitting conjecture. 
In red, we show regions for which ellipsoid fitting is rigorously known to be unsatisfiable (UNSAT), and in orange regions which are conjectured to be.
Similarly, we show in green regions rigorously known to be satisfiable (SAT), and in yellow regions which are conjectured to be so.
Figure is taken from~\cite{maillard2024fitting}.
}
\end{figure}
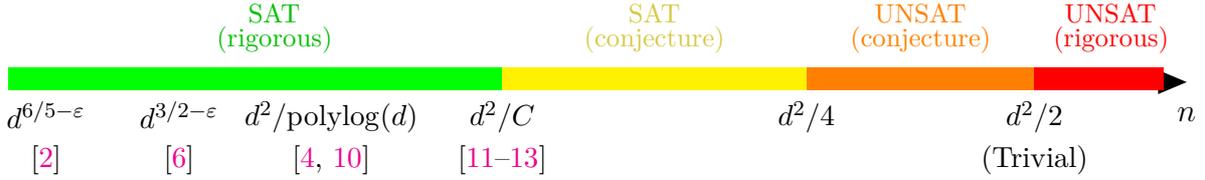
Presently, the best rigorous results on Conjecture~\ref{conj:ellipsoid_fitting} 
are due to the recent works \cite{bandeira2023fitting,hsieh2023ellipsoid,tulsiani2023ellipsoid} and can be summarized as follows:
\begin{theorem}[\cite{bandeira2023fitting,hsieh2023ellipsoid,tulsiani2023ellipsoid}]
    \label{thm:previous_results}
    \noindent
    Let $n, d \geq 1$, and $x_1, \cdots, x_n \iid \mcN(0, \Id_d)$.
    Let $p(n, d) \coloneqq \bbP[\exists S \in \mcS_d \textrm{ an ellipsoid fit for }(x_\mu)_{\mu=1}^n]$.
    There exists a (small) universal constant $\eta > 0$ such that:
    \begin{equation*}
        \limsup_{d \to \infty} \frac{n}{d^2} \leq \eta \Rightarrow \lim_{d \to \infty} p(n, d) = 1, \\ 
    \end{equation*}
    Moreover, if $n > d(d+1)/2$, then $p(n, d) = 0$.
\end{theorem}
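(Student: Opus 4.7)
\emph{The trivial bound $n > d(d+1)/2 \Rightarrow p(n,d) = 0$.}
The constraint $x_\mu^\T S x_\mu = \langle x_\mu x_\mu^\T, S\rangle = d$ is linear in $S \in \mcS_d$, so I view $\Phi(S)_\mu \coloneqq x_\mu^\T S x_\mu$ as a linear map $\mcS_d \to \bbR^n$. Since $\dim \mcS_d = d(d+1)/2$, I first show that almost surely the rank-one matrices $x_1 x_1^\T, \ldots, x_{n_0} x_{n_0}^\T$, with $n_0 \coloneqq d(d+1)/2$, form a basis of $\mcS_d$: their Gram determinant is a polynomial in the coordinates of the $x_\mu$'s that is not identically zero (check e.g.\ on the standard basis of $\mcS_d$), hence nonzero almost surely on Gaussian input. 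This pins down a unique $S^\star \neq 0$ solving the first $n_0$ constraints (nonzero because $\Phi(S^\star) = d \cdot \mathbf{1}_{n_0} \neq 0$). For each $\mu > n_0$, independence of $x_\mu$ from $S^\star$ gives that $x_\mu^\T S^\star x_\mu$ has an absolutely continuous law, so it equals $d$ with probability zero; a union bound over the finitely many extra constraints concludes.

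\emph{The positive side with a small $\eta$.}
Here I would use an identity-plus-perturbation ansatz $S^\star = \Id_d + \Delta$. The identity nearly satisfies the constraints since $x_\mu^\T \Id_d x_\mu = \|x_\mu\|^2 = d + O(\sqrt{d})$, so I take the minimum-Frobenius-norm correction $\Delta$ restoring them exactly: $\Delta = \sum_\mu c_\mu (x_\mu x_\mu^\T - \Id_d)$ with $c = M^{-1} b$, where $M_{\mu\nu} = \langle x_\mu x_\mu^\T - \Id_d, x_\nu x_\nu^\T - \Id_d\rangle$ and $b_\mu = d - \|x_\mu\|^2$. Centering the outer products by $\Id_d$ does not change the affine space of solutions but improves concentration of $M$. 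Feasibility of $S^\star$ then reduces entirely to the spectral bound $\|\Delta\|_{\mathrm{op}} < 1$ with high probability.

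\emph{Where the work is.}
The hard step is the last one. The weights $c_\mu$ depend on the data through $M^{-1}$ and $b$, so $\Delta$ is \emph{not} a sum of independent rank-one terms. One must combine (i) spectral concentration of $M$ (its expectation is close to $(d^2+d)\,\Id_n$ with small off-diagonal pieces and controlled fluctuations, so $M^{-1}$ is well-defined and its norm is controlled with high probability) with (ii) operator-norm control of $\sum_\mu c_\mu x_\mu x_\mu^\T$ that accounts for the coupling between $c_\mu$ and $x_\mu x_\mu^\T$. Step (ii) is where constants bleed: a triangle inequality or a naive matrix-Bernstein bound is far from tight, and the three references in the statement each handle it via a different device (careful moment / trace computations, leave-one-out decoupling between $c_\mu$ and $x_\mu$, or tweaking the ansatz away from $\Id_d$). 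The outcome is only a small universal constant $\eta > 0$, nowhere near the conjectural $1/4$, and it is exactly this step that one must go beyond to approach the true threshold as in the present paper.
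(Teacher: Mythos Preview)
The paper does not prove this theorem; it is stated as a summary of the cited prior work \cite{bandeira2023fitting,hsieh2023ellipsoid,tulsiani2023ellipsoid}. The only in-text justification is the one-line remark after the statement that $d(d+1)/2 = \dim \mcS_d$, so for $n > d(d+1)/2$ the linear system $x_\mu^\T S x_\mu = d$ is almost surely inconsistent even without the PSD constraint. Your first paragraph is a correct fleshing-out of that remark. One small quibble: the ``standard basis of $\mcS_d$'' does not consist of rank-one matrices $xx^\T$, so you cannot literally plug it in; but the fix is immediate (take e.g.\ $x \in \{e_i\}_i \cup \{e_i + e_j\}_{i<j}$, whose outer products do span $\mcS_d$), and the rest of your argument---unique $S^\star$ from the first $n_0$ constraints, absolute continuity of $x_\mu^\T S^\star x_\mu$ for the independent extra point---is sound.

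For the positive side, your identity-plus-least-squares-correction ansatz is exactly the construction analyzed in the cited references, and your description of where the difficulty lies (controlling $\|\Delta\|_{\mathrm{op}}$ despite the coupling between the weights $c_\mu$ and the data through $M^{-1}$) is accurate. Since the paper itself merely quotes the result and does not reproduce any of that analysis, there is nothing further to compare: your sketch matches the approach of the works being cited, not anything proved in this paper.
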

\noindent
Note that the bound $n > d(d+1)/2$ in Theorem~\ref{thm:previous_results} arises from a simple dimension counting argument, 
as $d(d+1)/2$ is the dimension of the space of symmetric matrices: for such values of $n$, not only does there not exist 
a solution to eq.~\eqref{eq:def_P}, there does not exist any solution even without the constraint $S \succeq 0$! 

\myskip
\textbf{Statistical physics approaches: heuristic and rigorous --}
In this work, we tackle Conjecture~\ref{conj:ellipsoid_fitting} using techniques inspired by the statistical physics of disordered systems. 
While analytical methods developed in this field were originally designed to study models known as spin glasses \cite{anderson1989spin,mezard1987spin},
they have seen in the past decades a great number of applications in high dimensional statistics, theoretical computer science, and machine learning. 
Moreover, despite these techniques often being non-rigorous, a growing line of mathematics literature has emerged establishing many of their predictions.
Notably, ellipsoid fitting is an example of a \emph{semidefinite program (SDP)}\footnote{i.e.\ a combination of linear equations with a positivity constraint $S \succeq 0$.}
with random linear constraints, and some such SDPs have been previously analyzed with tools of statistical physics \cite{montanari2016semidefinite,javanmard2016phase},
although the methods of these works fall short for analyzing the satisfiability transition in random ellipsoid fitting~\cite{maillard2024fitting}. 
We refer the interested reader to the recent book \cite{charbonneau2023spin} that compiles many (sometimes surprising) applications of the
theory of disordered systems, as well as mathematically rigorous approaches to it.

\myskip
Notably, in the companion work to our manuscript \cite{maillard2024fitting},
non-rigorous methods of statistical physics are employed to provide a 
detailed picture of the satisfiability transition in random ellipsoid fitting.
Besides predicting a threshold for $n \sim d^2/4$, this work gives analytical formulas for the typical shape of ellipsoid fits in the satisfiable phase (i.e.\ the spectral density of $S$), 
generalizes these predictions for non-Gaussian but rotationally-invariant vectors $\{x_\mu\}_{\mu=1}^n$, 
and also studies the performance of different explicit solutions, notably ones used in the previous literature (see Fig.~\ref{fig:summary}).
We emphasize that the present paper is, in contrast, mathematically rigorous.

\myskip 
\textbf{Inspiration of our approach --}
Importantly, the non-rigorous analysis of \cite{maillard2024fitting} suggests that a quantity known as the free entropy (or free energy) in statistical physics, 
is universal: its value is (with high probability) the same for $(\rm P)$ and a variant of $(\rm P_\Gauss)$, as $d \to \infty$.
Such a universality property would have major consequences, as the free entropy carries deep information 
about the structure of the space of solutions to the problem.
Remarkably, similar phenomena have been studied numerically and theoretically in statistical learning models, in which one can effectively replace an arbitrary (and possibly complicated) data distribution 
by its ``Gaussian equivalent''.
Investigating this Gaussian equivalence phenomenon is the object of a recent and very active line of work, 
with consequences on the theory of empirical risk minimization and beyond \cite{goldt2022gaussian,hu2022universality,montanari2022universality,gerace2024gaussian,adamczak2011restricted,dandi2023universality,loureiro2021learning,dhifallah2020precise,schroder2023deterministic}. 
Inspired by these works (in particular \cite{montanari2022universality}) we provide a rigorous proof 
of the universality conjectured in \cite{maillard2024fitting}, using an interpolation argument.
We then leverage tools of the theory of random convex programs \cite{chandrasekaran2012convex,amelunxen2014living}, such as Gordon's min-max inequality \cite{gordon1988milman},
to sharply characterize the space of solutions to $(\rm P_\Gauss)$. Using the aforementioned universality allows to transfer 
many of these conclusions to the original problem $(\rm P)$, yielding our main results.

\subsection{Main results}

\noindent
We now state our main results, separating the conjecturally satisfiable ($\alpha = n/d^2 < 1/4$) and unsatisfiable ($\alpha > 1/4$) regimes.

\myskip
\textbf{Notation --}
$f = \smallO_d(g)$ (respectively $f = \mcO_d(g)$) means that $f/g \to 0$ as $d \to \infty$ (respectively $f/g$ is bounded as $d \to \infty$). 
We also use $g \gtrsim f$ to denote $f = \mcO_d(g)$.
We denote $\mcS_d$ the set of $d \times d$ real symmetric matrices, while 
$\bbS^{d-1}(r)$ refers to the Euclidean sphere of radius $r$ in $\bbR^d$.
For $S \in \mcS_d$, $\Sp(S) = \{\lambda_i\}_{i=1}^d$ is the set of eigenvalues of $S$.
For $\gamma \in [1, \infty]$, $\|S\|_{S_\gamma} \coloneqq (\sum_i |\lambda_i|^\gamma)^{1/\gamma}$ stands for the Schatten-$\gamma$ norm.
$B_\gamma(S, \delta)$ is the Schatten-$\gamma$ ball of radius $\delta$ centered in $S$, and $B_\gamma(\delta)$ the ball centered at $S = 0$.
We denote by $\|S\|_{\op} \coloneqq \|S\|_{S_\infty}$ the operator norm, and $\|S\|_F \coloneqq \|S\|_{S_2}$ the Frobenius norm.
For a function $\psi : \bbR \to \bbR$, we write $\|\psi\|_L$ to denote its Lipschitz constant.
Finally, we use a generic nomenclature $C, c > 0$ to denote positive constants (not depending on the dimension), that may vary from line to line. 
If necessary, we will make explicit the dependency of these constants on parameters of the problems.
Finally, we use in the text the abbreviation ``w.h.p.'', short for ``with high probability'', to refer to events that 
have probability $1 - \smallO_d(1)$ as $d \to \infty$.

\subsubsection{The satisfiable phase: \texorpdfstring{$\alpha < 1/4$}{}}

Our main result on the ``positive'' side of the ellipsoid fitting conjecture can be stated as follows.
\begin{theorem}[Satisfiable regime]\label{thm:main_positive_side}
    \noindent
    Assume $\alpha \coloneqq \limsup (n/d^2) < 1/4$ and let $r \in [1,4/3)$.
    There exist $0 < \lambda_- \leq \lambda_+$, depending only on $\alpha$, 
    such that the following holds.
    Let 
    \begin{equation*}
        \Gamma_r(\eps) \coloneqq \Bigg\{S \in \mcS_d : \Sp(S) \subseteq [\lambda_-, \lambda_+] \textrm{ and } \frac{1}{n} \sum_{\mu=1}^n \left|\sqrt{d} \left[\frac{x_\mu^\T S x_\mu}{d} - 1\right]\right|^r \leq \eps \Bigg\}.
    \end{equation*}
    Then for any $\eps > 0$, if $x_1, \cdots, x_n \iid \mcN(0, \Id_d)$, $\bbP[\Gamma_r(\eps) \neq \emptyset] \to 1$ as $n, d \to \infty$.
\end{theorem}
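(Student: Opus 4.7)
The plan is to follow the three-step strategy advertised in the introduction: reformulate existence as a fitting-error optimization, pass to a Gaussian-linear surrogate by universality, and analyze the surrogate via Gordon's min-max inequality.

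\textbf{Step 1 (reformulation).} For parameters $\lambda_-\leq\lambda_+$ to be chosen, let $K=\{S\in\mcS_d:\Sp(S)\subseteq[\lambda_-,\lambda_+]\}$ and
\[
\Phi_n(x) \coloneqq \inf_{S\in K}\; \frac{1}{n}\sum_{\mu=1}^n \Bigl|\sqrt{d}\Bigl(\tfrac{1}{d}\,x_\mu^\T S x_\mu - 1\Bigr)\Bigr|^r.
\]
Since $K$ is compact and the cost continuous, the event $\{\Gamma_r(\eps)\neq\emptyset\}$ coincides with $\{\Phi_n(x)\leq\eps\}$. It therefore suffices to produce $\lambda_\pm$ depending only on $\alpha$ for which $\Phi_n(x)=\smallO_d(1)$ with high probability.

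\textbf{Step 2 (universality).} Each constraint can be written as $\langle S, x_\mu x_\mu^\T - \Id_d\rangle = d-\Tr(S)$, i.e.\ a linear functional of the zero-mean random matrix $M_\mu\coloneqq x_\mu x_\mu^\T-\Id_d$, whose entrywise covariance matches that of a rescaled GOE matrix. Let $(G_\mu)_{\mu=1}^n$ be an independent Gaussian family matching $(M_\mu)$ in mean and covariance, and define $\Phi_n^{\Gauss}$ by swapping $M_\mu\rightarrow G_\mu$. Following the interpolation strategy of \cite{montanari2022universality}, I would consider the smoothed, interpolated free energy
\[
F(t)\;\coloneqq\;\bbE\,\log\int_K \exp\!\Bigl[-\beta\sum_{\mu=1}^n \varphi\bigl(\langle S,\sqrt{1-t}\,M_\mu+\sqrt{t}\,G_\mu\rangle + \Tr(S)-d\bigr)\Bigr] dS,
\]
with $\varphi$ a smooth proxy for $|\cdot|^r$ and $\beta$ large. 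A Stein/Lindeberg expansion of $\partial_t F$, using that the first two moments of $M_\mu$ and $G_\mu$ coincide and that $\|S\|_{\op}\leq\lambda_+$ uniformly on $K$, should give $|F(1)-F(0)|=\smallO(n)$. Concentration (Gaussian concentration on the $G$-side, Hanson-Wright on the $x$-side) would then upgrade this to the high-probability equivalence $\Phi_n(x)=\Phi_n^{\Gauss}(G)+\smallO_d(1)$.

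\textbf{Step 3 (Gordon analysis) and main obstacle.} The surrogate $\Phi_n^{\Gauss}$ is a convex minimization of an $\ell_r$ loss over the compact convex set $K$ with purely Gaussian linear data, the archetypal setting for Gordon's min-max inequality. Passing to the associated Gordon auxiliary problem should reduce the $\Theta(d^2)$-dimensional optimization to a scalar saddle-point in variables parameterizing $\|S\|_F$, $\Tr(S)$, and the residual norm; the threshold $\alpha=1/4$ arises because the statistical dimension of $\mcS_d^+$ is asymptotically $d^2/4$, so that for $\alpha<1/4$ one can choose $\lambda_\pm$ depending only on $\alpha$ such that the scalar saddle value is $\smallO_d(1)$. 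The hardest step is Step 2: unlike in standard Gaussian equivalence for regression, the constraints here are \emph{quadratic} in $x_\mu$, so the Lindeberg swap must control higher-order derivatives of the soft-min in each coordinate of $x_\mu$. Showing that the resulting remainder terms stay $\smallO(n)$ uniformly over $S\in K$ at the scale $n\asymp d^2$ is where most of the technical work should concentrate; the restriction $r<4/3$ in the statement most likely enters precisely here, as the moment/smoothness condition on $\varphi$ needed to make these error terms summable.
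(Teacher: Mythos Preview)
Your three-step outline matches the paper's architecture, but two of the details you flag as the crux are not where the difficulty actually lies, and the paper handles them quite differently.

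\textbf{Where $r<4/3$ really comes from.} You locate the restriction in the Lindeberg/interpolation error terms of Step~2. In the paper it does not arise there at all. The universality statement (Proposition~\ref{prop:universality_gs}) is proved only for \emph{bounded} $\phi$ with bounded derivative, and the interpolation (done matrix-by-matrix via $U_\mu(t)=\cos(t)W_\mu+\sin(t)G_\mu$, not coordinate-by-coordinate in $x_\mu$) closes using a one-dimensional CLT for $\Tr[WS]$ versus $\Tr[GS]$ (Lemma~\ref{lemma:1d_clt_ellipse}); no constraint on $r$ appears. The passage from bounded smooth $\phi$ to $\phi(x)=|x|^r$ is a \emph{separate} truncation argument (Lemma~\ref{lemma:positive_side_strong_phi}): one writes $|x|^r\le\phi_A(x)+|x|^r\indi\{|x|\ge A\}$ and bounds the tail via an empirical-process estimate
\[
\max_{\|R\|_\op=1}\sum_{\mu=1}^n|\Tr(W_\mu R)|^{\gamma r}\le Cn
\]
(Lemma~\ref{lemma:emp_proc_ellipse}). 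That lemma is what only works for exponent $\le 4/3$, because $\Tr[W_\mu R]$ is merely sub-exponential, and this is the sole source of the restriction. So your ``Step~2 is the hard step'' diagnosis is right in spirit (the non-Gaussian tails of $W_\mu$ are the issue) but the mechanism is an empirical-process bound used \emph{after} universality, not the Lindeberg remainder itself.

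\textbf{The trace term.} You keep the $S$-dependent bias $\Tr(S)-d$ inside the interpolation. The paper avoids this: it first proves everything for the problem $\Tr[W_\mu S]=b$ with a \emph{fixed} scalar $b$ (taking $b=1$), and only at the very end converts back to the ellipsoid constraint by the rescaling $S\mapsto dS/(\sqrt d+\Tr S)$ (Lemma~\ref{lemma:Gamma_1_incl_Gamma}). This decoupling is what lets both the Gordon analysis and the universality run with a clean affine constraint; carrying $\Tr(S)-d$ through the interpolation as you propose would add an extra layer you would have to control.
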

\noindent
Let us make a series of remarks on Theorem~\ref{thm:main_positive_side}.
First, one can alternatively formulate its conclusion as: 
\begin{equation}\label{eq:main_positive_side_alternate}
    \plim_{d \to \infty} \min_{\Sp(S) \subseteq [\lambda_-, \lambda_+]} \frac{1}{n} \sum_{\mu=1}^n \left|\sqrt{d} \left[\frac{x_\mu^\T S x_\mu}{d} - 1\right]\right|^r = 0,
\end{equation}
where $\plim$ denotes limit in probability.
Secondly, while our current proof limits the choice of $r \in [1, 4/3)$,  it might be possible to refine our arguments
to reach the same result for any $r \in [1,2]$, see our discussion in Section~\ref{subsec:generalizations}.
Moreover, note that by standard concentration arguments, we expect Gaussian points to be close to the sphere $\bbS^{d-1}(\sqrt{d})$, 
i.e.\ the ellipsoid defined by $S = \Id_d$.
A detailed analysis yields, for any $r \in [1,2]$:
\begin{equation}\label{eq:error_identity}
    \plim_{d \to \infty} \frac{1}{n} \sum_{\mu=1}^n \left|\sqrt{d} \left[\frac{\|x_\mu\|^2}{d} - 1\right]\right|^r = \EE[|Z|^r] > 0,
\end{equation}
where $Z \sim \mcN(0,2)$.
This follows from classical concentration arguments, see Appendix~\ref{sec_app:error_identity} for a detailed derivation.
Theorem~\ref{thm:main_positive_side} therefore shows that there exists an ellipsoid whose ``fitting error'' 
improves by an arbitrary factor over the one achieved by the unit sphere, as long as $\alpha < 1/4$.
On the other hand, we will see that this is not possible for $\alpha > 1/4$, strongly suggesting that our results capture the phenomenon responsible for the conjectured satisfiability transition of ellipsoid fitting.
In Section~\ref{subsec:generalizations} we will consider possible future directions that could allow to 
improve our results to the existence of fitting ellipsoids with \emph{exactly zero} error, i.e.\ the conjecture of eq.~\eqref{eq:conj_ef_positive}.

\myskip
Finally, we notice that Theorem~\ref{thm:main_positive_side} is coherent with the non-rigorous analysis of \cite{maillard2024fitting},
which predicts that for any $\alpha < 1/4$ typical solutions to ellipsoid fitting have spectral density contained in an interval of the type $[\lambda_-, \lambda_+]$ depending only on $\alpha$.

\subsubsection{The unsatisfiable phase: \texorpdfstring{$\alpha > 1/4$}{}}

Our main result towards proving the non-existence of fitting ellipsoids for $\alpha > 1/4$ is the following.
\begin{theorem}[Unsatisfiable regime]\label{thm:main_negative_side}
    \noindent
    Assume $\alpha \coloneqq \liminf (n/d^2) > 1/4$.
    Let $\phi: \bbR_+ \to \bbR_+$ be a non-decreasing differentiable function, with $\phi(0) = 0$, and such that $\phi$ has a unique global minimum in $0$.
    For any $\eps > 0$ and $M > 0$ we  let:
    \begin{equation*}
        \Gamma(\eps, M) \coloneqq \Bigg\{S \in \mcS_d \, : \Sp(S) \subseteq [0, M] \textrm{ and } \frac{1}{n} \sum_{\mu=1}^n \phi\left(\sqrt{d} \left|\frac{x_\mu^\T S x_\mu}{d} - 1\right|\right) \leq \eps \Bigg\}.
    \end{equation*}
    There exists $\eps = \eps(\alpha, \phi) > 0$ such that for all $M > 0$,
    if $x_1, \cdots, x_n \iid \mcN(0, \Id_d)$,
    \begin{equation*}
    \lim_{d \to \infty} \bbP\left[\Gamma(\eps, M) \neq \emptyset\right] = 0.
    \end{equation*}
\end{theorem}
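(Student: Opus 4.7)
My plan is to combine two ingredients: the universality principle highlighted in the introduction---which matches, to leading order, the minimum of Lipschitz observables of the residuals $r_\mu(S) \coloneqq (x_\mu^{\T} S x_\mu - d)/\sqrt{d}$ over spectrally bounded $S$, between the original problem $(\mathrm{P})$ and a Gaussian equivalent---together with a sharp lower bound for the Gaussian equivalent obtained via Gordon's min-max inequality and the fact that the statistical dimension of the positive semidefinite cone $\mcS_d^+$ equals $d^2/4 + \smallO_d(d^2)$ \cite{amelunxen2014living}.

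\textbf{Step 1: Lipschitz reduction and universality.} Since $\phi$ is nonnegative, non-decreasing, differentiable, with unique minimum $\phi(0) = 0$, I will fix $\tau = \tau(\phi) > 0$ and a bounded $1$-Lipschitz $\psi : \bbR \to [0,1]$ with $\mathbf{1}_{[\tau,\infty)} \leq \psi \leq \phi(\tau)^{-1} \phi$, so that a lower bound on $\widetilde L(S) \coloneqq \frac{1}{n}\sum_\mu \psi(|r_\mu(S)|)$ immediately yields one on $L(S) \coloneqq \frac{1}{n}\sum_\mu \phi(|r_\mu(S)|)$. Writing $S = \Id_d + T$ and expanding, the universality result of the paper, applicable precisely to Lipschitz observables on $K_M \coloneqq \{S \succeq 0 : \|S\|_{\op} \leq M\}$, will deliver, with high probability and uniformly over $K_M$,
\begin{equation*}
    \widetilde L(S) = \widetilde L_{\Gauss}(S) + \smallO_d(1),
\end{equation*}
where $\widetilde L_{\Gauss}$ replaces the centered quadratic form $\Tr[T(x_\mu x_\mu^{\T} - \Id_d)]$ appearing inside $r_\mu(S)$ by the Gaussian-equivalent linear form $\sqrt{2}\,\Tr[T W_\mu]$, with $W_\mu$ independent $\mathrm{GOE}(d)$ matrices. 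Uniformity in $S$ will come from a Frobenius $\eta$-net of $K_M$ of metric entropy $\mcO(d^2 \log(M/\eta))$, combined with the Lipschitz continuity of $\widetilde L$.

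\textbf{Step 2: Gordon lower bound for the Gaussian model.} The lower bound on $\widetilde L_{\Gauss}$ will proceed by splitting on the size of $T \coloneqq S - \Id_d$ in Frobenius norm. For $\|T\|_F \leq \rho_0$ small, the Gaussian pieces of $r_\mu^{\Gauss}(S)$ are dominated by the ``base'' noise $(\|x_\mu\|^2 - d)/\sqrt{d}$, which converges in distribution to $\mcN(0,2)$; by the law of large numbers, $\widetilde L_{\Gauss}(S) \to \EE[\psi(|Z|)] > 0$ in probability with $Z \sim \mcN(0,2)$, uniformly in $\|T\|_F \leq \rho_0$ by a standard continuity bound. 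For $\|T\|_F \geq \rho_0$, I will invoke Gordon's min-max inequality on the Gaussian process $T \mapsto (\Tr[T W_\mu]/\sqrt{d})_{\mu=1}^n$ restricted to the compact convex slice $(K_M - \Id_d) \cap \{\|T\|_F = \rho\}$: a comparison with the Gaussian width of $\mcS_d^+$---whose statistical dimension equals $d^2/4 + \smallO_d(d^2)$---forces, for $n > (1/4 + \delta) d^2$, at least a constant fraction of the residuals $|r_\mu^{\Gauss}(S)|$ to exceed $\tau$, yielding $\widetilde L_{\Gauss}(S) \geq c(\alpha, \tau) > 0$.

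\textbf{Main obstacle: $M$-uniformity.} The hardest point will be to secure $\eps$ independent of $M$, whereas the entropy of the net in Step 1 grows with $M$. The resolution is that matrices $S$ of very large operator norm cannot achieve small $\widetilde L$: if $\|S\|_{\op} \geq M_0$ for a suitable universal $M_0 = M_0(\alpha,\phi)$, then either the deterministic trace shift $|\Tr[S - \Id_d]|/\sqrt{d}$ is large, forcing $|r_\mu(S)| \gtrsim 1$ for \emph{every} $\mu$, or a positive fraction of $x_\mu$'s aligns sufficiently with the top eigenvector of $S - \Id_d$ to produce $|r_\mu(S)| \geq \tau$ on a constant fraction of indices. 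In either case $\widetilde L(S) \gtrsim 1$ with an $M$-free constant, so it will suffice to prove the bound on $K_{M_0}$, where the Gordon argument of Step 2 yields the desired $\eps(\alpha, \phi) > 0$ independent of $M$. Combining the three steps completes the proof.
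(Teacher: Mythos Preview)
Your high-level plan---universality to a Gaussian model, then a Gordon-type lower bound exploiting that $\omega(\mcS_d^+)^2\sim d^2/4$---is exactly the paper's. But two of your steps have genuine gaps.

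\textbf{The Gordon step is not set up correctly.} After universality the Gaussian residual is $r_\mu^{\Gauss}(S)=\Tr[S G_\mu]-(d-\Tr[S])/\sqrt d$, so the ``target'' $b_S\coloneqq(d-\Tr[S])/\sqrt d$ \emph{varies with $S$}. You propose to run Gordon on the slice $(K_M-\Id_d)\cap\{\|T\|_F=\rho\}$ and compare $n$ with the statistical dimension of $\mcS_d^+$; but this slice is a translate of the cone, not the cone, and for $\rho$ of constant order the constraint $T\succeq -\Id_d$ is essentially vacuous (since $\|T\|_\op\leq\|T\|_F=\rho$), so the slice is close to a full Frobenius sphere of statistical dimension $\sim d^2/2$, not $d^2/4$, and the comparison with $n$ yields nothing. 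The paper's fix is a different decomposition: split on $|\Tr[S]-d|$ rather than on $\|S-\Id_d\|_F$. If $|\Tr[S]-d|\geq\delta\sqrt d$, \emph{rescale} to $S'\coloneqq\sqrt d\,S/|d-\Tr[S]|\in\mcS_d^+$, which normalizes the target to $b=\pm1$ and lands back on the cone, where Gordon with $\omega(\mcS_d^+)$ applies verbatim (Proposition~\ref{prop:no_approx_unsat_gaussian}$(i)$). If $|\Tr[S]-d|\leq\delta\sqrt d$, the target is $\leq\delta$ and one works with $b=0$; crucially, $\Tr[S]\approx d$ together with $S\succeq 0$ forces $\|S\|_F\gtrsim\sqrt d$, which is precisely the scale at which the homogeneous ($b=0$) Gordon bound (Proposition~\ref{prop:no_approx_unsat_gaussian}$(ii)$) bites. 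Your split on $\|T\|_F$ does not give access to either mechanism.

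\textbf{The ``Main obstacle'' section is both unnecessary and incorrect.} It is unnecessary because the paper's Gaussian lower bound is proved over the \emph{entire} cone $\mcS_d^+$ with constants depending only on $\alpha$; universality (Proposition~\ref{prop:universality_gs}) is then applied for each fixed $M$, and the resulting $\eps$ automatically inherits $M$-independence from the Gaussian side---no separate large-$\|S\|_\op$ argument is needed. It is also incorrect as stated: for $S=\Id_d+(M_0-1)vv^\T$ with $\|v\|_2=1$ one has $\|S\|_\op=M_0$, yet the trace shift is $(M_0-1)/\sqrt d\to 0$ and the ``alignment'' contribution to $r_\mu(S)$ is $(M_0-1)(v^\T x_\mu)^2/\sqrt d$, of order $M_0/\sqrt d\to 0$ in probability, so neither branch of your dichotomy fires for any constant $M_0$.
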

\noindent
As stated below, a direct corollary of Theorem~\ref{thm:main_negative_side} is that, when $\alpha > 1/4$, ellipsoid fitting admits (w.h.p.) no solutions
with spectrum bounded above as $d \to \infty$ (i.e.\ an ellipsoid whose smallest axis has length bounded away from zero).
\begin{corollary}[Non-existence of fitting ellipsoids with bounded spectrum]
    \label{cor:no_fit_bounded}
    Let $\alpha > 1/4$.    
    Let $n, d \to \infty$ with $n/d^2 \to \alpha > 0$, and $x_1, \cdots, x_n \iid \mcN(0, \Id_d)$. 
    We denote $\Gamma$ the set of ellipsoid fits for $(x_\mu)_{\mu=1}^n$.
    Then, for all $c > 0$,
    \begin{equation*}
        \lim_{d \to \infty} \bbP[\exists S \in \Gamma \, : \, \|S\|_\op \leq c] = 0.
    \end{equation*}
\end{corollary}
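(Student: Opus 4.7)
The plan is to derive the corollary as a direct specialization of Theorem~\ref{thm:main_negative_side}. The key observation is that an ellipsoid fit satisfies $x_\mu^\T S x_\mu = d$ exactly, so the fitting error is identically zero, and the bound on $\|S\|_\op$ combined with $S \succeq 0$ places the spectrum in a bounded interval $[0,c]$. This is exactly the setting in which Theorem~\ref{thm:main_negative_side} can be invoked with any admissible loss function $\phi$.

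Concretely, first I would fix a specific admissible $\phi$, for instance $\phi(t) = t$ or $\phi(t) = t^2$; both are non-decreasing, differentiable, vanish only at $0$, and satisfy $\phi(0) = 0$. Applying Theorem~\ref{thm:main_negative_side} to this $\phi$ produces a constant $\eps_0 = \eps(\alpha,\phi) > 0$ such that for every $M > 0$,
\begin{equation*}
\lim_{d \to \infty} \bbP[\Gamma(\eps_0, M) \neq \emptyset] = 0.
\end{equation*}
Next, given the constant $c > 0$ in the corollary, I would specialize to $M = c$.

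Then I would argue the inclusion: if $S$ is any ellipsoid fit for $(x_\mu)_{\mu=1}^n$ with $\|S\|_\op \leq c$, then $S \succeq 0$ and $\Sp(S) \subseteq [0,c]$, while $x_\mu^\T S x_\mu = d$ for every $\mu$ implies
\begin{equation*}
\frac{1}{n} \sum_{\mu=1}^n \phi\!\left(\sqrt{d}\left|\frac{x_\mu^\T S x_\mu}{d} - 1\right|\right) = \phi(0) = 0 \leq \eps_0.
\end{equation*}
Hence any such $S$ belongs to $\Gamma(\eps_0, c)$, so the event $\{\exists S \in \Gamma \,:\, \|S\|_\op \leq c\}$ is contained in $\{\Gamma(\eps_0,c) \neq \emptyset\}$. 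Taking probabilities and invoking Theorem~\ref{thm:main_negative_side} yields the claim.

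There is no real obstacle here beyond Theorem~\ref{thm:main_negative_side} itself; the corollary is essentially the observation that exact fitting plus a bounded operator norm is a strictly stronger condition than approximate fitting with spectrum in $[0,M]$, so the impossibility of the latter immediately rules out the former. The only minor subtlety is verifying that the constant $\eps_0$ furnished by the theorem is indeed uniform in $M$, which is already built into the statement of Theorem~\ref{thm:main_negative_side} (the constant depends only on $\alpha$ and $\phi$).
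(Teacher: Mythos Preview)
Your proposal is correct and is precisely the argument the paper intends: the corollary is stated as an immediate consequence of Theorem~\ref{thm:main_negative_side}, and your derivation via the inclusion $\{\exists S \in \Gamma : \|S\|_\op \leq c\} \subseteq \{\Gamma(\eps_0, c) \neq \emptyset\}$ is exactly the right way to unpack that.
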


\myskip 
In particular, our results imply that the negative side of the ellipsoid fitting conjecture, i.e.\ eq.~\eqref{eq:conj_ef_negative},  
would follow from either:
\begin{enumerate}[label=\textbf{H.\arabic*},ref=H.\arabic*]
  \item\label{hyp:1} a proof that (for $\alpha > 1/4$ and with high probability)
  the set of ellipsoid fits is bounded in spectral norm as $d \to \infty$,
  \item[] 
    \begin{center}
        \emph{or}
    \end{center}
  \item\label{hyp:2} a proof that, if $S \in \Gamma$ is an ellipsoid fit, there exists a (possibly different) ellipsoid fit $\hS \in \Gamma$ that has bounded spectral norm 
  as $d \to \infty$.
\end{enumerate}
As a consequence of Corollary~\ref{cor:no_fit_bounded}, proving either \ref{hyp:1} or \ref{hyp:2} would yield the negative side of the ellipsoid fitting conjecture.
Notice further that Theorem~\ref{thm:main_negative_side} implies the non-existence of bounded ellipsoid fits even allowing a small ``fitting error'',
so that it would be sufficient to prove~\ref{hyp:2} considering an ellipsoid $\hS$ that fits $(x_\mu)_{\mu=1}^n$ only up to a small enough error $\eps > 0$ (in the sense of Theorem~\ref{thm:main_negative_side}).

\subsection{Discussion and consequences}\label{subsec:generalizations}

\noindent
The combination of our two main results (Theorem~\ref{thm:main_positive_side} and Theorem~\ref{thm:main_negative_side}) provides strong evidence for 
the original ellipsoid fitting conjecture (Conjecture~\ref{conj:ellipsoid_fitting}). 
Our conclusions are attained through the study of a ``Gaussian equivalent'' problem, which partly motivated Conjecture~\ref{conj:ellipsoid_fitting}.
Informally, we show that an approximate version of the ellipsoid fitting (i.e.\ by allowing infinitesimally small error) undergoes a sharp satisfiability transition at
$\alpha = n/d^2 = 1/4$.
Moreover, we also show in our proof that in the Gaussian equivalent problem, the satisfiability transition for this ``approximate'' version 
corresponds to the one of the exact fitting problem (i.e.\ not allowing for any non-zero error). 
This strongly suggests that our method is indeed capturing the phenomenon responsible for the ellipsoid fitting transition.

\myskip 
Our results are an example of a universality phenomenon in high-dimensional stochastic geometry: we show that 
the statistical dimension (or the square of the Gaussian width) of the set of positive semidefinite matrices determines the satisfiability of -- a modified version of -- random ellipsoid fitting, 
even though the affine subset $\{S \in \mcS_d \, : \, (x_\mu^\T S x_\mu = d)_{\mu=1}^n\}$ is \emph{not} randomly oriented uniformly in all directions.
In general, understanding the conditions under which universality holds in such problems of high-dimensional random geometry is an important open question.
We mention \cite{oymak2018universality}, which proves universality between a model in which the random subspace is given by the kernel of a random i.i.d.\ Gaussian matrix, 
and a second model where the subspace is the kernel of a matrix with independent elements (not necessarily Gaussian).

\subsubsection{Towards Conjecture~\ref{conj:ellipsoid_fitting}}

Unfortunately, while our main theorems characterize a satisfiability transition for ellipsoid fitting at the expected threshold, 
they do not formally imply Conjecture~\ref{conj:ellipsoid_fitting}.
We discuss briefly some improvements of our results that could potentially allow to bridge this gap.

\myskip 
On the ``positive'' side of the conjecture (i.e.\ the regime $\alpha = n/d^2 < 1/4$), 
Theorem~\ref{thm:main_positive_side} shows the existence of bounded ellipsoids that can achieve arbitrarily small error $\eps$ (where the error is taken to $0$ \emph{after} $d \to \infty$).
On the other hand, a proof of eq.~\eqref{eq:conj_ef_positive} would require to invert these limits, and take $\eps \to 0$ \emph{before} $d \to \infty$.
In this regard, an important strengthening of Theorem~\ref{thm:main_positive_side} would be to obtain non-asymptotic
bounds on $\bbP[\Gamma_r(\eps) = \emptyset]$, that depend on $\eps$.

\myskip 
Another potential for improvement stems from geometrical considerations: denoting $V \coloneqq \{S \in \mcS_d \, : \, x_\mu^\T S x_\mu = d \textrm{ for all } \mu \in [n]\}$, 
one may use Theorem~\ref{thm:main_positive_side} to bound the distance of the set $\Gamma_r(\eps)$ to the affine subspace $V$.
Since $\lambda_{\min}(S) \geq \lambda_-(\alpha)$ for any $S \in \Gamma_r(\eps)$, it would suffice to show that $d_\op(\Gamma_r(\eps), V) \leq \lambda_-(\alpha)$ 
to deduce eq.~\eqref{eq:conj_ef_positive}, the first part of Conjecture~\ref{conj:ellipsoid_fitting}.
We perform in Appendix~\ref{sec_app:geometric_approx_exact} a naive analysis of necessary conditions for this conclusion to follow from Theorem~\ref{thm:main_positive_side}.
Unfortunately, we find that (among other considerations) these conditions would require a significantly stronger form of Theorem~\ref{thm:main_positive_side}, 
by proving the conclusion for larger values of $r \in [1,2]$ and/or a better scaling with $d$ of the minimal error achievable (i.e.\ proving the conclusion of Theorem~\ref{thm:main_positive_side} for $\Gamma_r(\eps_d)$ with $\eps_d \to 0$ as $d \to \infty$).

\myskip
Moreover, let us emphasize that a critical difficulty in improving our proof techniques would be to quantitatively sharpen the universality arguments we carry out, and in particular to strengthen Proposition~\ref{prop:universality_gs}, which shows the universality of the 
minimal fitting error, or ``ground state'' energy, for ellipsoid fitting and a simpler ``Gaussian equivalent'' problem. 
While the present form of Proposition~\ref{prop:universality_gs} shows universality of this error up to a $\smallO_d(1)$ difference, this estimate would likely have to be improved in order to carry out the aforementioned approaches.
This part of our proof is greatly inspired by a recent literature on similar universality phenomena
\cite{goldt2022gaussian,hu2022universality,montanari2022universality,gerace2024gaussian,adamczak2011restricted,dandi2023universality,loureiro2021learning,dhifallah2020precise,schroder2023deterministic}, 
and we are not aware of the existence of such universality results at a finer scale (or even predictions/conjectures of conditions under which they should hold).

\myskip
Finally, on the ``negative'' side of the conjecture (i.e.\ for $\alpha > 1/4$), 
as emphasized in Corollary~\ref{cor:no_fit_bounded} and the discussion thereafter, 
Theorem~\ref{thm:main_negative_side} reduces the second part of Conjecture~\ref{conj:ellipsoid_fitting} (eq.~\eqref{eq:conj_ef_negative}) 
to proving either \ref{hyp:1} or \ref{hyp:2}.
If such a proof were to become available, our results would imply the regime $\alpha > 1/4$ of Conjecture~\ref{conj:ellipsoid_fitting}.

\subsubsection{Further directions}

Our proof method that leverages universality of the minimal fitting error is quite versatile, and
we end our discussion by mentioning a few further directions and generalized results that stem from our analysis. 

\myskip 
\textbf{The dual program --}
First, as a semidefinite program, ellipsoid fitting admits a dual formulation, as written e.g.\ in \cite{bandeira2023fitting}.
While the limitations of Theorems~\ref{thm:main_positive_side} and \ref{thm:main_negative_side}, discussed above, prevent us from directly drawing conclusions on the dual, 
it might be possible to directly apply to it a similar universality approach.
Such an application might allow to overcome some current limitations of our results, and we leave this investigation for future work.

\myskip 
\textbf{Beyond Gaussian vectors --}
Secondly, while we perform our analysis for $x_1, \cdots, x_n \sim \mcN(0, \Id_d)$, 
it is clear from our proof that our results (both Theorems~\ref{thm:main_positive_side} and \ref{thm:main_negative_side}) hold for any i.i.d.\ $(x_\mu)_{\mu=1}^n$ such that the matrices $W_\mu \coloneqq (x_\mu x_\mu^\T - \Id_d)/\sqrt{d}$ satisfy 
a uniform pointwise normality (or uniform one-dimensional CLT) assumption, as defined in Definition~\ref{def:one_dimensional_CLT}, 
and proven for the case of Gaussian vectors in Lemma~\ref{lemma:1d_clt_ellipse}.
An interesting example of a non-Gaussian distribution is given by the case of rotationally-invariant vectors with fluctuating norm, of the form 
\begin{equation*}
    x_\mu \deq \sqrt{r_\mu} \omega_\mu,
\end{equation*}
with $r_\mu$ and $\omega_\mu$ independent, and $\omega_\mu \sim \Unif(\bbS^{d-1}(\sqrt{d}))$. Letting $\tau \coloneqq \lim_{d \to \infty} [\sqrt{d} \Var(r_1)]$, 
\cite{maillard2024fitting} conjectures that the ellipsoid fitting transition point for this model is located at $n/d^2 = \alpha_c(\tau) \in (0,1/2)$,
and gives an exact expression of $\alpha_c(\tau)$ (see Fig.~5 of \cite{maillard2024fitting}), showing that ellipsoid fitting becomes harder as the fluctuations of the norm increase.
While pointwise normality may not hold in this setting, it is conceivable that our proof techniques can be adapted to handle these distributions, by following the calculations of \cite{maillard2024fitting}, to obtain results akin to Theorems~\ref{thm:main_positive_side} and \ref{thm:main_negative_side}.
More generally, while it is clear that some distributions can not satisfy uniform pointwise normality (see the discussion below Lemma~\ref{lemma:1d_clt_ellipse} for examples), a more thorough investigation of the class of distributions of $x_\mu$'s for which pointwise normality holds 
(and thus our proof applies) is, in our opinion, an interesting direction to explore.

\myskip 
\textbf{A minimal nuclear norm estimator --}
Let us conclude by mentioning a different approach to a possible solution of the first part of Conjecture~\ref{conj:ellipsoid_fitting}. 
It is conjectured in \cite{maillard2024fitting} (through non-rigorous methods) that the minimal nuclear norm solution, i.e.\ 
\begin{equation*}
    \hS_\NN \coloneqq \argmin_{\substack{S \in \mcS_d \\ \{x_\mu^\T S x_\mu = d\}_{\mu=1}^n}} \|S\|_{S_1} = \argmin_{\substack{S \in \mcS_d \\ \{x_\mu^\T S x_\mu = d\}_{\mu=1}^n}} \sum_{i=1}^d |\lambda_i(S)| ,
\end{equation*}
satisfies $\hS_\NN \succeq 0$ with high probability for any $\alpha < 1/4$. 
Analyzing $\hS_\NN$, whether through the techniques of the present paper or with different methods, 
is another promising approach to prove eq.~\eqref{eq:conj_ef_positive}, the ``positive'' part of Conjecture~\ref{conj:ellipsoid_fitting}.

\subsection{Structure of the paper}

\noindent
In Section~\ref{sec:outline_proof} we present the proof of our main results.
The proof of some intermediate results is postponed to later sections: in Section~\ref{sec:gaussian_equivalent} we study in detail 
the ``Gaussian equivalent'' problem to random ellipsoid fitting, and in Section~\ref{sec:proof_universality_gs} we prove a crucial universality property for the minimal fitting error between ellipsoid fitting and its ``Gaussian equivalent''.

\section{Proof of the main results}\label{sec:outline_proof}
\noindent
We prove here Theorems~\ref{thm:main_positive_side} and \ref{thm:main_negative_side}. 
The core idea of our proof can be sketched as follows: 
\begin{itemize}
    \item[$(i)$] Using rigorous methods inspired by statistical physics, we prove that a quantity known as the \emph{asymptotic free entropy} 
    is universal for the ellipsoid fitting problem of eq.~\eqref{eq:def_P} and a variant of its Gaussian counterpart of eq.~\eqref{eq:def_P_Gaussian}, for any value of $\alpha = n/d^2$.
    The main technique we use is an interpolation method.
    In a suitable limit (known as the low-temperature limit in statistical physics), this implies the universality of the minimal ``fitting error''.
    \item[$(ii)$] We study the Gaussian equivalent problem using methods of random convex geometry \cite{chandrasekaran2012convex,amelunxen2014living}, leveraging in particular
    Gordon's min-max inequality \cite{gordon1988milman}.
    When $\alpha < 1/4$ we show that not only a zero error is achievable, but that one can achieve it by a matrix whose spectrum is 
    contained in an interval of the type $[\lambda_-, \lambda_+]$, i.e.\ the axis of the corresponding ellipsoid have lengths bounded above and below.
    On the other hand, for $\alpha > 1/4$, we prove that not only is the Gaussian equivalent problem not satisfiable, 
    but one can lower bound the minimal fitting error as long as the set of candidate matrices is contained in an operator norm ball
    $B_\op(M)$ (for any constant $M > 0$).
    \item[$(iii)$] We prove that the conclusions of $(ii)$ transfer 
    to the original ellipsoid fitting problem, using the universality shown in $(i)$.
\end{itemize}
Our proof of $(i)$ leverages an important line of work on free entropy universality \cite{hu2022universality,montanari2022universality,gerace2024gaussian}, 
and a part of it closely follows the proof of \cite{montanari2022universality}, which we will point out in relevant places. Nevertheless, as our setting does not satisfy all the hypotheses of this work, 
and for completeness of our exposition, we chose to write the whole proof in a self-contained manner.

\subsection{Reduction of the problem and Gaussian equivalent}

\noindent
For any $0 \leq \lambda_- \leq \lambda_+$, any function $\phi : \bbR \to \bbR$
and any $\eps > 0$ we define the set
\begin{equation}\label{eq:def_Gamma}
    \Gamma(\phi, \lambda_-, \lambda_+, \eps) \coloneqq \Bigg\{S \in \mcS_d : \Sp(S) \subseteq [\lambda_-, \lambda_+] \textrm{ and } \frac{1}{n} \sum_{\mu=1}^n \phi\left(\sqrt{d} \left[\frac{x_\mu^\T S x_\mu}{d} - 1\right]\right) \leq \eps \Bigg\}.
\end{equation}
If one thinks of $\phi$ as an error (or loss) function, then $\Gamma(\phi, \lambda_-, \lambda_+, \eps)$
represents the set of matrices with spectrum in $[\lambda_-, \lambda_+]$ that solve $(\rm P)$ up to an approximation error $\eps$.
Notice that for any $S \in \mcS_d$:
\begin{align}\label{eq:correspondance_X_W}
   \sqrt{d} \left[\frac{x_\mu^\T S x_\mu}{d} - 1\right] = \Tr[W_\mu S] - \frac{d - \Tr[S]}{\sqrt{d}},
\end{align}
with $W_\mu \coloneqq (x_\mu x_\mu^\T - \Id_d)/\sqrt{d}$. Moreover, $W_\mu$ has the same first two moments as a Gaussian matrix. 
Formally, we define: 
\begin{definition}[Matrix ensembles]\label{def:matrix_ensembles}
    \noindent
    Let $d \geq 1$. We say that a random symmetric $W \in \mcS_d$ is generated according to\footnotemark:
    \begin{itemize}
        \item $W \sim \GOE(d)$ if $W_{ij} \iid \mcN(0, [1+\delta_{ij}]/d)$ for $i \leq j$.
        \item $W \sim \Ell(d)$ if $W \deq (x x^\T - \Id_d)/\sqrt{d}$ for $x \sim \mcN(0, \Id_d)$.
    \end{itemize}
\end{definition}
\footnotetext{$\GOE(d)$ stands for \emph{Gaussian Orthogonal Ensemble}.}
\noindent
One checks easily that $\EE_{\Ell(d)}[W_{ij} W_{kl}] = \EE_{\GOE(d)}[W_{ij} W_{kl}]$ for any $i \leq j$ and $k \leq l$.
This remark and eq.~\eqref{eq:correspondance_X_W} lead to consider the following modified problem,
with $W_\mu \coloneqq (x_\mu x_\mu^\T - \Id_d)/\sqrt{d}$ and $b \in \bbR$:
\begin{equation}\label{eq:def_Gamma_b}
    \Gamma_b(\phi, \lambda_-, \lambda_+, \eps) \coloneqq \left\{S \in \mcS_d : \Sp(S) \subseteq [\lambda_-, \lambda_+] \textrm{ and } \frac{1}{n} \sum_{\mu=1}^n \phi\left(\Tr[W_\mu S] - b\right) \leq \eps \right\}.
\end{equation}
In the rest of the proof we will focus on studying the set $\Gamma_b$ of eq.~\eqref{eq:def_Gamma_b} with\footnote{Furthermore, by rescaling $S$ (and up to a change in $\lambda_-, \lambda_+, \phi$) we will reduce to the case $b \in \{-1,0,1\}$.} $b \in \bbR$, 
for both $W_\mu \sim \Ell(d)$ and $W_\mu \sim \GOE(d)$ (which we call the ``Gaussian equivalent'' problem).
At the end of our proof, we will transfer our conclusions on $\Gamma_b$ back to the original solution set $\Gamma$ of eq.~\eqref{eq:def_Gamma}.

\subsection{Universality of the minimal error}\label{subsec:fe_universality_main_results}

\noindent
We can now state the main result concerning on the universality of the minimal error (or ``ground state energy'' in statistical physics jargon).
This result is inspired by a rich line of work on universality of empirical risk minimization \cite{hu2022universality,montanari2022universality,gerace2024gaussian,dandi2023universality}.
\begin{proposition}[Ground state universality]
    \label{prop:universality_gs}
    \noindent
    Let $\phi : \bbR \to \bbR_+$ and $\psi : \bbR \to \bbR$ two bounded differentiable functions with bounded derivatives, and assume furthermore $\|\psi'\|_L < \infty$.
    Let $n,d \geq 1$ and $n,d \to \infty$ with $\alpha_1 d^2 \leq n \leq \alpha_2 d^2$ for some $0 < \alpha_1 \leq \alpha_2$, and $B \subseteq \mcS_d$ a closed set such that $B \subseteq B_\op(C_0)$ for some $C_0 > 0$ (not depending on $d$).
    For $X_1, \cdots, X_n \in \mcS_d$ we define the \emph{ground state energy}:
    \begin{equation}\label{eq:def_gs}
        \GS_d(\{X_\mu\}) \coloneqq \inf_{S \in B} \frac{1}{d^2}\sum_{\mu=1}^n\phi(\Tr[X_\mu S]).
    \end{equation}
    Then we have:
    \begin{equation}\label{eq:universality_gs}
        \lim_{d \to \infty} \left| \EE_{\{W_\mu\} \iid \Ell(d)} \psi[\GS_d(\{W_\mu\})] - \EE_{\{G_\mu\} \iid \mathrm{GOE}(d)} \psi[\GS_d(\{G_\mu\})] \right| = 0.
    \end{equation}
    Therefore, for any $\rho \geq 0$ and $\delta > 0$: 
    \begin{equation}\label{eq:universality_gs_probabilities}
        \begin{dcases}
           \lim_{d \to \infty} \bbP[\GS_d(\{W_\mu\}) \geq \rho + \delta] \leq \lim_{d \to \infty} \bbP[\GS_d(\{G_\mu\}) \geq \rho], \\
           \lim_{d \to \infty} \bbP[\GS_d(\{W_\mu\}) \leq \rho - \delta] \leq \lim_{d \to \infty} \bbP[\GS_d(\{G_\mu\}) \leq \rho].
        \end{dcases}
    \end{equation}
\end{proposition}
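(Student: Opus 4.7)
Proposition~\ref{prop:universality_gs} is a Gaussian-equivalence (or universality) result for the infimum of a random functional over a bounded set of symmetric matrices, which invites the smoothing-plus-interpolation template of the recent universality literature \cite{hu2022universality,montanari2022universality,gerace2024gaussian}, transplanted here from the vector setting of empirical risk minimization to the matrix setting. The plan has four steps: (i) regularize the hard minimum by a finite-temperature free energy $F_\beta$; (ii) run a Lindeberg-type swap, replacing each $W_\mu$ by its $\GOE$ counterpart $G_\mu$ one matrix at a time; (iii) bound each swap via a third-order Taylor expansion in the entries of the swapped matrix, where the constant, linear, and quadratic terms cancel because the entries of $W_\mu$ and $G_\mu$ match in mean and covariance by construction of $\Ell(d)$ and $\GOE(d)$; (iv) choose $\beta = \beta_d \to \infty$ slowly enough that both the low-temperature approximation error and the accumulated Lindeberg error vanish.

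Concretely, I would fix $\beta > 0$ and a reference probability measure $\nu$ on the compact set $B$ (e.g.\ normalized Lebesgue on $B$), and introduce
$$F_\beta(\{X_\mu\}) := -\frac{1}{\beta d^2} \log \int_B \exp\!\Big(-\beta \sum_{\mu=1}^n \phi(\Tr[X_\mu S])\Big) \, d\nu(S).$$
Because $B \subseteq B_\op(C_0)$ is a compact subset of the $\mathcal{O}(d^2)$-dimensional space $\mcS_d$ and $\phi$ is bounded with bounded derivative, a standard Laplace-type estimate yields $|F_\beta - \GS_d| = \mathcal{O}((\log \beta)/\beta)$ uniformly in the $\{X_\mu\}$ over the compact domain. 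For fixed $\beta$, set $\tilde\psi := \psi \circ F_\beta$ and telescope through hybrid ensembles $Z^{(k)}_\mu := G_\mu$ for $\mu \leq k$ and $W_\mu$ otherwise. Each increment $|\EE \tilde\psi(Z^{(k)}) - \EE \tilde\psi(Z^{(k-1)})|$ is controlled by a third-order Taylor expansion in the entries of the $k$-th matrix: conditional on $(Z^{(k-1)}_\mu)_{\mu\ne k}$, the zeroth-, first-, and second-order terms cancel in expectation because the entries of $W_k$ and $G_k$ agree in first and second moments, leaving a cubic remainder bounded by $\sup \|\nabla^3 \tilde\psi\| \cdot \EE\|W_k\|_F^3$.

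The main technical obstacle is the third-derivative bound. Differentiating the log-integral produces Gibbs cumulants of polynomials in the $S_{ij}$'s weighted by $\phi^{(k)}(\Tr[X_\mu S])$; the uniform bounds on $\phi,\phi',\phi''$ and on $\psi,\psi'$, together with $|S_{ij}| \leq \|S\|_\op \leq C_0$, must be combined with careful tracking of the $\beta$-dependence introduced by the cumulant structure and with the $1/d^2$ normalization in $F_\beta$. The target estimate per swap is $|\EE\tilde\psi(Z^{(k)}) - \EE\tilde\psi(Z^{(k-1)})| \lesssim \mathrm{poly}(\beta) \cdot d^{-c}$ for some $c > 0$ large enough that, summed over $n \lesssim d^2$ swaps, the total Lindeberg error is $o_d(1)$; choosing $\beta_d \to \infty$ subpolynomially in $d$ then balances this against the $\mathcal{O}((\log \beta_d)/\beta_d)$ smoothing error. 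This is where the proof is expected to be delicate and will likely follow the cumulant manipulations of \cite{montanari2022universality} closely, adapted from sums of rank-one terms $\phi(\langle a_\mu, x\rangle)$ to their matrix analogues $\phi(\Tr[X_\mu S])$.

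Finally, the probabilistic conclusions eq.~\eqref{eq:universality_gs_probabilities} follow from eq.~\eqref{eq:universality_gs} by a standard cutoff argument: given $\rho$ and $\delta > 0$, apply eq.~\eqref{eq:universality_gs} to a bounded, Lipschitz function $\psi_{\rho,\delta}$ that equals $0$ on $(-\infty, \rho - \delta]$ and $1$ on $[\rho, \infty)$ (mollified so that $\psi_{\rho,\delta}$ and $\psi_{\rho,\delta}'$ are bounded with $\|\psi_{\rho,\delta}'\|_L < \infty$, as required). Then $\bbP[\GS_d(\{W_\mu\}) \geq \rho + \delta] \leq \EE\psi_{\rho,\delta}(\GS_d(\{W_\mu\})) = \EE\psi_{\rho,\delta}(\GS_d(\{G_\mu\})) + o_d(1) \leq \bbP[\GS_d(\{G_\mu\}) \geq \rho] + o_d(1)$, giving the first inequality of eq.~\eqref{eq:universality_gs_probabilities}; the second is symmetric.
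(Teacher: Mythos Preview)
Your high-level framework---smooth $\GS_d$ by a finite-temperature free energy $F_\beta$, prove universality for $F_\beta$, then send $\beta\to\infty$---matches the paper's. The gap is in step~(iii). The crude third-order Lindeberg remainder you propose does not give a vanishing total error: after exploiting the structure (the $k$-th matrix enters only through $u=\Tr[X_k S]$), the third derivative of $\psi\circ F_\beta$ in $u$ is $O(\beta^2/d^2)$, and since $\EE|\Tr[W_k S]|^3=O(1)$ the per-swap remainder is $O(\beta^2/d^2)$; summing over $n\asymp d^2$ swaps yields $O(\beta^2)$, not $o_d(1)$. The bound you actually write, $\sup\|\nabla^3\tilde\psi\|\cdot\EE\|W_k\|_F^3$, is even cruder (recall $\EE\|W_k\|_F^3\asymp d^{3/2}$) and blows up. This is precisely why \cite{montanari2022universality}---and the present paper---do \emph{not} rely on a third-moment remainder: the mechanism that actually drives universality is a \emph{one-dimensional CLT} (pointwise normality, Definition~\ref{def:one_dimensional_CLT}). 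The paper establishes via Berry--Esseen (Lemma~\ref{lemma:1d_clt_ellipse}) that $\Tr[WS]$ is close in distribution to $\Tr[GS]$ uniformly over $S$ in a Schatten-$3$ ball, and then runs a continuous trigonometric interpolation $U_\mu(t)=\cos(t)W_\mu+\sin(t)G_\mu$. The derivative along the path (eq.~\eqref{eq:ee_derivative_psi}) involves $\EE_{W_1,G_1}[\Tr[S\tilde U_1]\,\phi'(\Tr[U_1 S])]$, which would vanish exactly if $W_1$ were replaced by an independent $\GOE$ matrix (since then $U_1\perp\tilde U_1$); the finite-dimensional CLT (Lemma~\ref{lemma:finite_dim_clt}) transfers this cancellation to $W_1$. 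Moment matching to second order is used only to make the CLT applicable, not as the source of the $o(1)$.

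A second, smaller gap: your Laplace estimate $|F_\beta-\GS_d|=O((\log\beta)/\beta)$ ``uniformly in the $\{X_\mu\}$'' implicitly requires a uniform Lipschitz bound on $S\mapsto d^{-2}\sum_\mu\phi(\Tr[X_\mu S])$, which is random and not deterministically bounded. The paper handles this by first discretizing $B$ to a minimal $\eta$-net $\mcN_\eta$ in operator norm (so $P_0$ is uniform on finitely many points and the entropy bound $\log|\mcN_\eta|\le d^2\log(K/\eta)$ gives $|F_\beta-\GS_d^{\mathrm{net}}|\le\beta^{-1}\log(K/\eta)$ deterministically, eq.~\eqref{eq:gs_fenergy_net_2}), and then invoking the operator-norm Lipschitz estimate of Lemma~\ref{lemma:energy_change_small_ball}---itself resting on the nontrivial empirical-process bound of Lemma~\ref{lemma:emp_proc_ellipse}---to control $|\GS_d^{\mathrm{net}}-\GS_d|\le C\|\phi'\|_\infty\,\eta$ with high probability. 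Your continuous-$\nu$ Laplace argument would need the same Lipschitz input.
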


\myskip
\textbf{A word on the proof --}
The main proof technique we use is Gaussian interpolation: namely we define an interpolating $U_\mu(t)$ such that 
$U_\mu(0) = G_\mu$ and $U_\mu(1) = W_\mu$, and show that $\GS_d(\{U_\mu(t)\})$ is constant (up to negligible terms) along the interpolation path. 
Note that while Proposition~\ref{prop:universality_gs} is very close to the results of \cite{montanari2022universality},
there is a technical difference with the setup of this work: for any fixed $S$, 
the random variable $\Tr[W S]$ for $W \sim \Ell(d)$ is not sub-Gaussian but only sub-exponential. 
As a consequence, we can not achieve a good control of the Lipschitz constant of the error (or ``energy'' function) of eq.~\eqref{eq:def_gs} with respect to the Frobenius norm of $S$,
as is required in \cite{montanari2022universality}.
We bypass this difficulty by controlling instead the Lipschitz constant with respect to the operator norm (see Lemma~\ref{lemma:energy_change_small_ball}), using important empirical process bounds over the operator norm ball (see Lemma~\ref{lemma:emp_proc_ellipse}): this leads to the limitation $B \subseteq B_\op(C_0)$. 
Interestingly, improving these bounds would also allow to relax the limitation $r \in [1,4/3)$ in Theorem~\ref{thm:main_positive_side}, as we discuss after.
Having dealt with this difficulty, the rest of the interpolation argument is very similar to \cite{montanari2022universality}. 
We show Proposition~\ref{prop:universality_gs} in Section~\ref{sec:proof_universality_gs}, deferring some arguments to Appendix~\ref{sec_app:technical_universality}.

\subsection{The Gaussian equivalent problem}\label{subsec:proof_gaussian_equivalent}

\noindent
We now study the Gaussian equivalent problem.
We will later transfer our analysis to the original ellipsoid fitting case 
using Proposition~\ref{prop:universality_gs}.
Our results are stated separately for the satisfiable and unsatisfiable regimes.
\begin{proposition}[Regular solutions in the satisfiable regime]
    \label{prop:regular_sol_gaussian}
    \noindent
    Let $n, d \to \infty$ with $n / d^2 \to \alpha < 1/4$, 
    and let $\{G_\mu\}_{\mu=1}^n \iid \mathrm{GOE}(d)$.
    Let
    \begin{equation*}
        V \coloneqq \{S \in \mcS_d \, : \forall \mu \in [n], \, \Tr[G_\mu S] = 1 \}.
    \end{equation*}
    There exist $0 < \lambda_- \leq \lambda_+$ (depending only on $\alpha$) such that:
    \begin{equation*}
        \lim_{d \to \infty} \bbP \{\exists S \in V \, \textrm{ s.t. } \Sp(S) \subseteq [\lambda_-, \lambda_+] \} = 1.
    \end{equation*}
\end{proposition}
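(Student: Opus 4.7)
The plan is to use the Convex Gaussian Min-Max Theorem (CGMT), a consequence of Gordon's inequality, to analyze the feasibility of the intersection $V \cap K$, where $K \coloneqq \{S \in \mcS_d : \lambda_- \Id_d \preceq S \preceq \lambda_+ \Id_d\}$. Writing $\mathcal{A}(S)_\mu \coloneqq \Tr[G_\mu S]$, I would first reformulate the question as showing that
\begin{equation*}
\Phi_d \;=\; \inf_{S \in K} \|\mathcal{A}(S) - \mathbf{1}\|_2 \;=\; \inf_{S \in K} \sup_{\|u\|_2 \leq 1} u^\top \bigl(\mathcal{A}(S) - \mathbf{1}\bigr)
\end{equation*}
satisfies $\Phi_d = 0$ w.h.p., since this event coincides with $V \cap K \neq \emptyset$. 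The Gaussian process $(S, u) \mapsto u^\top \mathcal{A}(S)$ has factorised covariance proportional to $\langle u, v\rangle \cdot \langle S, T\rangle_F$, which together with the convexity of $K$ and the bilinearity of the objective in $(S,u)$ places us in the regime where the CGMT yields a two-sided Gaussian comparison.

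Applying CGMT with normalisation $\tau \coloneqq \sqrt{2/d}$, $\Phi_d$ is controlled by the auxiliary optimisation
\begin{equation*}
\Phi_d^{\mathrm{AO}} \;=\; \inf_{S \in K}\, \max\!\Bigl(0,\; \bigl\|\tau\,\|S\|_F\, g - \mathbf{1}\bigr\|_2 \;+\; \tau\,\Tr[H S]\Bigr),
\end{equation*}
where $g \sim \mcN(0, \Id_n)$ and $H$ is a ``standard'' Gaussian element of $(\mcS_d, \langle\cdot,\cdot\rangle_F)$ (i.e., $H \deq \sqrt{d/2}\,G$ with $G \sim \GOE(d)$), mutually independent; the $\max(0,\cdot)$ arises from the inner supremum over $u$ with $\|u\|_2 \leq 1$. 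Both $\Phi_d$ and $\Phi_d^{\mathrm{AO}}$ are non-negative, and in the convex-concave form CGMT gives $\bbP[\Phi_d = 0] \geq 2\bbP[\Phi_d^{\mathrm{AO}} = 0] - 1$, so it suffices to exhibit (w.h.p.) a candidate $S^\star \in K$ such that $\|\tau\|S^\star\|_F g - \mathbf{1}\|_2 + \tau \Tr[H S^\star] \leq 0$.

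I would take $S^\star$ diagonal in the eigenbasis of $H$, pairing the largest eigenvalues of $S^\star$ with the most negative eigenvalues of $H$: writing $H = U\,\mathrm{diag}(\sqrt{d/2}\,\mu_i)\,U^\top$, set $S^\star = U\,\mathrm{diag}(\lambda(\mu_i))\,U^\top$ with the clipped linear profile
\begin{equation*}
\lambda(\mu) \;\coloneqq\; \max\bigl(\lambda_-,\; -c\,\mu\bigr), \qquad \lambda_+ = 2c,
\end{equation*}
for a free parameter $c > 0$. Using the convergence of the empirical spectrum of $H/\sqrt{d/2}$ to the semicircle law $\rho_{sc}$ on $[-2,2]$, together with the standard concentration $\|g\|_2^2 \approx n$ and $|g^\top \mathbf{1}| \lesssim \sqrt{n}$, the bracket reduces to leading order in $d$ to $d\bigl[\sqrt{\alpha(2L + 1)} + M\bigr]$, where
\begin{equation*}
M \;\coloneqq\; \int \lambda(\mu)\,\mu\, \rho_{sc}(d\mu), \qquad L \;\coloneqq\; \int \lambda(\mu)^2\, \rho_{sc}(d\mu).
\end{equation*}
Non-positivity is equivalent to $\alpha \leq M^2/(2L + 1)$ (with $M < 0$). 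A direct calculation gives, in the joint limit $\lambda_- \to 0^+$ and $c \to \infty$, $M \to -c/2$ and $L \to c^2/2$, so $M^2/(2L + 1) \to 1/4$: this is precisely where the expected threshold $\alpha = 1/4$, matching the statistical dimension $d^2/4$ of the PSD cone, emerges. For any $\alpha < 1/4$, one selects $\lambda_-(\alpha) > 0$ small enough and $c(\alpha) < \infty$ large enough so that $M^2/(2L+1) > \alpha$ strictly, yielding the required non-positivity with definite slack and concluding the proof via CGMT.

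The main obstacle I anticipate is the quantitative control of the approximations taking $\Phi_d^{\mathrm{AO}}$ to the deterministic variational quantity $M^2/(2L+1)$: one needs uniform-in-$S$ concentration for $\Tr[HS]$ and $\|S\|_F^2$ across the one-parameter family $\{S^\star\}$ (driven by $H$), together with enough slack in the CGMT estimates to absorb the $\smallO_d(1)$ fluctuations. A related subtlety is the boundary nature of the optimal profile: since the critical value $1/4$ is attained only in the joint limit $\lambda_- \to 0^+$, $c \to \infty$, the parameters $\lambda_\pm(\alpha)$ must diverge (respectively vanish) as $\alpha \uparrow 1/4$, and they must be tuned so that the strict inequality $M^2/(2L+1) > \alpha$ translates into $\Phi_d = 0$ with probability $1 - \smallO_d(1)$.
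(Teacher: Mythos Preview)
Your approach is correct and rests on the same core ingredients as the paper's proof: Gordon's min--max inequality applied to $\inf_{S}\sup_{\|u\|\leq 1}u^\top(\mathcal A(S)-\mathbf 1)$, together with an explicit witness whose eigenvectors are aligned with the auxiliary Gaussian matrix and whose eigenvalue profile is a threshold of the semicircle spectrum. The paper organises the argument differently: it works not with the box $\{\lambda_-\Id\preceq S\preceq\lambda_+\Id\}$ but with the \emph{cone} $K_\kappa=\{S\succeq 0:\lambda_{\max}(S)\leq\kappa\,\lambda_{\min}(S)\}$, first proving a general lemma that $V\cap K$ contains a point of bounded Frobenius norm whenever $n\leq(1-\varepsilon)\omega(K)^2$, and then computing $\omega(K_\kappa)\sim d/2$ as $\kappa\to\infty$ via exactly your clipped-linear construction against the semicircle. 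Because the cone route only yields bounded condition number and bounded $\|S\|_F$, the paper needs an additional step to force $\lambda_{\min}$ away from zero: it observes that $H=n^{-1/2}\sum_\mu G_\mu\sim\GOE(d)$ has $\|H\|_{\op}\leq 3$ w.h.p., so any $S\in V$ satisfies $\Tr|S|\geq |\Tr[HS]|/\|H\|_{\op}=\sqrt{n}/3$, giving the trace lower bound. Your direct attack on the box sidesteps this extra step entirely, at the (mild) cost of having to track $\|S^\star\|_F$ explicitly inside the auxiliary problem rather than letting the cone structure factor it out. One small point: with $\lambda_+ = 2c$ exactly, $S^\star\in K$ requires $\mu_i\geq -2$, which only holds w.h.p.; clipping the profile from above as well (or taking $\lambda_+=2c+\eps$) resolves this without affecting the limit.
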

\noindent
Proposition~\ref{prop:regular_sol_gaussian} shows that for $\alpha < 1/4$, there exist with high probability ellipsoids satisfying the ``Gaussian equivalent'' to random ellipsoid fitting, 
and that such solutions might also be assumed to have their axes' lengths bounded above and below as $d \to \infty$.
In the unsatisfiable regime $\alpha > 1/4$, we show on the other hand that with high probability there are no solutions to the Gaussian equivalent problem, even allowing for some error when fitting the random points.
\begin{proposition}[No approximate solution in the unsatisfiable regime]
    \label{prop:no_approx_unsat_gaussian}
    \noindent
    Let $n, d \to \infty$ with $n / d^2 \to \alpha > 1/4$, 
    and let $\{G_\mu\}_{\mu=1}^n \iid \mathrm{GOE}(d)$.
    Let $b \in \bbR$ and denote $\mcC_\mu^{(b)}(S) \coloneqq |\Tr(G_\mu S) - b|$. 
    Define the affine subspace:
    \begin{equation*}
        V_b \coloneqq \{S \in \mcS_d \, : \, \forall \mu \in [n], \, \mcC^{(b)}_\mu(S) = 0 \}.
    \end{equation*}
    \begin{itemize}
        \item[$(i)$] Assume that $b \neq 0$. 
        Then
        there exist $c = c(\alpha,b) > 0$ and $\eta = \eta(\alpha, b) \in (0,1)$ such that 
        \begin{equation*}
            \lim_{d \to \infty} \bbP\{\forall S \succeq 0 \, : \, \# \{\mu \in [n] \, : \, \mcC^{(b)}_\mu(S) > c\} \geq \eta n\} = 1.
        \end{equation*}
        \item[$(ii)$] Assume that $b = 0$. 
        Then
        there exist $c = c(\alpha) > 0$ and $\eta = \eta(\alpha) \in (0,1)$ such that, with probability $1 - \smallO_d(1)$, 
        the following holds for all $\tau \geq 0$:
        \begin{equation*}
           \sup_{\substack{S \succeq 0 \\ \# \{\mu \in [n] \, : \, \mcC^{(0)}_\mu(S) > c \tau\} < \eta n}} \|S\|_F \leq \tau \sqrt{d}.
        \end{equation*}
    \end{itemize}
\end{proposition}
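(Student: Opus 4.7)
\myskip
\textbf{Proof strategy.}
The plan is to combine Gordon's minimax inequality with a union bound over subsets of the constraints, converting an $L^2$ lower bound on $\|\mcA S - b\mathbf{1}\|_2$ into a count of strongly violated constraints. Identify $\mcA : \mcS_d \to \bbR^n$ by $(\mcA S)_\mu = \Tr[G_\mu S]$; in a Frobenius-orthonormal basis of $\mcS_d$, $\mcA$ is represented by a Gaussian map with entrywise standard deviation $\sqrt{2/d}$. The key quantitative input is that the Gaussian width of $C_+ \coloneqq \{S \succeq 0 : \|S\|_F = 1\}$ equals $d/2 + \smallO_d(d)$, in agreement with the statistical dimension $d^2/4$ of the PSD cone, so that $\sqrt{n} - w(C_+) \sim (\sqrt{\alpha} - 1/2)\,d > 0$ for $\alpha > 1/4$.

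\myskip
For part~(ii) ($b = 0$), I would first reduce by homogeneity: setting $\tilde{S} = S/\|S\|_F$ and using $\|S\|_F > \tau\sqrt{d}$ to compare thresholds ($c\tau/\|S\|_F \leq c/\sqrt{d}$), the claim becomes the uniform statement that there exist $c, \eta > 0$ (depending only on $\alpha$) such that, with high probability, $\#\{\mu : |\Tr[G_\mu \tilde{S}]| > c/\sqrt{d}\} \geq \eta n$ for every $\tilde{S} \in C_+$. To prove this, fix $\eta > 0$ small enough that $(1-\eta)\alpha > 1/4$ and apply Gordon's minimax inequality together with Gaussian concentration to each restricted map $\mcA_I$ for $I \subseteq [n]$ with $|I| = \lceil (1-\eta)n\rceil$: for each such $I$, with probability $1 - \exp(-c_1 d^2)$,
\[
    \min_{\tilde{S} \in C_+} \|\mcA_I \tilde{S}\|_2^2 \geq c_0\,d,
\]
where $c_0, c_1 > 0$ depend only on $\alpha$ and $\eta$. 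A union bound over the $\binom{n}{\eta n} \leq \exp(\eta n \log(e/\eta))$ such subsets then yields this bound simultaneously for all $I$ and every $\tilde{S} \in C_+$, provided $\eta$ is small enough that $\alpha \eta \log(e/\eta) < c_1$ --- a condition that can be met at every fixed $\alpha > 1/4$, since $c_1 \to (\sqrt{\alpha} - 1/2)^2/8 > 0$ while $\eta \log(1/\eta) \to 0$ as $\eta \to 0$.

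\myskip
To conclude part~(ii), I argue by contradiction: if some $\tilde{S} \in C_+$ satisfied $\#\{|\Tr[G_\mu \tilde{S}]| > c/\sqrt{d}\} < \eta n$, then the complementary set $I \coloneqq \{\mu : |\Tr[G_\mu \tilde{S}]| \leq c/\sqrt{d}\}$ would have $|I| \geq (1-\eta)n$ and $\|\mcA_I \tilde{S}\|_2^2 \leq |I|\,c^2/d \leq \alpha c^2 d$; choosing $c \coloneqq \sqrt{c_0/(2\alpha)}$ makes this $\leq c_0 d/2 < c_0 d$, contradicting the uniform Gordon bound above. For part~(i) ($b \neq 0$), the same subset-and-Gordon scheme applies to the shifted residual $\mcA_I S - b\mathbf{1}_I$: parameterizing $r = \|S\|_F \geq 0$ and analyzing the one-dimensional Gordon auxiliary shows that $\min_{S \succeq 0}\|\mcA_I S - b\mathbf{1}_I\|_2 = \Omega(d)$ on any subset $I$ of size $\geq (1-\eta)n$ (with the optimizer satisfying $\|S\|_F = \mcO(1)$), after which the same contradiction produces $\eta n$ indices with $|\Tr[G_\mu S] - b| > c$.

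\myskip
The main obstacle I anticipate is the quantitative balance in the union bound: the failure exponent $c_1 d^2$ from Gordon's concentration must dominate the subset-count exponent $\alpha \eta \log(e/\eta)\, d^2$. This requires $c_1 = c_1(\alpha, \eta)$ to remain bounded away from zero as $\eta \to 0$ at fixed $\alpha > 1/4$, which in turn relies on the sharpness of Gordon's concentration over the nonconvex set $C_+$ and on tight asymptotics for the Gaussian width $\EE\|H_+\|_F$ of the PSD cone at the Frobenius unit sphere. Establishing these precise quantitative forms in the symmetric-matrix setting, and carefully tracking the dependence of $c_0, c_1$ on the constants throughout the contradiction argument (and its shifted variant for part~(i)), is where I expect the technical work to be concentrated.
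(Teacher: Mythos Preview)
Your approach is correct and structurally identical to the paper's: both argue via a union bound over subsets $I\subseteq[n]$ of size $(1-\eta)n$, apply Gordon's minimax inequality to the restricted map $\mcA_I$, and close by choosing $\eta$ small enough that the Gordon concentration exponent $\Theta(n)$ beats the subset-count exponent $\eta\log(e/\eta)\,n$. The only real difference is inside the Gordon step. The paper bounds the $\ell_\infty$ residual directly, using the dual constraint $\|\lambda\|_1\le 1$ and an explicit near-optimizer $\lambda^\star_\mu\propto h_\mu\indi\{h_\mu\le A_\eps\}$ (for $b>0$), so the event ``all constraints in $I$ are within $c$'' is controlled in one shot. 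You instead run the standard $\ell_2$ Gordon bound, obtaining (after the one-dimensional minimization over $r=\|S\|_F$) $\min_{S\succeq 0}\|\mcA_I S-b\mathbf 1_I\|_2\gtrsim |b|\sqrt{|I|-\omega(\mcS_d^+)^2}$ for $b\neq 0$ and $\min_{\tilde S\in C_+}\|\mcA_I\tilde S\|_2\gtrsim \sqrt{2/d}\,(\sqrt{|I|}-\omega(\mcS_d^+))$ for $b=0$, and then convert to a count by pigeonhole. Your route is slightly more indirect but needs only the textbook form of Gordon and avoids constructing $\lambda^\star$; the paper's $\ell_\infty$ route is shorter once that construction is available. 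Both rely on exactly the quantitative inputs you flagged: $\omega(\mcS_d^+)=d/2+o(d)$ and the fact that the concentration exponent remains bounded below as $\eta\to 0$ at fixed $\alpha>1/4$.
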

\noindent
Propositions~\ref{prop:regular_sol_gaussian} and \ref{prop:no_approx_unsat_gaussian} are proven in Section~\ref{sec:gaussian_equivalent}.
Our proof follows a standard approach in random geometry problems involving Gaussian distributions, by leveraging Gordon's min-max inequality~\cite{gordon1988milman}
and its sharpness in convex settings \cite{thrampoulidis2015regularized,thrampoulidis2018precise}.
It strengthens for our setting the results obtained for general random convex programs in \cite{chandrasekaran2012convex,amelunxen2014living} (using either Gordon's inequality or tools of integral geometry).

\subsection{The satisfiable regime: proof of Theorem~\ref{thm:main_positive_side}}
\label{subsec:proof_thm_main_positive_side}

\noindent
Propositions~\ref{prop:universality_gs} and \ref{prop:regular_sol_gaussian}
have the following consequence, taking $B \coloneqq \{S \, : \, \lambda_-\Id_d \preceq S \preceq \lambda_+ \Id_d\}$, with 
$(\lambda_-, \lambda_+)$ given by Proposition~\ref{prop:regular_sol_gaussian}.
\begin{corollary}\label{cor:positive_side_weak_phi}
    \noindent
    Let $n, d \to \infty$ with $n / d^2 \to \alpha < 1/4$, and $W_1, \cdots, W_n \iid \Ell(d)$.
    There exist $\lambda_-, \lambda_+ > 0$ depending only on $\alpha$ such that the following holds. 
    If we have $\phi : \bbR \to \bbR_+$ with $\|\phi\|_\infty, \|\phi'\|_\infty < \infty$ 
    and such that $\phi(0) = 0$, then
    \begin{equation*}
        \plim_{d\to \infty} \min_{\Sp(S) \subseteq [\lambda_-, \lambda_+]} \frac{1}{n} \sum_{\mu=1}^n \phi(\Tr[W_\mu S] - 1) = 0.
    \end{equation*}
\end{corollary}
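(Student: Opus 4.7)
The plan is to deduce Corollary~\ref{cor:positive_side_weak_phi} as an essentially direct consequence of Propositions~\ref{prop:universality_gs} and~\ref{prop:regular_sol_gaussian}, after a small amount of bookkeeping to align notation.

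First I fix $(\lambda_-, \lambda_+)$ to be the constants produced by Proposition~\ref{prop:regular_sol_gaussian} (which depend only on $\alpha$), and set $B := \{S \in \mcS_d : \lambda_-\Id_d \preceq S \preceq \lambda_+\Id_d\}$. This set is closed and, since $0 < \lambda_- \leq \lambda_+$, lies inside $B_\op(\lambda_+)$, so the boundedness hypothesis of Proposition~\ref{prop:universality_gs} is met with $C_0 = \lambda_+$. I also replace $\phi$ by the shifted loss $\tilde\phi(x) := \phi(x-1)$, which inherits from $\phi$ the bounds $\|\tilde\phi\|_\infty = \|\phi\|_\infty$ and $\|\tilde\phi'\|_\infty = \|\phi'\|_\infty$, so $\tilde\phi$ satisfies the hypotheses of Proposition~\ref{prop:universality_gs}. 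With these choices the associated ground-state functional
\[
\GS_d(\{X_\mu\}) := \inf_{S \in B} \frac{1}{d^2} \sum_{\mu=1}^n \tilde\phi(\Tr[X_\mu S])
\]
is exactly $d^{-2}$ times the minimum appearing in the statement of the corollary (for $X_\mu = W_\mu$), and analogously for the GOE ensemble.

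The second step is to evaluate $\GS_d(\{G_\mu\})$ for $\{G_\mu\} \iid \GOE(d)$. By Proposition~\ref{prop:regular_sol_gaussian}, with probability $1 - \smallO_d(1)$ there exists $S_\star \in B$ such that $\Tr[G_\mu S_\star] = 1$ for every $\mu$, and at such a point $\tilde\phi(\Tr[G_\mu S_\star]) = \phi(0) = 0$. Since $\tilde\phi \geq 0$, this forces $\GS_d(\{G_\mu\}) = 0$ on this high-probability event, and in particular $\bbP[\GS_d(\{G_\mu\}) \geq \delta] \to 0$ for every $\delta > 0$. Applying the probability-comparison form~\eqref{eq:universality_gs_probabilities} of Proposition~\ref{prop:universality_gs} with $\rho = \delta$ then yields $\bbP[\GS_d(\{W_\mu\}) \geq 2\delta] \to 0$ for every $\delta > 0$, i.e.\ $\GS_d(\{W_\mu\}) \to 0$ in probability. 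Because $B$ is compact and $\tilde\phi$ is continuous the infimum is attained, and since $n/d^2 \to \alpha > 0$ we may multiply by $d^2/n \to 1/\alpha$ to convert normalizations, obtaining $\min_{\Sp(S) \subseteq [\lambda_-, \lambda_+]} \frac{1}{n}\sum_{\mu=1}^n \phi(\Tr[W_\mu S] - 1) \to 0$ in probability, which is the desired conclusion.

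The argument above is pure bookkeeping: essentially all the real technical work is carried by Propositions~\ref{prop:universality_gs} and~\ref{prop:regular_sol_gaussian}, and I do not anticipate any obstacle in this particular proof. The only points worth checking explicitly are that the box $\{\lambda_-\Id_d \preceq S \preceq \lambda_+ \Id_d\}$ is an admissible choice of $B$ in Proposition~\ref{prop:universality_gs} (it is closed and operator-norm bounded, so yes), and that the shift $\tilde\phi(x) = \phi(x-1)$ does not spoil the hypotheses on $\phi$ (it does not, since $\|\cdot\|_\infty$ and $\|(\cdot)'\|_\infty$ are translation-invariant).
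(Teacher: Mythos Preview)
Your proposal is correct and matches the paper's own approach exactly: the paper states that the corollary ``is immediate by combining Propositions~\ref{prop:universality_gs} and~\ref{prop:regular_sol_gaussian}'' with the choice $B \coloneqq \{S : \lambda_-\Id_d \preceq S \preceq \lambda_+ \Id_d\}$, and your write-up spells out precisely this combination (including the shift $\tilde\phi(x)=\phi(x-1)$ and the $d^2/n$ normalization conversion that the paper leaves implicit).
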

\noindent
The proof of Corollary~\ref{cor:positive_side_weak_phi} is immediate by combining Propositions~\ref{prop:universality_gs} and \ref{prop:regular_sol_gaussian}.
We can furthermore relax some of the assumptions on $\phi$ in Corollary~\ref{cor:positive_side_weak_phi}, as we now show. 
\begin{lemma}\label{lemma:positive_side_strong_phi}
    \noindent
    Corollary~\ref{cor:positive_side_weak_phi} holds for $\phi(x) = |x|^r$, for any $1 \leq r < 4/3$.
\end{lemma}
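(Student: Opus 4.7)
The strategy is to approximate $\phi(x) = |x|^r$ by a smooth bounded function to which Corollary~\ref{cor:positive_side_weak_phi} applies, and absorb the truncation error through a uniform second-moment bound on the empirical loss. For a parameter $K > 0$ (eventually large), I would construct $\phi_K : \bbR \to \bbR_+$ even, smooth, with $\phi_K(0) = 0$, $\|\phi_K\|_\infty \leq K^r$ and $\|\phi_K'\|_\infty \lesssim K^{r-1}$, coinciding with $|x|^r$ on $\{K^{-1} \leq |x| \leq K\}$; the smoothing near the origin is needed only when $r = 1$ to deal with the non-differentiability of $|x|$, while a monotone cutoff handles the transition at $|x| = K$. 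The key pointwise estimate is then
\begin{equation*}
|x|^r \leq \phi_K(x) + K^{-r} + K^{r-2} \, x^2 \, \mathbf{1}\{|x| > K\},
\end{equation*}
which uses $|x|^r \leq K^{r-2} x^2$ for $|x| > K$ (crucially requiring $r < 2$).

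Applying Corollary~\ref{cor:positive_side_weak_phi} to the bounded Lipschitz function $\phi_K$ (for $K$ fixed) produces, with probability tending to one, a matrix $S^*_K$ with $\Sp(S^*_K) \subseteq [\lambda_-, \lambda_+]$ realizing $\frac{1}{n}\sum_\mu \phi_K(\Tr[W_\mu S^*_K] - 1) = \smallO_d(1)$. Evaluating the pointwise estimate above at $x_\mu = \Tr[W_\mu S^*_K] - 1$ and averaging over $\mu$ yields
\begin{equation*}
\frac{1}{n}\sum_{\mu=1}^n |\Tr[W_\mu S^*_K] - 1|^r \leq \smallO_d(1) + K^{-r} + K^{r-2} \cdot \frac{1}{n}\sum_{\mu=1}^n (\Tr[W_\mu S^*_K] - 1)^2.
\end{equation*}
The remaining task is therefore to establish a uniform second-moment bound
\begin{equation*}
\sup_{S \, : \, \Sp(S) \subseteq [\lambda_-,\lambda_+]} \frac{1}{n}\sum_{\mu=1}^n (\Tr[W_\mu S] - 1)^2 \leq C(\alpha, \lambda_+)
\end{equation*}
with probability $1 - \smallO_d(1)$. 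Given such a bound, for any $\eps > 0$ one picks $K = K(\eps)$ large enough that $K^{-r} + C K^{r-2} < \eps/2$ (feasible because $r < 2$), and combining with Corollary~\ref{cor:positive_side_weak_phi} concludes the proof.

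The hard part is the uniform second-moment bound. For a fixed $S$ with bounded operator norm, $\Tr[W_\mu S] = (x_\mu^\T S x_\mu - \Tr S)/\sqrt{d}$ is centered with $\mcO(\lambda_+^2)$ variance by Hanson-Wright, but it is only sub-exponential (not sub-Gaussian), and its Frobenius-norm Lipschitz constant is too large relative to the metric entropy of the Frobenius ball of radius $\lambda_+ \sqrt{d}$ to yield uniform control by a naive net argument. One must instead rely on finer empirical process bounds over the operator-norm ball---precisely those underlying Lemma~\ref{lemma:emp_proc_ellipse} and, in the universality proof, Lemma~\ref{lemma:energy_change_small_ball}. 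The quantitative strength of these bounds for $\Ell(d)$ matrices is exactly what produces the threshold $r < 4/3$ rather than the natural $r < 2$; sharpening them would immediately extend the conclusion up to $r \in [1, 2)$, consistent with our discussion in Section~\ref{subsec:generalizations}.
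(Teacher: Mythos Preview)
Your overall strategy---truncate $|x|^r$ to a bounded smooth function, apply Corollary~\ref{cor:positive_side_weak_phi}, and control the tail by a higher-moment empirical process bound---is exactly the paper's approach. However, there is a genuine gap in the version you wrote down.

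Your tail estimate $|x|^r \leq \phi_K(x) + K^{-r} + K^{r-2} x^2 \mathbf{1}\{|x|>K\}$ reduces the problem to the uniform bound
\[
\sup_{\Sp(S)\subseteq[\lambda_-,\lambda_+]} \frac{1}{n}\sum_{\mu=1}^n (\Tr[W_\mu S]-1)^2 \leq C,
\]
but this is precisely Lemma~\ref{lemma:emp_proc_ellipse} at exponent $2$, which the paper does \emph{not} prove: the lemma is established only for exponents in $[1,4/3]$, and the paper explicitly notes (Remark~I after Lemma~\ref{lemma:emp_proc_ellipse}) that extending it to $(4/3,2]$ would require further work. So your approach, as written, does not go through for \emph{any} value of $r$---the missing ingredient is the same regardless of $r$.

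The fix is simple and is what the paper does: replace the exponent $2$ by some $s \in (r, 4/3]$. The pointwise inequality becomes $|x|^r \mathbf{1}\{|x|\geq A\} \leq A^{r-s}|x|^{s}$ (the paper writes this with $s=\gamma r$, $\gamma>1$), and now Lemma~\ref{lemma:emp_proc_ellipse} applies directly to bound $\frac{1}{n}\sum_\mu |\Tr[W_\mu S]-1|^{s}$ uniformly. The requirement $r<s\leq 4/3$ is exactly what forces $r<4/3$. Your closing paragraph correctly senses that the threshold comes from Lemma~\ref{lemma:emp_proc_ellipse}, but misdiagnoses the mechanism: it is not that the lemma's range restricts which $r$ you can treat \emph{via} a second-moment bound; rather, the lemma's range dictates which auxiliary exponent $s$ you may use, and you must take $s\leq 4/3$, not $s=2$.
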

\noindent
Note that the limitation $r < 4/3$ is a consequence of a limitation on the control 
of an empirical process that is done in Lemma~\ref{lemma:emp_proc_ellipse} (see also the discussion in Section~\ref{subsec:lipschitz_energy}).

\begin{proof}[Proof of Lemma~\ref{lemma:positive_side_strong_phi}]
    Let us first assume that $r >1$, so that $\phi(x) = |x|^r$ is continuously differentiable in $x = 0$.
    Let $\eps > 0$ and $A > 0$, and let us denote 
    $u_A : \bbR_+ \to [0,1]$ a $\mcC^\infty$ function such that $u_A(x) = 1$ if $x \leq A$ and $u_A(x) = 0$ if $x \geq A+1$. 
    We denote $\phi_A(z) \coloneqq |z|^r u_A(|z|)$. Then $\phi_A$ is bounded, with bounded derivative.
    Moreover, we have for any $x \in \bbR$:
    \begin{equation*}
        |x|^r = \phi_A(x) + |x|^r(1-u_A(|x|)) \leq \phi_A(x) + |x|^r \indi\{|x| \geq A\}.
    \end{equation*}
    By Corollary~\ref{cor:positive_side_weak_phi}, under an event of probability $1 - \smallO_d(1)$ we can 
    fix $S$ with $\Sp(S) \in [\lambda_-, \lambda_+]$ and such that $\sum_{\mu=1}^n \phi_A(\Tr[W_\mu S] - 1) \leq n \eps/2$.
    We pick $\gamma > 1$ such that $\gamma r \leq 4/3$, and condition on the $1-\smallO_d(1)$ probability event, thanks to Lemma~\ref{lemma:emp_proc_ellipse}: 
    \begin{equation}\label{eq:bound_emp_process}
        \max_{\|R\|_\op = 1} \sum_{\mu=1}^n |\Tr (W_\mu R) |^{\gamma r} \leq C n
    \end{equation}
    We have, with probability $1 - \smallO_d(1)$:
    \begin{align*}
        \frac{1}{n} \sum_{\mu=1}^n |\Tr[W_\mu S] - 1|^r &\leq \frac{\eps}{2} + \frac{1}{n} \sum_{\mu=1}^n |\Tr[W_\mu S] - 1|^r \indi\{|\Tr[W_\mu S] - 1| \geq A\}, \\
        &\aleq \frac{\eps}{2}+ \frac{A^{r(1-\gamma)}}{n} \sum_{\mu=1}^n |\Tr[W_\mu S] - 1|^{\gamma r}, \\ 
        &\bleq \frac{\eps}{2}+ \frac{A^{r(1-\gamma) }2^{\gamma r - 1}}{n} [n + C \lambda_+^{\gamma r} n], \\
        &\leq \frac{\eps}{2}+ C(\gamma, r, \alpha) A^{r(1-\gamma)}.
    \end{align*}
    We used in $(\rm a)$ the following inequality, for a positive random variable $X$, $t \geq 0$, and any $\gamma > 1$:
    \begin{equation*}
        \EE[X \indi\{X \geq t\}] = t \EE\left[\frac{X}{t} \indi\left\{\frac{X}{t} \geq 1\right\}\right] \leq t^{1-\gamma} \EE[X^\gamma].
    \end{equation*}
    In $(\rm b)$ we used eq.~\eqref{eq:bound_emp_process} and $|a+b|^{r} \leq 2^{r-1}(|a|^r + |b|^r)$.
    We pick 
    \begin{equation*}
    A = \left(\frac{\eps}{2 C(\gamma, r, \alpha)}\right)^{1/[r(1-\gamma)]}.
    \end{equation*}
    We have then, with probability $1 - \smallO_d(1)$: 
    \begin{equation*}
        \min_{\Sp(S) \subseteq [\lambda_-, \lambda_+]}\frac{1}{n} \sum_{\mu=1}^n |\Tr[W_\mu S] - 1|^r \leq \eps,
    \end{equation*}
    which ends the proof.

    \myskip
    We now tackle the case $r = 1$. 
    For $\eta > 0$, we let $v_\eta : \bbR_+ \to [0,1]$ a $\mcC^\infty$ function such that 
    $v_\eta(x) = 1$ for $x \geq \eta$ and $v_\eta(x) = 0$ for $x \leq \eta/2$.
    A transposition of the argument above shows that
    Corollary~\ref{cor:positive_side_weak_phi} 
    applies to $\phi_\eta(x) \coloneqq |x| v_\eta(|x|)$, which is continuously differentiable everywhere.
    Notice that for any $x \in \bbR$:
    \begin{equation*}
        |x| = \phi_\eta(x) + |x|(1-v_\eta(|x|)) \leq \phi_\eta(x) + |x| \indi\{|x| \leq \eta\} 
        \leq \phi_\eta(x) + \eta. 
    \end{equation*}
    So, for any $S \in \mcS_d$:
    \begin{equation}\label{eq:bound_abs_phi_eta}
       \frac{1}{n} \sum_{\mu=1}^n |\Tr[W_\mu S] - 1| \leq 
       \frac{1}{n} \sum_{\mu=1}^n \phi_\eta(\Tr[W_\mu S] - 1) + \eta.
    \end{equation}
    Fixing now any $\eps > 0$, and letting $\eta \coloneqq \eps / 2$, we get from Corollary~\ref{cor:positive_side_weak_phi} applied to $\phi_\eta$ that with probability $1 - \smallO_d(1)$:
    \begin{equation*}
        \min_{\Sp(S) \subseteq [\lambda_-, \lambda_+]} \frac{1}{n} \sum_{\mu=1}^n \phi_\eta(\Tr[W_\mu S] - 1) \leq \frac{\eps}{2}.
    \end{equation*}
    Combining this result with eq.~\eqref{eq:bound_abs_phi_eta}, we get that with probability $1 - \smallO_d(1)$:
    \begin{equation*}
        \min_{\Sp(S) \subseteq [\lambda_-, \lambda_+]}\frac{1}{n} \sum_{\mu=1}^n |\Tr[W_\mu S] - 1| \leq \eps,
    \end{equation*}
    which ends the proof.
\end{proof}

\myskip 
\textbf{Proof of Theorem~\ref{thm:main_positive_side} --}
Notice that Lemma~\ref{lemma:positive_side_strong_phi} precisely shows that, for $\phi(x) = |x|^r$ and $\eps > 0$, the set $\Gamma_1$ of eq.~\eqref{eq:def_Gamma_b} is non-empty with high probability.
We now use the following remark (recall the definition of $\Gamma$ in eq.~\eqref{eq:def_Gamma}):
\begin{lemma}\label{lemma:Gamma_1_incl_Gamma}
    \noindent
    For any $(x_\mu)_{\mu=1}^n$ and $\lambda_-, \lambda_+, \eps > 0, r \geq 1$, if $S \in \Gamma_1(|\cdot|^r, \lambda_-, \lambda_+, \eps)$, then
    $\hS \in \Gamma(|\cdot|^r, \lambda_-', \lambda_+', \eps')$, with 
    \begin{equation*}
            \hS = \frac{dS}{\sqrt{d} + \Tr[S]}, \quad
            \lambda_-' = \frac{\lambda_-}{\lambda_+ + d^{-1/2}}, \quad
            \lambda_+' = \frac{\lambda_+}{\lambda_- +  d^{-1/2}}, \quad
            \eps' = \frac{\eps}{\left(\lambda_- +  d^{-1/2}\right)^r}.
    \end{equation*}
\end{lemma}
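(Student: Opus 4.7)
This is a purely algebraic statement: $\hS$ is simply a scalar multiple of $S$, and the rescaling is engineered so that the ``$\Gamma$-form'' of the fitting error (centered around $\sqrt{d}(x_\mu^\T \hS x_\mu / d - 1)$) becomes a scalar multiple of the ``$\Gamma_1$-form'' $\Tr[W_\mu S] - 1$. I would write $\hS = c S$ with $c \coloneqq d / (\sqrt{d} + \Tr[S])$ and verify the two conclusions (spectral bounds and error bound) by direct substitution, leveraging the identity of eq.~\eqref{eq:correspondance_X_W}.

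\textbf{Step 1: Reformulating the error.} For any scalar $c > 0$, applying eq.~\eqref{eq:correspondance_X_W} to $\hS = cS$ gives
\begin{equation*}
    \sqrt{d}\!\left[\frac{x_\mu^\T \hS x_\mu}{d} - 1\right] = c\,\Tr[W_\mu S] - \frac{d - c\,\Tr[S]}{\sqrt{d}} = c\,(\Tr[W_\mu S] - 1) + c\!\left(1 + \frac{\Tr[S]}{\sqrt{d}}\right) - \sqrt{d}.
\end{equation*}
Choosing $c = d/(\sqrt{d} + \Tr[S])$ kills the $\mu$-independent term, yielding
\begin{equation*}
    \sqrt{d}\!\left[\frac{x_\mu^\T \hS x_\mu}{d} - 1\right] = c\,(\Tr[W_\mu S] - 1),
\end{equation*}
so that the $\Gamma$-error of $\hS$ equals $c^r$ times the $\Gamma_1$-error of $S$.

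\textbf{Step 2: Bounding $c$ via the spectral bounds on $S$.} From $\Sp(S) \subseteq [\lambda_-, \lambda_+]$ we get $d\lambda_- \leq \Tr[S] \leq d\lambda_+$, hence
\begin{equation*}
    \frac{1}{\lambda_+ + d^{-1/2}} \leq c \leq \frac{1}{\lambda_- + d^{-1/2}}.
\end{equation*}
The upper bound on $c$ gives $\frac{1}{n}\sum_\mu |\sqrt{d}(x_\mu^\T \hS x_\mu/d - 1)|^r \leq c^r \eps \leq \eps/(\lambda_- + d^{-1/2})^r = \eps'$. Since $\hS = cS$, its spectrum satisfies $\Sp(\hS) \subseteq [c\lambda_-, c\lambda_+]$, and the same two-sided bound on $c$ yields $\Sp(\hS) \subseteq [\lambda_-', \lambda_+']$ as claimed.

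\textbf{Obstacles.} There is essentially nothing subtle here: the whole content of the lemma is a good choice of scalar $c$ that absorbs the constant shift $(d - \Tr[\hS])/\sqrt{d}$ appearing in eq.~\eqref{eq:correspondance_X_W}, and the rest is bookkeeping on constants. The only thing to keep in mind is that both bounds on $c$ are needed — the upper bound controls the error $\eps'$ and the upper endpoint $\lambda_+'$, while the lower bound controls the lower endpoint $\lambda_-'$ of the spectrum of $\hS$.
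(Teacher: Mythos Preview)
Your proof is correct and follows essentially the same approach as the paper: both identify $\hS = cS$ with $c = d/(\sqrt{d}+\Tr[S])$ so that, via eq.~\eqref{eq:correspondance_X_W}, the $\Gamma$-error of $\hS$ becomes exactly $c^r$ times the $\Gamma_1$-error of $S$, and then bound $c$ using $d\lambda_- \le \Tr[S] \le d\lambda_+$. The paper's own proof is terser (it only writes out the error bound and leaves the spectral inclusion implicit), but the content is identical.
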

\noindent
Since $\lambda_+' \leq \lambda_+ / \lambda_-$, $\eps' \leq \eps/(\lambda_-)^r$, and $\lambda_-' \geq \lambda_- / (2 \lambda_+)$ for $d$ large enough, combining 
Lemmas~\ref{lemma:positive_side_strong_phi} and \ref{lemma:Gamma_1_incl_Gamma} imply that 
for any $r \in [1, 4/3)$ and $\eps > 0$:
\begin{equation*}
    \bbP[\Gamma(|\cdot|^r, a, b, \eps) \neq \emptyset] \to_{d \to \infty} 1,
\end{equation*}
for some $0 < a \leq b$ depending only on $\alpha$,
which ends the proof of Theorem~\ref{thm:main_positive_side}. $\qed$

\myskip
\begin{proof}[Proof of Lemma~\ref{lemma:Gamma_1_incl_Gamma}]
    Let $S \in \Gamma_1(|\cdot|^r, \lambda_-, \lambda_+, \eps)$. Defining $\hS =  dS / (\sqrt{d} + \Tr[S])$, 
    we have by eq.~\eqref{eq:correspondance_X_W}: 
    \begin{align*}
        \left|\sqrt{d} \left[\frac{x_\mu^\T \hS x_\mu}{d} - 1\right]\right|^r &= \left(\frac{d}{\sqrt{d} + \Tr[S]}\right)^r \left|\Tr[S W_\mu] - 1\right|^r, \\
        &\leq \frac{1}{\left(\lambda_- + d^{-1/2}\right)^r} \left|\Tr[S W_\mu] - 1\right|^r.
    \end{align*}
\end{proof}

\subsection{The unsatisfiable regime: proof of Theorem~\ref{thm:main_negative_side}}

\noindent
Propositions~\ref{prop:universality_gs} and \ref{prop:no_approx_unsat_gaussian} have the following corollary.
\begin{corollary}\label{cor:negative_side_weak_phi}
    \noindent
    Let $n, d \to \infty$ with $n / d^2 \to \alpha > 1/4$, and $W_1, \cdots, W_n \iid \Ell(d)$.
    Let $\phi: \bbR_+ \to \bbR_+$ be a non-decreasing differentiable function, with $\phi(0) = 0$, and such that $\phi$ has a unique global minimum in $0$.
    Then: 
    \begin{itemize}
        \item[$(i)$] Let $b \in \{-1, 1\}$. 
        There exists $\eps = \eps(\alpha, \phi) > 0$ such that for all $M > 0$:
        \begin{equation*}
            \lim_{d\to \infty} \bbP\left[\min_{\Sp(S) \subseteq [0, M]} \frac{1}{n} \sum_{\mu=1}^n \phi(|\Tr[W_\mu S] - b|) \geq \eps\right] = 1.
        \end{equation*}
        \item[$(ii)$] Let $b = 0$. 
        For all $\tau > 0$, there exists $\eps = \eps(\tau, \alpha, \phi) > 0$ such that 
        for all $M > 0$:
        \begin{equation*}
            \lim_{d\to \infty} \bbP\left[\min_{\substack{\Sp(S) \subseteq [0, M] \\ 
            \|S\|_F \geq \tau \sqrt{d}}} \frac{1}{n} \sum_{\mu=1}^n \phi(|\Tr[W_\mu S]|) \geq \eps\right] = 1.
        \end{equation*}
    \end{itemize}
    
\end{corollary}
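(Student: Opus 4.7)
The plan is to combine Proposition~\ref{prop:no_approx_unsat_gaussian}, which rules out exact solutions for the GOE ensemble, with Proposition~\ref{prop:universality_gs}, which transfers the minimal fitting error (``ground state energy'') from GOE to $\Ell(d)$. Since Proposition~\ref{prop:universality_gs} applies only to losses that are bounded and smooth with bounded derivative, the technical step is to replace $\phi$ by a smoothed and truncated proxy $\bar\phi \leq \phi(|\cdot|)$ that remains uniformly bounded \emph{below} on the set where Proposition~\ref{prop:no_approx_unsat_gaussian} guarantees a violated constraint. In both parts the admissible set $B \subseteq \mcS_d$ satisfies $B \subseteq B_\op(M)$ and is closed, which meets the boundedness hypothesis of Proposition~\ref{prop:universality_gs}; the prefactors $1/n$ and $1/d^2$ differ only by the convergent ratio $n/d^2 \to \alpha > 1/4$.

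\textbf{Part (i).} Fix $b \in \{-1, 1\}$ and let $c = c(\alpha, b) > 0$, $\eta = \eta(\alpha, b) \in (0,1)$ be given by Proposition~\ref{prop:no_approx_unsat_gaussian}(i); set $\kappa \coloneqq \phi(c)$, which is strictly positive since $\phi$ has unique global minimum at $0$. I would construct a smooth even function $\bar\phi : \bbR \to [0, \kappa]$, with $\|\bar\phi\|_\infty, \|\bar\phi'\|_\infty < \infty$, satisfying $\bar\phi(0) = 0$, $\bar\phi(x) \leq \phi(|x|)$ for every $x$, and $\bar\phi(x) \geq \kappa/2$ whenever $|x| \geq c$; such a function is obtained, e.g., by mollifying $x \mapsto \min(\phi(|x|), \kappa/2)$ and multiplying by a smooth cutoff vanishing near $0$. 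Under the high-probability event of Proposition~\ref{prop:no_approx_unsat_gaussian}(i), for every $S \in B \coloneqq \{S \in \mcS_d : \Sp(S) \subseteq [0, M]\}$ at least $\eta n$ of the summands $\bar\phi(\Tr[G_\mu S] - b)$ are $\geq \kappa/2$, so
\[
    \inf_{S \in B} \frac{1}{d^2} \sum_{\mu=1}^n \bar\phi(\Tr[G_\mu S] - b) \ \geq \ \frac{n}{d^2} \cdot \frac{\eta \kappa}{2} \ \geq \ \frac{\alpha \eta \kappa}{4}
\]
for $d$ large enough. Applying the second inequality of eq.~\eqref{eq:universality_gs_probabilities} to the loss $z \mapsto \bar\phi(z - b)$ (still bounded and smooth with bounded derivative) with $\rho \coloneqq \alpha\eta\kappa/4$ and $\delta \coloneqq \rho/2$ transfers the bound to $\Ell(d)$: w.h.p., the corresponding infimum over $\{W_\mu\}$ is $\geq \alpha\eta\kappa/8$. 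Finally, using $\bar\phi(\,\cdot\,-b) \leq \phi(|\,\cdot\,-b|)$ and $d^2/n \to 1/\alpha$ yields the claim with $\eps \coloneqq \eta \phi(c)/16$ (or the minimum over $b \in \{-1,1\}$ if one wants $\eps$ uniform in $b$).

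\textbf{Part (ii) and main obstacle.} The argument for $b = 0$ is structurally identical. Using the contrapositive of Proposition~\ref{prop:no_approx_unsat_gaussian}(ii) with parameter $\tau$, w.h.p.\ every $S \succeq 0$ with $\|S\|_F \geq \tau \sqrt{d}$ satisfies $\#\{\mu : |\Tr(G_\mu S)| > c\tau\} \geq \eta n$, with $c, \eta$ depending only on $\alpha$. Running the same construction with threshold $c\tau$ in place of $c$ (so that $\bar\phi \geq \phi(c\tau)/2$ on $\{|x| \geq c\tau\}$) and with the admissible set $B \coloneqq \{\Sp(S) \subseteq [0, M],\ \|S\|_F \geq \tau \sqrt{d}\} \subseteq B_\op(M)$ yields the conclusion with $\eps \coloneqq \eta \phi(c\tau)/16$, which depends on $\alpha, \phi, \tau$ as required. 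The only non-routine ingredient is the construction of $\bar\phi$, which is straightforward given the assumed regularity of $\phi$; the key conceptual point is that universality of the ground state energy converts the \emph{exact} unsatisfiability of the Gaussian equivalent into an $\eps$-\emph{approximate} unsatisfiability for the $\Ell(d)$ problem, at the sole cost of passing from $\kappa$ to a harmless fraction of it.
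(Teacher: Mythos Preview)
Your proposal is correct and follows essentially the same approach as the paper: both arguments lower bound $\phi(|\cdot|)$ by a bounded smooth function (the paper phrases this as ``assume without loss of generality that $\phi$ is bounded with bounded derivative and $\phi'(0)=0$''), extract a uniform GOE energy lower bound from the fraction-of-violated-constraints statement in Proposition~\ref{prop:no_approx_unsat_gaussian}, and transfer it to $\Ell(d)$ via Proposition~\ref{prop:universality_gs} on the closed set $B\subseteq B_\op(M)$. The only cosmetic difference is that the paper, for part~$(ii)$, applies Proposition~\ref{prop:no_approx_unsat_gaussian}$(ii)$ with parameter $\tau/2$ to turn the weak inequality $\|S\|_F\geq\tau\sqrt d$ into a strict one before taking the contrapositive---a triviality your sketch would also need.
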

\begin{proof}[Proof of Corollary~\ref{cor:negative_side_weak_phi}]
    Note that we can assume that $\phi$ is bounded with bounded derivative and $\phi'(0) = 0$: if not it is always possible to lower bound $\phi$ by such a function. 
    $x \mapsto \phi(|x|)$ is then a bounded function on $\bbR$ with bounded derivative.
    We start with $(i)$. By Proposition~\ref{prop:no_approx_unsat_gaussian}, there exist $c_\alpha, \eta_\alpha > 0$ such that 
        \begin{equation*}
            \lim_{d \to \infty} \bbP\{\forall S \succeq 0 \, : \, \# \{\mu \in [n] \, : \, |\Tr(G_\mu S) - b| \leq c_\alpha\} \leq (1-\eta_\alpha) n\} = 1.
        \end{equation*}
        Conditioning on this event, we have 
        \begin{equation*}
            \inf_{S \succeq 0} \sum_{\mu=1}^n \phi(|\Tr[G_\mu S] - b|) \geq n \eta_\alpha \phi(c_\alpha).
        \end{equation*}
        Using Proposition~\ref{prop:universality_gs} with $B = \{S \, : \, \Sp(S) \subseteq [0, M]\}$ we reach that, for all $M > 0$, with probability $1 - \smallO_d(1)$:
        \begin{equation*}
            \inf_{\Sp(S) \subseteq [0, M]} \frac{1}{n}\sum_{\mu=1}^n \phi(|\Tr[W_\mu S] - b|) \geq \frac{1}{2}\eta_\alpha \phi(c_\alpha).
        \end{equation*}
        We now turn to $(ii)$. Again by Proposition~\ref{prop:no_approx_unsat_gaussian}, we fix $c_\alpha, \eta_\alpha > 0$ 
        such that for all $\tau \geq 0$: 
        \begin{equation*}
           \lim_{d \to \infty} \bbP\left\{\sup_{\substack{S \succeq 0 \\ \# \{\mu \in [n] \, : \, |\Tr(G_\mu S)| > c_\alpha \tau\} < \eta_\alpha n}} \|S\|_F \leq \frac{\tau}{2} \sqrt{d}\right\} = 1.
        \end{equation*}
        Stated differently:
        \begin{equation*}
            \lim_{d \to \infty} \bbP\{\forall S  \succeq 0 : \, \|S\|_F \leq \frac{\tau}{2} \sqrt{d} \, \textrm{ or } \, \# \{\mu \in [n] \, : \, |\Tr(G_\mu S)| > c_\alpha \tau\} \geq \eta_\alpha n\} = 1.
        \end{equation*}
        Conditioning on this event and since $\phi$ is non-decreasing on $\bbR_+$:
        \begin{equation*}
            \inf_{\substack{S \succeq 0 \\ \|S\|_F \geq \tau \sqrt{d}}} \sum_{\mu=1}^n \phi(|\Tr[G_\mu S]|) \geq n \eta_\alpha \phi(c_\alpha \tau).
        \end{equation*}
        Using Proposition~\ref{prop:universality_gs} with $B = \{S \, : \, 0 \preceq S \preceq M \Id_d \, \textrm{ and } \|S\|_F \geq \tau\sqrt{d}\}$
        we reach that, for all $\tau, M$, with probability $1 - \smallO_d(1)$:
        \begin{equation*}
            \inf_{\substack{\Sp(S) \subseteq [0, M] \\ 
            \|S\|_F \geq \tau \sqrt{d}}} \frac{1}{n}\sum_{\mu=1}^n \phi(|\Tr[W_\mu S]|) \geq \frac{1}{2}\eta_\alpha \phi(c_\alpha \tau),
        \end{equation*}
        which ends the proof.
\end{proof}

\myskip 
We now turn to the proof of Theorem~\ref{thm:main_negative_side}. 

\begin{proof}[Proof of Theorem~\ref{thm:main_negative_side} --]
    Let $M > 0$. 
    As in the proof of Corollary~\ref{cor:negative_side_weak_phi},
    we can assume without loss of generality that $\phi$ has bounded derivative: if it does not, it is always possible to lower bound $\phi$ by such a function.

    \myskip 
    Let $\delta \in (0,1)$, and $S$ with $\Sp(S) \subseteq [0, M]$ and $|\Tr[S] - d| \geq \delta \sqrt{d}$.
    Notice that, defining $S' \coloneqq \sqrt{d}S / |d - \Tr[S]| \succeq 0$, we have 
    with $b \coloneqq \sign(d - \Tr[S]) \in \{\pm 1\}$:
    \begin{equation*}
        \Tr[S' W_\mu] - b = \frac{x_\mu^\T S x_\mu - d}{|\Tr S - d|},
    \end{equation*}
    and so since $\phi$ is non-decreasing:
    \begin{equation*}
        \phi\left(\sqrt{d} \left|\frac{x_\mu^\T S x_\mu}{d} - 1\right|\right) \geq \phi(\delta |\Tr (W_\mu S') - b|).
    \end{equation*}
    Moreover, $\Sp(S') \subseteq [0, M / \delta]$.
    Since this argument is valid for any $S$ with $\Sp(S) \subseteq [0, M]$ we get:
    \begin{equation*}
        \min_{\substack{\Sp(S) \subseteq [0, M]\\ |\Tr[S] - d| \geq \delta \sqrt{d}}} \frac{1}{n} \sum_{\mu=1}^n \phi\left(\sqrt{d} \left|\frac{x_\mu^\T S x_\mu}{d} - 1\right|\right) 
        \geq
        \min_{b \in \{\pm1\}}\min_{\Sp(S) \subseteq [0, M / \delta]} \frac{1}{n} \sum_{\mu=1}^n \phi(\delta |\Tr[W_\mu S] - b|).
    \end{equation*}
    Using Corollary~\ref{cor:negative_side_weak_phi} applied to $x \mapsto \phi(\delta x)$ there exists therefore $c = c(\alpha, \delta, \phi) > 0$ such that with probability $1 - \smallO_d(1)$:
    \begin{equation}\label{eq:lb_negative_side_trace_far}
        \min_{\substack{\Sp(S) \subseteq [0, M]\\ |\Tr[S] - d| \geq \delta \sqrt{d}}} \frac{1}{n} \sum_{\mu=1}^n \phi\left(\sqrt{d} \left|\frac{x_\mu^\T S x_\mu}{d} - 1\right|\right) 
        \geq c(\alpha, \delta, \phi).
    \end{equation}
    Let now $S \in \mcS_d$ with $\Sp(S) \subseteq [0, M]$ and $|\Tr[S] - d| \leq \delta \sqrt{d}$.
    Then:
    \begin{equation*}
        \Tr[W_\mu S] = \sqrt{d} \left(\frac{x_\mu^\T S x_\mu}{d} - 1\right) + \underbrace{\frac{d - \Tr[S]}{\sqrt{d}}}_{|\cdot| \leq \delta}, 
    \end{equation*}
    so that 
    \begin{equation}\label{eq:lb_negative_side_trace_close_1}
        \min_{\substack{\Sp(S) \subseteq [0, M]\\ |\Tr[S] - d| \leq \delta \sqrt{d}}} \frac{1}{n} \sum_{\mu=1}^n \phi\left(\sqrt{d} \left|\frac{x_\mu^\T S x_\mu}{d} - 1\right|\right) \geq 
        \min_{\substack{\Sp(S) \subseteq [0, M]\\ |\Tr[S] - d| \leq \delta \sqrt{d}}} \frac{1}{n} \sum_{\mu=1}^n \phi(|\Tr[W_\mu S]|) - \|\phi'\|_\infty \delta.
    \end{equation}
    Notice that since $\delta < 1$, for large enough $d$ we have $|\Tr[S] - d|\leq \delta \sqrt{d} \Rightarrow \Tr[S] \geq d/2$.
    If moreover $S \succeq 0$, by Cauchy-Schwarz we have $\|S\|_F \geq \Tr[S]/\sqrt{d} \geq \sqrt{d}/2$.
    This implies:
    \begin{equation}\label{eq:lb_negative_side_trace_close_2}
    \min_{\substack{\Sp(S) \subseteq [0, M]\\ |\Tr[S] - d| \leq \delta \sqrt{d}}} \frac{1}{n} \sum_{\mu=1}^n \phi(|\Tr[W_\mu S]|)
    \geq 
    \min_{\substack{\Sp(S) \subseteq [0, M]\\ \|S\|_F \geq \sqrt{d}/2}} \frac{1}{n} \sum_{\mu=1}^n \phi(|\Tr[W_\mu S]|).
    \end{equation}
    Using Corollary~\ref{cor:negative_side_weak_phi} we can obtain $\eps = \eps(\alpha, \phi) > 0$ such that,
    with probability $1 - \smallO_d(1)$, we have:
    \begin{equation}\label{eq:lb_negative_side_trace_close_3}
        \min_{\substack{\Sp(S) \subseteq [0, M]\\ \|S\|_F \geq \sqrt{d}/2}} \frac{1}{n} \sum_{\mu=1}^n \phi(|\Tr[W_\mu S]|)
        \geq \eps.
    \end{equation}
    Combining eq.~\eqref{eq:lb_negative_side_trace_far} with all three equations~\eqref{eq:lb_negative_side_trace_close_1},\eqref{eq:lb_negative_side_trace_close_2},\eqref{eq:lb_negative_side_trace_close_3}, 
     we get that for any $\delta > 0$, with probability $1 - \smallO_d(1)$:
    \begin{equation*}
        \min_{\Sp(S) \subseteq [0, M]} \frac{1}{n} \sum_{\mu=1}^n \phi\left(\sqrt{d} \left[\frac{x_\mu^\T S x_\mu}{d} - 1\right]\right) \geq
        \min[c(\alpha, \delta, \phi), \eps(\alpha, \phi) - \delta \|\phi'\|_\infty].
    \end{equation*}
    Taking $\delta \coloneqq \min\left(1,\eps(\alpha, \phi)/(2 \|\phi'\|_\infty)\right) > 0$ ends the proof.
\end{proof}

\section{The Gaussian equivalent problem}\label{sec:gaussian_equivalent}
\subsection{The satisfiable regime: proof of Proposition~\ref{prop:regular_sol_gaussian}}\label{subsec:gaussian_sat}

\subsubsection{Gordon's min-max theorem}

    We will use the Gaussian min-max theorem of Gordon \cite{gordon1988milman}, as stated in \cite{thrampoulidis2015regularized,thrampoulidis2018precise}:
    \begin{proposition}[Gaussian min-max theorem \cite{gordon1988milman,thrampoulidis2015regularized,thrampoulidis2018precise}]\label{prop:gaussian_minmax}
    \noindent
    Let $n, p \geq 1$,
    $W \in \bbR^{n \times p}$ an i.i.d.\ standard normal matrix, and $g \in \bbR^n,h \in \bbR^p$ two independent vectors 
    with i.i.d.\ $\mcN(0,1)$ coordinates.
    Let $\mcS_v, \mcS_u$ be two compact subsets respectively of $\bbR^p$ and $\bbR^n$, and let 
    $\psi : \mcS_v \times \mcS_u \to \bbR$ a continuous function. 
    We define the two optimization problems: 
    \begin{equation*}
        \begin{dcases}
            C(W)  &\coloneqq \min_{v \in \mcS_v} \max_{u \in \mcS_{u}} \left\{u^\intercal W v+ \psi(v, u)\right\}, \\ 
            \mcC(g, h) &\coloneqq \min_{v \in \mcS_v} \max_{u \in \mcS_{u}} \left\{\|u\|_2 h^\intercal v + \|v\|_2 g^\intercal u+ \psi(v, u)\right\}.
        \end{dcases}
    \end{equation*}
    Then:
    \begin{itemize}
        \item[$(i)$] For all $t \in \bbR$, one has 
    \begin{equation*}
        \bbP[C(W) < t] \leq 2\bbP[\mcC(g,h) \leq t].
    \end{equation*}
    \item[$(ii)$] Assume that $\mcS_v, \mcS_u$ are convex and that $\psi$ is convex-concave on 
    $\mcS_v \times \mcS_u$. Then for all $t \in \bbR$:
    \begin{equation*}
        \bbP[C(W) > t] \leq 2\bbP[\mcC(g,h) \geq t].
    \end{equation*}
    \end{itemize}
    \end{proposition}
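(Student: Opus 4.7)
The plan is to derive both parts from Gordon's classical Gaussian comparison inequality for min--max functionals, after a reduction to finite index sets and, in the convex case, a duality argument based on Sion's minimax theorem.

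As a preliminary step, I would discretize $\mcS_v, \mcS_u$: continuity of $\psi$ together with the Lipschitz dependence in $(u,v)$ of the random bilinear and linear terms imply that approximating $\mcS_v, \mcS_u$ by $\delta$-nets perturbs both $C(W)$ and $\mcC(g,h)$ by $\mcO(\delta)$ with high probability, so one can reduce to finite sets and then let $\delta \to 0$.

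For part $(i)$, the core step is to consider the two centered Gaussian processes
\[
    X_{u,v} \coloneqq u^\intercal W v + \gamma \|u\|_2 \|v\|_2, \qquad Y_{u,v} \coloneqq \|v\|_2\, g^\intercal u + \|u\|_2\, h^\intercal v,
\]
where $\gamma \sim \mcN(0,1)$ is an independent auxiliary Gaussian introduced so as to match second moments: $\EE X_{u,v}^2 = \EE Y_{u,v}^2 = 2\|u\|_2^2\|v\|_2^2$. A direct computation shows that the cross-covariances agree whenever $v = v'$, and by two Cauchy--Schwarz inequalities
\[
    \EE X_{u,v} X_{u',v'} - \EE Y_{u,v} Y_{u',v'} = (\|u\|_2\|u'\|_2 - u^\intercal u')(\|v\|_2\|v'\|_2 - v^\intercal v') \geq 0
\]
whenever $v \neq v'$. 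These are precisely the covariance hypotheses under which Gordon's inequality yields $\bbP[\min_v \max_u (X_{u,v} + \psi(v,u)) \leq t] \geq \bbP[\min_v \max_u (Y_{u,v} + \psi(v,u)) \leq t]$ for any deterministic $t$. Conditioning on the sign of $\gamma$ then removes the auxiliary term on the $X$-side at the cost of a factor $2$ in probability, yielding the desired inequality in $(i)$.

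For part $(ii)$, the matching reverse tail bound requires the convexity assumptions. Under these, Sion's minimax theorem applies deterministically in $W$, permitting the swap $\min_v \max_u = \max_u \min_v$ in $C(W)$. Running the same Gaussian comparison argument with $\min$ and $\max$ interchanged flips the direction of Gordon's inequality on the primal side, while on the Gaussian-equivalent side only the unconditional weak-duality bound $\max_u \min_v Y_{u,v} + \psi(v,u) \leq \mcC(g,h)$ is used. Combining these two ingredients produces the complementary tail bound with the same constant $2$. The main obstacle is precisely this asymmetric use of duality: Gordon's inequality is intrinsically one-sided, and it is only the convex-concave structure that, via Sion's theorem on the primal side combined with weak duality on the auxiliary side, unlocks the opposite-sided comparison. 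The discretization and the removal of the auxiliary Gaussian $\gamma$, while technical, are standard and not the conceptual difficulty.
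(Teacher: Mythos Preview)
The paper does not prove Proposition~\ref{prop:gaussian_minmax}; it is stated with citations to \cite{gordon1988milman,thrampoulidis2015regularized,thrampoulidis2018precise} and used as a black box. Your sketch is essentially the standard proof from \cite{thrampoulidis2015regularized,thrampoulidis2018precise}: the auxiliary Gaussian $\gamma$ to match variances, the covariance identity $(\|u\|\|u'\|-u^\intercal u')(\|v\|\|v'\|-v^\intercal v')$, Gordon's comparison, the factor~$2$ via independence of $\gamma$ and the monotonicity of $C'$ in $\gamma$, and for $(ii)$ Sion's theorem on the primal side combined with weak duality on the auxiliary side. This is correct and there is nothing to compare against in the paper itself.
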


\subsubsection{Gordon's min-max inequality and random geometry}

\noindent
We first introduce the notion of Gaussian width of a convex cone.
\begin{definition}[Gaussian width]\label{def:gaussian_width}
    \noindent
    For $p \geq 1$, and a closed convex cone $K \subseteq \bbR^p$, we define its \emph{Gaussian width} as 
    \begin{equation*}
        \omega(K) \coloneqq \EE \max_{x \in K \cap \bbS^{p-1}} \langle g, x\rangle,
    \end{equation*}
    for $g \sim \mcN(0, \Id_p)$.
\end{definition}
\noindent
We show now a general result leveraging Gordon's min-max inequality to prove the existence of a solution 
to a general type of random geometry problem. 
Such applications are classical, and we show here that one can assume furthermore that 
the solution is bounded.
\begin{lemma}\label{lemma:bounded_sols_general}
    \noindent
    Let $n, p \geq 1$, and $(g_\mu)_{\mu=1}^n \iid \mcN(0, \Id_p)$.
    We define $V \coloneqq \{x \in \bbR^p \, : \forall \mu \in [n], \, \langle g_\mu, x \rangle = 1 \}$.
    Let $K \subseteq \bbR^p$ be a closed convex cone, with Gaussian width $\omega(K)$.
    Assume that there exists $\varepsilon \in (0,1)$ such that $n \leq (1-\varepsilon) \, \omega(K)^2$ as $n \to \infty$. 
    Then:
    \begin{equation}\label{eq:bounded_sols_general}
        \lim_{n \to \infty} \bbP \left\{\exists x \in K \cap V \, \textrm{ s.t. } \, \|x\|_2 \leq  \frac{2}{\sqrt{\eps}} \right\} = 1.
    \end{equation}
\end{lemma}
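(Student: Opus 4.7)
The plan is to recast the existence statement as a convex min-max optimization and apply Gordon's inequality (Proposition~\ref{prop:gaussian_minmax}(ii)) to reduce it to a scalar comparison, which I then verify by Gaussian concentration. Assembling the $g_\mu^\top$ into the rows of a standard Gaussian matrix $G \in \bbR^{n \times p}$ and setting $R = 2/\sqrt{\eps}$, I introduce
\begin{equation*}
    \Phi(G) \;\coloneqq\; \min_{\substack{x \in K \\ \|x\|_2 \leq R}} \|Gx - \mathbf{1}\|_2 \;=\; \min_{\substack{x \in K \\ \|x\|_2 \leq R}}\; \max_{\|u\|_2 \leq 1} u^\top (Gx - \mathbf{1}).
\end{equation*}
Since the feasible set is compact and the objective continuous, $\{\Phi(G) = 0\}$ is \emph{exactly} the event in~\eqref{eq:bounded_sols_general}. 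The min-max is in Gordon form on compact convex sets with $\psi(x,u) = -u^\top \mathbf{1}$ affine (hence convex-concave), so Proposition~\ref{prop:gaussian_minmax}(ii) yields, for every $t > 0$, $\bbP[\Phi(G) > t] \leq 2\,\bbP[\mcC(g, h) \geq t]$, where performing the inner maximization over $\|u\|_2 \leq 1$ gives
\begin{equation*}
    \mcC(g, h) \;=\; \min_{\substack{x \in K \\ \|x\|_2 \leq R}}\, \max\!\Bigl(0,\; h^\top x + \bigl\|\,\|x\|_2\, g - \mathbf{1}\bigr\|_2 \Bigr),
\end{equation*}
with $g \sim \mcN(0, \Id_n)$ and $h \sim \mcN(0, \Id_p)$ independent.

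Next I pass to $t \to 0^+$: along any decreasing sequence $t_k \downarrow 0$, monotone continuity of measure gives $\bbP[\Phi > t_k] \nearrow \bbP[\Phi > 0]$ and $\bbP[\mcC \geq t_k] \nearrow \bbP[\mcC > 0]$, hence
\begin{equation*}
    \bbP[\Phi(G) > 0] \;\leq\; 2\,\bbP[\mcC(g, h) > 0].
\end{equation*}
Keeping $t$ strictly positive in the limit is crucial: the naive specialization at $t = 0$ would only bound $\bbP[\Phi > 0]$ by $2\,\bbP[\mcC \geq 0] = 2$, which is useless since $\mcC \geq 0$ identically. So it suffices to exhibit, with high probability, one feasible $x$ with $h^\top x + \|\,\|x\|_2\, g - \mathbf{1}\|_2 \leq 0$. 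Taking $x = R\,\hat{x}^\star$ with $\hat{x}^\star \in \arg\min_{\hat{x} \in K \cap \bbS^{p-1}} h^\top \hat{x}$, this reduces to $\|Rg - \mathbf{1}\|_2 \leq R\, M$, where $M \coloneqq \max_{\hat{x} \in K \cap \bbS^{p-1}} \langle -h, \hat{x}\rangle$.

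Now $h \mapsto M(h)$ is $1$-Lipschitz with $\EE M = \omega(K)$, and $\omega(K)^2 \geq n/(1-\eps) \to \infty$, so Borell-TIS gives $M \geq (1-\eta)\,\omega(K) \geq (1-\eta)\sqrt{n/(1-\eps)}$ with high probability, for any fixed $\eta > 0$. Likewise $g \mapsto \|Rg - \mathbf{1}\|_2$ is $R$-Lipschitz with mean at most $\sqrt{n(R^2+1)}$, yielding $\|Rg - \mathbf{1}\|_2 \leq (1+\eta)\sqrt{n(R^2+1)}$ with high probability. Squaring the target inequality, it reduces (up to an $\eta$-correction) to $(1-\eps)(R^2+1) \leq R^2$, i.e.\ $R^2 \geq (1-\eps)/\eps$; this holds strictly and with ample slack for $R^2 = 4/\eps$ (the gap is $(3+\eps)/\eps$), so for small enough $\eta$ the condition is satisfied with high probability. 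Therefore $\bbP[\mcC > 0] \to 0$, whence $\bbP[\Phi(G) = 0] \to 1$, which yields~\eqref{eq:bounded_sols_general}.

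The main obstacle, in my view, is precisely this passage $t \to 0^+$ in Gordon's inequality, which converts the usual estimate on \emph{approximate} feasibility into \emph{exact} feasibility; it only goes through because we have strict slack in the hypothesis $n \leq (1-\eps)\omega(K)^2$, which is exactly what the factor $(1-\eps)$ in the statement is designed to provide. If instead one only assumes $n \leq \omega(K)^2$, the Gaussian proxy $\mcC$ could concentrate at $0$ and the limiting inequality would become vacuous.
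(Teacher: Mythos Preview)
Your proof is correct and follows essentially the same approach as the paper's: both cast the existence problem as a convex min--max, apply Proposition~\ref{prop:gaussian_minmax}(ii), take a limit $t\downarrow 0$ to pass from approximate to exact feasibility, reduce to a scalar inequality via the cone structure, and verify the latter by Gaussian concentration of the width. The only cosmetic difference is that the paper parametrizes over $v=\|x\|_2\in[0,A]$ and selects $v^\star=\sqrt{(4-\eps)/\eps}\leq 2/\sqrt{\eps}$, whereas you fix $\|x\|_2=R=2/\sqrt{\eps}$ from the outset; both choices leave strict slack in the comparison $(1-\eps)(R^2+1)<R^2$, which is exactly what makes the $t\to 0^+$ limit non-vacuous, as you correctly emphasize.
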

\begin{proof}[Proof of Lemma~\ref{lemma:bounded_sols_general}]
    Let us denote, for $A > 0$:
    \begin{equation*}
        P_n(A) \coloneqq \bbP \left\{\exists x \in K \cap V \, \textrm{ s.t. } \, \|x\|_2 \leq  A \right\},
    \end{equation*}
    and define $G \in \bbR^{n \times p}$ as the Gaussian matrix with $g_\mu$ as its $\mu$-th row.
    By elementary compactness and duality arguments, we have:
    \begin{equation*}
        1 - P_n(A) = \bbP \Big[\min_{\substack{x \in K \\ \|x \|_2 \leq A}} \|G x - \ones_n \|_2 > 0\Big] = \bbP \Big[\min_{\substack{x \in K \\ \|x \|_2 \leq A}} \max_{\| \lambda\|_2 \leq 1} \{-\lambda^\intercal \ones_n + \lambda^\intercal G x\} > 0\Big],
    \end{equation*}
    for $G \in \bbR^{n \times p}$ with i.i.d.\ $\mcN(0,1)$ elements.
    By dominated convergence we have then 
    \begin{equation}\label{eq:ub_PnA_1}
        1 - P_n(A) 
        =  \lim_{\eta \to 0} \bbP \Big[\min_{\substack{x \in K \\ \|x \|_2 \leq A}} \max_{\| \lambda\|_2 \leq 1} \{-\lambda^\intercal \ones_n + \lambda^\intercal G x\} > \eta\Big].
    \end{equation}
    Note that in eq.~\eqref{eq:ub_PnA_1}, both $x$ and $\lambda$ belong to a convex and compact set (since $K$ is closed and convex), 
    and $\psi(x, \lambda) = -\lambda^\T \ones_n$ is clearly convex-concave.
    We can thus apply item $(ii)$ of Proposition~\ref{prop:gaussian_minmax}:
    \begin{equation}\label{eq:ub_PnA_2}
        1 - P_n(A) \leq 2 \lim_{\eta \to 0} \bbP\Big[\min_{\substack{x \in K \\ \|x \|_2 \leq A}} \max_{\| \lambda\|_2 \leq 1} \{-\lambda^\intercal \ones_n + \| \lambda\|_2 g^\intercal x + \|x \|_2 \lambda^\intercal h\} \geq \eta\Big],
    \end{equation}
    with $g \sim \mcN(0, \Id_p)$ and $h \sim \mcN(0, \Id_n)$.
    We then control the right-hand-side of the last equation, using that $K$ is a cone:
    \begin{align}\label{eq:ub_PnA_3}
        \nonumber
        &\min_{\substack{x \in K \\ \|x \|_2 \leq A}} \max_{\| \lambda\|_2 \leq 1} \{-\lambda^\intercal \ones_n + \| \lambda\|_2 g^\intercal x + \|x \|_2 \lambda^\intercal h\} 
        = \min_{\substack{x \in K \\ \|x \|_2 \leq A}} \max \{0, \|\|x\|_2 h - \ones_n \|_2 + g^\intercal x\}, \\ 
        \nonumber
        &= \max \Big\{0,\min_{\substack{x \in K \\ \|x \|_2 \leq A}}  \big[\|\|x\|_2 h - \ones_n \|_2 + g^\intercal x \big]\Big\}, \\ 
        &= \max \Big\{0,\min_{v \in [0, A]}  \big[\|v h - \ones_n \|_2 + v \min_{x \in K \cap \bbS^{p-1}}g^\intercal x \big]\Big\}.
    \end{align}
    Note that $g \to \max_{x \in K \cap \bbS^{p-1}}[g^\intercal x]$ is $1$-Lipschitz, 
    and in particular concentrates on its average, which by definition is the Gaussian width $\omega(K)$.
    We use the classical result (see e.g.\ Theorem~3.25 of \cite{van2014probability}): 
    \begin{theorem}
        \label{thm:gaussian_conc_lipschitz}
        \noindent
        Let $X_1, \cdots, X_n \iid \mcN(0,1)$. 
        Let $f : \bbR^n \to \bbR$ a Lipschitz function. 
        Then for all $t \geq 0$:
        \begin{equation*}
            \bbP[f(X_1, \cdots, X_n) - \EE f(X_1, \cdots, X_n) \geq t] \leq \exp\Big\{-\frac{t^2}{2 \| f \|_\rL^2}\Big\}.
        \end{equation*}
    \end{theorem}
    \noindent
    Therefore, for $\delta \in (0, \omega(K))$, we have\footnote{Since $g \deq -g$, $\min_{x \in K \cap \bbS^{p-1}}[g^\intercal x] \deq - \max_{x \in K \cap \bbS^{p-1}}[g^\intercal x]$.}:
    \begin{equation}\label{eq:concentration_gwidth}
       \bbP\Big\{\min_{x \in K \cap \bbS^{p-1}}[g^\intercal x] \geq - \omega(K) + \delta\Big\} \leq e^{-\delta^2/2}.
    \end{equation}
    From eqs.~\eqref{eq:ub_PnA_2},\eqref{eq:ub_PnA_3} and \eqref{eq:concentration_gwidth} we have: 
    \begin{equation}\label{eq:ub_PnA_4}
        1 - P_n(A) \leq 2 \lim_{\eta \to 0} \bbP\Big[
        \max \Big\{0,\min_{v \in [0, A]}  \big[\|v h - \ones_n \|_2 + v (- \omega(K) + \delta) \big]\Big\}\geq \eta\Big] + 2 e^{-\delta^2/2}.
    \end{equation}
    Recall that we assumed $n \leq (1-\varepsilon) \, \omega(K)^2$ and $n \to \infty$.
    Therefore, for $n \geq n_0(\varepsilon,\delta)$ large enough we can assume $n \leq (1-\varepsilon/2) \, [\omega(K) - \delta]^2$.
    Let $v^\star = v^\star(\varepsilon) = \sqrt{(4-\eps)/\eps}$ such that:
    \begin{equation*}
        \Big(1 - \frac{\varepsilon}{4}\Big) \EE_{X \sim \mcN(0,1)} [(v^\star X - 1)^2] = (v^\star)^2. 
    \end{equation*}
    Thus, for $A = v^\star(\eps)$, we have from eq.~\eqref{eq:ub_PnA_4}:
    \begin{equation}\label{eq:ub_PnA_5}
        1 - P_n(v^\star) \leq 2 \lim_{\eta \to 0} \bbP\Big[
        \max \Big\{0, \big[\|v^\star h - \ones_n \|_2 + v^\star (- \omega(K) + \delta) \big]\Big\}\geq \eta\Big] + 2 e^{-\delta^2/2}.
    \end{equation}
    By the law of large numbers we have ($\pto$ denotes convergence in probability):
    \begin{equation*}
        \frac{1}{\sqrt{n}} \|v^\star h - \ones_n \|_2 \pto \sqrt{\EE[(v^\star X - 1)^2]} = \frac{v^\star}{\sqrt{1 - \frac{\varepsilon}{4}}} < \frac{v^\star}{\sqrt{1 - \frac{\varepsilon}{3}}}.
    \end{equation*}
    In particular, with probability $1 - \smallO_n(1)$, we have
    \begin{equation*}
        \|v^\star h - \ones_n \|_2 + v^\star (- \omega(K) + \delta) \leq  v^\star \sqrt{n} \left[\left(1 - \frac{\varepsilon}{3}\right)^{-1/2} - \left(1 - \frac{\varepsilon}{2}\right)^{-1/2} \right] < 0.
    \end{equation*}
    Therefore, we have by eq.~\eqref{eq:ub_PnA_5}:
    \begin{equation*}
        1 - P_n(v^\star) \leq \smallO_n(1) + 2 e^{-\delta^2/2}.
    \end{equation*}
    Taking the limit $n \to \infty$ and then $\delta \to \infty$ finishes the proof\footnote{Recall that $\delta < \omega(K)$ but $\omega(K) \to \infty$ as $n \to \infty$ by hypothesis.} (notice that $v^\star \leq 2/\sqrt{\eps}$).
\end{proof}

\subsubsection{The cone of positive matrices with bounded condition number}

We study here the Gaussian width of the convex cone of positive semidefinite matrices with bounded condition number. 
We start with a classical result on the Gaussian width of $\mcS_d^+$ \cite{chandrasekaran2012convex,amelunxen2014living}, we refer the reader to Proposition~10.2 of \cite{amelunxen2014living} for a proof.
\begin{proposition}[Gaussian width of $\mcS_d^+$]
    \label{prop:gwidth_pds}
    \noindent 
    The Gaussian width of $\mcS_d^+$ satisfies: 
    \begin{equation*}
        \sqrt{\frac{d(d+1)}{4} - 1} \leq \omega(\mcS_d^+) \leq \sqrt{\frac{d(d+1)}{4}}.
    \end{equation*}
\end{proposition}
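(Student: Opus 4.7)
The cleanest route is through the \emph{statistical dimension} $\delta(K) \coloneqq \EE[\|\Pi_K(g)\|^2]$, where $\Pi_K$ denotes the metric (Frobenius) projection onto $K$. I will (i) fix a convenient Hilbert-space identification of $\mcS_d$, (ii) compute $\delta(\mcS_d^+) = d(d+1)/4$ by a short symmetry argument, and (iii) deduce the two-sided bound on $\omega(\mcS_d^+)$ from Jensen's inequality and Gaussian concentration.

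For (i), identify $\mcS_d$ with $\bbR^{d(d+1)/2}$ via the Frobenius-orthonormal basis $\{E_{ii}\}_i \cup \{(E_{ij}+E_{ji})/\sqrt{2}\}_{i<j}$. A standard Gaussian vector on this space is encoded by a symmetric matrix $W$ with independent entries $W_{ii} \sim \mcN(0,1)$ and $W_{ij} \sim \mcN(0, 1/2)$ for $i<j$, giving $\EE\|W\|_F^2 = d + 2\binom{d}{2} \cdot \tfrac{1}{2} = d(d+1)/2$. By the Moreau decomposition and Cauchy--Schwarz, $\max_{x \in \mcS_d^+,\|x\|_F \leq 1}\langle W, x\rangle = \|\Pi_{\mcS_d^+}(W)\|_F$, and this coincides with the sphere supremum defining $\omega$ outside the event $\{W \prec 0\}$, whose probability is exponentially small in $d^2$ and contributes a negligible correction that can be absorbed into the $-1$.

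For (ii), the projection onto the self-dual PSD cone is explicit: writing $W = \sum_i \lambda_i v_i v_i^{\T}$ one has $\Pi_{\mcS_d^+}(W) = W_+ \coloneqq \sum_i \max(\lambda_i,0)\, v_i v_i^{\T}$, and the Pythagorean identity yields $\|W\|_F^2 = \|W_+\|_F^2 + \|W_-\|_F^2$. Since $W \stackrel{d}{=} -W$, we have $\EE\|W_+\|_F^2 = \EE\|W_-\|_F^2$, and summing gives
\[
\delta(\mcS_d^+) = \EE\|W_+\|_F^2 = \tfrac{1}{2}\EE\|W\|_F^2 = \frac{d(d+1)}{4}.
\]

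For (iii), Jensen's inequality yields the upper bound $\omega(\mcS_d^+) \leq \EE\|\Pi_{\mcS_d^+}(W)\|_F \leq \sqrt{\delta(\mcS_d^+)} = \sqrt{d(d+1)/4}$. For the lower bound, the map $W \mapsto \|\Pi_{\mcS_d^+}(W)\|_F$ is $1$-Lipschitz with respect to $\|\cdot\|_F$ (composition of the $1$-Lipschitz metric projection with the $1$-Lipschitz norm), so Theorem~\ref{thm:gaussian_conc_lipschitz} gives $\mathrm{Var}(\|\Pi_{\mcS_d^+}(W)\|_F) \leq 1$, hence
\[
\omega(\mcS_d^+)^2 \geq \bigl(\EE\|\Pi_{\mcS_d^+}(W)\|_F\bigr)^2 \geq \EE\|\Pi_{\mcS_d^+}(W)\|_F^2 - 1 = \frac{d(d+1)}{4} - 1.
\]
No substantial technical obstacle is expected; the only minor subtlety is reconciling the sphere-versus-ball definitions of $\omega$ used in the paper, a discrepancy that is exponentially small in $d$ thanks to the concentration of the GOE spectrum and easily accommodated by the slack in the $-1$.
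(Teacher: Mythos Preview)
Your approach is correct and is essentially the standard proof (the paper does not prove this proposition itself but simply cites Proposition~10.2 of \cite{amelunxen2014living}, whose argument is precisely the one you sketch: compute $\delta(\mcS_d^+)=d(d+1)/4$ by the symmetry $W\stackrel{d}{=}-W$, then sandwich $\omega$ via Jensen and the Gaussian Poincar\'e inequality).

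There is one genuine slip in your handling of the lower bound. You establish $(\EE\|\Pi_{\mcS_d^+}(W)\|_F)^2 \geq \delta - 1$ for the \emph{ball} supremum and then assert that the discrepancy with the \emph{sphere} supremum of Definition~\ref{def:gaussian_width} can be ``accommodated by the slack in the $-1$''. But there is no slack: the $-1$ is exactly the bound $\Var(\|\Pi_{\mcS_d^+}(W)\|_F)\leq 1$, so any further subtraction---even an exponentially small one---would violate the stated inequality. The clean fix is to run the variance argument directly on the sphere supremum $h(W)\coloneqq\max_{S\in\mcS_d^+,\,\|S\|_F=1}\Tr(WS)$. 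This map is also $1$-Lipschitz (a supremum of unit linear functionals), so $\Var(h(W))\leq 1$; and pointwise $h(W)^2\geq\|\Pi_{\mcS_d^+}(W)\|_F^2$, since the two agree whenever $W$ has a positive eigenvalue, while on $\{W\preceq 0\}$ one has $h(W)=\lambda_{\max}(W)$ and $\|\Pi_{\mcS_d^+}(W)\|_F=0$. Hence $\EE[h(W)^2]\geq\delta$ and
\[
\omega(\mcS_d^+)^2=(\EE h(W))^2=\EE[h(W)^2]-\Var(h(W))\geq \delta-1.
\]
Since $\omega(\mcS_d^+)=\EE h(W)\geq \EE\Tr(WS_0)=0$ for any fixed $S_0\in\mcS_d^+\cap\bbS$, you conclude $\omega(\mcS_d^+)\geq\sqrt{\delta-1}$ for $d\geq 2$ (for $d=1$ the lower bound is vacuous). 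No large-deviation estimate on the GOE spectrum is needed.
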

\noindent
In particular, notice that $\omega(\mcS_d^+) \sim d/2$ as $d \to \infty$.
We generalize this result by asking that the matrices have bounded condition number.
\begin{lemma}[Gaussian width of PSD matrices with bounded condition number]
    \label{lemma:gwidth_positive_condition_nb}
    \noindent
    For any $\kappa \geq 1$, define
    $K_\kappa \coloneqq \{S \in \mcS_d^+ \, : \, \lambda_\mathrm{max}(S) \leq \kappa \lambda_\mathrm{min}(S) \}$.
    Then $K_\kappa$ is a closed convex cone, and its Gaussian width satisfies 
    \begin{equation*}
        \liminf_{d \to \infty} \frac{2 \omega(K_\kappa)}{d} = f(\kappa),
    \end{equation*}
    where $\kappa \to f(\kappa) \in [0,1]$ is non-decreasing, with
    $\lim_{\kappa \to \infty} f(\kappa) = 1$.
\end{lemma}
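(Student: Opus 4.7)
The verification that $K_\kappa$ is a closed convex cone is routine: for $A, B \in K_\kappa$ and $s, t \geq 0$, Weyl's inequalities give
\[
\lambda_{\max}(sA + tB) \leq s\lambda_{\max}(A) + t\lambda_{\max}(B) \leq \kappa\bigl(s\lambda_{\min}(A) + t\lambda_{\min}(B)\bigr) \leq \kappa\,\lambda_{\min}(sA + tB),
\]
yielding convexity; closedness and scale invariance are immediate. Monotonicity of $f$ is then clear from the containment $K_{\kappa_1} \subseteq K_{\kappa_2}$ for $\kappa_1 \leq \kappa_2$, and the upper bound $f(\kappa) \leq 1$ follows from $K_\kappa \subseteq \mcS_d^+$ combined with Proposition~\ref{prop:gwidth_pds}.

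\textbf{Lower bound via an explicit test direction.} The substantive part is showing $f(\kappa) \to 1$. My plan is to exhibit, w.h.p.\ on the law of a Gaussian $G \in \mcS_d$, a good test direction in $K_\kappa$. Let $G_{ii} \sim \mcN(0,1)$ and $G_{ij} \sim \mcN(0, 1/2)$ be independent for $i < j$, so that $\EE[\Tr(GS)^2] = \|S\|_F^2$ and $\omega(K_\kappa) = \EE\sup_{S \in K_\kappa,\, \|S\|_F = 1}\Tr(GS)$; write $G_+$ for the projection of $G$ onto $\mcS_d^+$ (truncation of its negative eigenvalues). For a fixed small $\delta > 0$ and $c_d := (\sqrt{2}+\delta)\sqrt{d}/(\kappa - 1)$, I would set
\[
S(G) := G_+ + c_d\,\Id_d \ \text{on} \ E_d := \bigl\{\|G\|_\op \leq (\sqrt{2}+\delta)\sqrt{d}\bigr\}, \qquad S(G) := \Id_d \ \text{on} \ E_d^c.
\]
On $E_d$ one has $\lambda_{\min}(S(G)) \geq c_d$ and $\lambda_{\max}(S(G)) \leq \|G\|_\op + c_d \leq \kappa c_d$, so $S(G) \in K_\kappa$; standard Wigner edge concentration gives $\bbP(E_d) = 1 - \smallO_d(1)$, so $\omega(K_\kappa) \geq \EE[\Tr(GS(G))/\|S(G)\|_F]$.

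\textbf{Asymptotic evaluation.} Direct expansion, using $G_+ G_- = 0$, gives $\Tr(GS(G)) = \|G_+\|_F^2 + c_d \Tr G$ and $\|S(G)\|_F^2 = \|G_+\|_F^2 + 2 c_d \Tr(G_+) + c_d^2 d$. The symmetry $G \stackrel{\rm d}{=} -G$ yields $\EE\|G_+\|_F^2 = \EE\|G\|_F^2/2 = d(d+1)/4$; the Wigner semicircle law gives $\Tr(G_+)/d^{3/2} \to C_1 := 2\sqrt{2}/(3\pi)$ in probability; and $c_d \Tr G$ is of order $d$ w.h.p., hence negligible at scale $d^2$. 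Both $G \mapsto \|G_+\|_F$ and $G \mapsto \Tr(G_+)$ are Lipschitz in the Frobenius norm (with constants $1$ and $\sqrt{d}$, from projection contractivity and Wielandt--Hoffman), so Theorem~\ref{thm:gaussian_conc_lipschitz} yields the required concentration. Combining these on $E_d$,
\[
\frac{\Tr(GS(G))}{\|S(G)\|_F} = \frac{d}{2}\left[1 + \frac{8(\sqrt{2}+\delta) C_1}{\kappa - 1} + O\bigl((\kappa - 1)^{-2}\bigr)\right]^{-1/2}\bigl(1 + \smallO_d(1)\bigr),
\]
while the $E_d^c$-contribution to $\EE[\Tr(GS(G))/\|S(G)\|_F]$ is $\smallO_d(d)$ by Cauchy--Schwarz using $\EE\|G\|_F^2 = O(d^2)$ and $\bbP(E_d^c) = \smallO_d(1)$. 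Taking $\liminf_{d \to \infty} 2\omega(K_\kappa)/d$ and then sending $\delta \to 0$ gives $f(\kappa) \geq [1 + 8\sqrt{2}\,C_1/(\kappa - 1) + O((\kappa - 1)^{-2})]^{-1/2}$, which tends to $1$ as $\kappa \to \infty$.

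\textbf{Main obstacle.} The principal technical care is twofold: (i) converting the separate tight concentrations of numerator and denominator, both of order $d^2$, into concentration of their ratio, which is of order $d$, and (ii) ensuring that the negative contribution of the atypical event $E_d^c$ does not spoil the leading-order bound. Both points follow from Lipschitz concentration (Theorem~\ref{thm:gaussian_conc_lipschitz}) together with standard Wigner edge estimates, but they require careful book-keeping.
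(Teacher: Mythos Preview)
Your proof is correct and follows the same high-level strategy as the paper: lower-bound $\omega(K_\kappa)$ by evaluating $\Tr(GS)/\|S\|_F$ at an explicit test matrix built from the positive part of $G$, then use the semicircle law to compute the limit. The specific regularization differs: the paper takes $S$ diagonal in the eigenbasis of $G$ with eigenvalues $\lambda_i^\star \propto \max(z_i, 2/\kappa)$ (a threshold from below), whereas you take $S = G_+ + c_d\Id_d$ (hard truncation at zero plus an identity shift). Your construction has the pleasant feature that the condition-number bound $\lambda_{\max}(S)/\lambda_{\min}(S) \leq \kappa$ holds exactly on the high-probability event $E_d$, avoiding the $\eps$-slack and rescaling step the paper performs; in exchange, you must control a ratio of two random quantities of order $d^2$ rather than a single normalized sum, which is precisely the extra bookkeeping you flag. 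Both routes are equally elementary and deliver explicit lower bounds for $f(\kappa)$ tending to $1$.
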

\begin{proof}[Proof of Lemma~\ref{lemma:gwidth_positive_condition_nb}]
The fact that $K_\kappa$ is a closed convex cone is easy to verify.
Moreover, for any $\kappa \leq \kappa'$ we have $K_\kappa \subseteq K_{\kappa'} \subseteq \mcS_d^+$,
and $\omega(\mcS_d^+) \sim d/2$ by Proposition~\ref{prop:gwidth_pds}, so $\kappa \mapsto f(\kappa) \in [0,1]$, and $f$ is non-decreasing. 
To finish the proof, we show that $f(\kappa) \to 1$ as $\kappa \to \infty$.
We let $\kappa > 1$.
To identify $\mcS_d$ with $\bbR^{d(d+1)/2}$, we use the matrix flattening function, for $M \in \mcS_d$:
\begin{align}\label{eq:def_flattening}
    \flatt(M) &\coloneqq ((\sqrt{2}M_{ab})_{1 \leq a < b \leq d}, (M_{aa})_{a=1}^d) \in \bbR^{d(d+1)/2}, \\ 
    \nonumber
    &= ((2 - \delta_{ab})^{1/2} M_{ab})_{a\leq b}.
\end{align}
It is an isometry ($\langle \flatt(M), \flatt(N) \rangle = \Tr[MN]$), and if $Z \sim \GOE(d)$ then the entries of $\flatt(Z)$ satisfy $\flatt(Z)_k \iid \mcN(0, 2 / d)$.
We thus reach:
\begin{equation*}
    \omega(K_\kappa) \coloneqq \EE \max_{\substack{S \in K_\kappa \\ \|S \|_\rF^2 = 2 d}} \left\{\frac{1}{2} \Tr[Z S]\right\},
\end{equation*}
in which $Z \sim \GOE(d)$, cf.\ Definition~\ref{def:matrix_ensembles}.
Letting $z_1 \geq \cdots \geq z_d$ be the eigenvalues of $Z$, by Wigner's theorem \cite{anderson2010introduction}
$(1/d) \sum_i \delta_{z_i}$ weakly converges as $d \to \infty$ (a.s.) to $\sigma_{\sci}(\rd x) = (2\pi)^{-1} \sqrt{4-x^2} \indi\{ |x|\leq 2\} \rd x$.
Moreover, we have (taking $S$ having same eigenvectors as $Z$)\footnote{Notice that we replaced $\|S\|_F^2 = 2d$ by $\|S\|_F^2 \leq 2d$ since w.h.p.\ one can always 
find $S \in K_\kappa$ such that $\Tr[S Z] > 0$.}:
\begin{equation}\label{eq:lb_omega_Kkappa}
    \omega(K_\kappa) \geq \EE \max_{\substack{\lambda_1 \geq \cdots \lambda_d \geq 0 \\ \sum \lambda_i^2 \leq 2 d \\ \lambda_1 \leq \kappa \lambda_d}} \left\{\frac{1}{2} \sum_{i=1}^d \lambda_i z_i\right\}.
\end{equation}
Let us now sketch the end of the proof. 
We define
\begin{equation}\label{eq:def_lambdastar_Kkappa}
    \begin{dcases}
    \gamma(\kappa) &\coloneqq \sqrt{\frac{2}{\int_{2 \kappa^{-1}}^2 x^2 \, \sigma_\sci(\rd x) + \frac{4}{\kappa^2} \int_{-2}^{2 \kappa^{-1}} \sigma_\sci(\rd x)}}, \\    
    \lambda_i^\star &\coloneqq \gamma(\kappa) \Big[z_i \indi\{z_i \geq 2 \kappa^{-1}\} + \frac{2}{\kappa} \indi\{z_i < 2 \kappa^{-1}\}\Big].
    \end{dcases}
\end{equation}
Let $\eps > 0$.
Since one can show that $z_1 \to 2$ a.s.\ as $d \to \infty$ \cite{anderson2010introduction}, with high probability we have $\lambda_1^\star \leq \kappa(1+\eps) \lambda_d^\star$.
Moreover, one checks easily that $d^{-1} \sum_{i=1}^d [\lambda_i^\star]^2 \pto 2$ as $d \to \infty$.
Letting $\mu_i \coloneqq \lambda_i^\star / \sqrt{1+\eps}$, we can therefore use $\{\mu_i\}$ to lower bound $\omega(K_\kappa)$ as $d \to \infty$, 
with $\kappa_\eps \coloneqq \kappa(1+\eps)$. 
This yields that, for any $\eps > 0$: 
\begin{equation}\label{eq:lb_omega_Kkappa_2}
    f(\kappa_\eps) = \liminf_{d \to \infty} \frac{2 \omega(K_{\kappa_\eps})}{d} \geq \frac{1}{\sqrt{1+\eps}}
    \sqrt{\frac{2 \left[\int_{2 \kappa^{-1}}^2 x^2 \, \sigma_\sci(\rd x) + \frac{2}{\kappa} \int_{-2}^{2 \kappa^{-1}} x \, \sigma_\sci(\rd x)\right]^2}{\int_{2 \kappa^{-1}}^2 x^2 \, \sigma_\sci(\rd x) + \frac{4}{\kappa^2} \int_{-2}^{2 \kappa^{-1}} \sigma_\sci(\rd x)}}
    .
\end{equation}
Taking the limits $\kappa \to \infty$ and $\eps \to 0$
in eq.~\eqref{eq:lb_omega_Kkappa_2} yields $\lim_{\kappa \to \infty} f(\kappa) \geq 1$, which ends the proof.
\end{proof}

\subsubsection{Proof of Proposition~\ref{prop:regular_sol_gaussian}}

    We can now complete the proof of Proposition~\ref{prop:regular_sol_gaussian}.
    Recall that 
    \begin{equation*}
        V = \{S \in \mcS_d \, : \forall \mu \in [n], \, \Tr[G_\mu S] = 1 \}.
    \end{equation*}
    Since $\alpha = n/d^2 < 1/4$,
    by Lemma~\ref{lemma:gwidth_positive_condition_nb}, we can find $\kappa = \kappa(\alpha)$ and $\eps = \eps(\alpha)$ such that 
    $n \leq (1 - \varepsilon) \omega(K_\kappa)^2$ for $n$ large enough.
    Therefore, by Lemma~\ref{lemma:bounded_sols_general} there exists $A = A(\alpha) > 0$ such that:
    \begin{equation}
        \label{eq:regular_sol_gauss_1}
        \lim_{d \to \infty} \bbP \left\{\exists S \in V \, \textrm{ s.t. } S \succeq 0 \, \textrm{ and } \,\Tr[S^2] \leq A d \,\textrm{ and } \, \lambda_\mathrm{max}(S) \leq \kappa \lambda_\mathrm{min}(S) \right\} = 1.
    \end{equation}
    Notice that $H \coloneqq n^{-1/2} \sum_{\mu=1}^n G_\mu \sim \GOE(d)$,
    and thus $\bbP[\|H\|_\op \leq 3] = 1 - \smallO_d(1)$ \cite{vershynin2018high}. 
    Conditioning on this event, let $S \in V$. 
    Then $\Tr[HS] = \sqrt{n} = \sqrt{\alpha} d$, and
    thus by duality of $\|\cdot\|_\op$ and $\|\cdot\|_{S_1}$:
    \begin{equation*}
        \Tr|S| \geq \frac{1}{\|H\|_\op} |\Tr[H S]| \geq \frac{\sqrt{\alpha}}{3} d.
    \end{equation*}
    The proof of Proposition~\ref{prop:regular_sol_gaussian} is then ended by 
    noticing that:
    \begin{equation*}
        \Sp(S) \subseteq [\lambda_-, \lambda_+] \Leftarrow
        \begin{cases}
        &S \succeq 0, \\ 
        &\Tr[S^2] \leq A(\alpha) d,\\ 
        &\lambda_{\max}(S) \leq \kappa(\alpha) \lambda_{\min}(S) , \\ 
        &\Tr[S] \geq B(\alpha) d,
        \end{cases}
    \end{equation*}
    for some $0 < \lambda_- \leq \lambda_+$ depending only on $\alpha$.
    $\qed$

\subsection{The unsatisfiable regime: proof of Proposition~\ref{prop:no_approx_unsat_gaussian}}\label{subsec:gaussian_unsat}

\noindent
We will show the following general result on the unsatisfiability of approximate versions of a general class of random geometry problems.
\begin{proposition}
    \label{prop:thick_mesh}
    \noindent
    Let $b \in \bbR$, $p, n \to \infty$, $K \subseteq \bbR^p$ a closed convex cone, with Gaussian width $\omega(K)$,
    $(a_\mu)_{\mu=1}^n \iid \mcN(0, \Id_p)$, 
    and denote $\mcC_\mu^{(b)}(x) \coloneqq |a_\mu^\T x - b|$ the $\mu$-th ``constraint''.
    We denote
    \begin{equation*}
        V_b \coloneqq \{x \in \bbR^p \, : \forall \mu \in [n], \, \, \mcC^{(b)}_\mu(x) = 0 \}
    \end{equation*}
    a randomly-oriented affine subspace.
    Then we have the following: 
    \begin{itemize}
        \item[$(i)$] Assume that $b \neq 0$. Then
        \begin{itemize}
            \item[$(a)$] If there exists $\varepsilon > 0$ such that $n \leq (1-\varepsilon) \, \omega(K)^2$ as $n \to \infty$,
            then $\bbP[V_{b} \cap K \neq \emptyset] \to_{n\to\infty} 1$.
            \item[$(b)$] If there exists $\varepsilon > 0$ such that $n \geq (1+\varepsilon) \, \omega(K)^2$ as $n \to \infty$, 
            then there exist $c = c(\eps,b) > 0$ and $\eta = \eta(\eps,b) \in (0,1)$ such that
            \begin{equation*}
                \lim_{n \to \infty} \bbP\{\forall x \in K \, : \, \# \{\mu \in [n] \, : \, \mcC^{(b)}_\mu(x) > c\} \geq \eta n\} = 1.
            \end{equation*}
        \end{itemize}
        \item[$(ii)$] Assume that $b = 0$. Note that $V_0$ is a linear subspace, and $V_{0} \cap K$ is a closed convex cone.
        \begin{itemize}
            \item[$(a)$] Assume that $n \leq (1-\varepsilon) \, \omega(K)^2$ for some $\eps > 0$. 
            Then as $n \to \infty$, $\bbP[V_{0} \cap K \cap S^{p-1} \neq \emptyset] \to 1$.
            \item[$(b)$] Assume that $n \geq (1+\varepsilon) \, \omega(K)^2$ for some $\eps > 0$. 
            Then there exist $c = c(\eps,b) > 0$ and $\eta = \eta(\eps,b) \in (0,1)$ such that, 
            with probability $1 - \smallO_n(1)$, the following holds for all $\tau \geq 0$:
            \begin{equation*}
                \max_{\substack{x \in K \\ 
                \# \{\mu \in [n] \, : \, \mcC^{(0)}_\mu(x) > c \tau\} < \eta n}} \|x\|_2 \leq \tau.
            \end{equation*}
        \end{itemize}
    \end{itemize}
\end{proposition}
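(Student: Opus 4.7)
The plan is to reduce all four statements to one-dimensional optimization problems controlled via Gordon's min-max inequality (Proposition~\ref{prop:gaussian_minmax}). The satisfiability parts $(i)(a)$ and $(ii)(a)$ will follow with minor modifications of Lemma~\ref{lemma:bounded_sols_general}: for $(i)(a)$ with general $b \neq 0$, the rescaling $x \mapsto x/b$ reduces the problem to $b = 1$ already handled there; for $(ii)(a)$ with $b = 0$, I will apply the escape-through-the-mesh argument from the same proof to the convex compact parametrization $\{x \in K, \, \|x\|_2 \leq 1\}$, showing that under $n \leq (1-\varepsilon)\omega(K)^2$ the infimum of $\|G x\|_2$ vanishes, and extracting a nonzero element of $V_0 \cap K$ by compactness.

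The substantive content lies in the unsatisfiability parts $(i)(b)$ and $(ii)(b)$, and my plan is to replace the combinatorial count by a smooth violation quantity accessible to the CGMT. Concretely, for $c > 0$, define
\begin{equation*}
V_c^{(b)}(x) \;\coloneqq\; \inf_{z \in [-c, c]^n} \|G x - b \, \ones_n - z\|_2^2 \;=\; \sum_{\mu=1}^n \Big[\big(|a_\mu^\T x - b| - c\big)_+\Big]^2,
\end{equation*}
where $G \in \bbR^{n \times p}$ has rows $a_\mu$. The elementary bound $V_c^{(b)}(x) \geq c^2 \cdot \#\{\mu : |a_\mu^\T x - b| > 2c\}$ reduces the count estimate to a uniform lower bound $V_c^{(b)}(x) \gtrsim n$. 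Rewriting $\sqrt{V_c^{(b)}(x)} = \min_{z \in [-c,c]^n} \max_{\|\lambda\|_2 \leq 1} \lambda^\T(G x - b \ones_n - z)$ on the convex compact set $(K \cap \{\|x\|_2 \leq R\}) \times [-c,c]^n$, I apply Proposition~\ref{prop:gaussian_minmax}(i) to replace $G$ by independent Gaussians $h \sim \mcN(0, \Id_p)$ and $g \sim \mcN(0, \Id_n)$. Optimizing $\lambda$ and $z$ analytically in the surrogate, and using Gaussian concentration of $\min_{y \in K \cap \bbS^{p-1}} h^\T y$ around $-\omega(K)$, reduces the task to proving
\begin{equation*}
\min_{s \in [0, R]} \Big[\sqrt{n \, \phi(s, b, c)} - s \, \omega(K) \Big] \;\geq\; \gamma(\varepsilon, b, c) \, \omega(K),
\quad \phi(s, b, c) \coloneqq \EE_{Z \sim \mcN(0,1)}\Big[\big((|s Z - b| - c)_+\big)^2\Big].
\end{equation*}
Under $n \geq (1+\varepsilon)\omega(K)^2$ this is a short one-dimensional analysis: at $s = 0$ the bracket equals $\sqrt{n}(|b|-c)_+$; for large $s$ one has $\phi(s,b,c) \to s^2$ and the bracket scales like $s(\sqrt{n} - \omega(K))$; continuity and compactness on the intermediate range give $\gamma > 0$ whenever $c$ is small relative to $|b|$. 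For $(ii)(b)$ where $b = 0$, homogeneity lets me parametrize $\|x\|_2 = \tau$ and obtain the analogous bound with threshold $c\tau$.

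The main obstacle I anticipate is removing the cutoff $\|x\|_2 \leq R$ in the CGMT step to obtain a uniform bound on the unbounded cone $K$. I plan to localize via direct arguments at the extremes: when $\|x\|_2 \to 0$ and $c < |b|$, $|a_\mu^\T x - b| \to |b| > c$ uniformly so $N_c(x) = n$ automatically; for $\|x\|_2 \geq R$ large, rescaling $y = x/\|x\|_2 \in K \cap \bbS^{p-1}$ recasts the count as $\#\{\mu : |a_\mu^\T y| > c/\|x\|_2\}$, and the $b = 0$ case of the same Gordon analysis (with shrinking threshold $c/\|x\|_2 \to 0$) forces $\#\{\mu : |a_\mu^\T x| > c\} \geq \eta n$. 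Matching constants across the three regimes gives the uniform conclusion; converting $V_c^{(b)}(x) \geq \eta c^2 n$ into $N_{2c}(x) \geq \eta n$ finally closes $(i)(b)$, with $(ii)(b)$ following by the explicit rescaling in $\tau$.
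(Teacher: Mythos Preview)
Your plan for the satisfiability parts $(i)(a)$ and $(ii)(a)$ is fine and essentially matches the paper. The real issue is in $(i)(b)$ and $(ii)(b)$, where your reduction from the count to the $\ell_2$ surrogate $V_c^{(b)}$ goes in the wrong direction.

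You correctly observe that $V_c^{(b)}(x) = \sum_\mu [(|a_\mu^\T x - b| - c)_+]^2 \geq c^2 \cdot \#\{\mu : |a_\mu^\T x - b| > 2c\}$. But this inequality says $N_{2c}(x) \leq V_c^{(b)}(x)/c^2$, so a \emph{lower} bound on $V_c^{(b)}$ gives no control whatsoever on $N_{2c}$. Concretely, take any $x$ with $|a_\mu^\T x - b| = 2c$ for every $\mu$: then $V_c^{(b)}(x) = n c^2$ is as large as possible, yet $N_{2c}(x) = 0$. So the final step ``converting $V_c^{(b)}(x) \geq \eta c^2 n$ into $N_{2c}(x) \geq \eta n$'' is simply false, and the CGMT lower bound on $V_c^{(b)}$, however sharp, cannot yield the count statement. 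The same defect propagates to your treatment of $(ii)(b)$.

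The paper handles this by a genuinely different mechanism: it does \emph{not} try to lower-bound a smooth surrogate, but instead runs a union bound over all subsets $S \subseteq [n]$ of size $> (1-\eta)n$, reducing to the event $\{\exists x \in K : \max_{\mu \in S} |a_\mu^\T x - b| \leq c\}$ for a \emph{fixed} $S$. Gordon's inequality with the $\ell_\infty/\ell_1$ duality (not $\ell_2$) then shows this event has probability $\leq e^{-c_2 n}$, and one chooses $\eta$ small enough that the entropy $\binom{n}{\leq \eta n} \leq e^{\eta n \log(e/\eta)}$ is beaten by this exponential decay. The point is that the $\ell_\infty$ constraint on a subset is exactly what encodes ``at most $\eta n$ constraints violated by more than $c$'', whereas your $\ell_2$ distance to the box $[-c,c]^n$ does not.
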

\noindent
It is clear that Proposition~\ref{prop:thick_mesh} ends the proof of Proposition~\ref{prop:no_approx_unsat_gaussian}.
Indeed, seen as an element of $\bbR^{d(d+1)/2}$, $\sqrt{d/2} G_\mu \iid \mcN(0, \Id_{d(d+1)/2})$, see the canonical embedding in eq.~\eqref{eq:def_flattening}.  
By Proposition~\ref{prop:gwidth_pds}, we have
$\omega(\mcS_d^+)^2 = d^2/4 + \smallO(d^2)$, and thus we can apply Proposition~\ref{prop:thick_mesh} with $p = d(d+1)/2$.
The difference $\sqrt{d}$ in normalization in point $(ii)$ of Proposition~\ref{prop:no_approx_unsat_gaussian} comes from the additional $\sqrt{d}$ factor that arises when relating $G_\mu$ to a standard Gaussian.

\myskip 
We now focus on proving Proposition~\ref{prop:thick_mesh}.

\subsubsection{Proof of Proposition~\ref{prop:thick_mesh}}
    We first show $(i)$, assuming $b \neq 0$.
    Results of \cite{chandrasekaran2012convex,amelunxen2014living}
    show that $V_b \cap K \neq \emptyset$ with high probability if $n \leq (1-\epsilon) \omega(K)^2$ (see e.g.\ Theorem~8.1 of \cite{amelunxen2014living}), 
    i.e.\ point $(a)$.
    While the converse is also shown in these works for $n \geq (1+\varepsilon) \omega(K)^2$, here we wish to prove the stronger statement $(b)$.
    Assume therefore that $n \geq (1+\varepsilon) \omega(K)^2$.
    By the union bound, for all $c \geq 0$ and $\eta \in (0,1)$: 
    \begin{align}
        \label{eq:ub_ceta}
        \nonumber
        &\bbP(\exists x \in K : \, \# \{\mu \in [n] \, : \, \mcC^{(b)}_\mu(x) > c\} < \eta n) \\ 
        \nonumber
        &= \bbP(\exists x \in K, \exists S \subseteq [n] \, : \, |S| > (1-\eta) n \, \textrm{ and } \,  \forall \mu \in S, \, \mcC^{(b)}_\mu(x) \leq c) \\ 
        \nonumber
        &\leq 
        \nonumber
        \sum_{\substack{S \subseteq [n] \\ |S| > (1-\eta) n}}\bbP(\exists x \in K : \, \forall \mu \in S, \, \mcC^{(b)}_\mu(x) \leq c), \\ 
        \nonumber
        &\aleq \left[\sum_{k < \eta \cdot n} \binom{n}{k}\right] \bbP(\exists x \in K \, : \, \|\{a_\mu^\T x - b\}_{\mu=1}^{(1-\eta)n} \|_\infty \leq c), \\ 
        &\bleq \exp\left\{\eta n \log \frac{e}{\eta}\right\} \bbP(\exists x \in K \, : \, \|\{a_\mu^\T x - b\}_{\mu=1}^{(1-\eta)n} \|_\infty \leq c).
    \end{align}
    We used that $a_\mu$ are i.i.d.\ in $(\rm a)$, 
    and the bound $\sum_{i=0}^k \binom{n}{i} \leq (en/k)^k$ in $(\rm b)$.
    We now make use of the following lemma (proven later on).
    \begin{lemma}
        \label{lemma:ub_ceta}
        \noindent
        Recall that $n \geq (1+\eps) \omega(K)^2$.
        There exist $c_1, c_2 > 0$ and $\eta_0 \in (0,1)$ depending only on $\eps$ such that
        for any $\eta \in (0,\eta_0)$:
        \begin{equation*}
            \bbP(\exists x \in K \, : \, \|\{a_\mu^\T x - b\}_{\mu=1}^{(1-\eta)n} \|_\infty \leq c_1 \cdot b) \leq 2 \exp\{-n c_2\}.
        \end{equation*}
    \end{lemma}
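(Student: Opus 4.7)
\textbf{Proof proposal for Lemma~\ref{lemma:ub_ceta}.}
The plan is to apply Gordon's min-max comparison (Proposition~\ref{prop:gaussian_minmax}) to a quadratic reformulation of the event. Let $m \coloneqq (1-\eta) n$ and let $A \in \bbR^{m \times p}$ denote the Gaussian matrix whose rows are $a_1,\ldots,a_m$. Using $\|\cdot\|_2 \leq \sqrt{m} \|\cdot\|_\infty$, it suffices to upper bound
\begin{equation*}
    \bbP\Big[\min_{x \in K} \|A x - b \ones_m\|_2 \leq c_1 |b| \sqrt{m}\Big].
\end{equation*}
Since $K$ is a cone, I would split the domain into the bounded part $K_R \coloneqq K \cap \{\|x\|_2 \leq R\}$ and its complement, for a radius $R > 0$ chosen later depending only on $\eps$ and $b$.

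\emph{The unbounded part.} If $x \in K$ with $\|x\|_2 > R$ satisfies the event, then by the triangle inequality $\|A x\|_2 \leq (1+c_1) |b|\sqrt{m}$, so $y \coloneqq x/\|x\|_2 \in K \cap \bbS^{p-1}$ satisfies $\|A y\|_2 \leq (1+c_1)|b|\sqrt{m}/R$. I would lower bound $\min_{y \in K \cap \bbS^{p-1}} \|A y\|_2 = \min_y \max_{\|\lambda\|_2 \leq 1} \lambda^\T A y$ via Gordon's inequality (part $(i)$, compact sets suffice), obtaining after computing the inner maximum a bound of the form $\max(0, \|h\|_2 - \widehat W(g))$ with $\widehat W(g) \coloneqq \max_{y \in K \cap \bbS^{p-1}} g^\T y$. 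By the $1$-Lipschitz concentration (Theorem~\ref{thm:gaussian_conc_lipschitz}) of $\widehat W$ around $\omega(K)$ and of $\|h\|_2$ around $\sqrt{m}$, and the assumption $\sqrt{m} \geq \sqrt{(1-\eta)(1+\eps)}\,\omega(K)$, for $\eta \leq \eta_0 \coloneqq \eps/(2(1+\eps))$ the gap $\sqrt{m} - \omega(K)$ is at least of order $\sqrt{n}$, so with probability $1 - 2 \exp(-c' n)$ we have $\min_{y} \|A y\|_2 \geq c_3 \sqrt{m}$ for an explicit $c_3 = c_3(\eps) > 0$. Choosing $R \coloneqq 2(1+c_1)|b|/c_3$ eliminates the unbounded part.

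\emph{The bounded part.} On $K_R$ the objective $\max_{\|\lambda\|_2 \leq 1}[\lambda^\T A x - b \lambda^\T \ones_m]$ is convex-concave on convex compact sets, so Gordon's inequality (part $(ii)$, extended to a non-strict inequality by a limit as in the proof of Lemma~\ref{lemma:bounded_sols_general}) reduces the analysis to
\begin{equation*}
    \Phi(g,h) \coloneqq \min_{x \in K_R} \max_{\|\lambda\|_2 \leq 1}\!\big\{\|\lambda\|_2 g^\T x + \|x\|_2 h^\T \lambda - b\lambda^\T \ones_m\big\}.
\end{equation*}
The inner max equals $\max(0,\, g^\T x + \|\|x\|_2 h - b \ones_m\|_2)$, so after the parameterization $x = v y$ with $v \in [0,R]$, $y \in K \cap \bbS^{p-1}$, for any $t>0$ the event $\Phi(g,h) \leq t$ is equivalent to $\min_{v,y}[v g^\T y + \|v h - b \ones_m\|_2] \leq t$. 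Minimizing over $y$ gives $-v \widehat W(-g)$, and the scalar minimization $v \mapsto \sqrt{m(v^2+b^2)} - v \widehat W$ (using concentration of $\|h\|_2^2$ and $h^\T \ones_m$) attains its minimum at $v^\star = |b|\widehat W/\sqrt{m - \widehat W^2}$ with value $|b|\sqrt{m - \widehat W^2}$. Under the high-probability bound $\widehat W \leq \omega(K)(1+\mcO(n^{-1/4}))$ and $m \geq (1+\eps/2)\omega(K)^2$ for small $\eta$, I obtain $|b|\sqrt{m-\widehat W^2} \geq \tfrac{|b|}{2}\sqrt{m}\cdot\sqrt{\eps/(1+\eps)}$ with probability $1 - 2 e^{-c'' n}$, and $v^\star \leq R$ for $R$ as above. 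Taking $c_1 \coloneqq \tfrac{1}{4}\sqrt{\eps/(1+\eps)}$ and $c_2$ the minimum of the exponential rates produced by the various Gaussian concentrations then yields the claim via a union bound.

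\textbf{Main obstacle.} The conceptual structure is standard, so the delicate point is \emph{uniformizing} the Gaussian concentration of $\widehat W(g)$, $\|h\|_2$, and $h^\T \ones_m$ at the rate $\exp(-cn)$, and ensuring all constants $c_1, c_2, \eta_0$ depend only on $\eps$ (and $b$). A secondary subtlety is that Gordon's inequality needs a compact feasible set, which is why we must separately rule out the unbounded regime $\|x\|_2 > R$; the homogeneity of $K$ reduces this to the sharper comparison $\sqrt{m} > \omega(K)$, which is precisely the regime where the PSD Gaussian-width threshold of Proposition~\ref{prop:gwidth_pds} becomes the right quantitative input.
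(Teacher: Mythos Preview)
Your approach is correct in spirit but takes a genuinely different route from the paper. The paper keeps the $\ell_\infty$ norm throughout: it dualizes $\|Gx-b\ones_m\|_\infty$ against the $\ell_1$ ball, applies Gordon's inequality part~$(i)$ \emph{once} (handling unboundedness simply by taking $A\to\infty$ in $\bbP[\gamma_A\leq t]\leq \bbP[\gamma\leq t]$), and then lower bounds the resulting auxiliary problem by \emph{constructing} an explicit near-optimal dual variable $\lambda^\star_\mu \propto h_\mu\indi\{h_\mu\leq A_\eps\}$. You instead relax to $\ell_2$ via $\|\cdot\|_2\leq\sqrt{m}\,\|\cdot\|_\infty$, split into $\{\|x\|_2\leq R\}$ and its complement, apply Gordon separately on each piece, and then solve the auxiliary scalar problem in closed form. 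Your route is more elementary (no clever $\lambda^\star$ needed) at the cost of more moving parts; the paper's is cleaner but requires spotting the right truncated-Gaussian dual certificate. Both ultimately rest on the same quantitative fact $m\geq(1+\eps/2)\omega(K)^2$ for small~$\eta$.

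Two small points to tighten. First, in the bounded part you invoke Gordon's part~$(ii)$; this is the wrong direction. You want $\bbP[C(A)\leq t]$ small, which is exactly what part~$(i)$ delivers (via $\bbP[C<t]\leq 2\bbP[\mcC\leq t]$ and a limit in $t$); part~$(ii)$ controls the upper tail and is not needed here. Second, your approximation $\|vh-b\ones_m\|_2\approx\sqrt{m(v^2+b^2)}$ requires $|h^\T\ones_m|\leq\delta m$ with probability $1-e^{-cn}$; since $h^\T\ones_m\sim\mcN(0,m)$ this holds with rate $e^{-\delta^2 m/2}$, which is indeed exponential in $n$, but you should make explicit that you only need this \emph{weak} $O(m)$ bound (not the typical $O(\sqrt{m})$ scale) so that the cross term is negligible uniformly over $v\in[0,R]$. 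With these fixes your argument goes through and the resulting constants depend only on~$\eps$, as required.
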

    \noindent
    Applying Lemma~\ref{lemma:ub_ceta} in eq.~\eqref{eq:ub_ceta}, we can 
    consider $\eta = \eta(\eps,b) \in (0,1/2)$ small enough, such that $\eta < \eta_0$ and $\eta \log(e/\eta) \leq c_2 /2$. 
    This yields then that 
    \begin{equation*}
        \bbP(\exists x \in K : \, \# \{\mu \in [n] \, : \, \mcC^{(b)}_\mu(x) > c_1 \cdot b\} < \eta n) \leq 2 \exp\{-nc_2/2\} \to 0,
    \end{equation*}
    and ends the proof.
    We now prove $(ii)$ of Proposition~\ref{prop:thick_mesh}, assuming $b = 0$.
    $(a)$ is here a simple consequence of the usual Gordon's ``escape through a mesh'' theorem \cite{gordon1988milman}, 
    so we focus on $(b)$, assuming $n \geq (1+\eps) \omega(K)^2$.
    Note that since $K$ is a cone,  we have 
    for all $c \geq 0$, $\eta \in (0,1)$: 
    \begin{align}
        \label{eq:b0}
        \nonumber
        &\sup_{\substack{x \in K \\ \# \{\mu \in [n] \, : \, \mcC^{(0)}_\mu(x) > c\} < \eta n}} \|x\|_2 \\
        &= \sup\{v \geq 0 \, : \, \exists x \in K \cap \bbS^{p-1} \, \textrm{ s.t. } \# \{\mu \in [n] \, : \, \mcC^{(0)}_\mu(x) > c / v\} < \eta n\}.
    \end{align}
    We now use the following counterpart to Lemma~\ref{lemma:ub_ceta} in the case $b = 0$, also proven later: 
    \begin{lemma}
        \label{lemma:ub_ceta_b0}
        \noindent
        Recall that $n \geq (1+\eps) \omega(K)^2$.
        There exist $c_1, c_2 > 0$ and $\eta_0 \in (0,1)$ depending only on $\eps$ such that
        for any $\eta \in (0,\eta_0)$:
        \begin{equation*}
            \bbP(\exists x \in K \cap \bbS^{p-1} \, : \, \|\{a_\mu^\T x\}_{\mu=1}^{(1-\eta)n} \|_\infty \leq c_1) \leq 2 \exp\{-n c_2\}.
        \end{equation*}
    \end{lemma}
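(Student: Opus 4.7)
\textbf{Proof plan for Lemma~\ref{lemma:ub_ceta_b0}.}
The plan is to establish, with probability at least $1 - 2\exp\{-c_2 n\}$, the lower bound $\min_{x \in K \cap \bbS^{p-1}}\|A' x\|_\infty \geq c_1$, where $A' \in \bbR^{m \times p}$ denotes the standard Gaussian matrix whose rows are $a_1, \dots, a_m$ and $m \coloneqq \lfloor (1-\eta) n\rfloor$. The cleanest route is to pass through the Euclidean norm via the elementary inequality $\|y\|_\infty \geq \|y\|_2/\sqrt{m}$: it suffices to bound $\min_{x \in K \cap \bbS^{p-1}} \|A'x\|_2$ from below by a quantity of order $\sqrt{m}$, and for this we appeal to Gordon's min-max inequality (Proposition~\ref{prop:gaussian_minmax}, item~$(i)$). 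The loss of a $\sqrt{\log m}$ factor is harmless since we only need the bound to be a positive constant.

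We would take $\mcS_v = K \cap \bbS^{p-1}$ (closed and bounded, hence compact), $\mcS_u = \bbS^{m-1}$, $\psi \equiv 0$, and $W = A'$, yielding
\[
C(A') = \min_{x \in K \cap \bbS^{p-1}} \|A'x\|_2, \qquad \mcC(g, h) = \|g\|_2 + \min_{x \in K \cap \bbS^{p-1}} h^\T x,
\]
with $g \sim \mcN(0, \Id_m)$ and $h \sim \mcN(0, \Id_p)$ independent. By the symmetry $-h \deq h$, $\EE \mcC(g, h) = E_m - \omega(K)$ where $E_m \coloneqq \EE\|g\|_2$. The key arithmetic step is to convert the hypothesis $n \geq (1+\eps)\omega(K)^2$ into a quantitative gap of size $\sqrt{m}$: choosing $\eta_0 = \eta_0(\eps) \in (0, 1/2)$ small enough that $(1-\eta_0)(1+\eps) \geq 1 + \eps/2$, for any $\eta \in (0, \eta_0)$ and $n$ large enough we have $m \geq (1+\eps/2)\omega(K)^2$; combined with the standard estimate $E_m = \sqrt{m}(1 - o_m(1))$, this produces some $\delta = \delta(\eps) > 0$ with $E_m - \omega(K) \geq \delta \sqrt{m}$ for all large $n$.

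The map $(g, h) \mapsto \mcC(g, h)$ is $\sqrt{2}$-Lipschitz on $\bbR^m \times \bbR^p$ (both summands being $1$-Lipschitz in their respective arguments), so Theorem~\ref{thm:gaussian_conc_lipschitz} gives
\[
\bbP\bigl[\mcC(g,h) \leq (E_m - \omega(K))/2\bigr] \leq 2 \exp\bigl\{-(E_m - \omega(K))^2/16\bigr\} \leq 2 \exp\bigl\{-\delta^2 m / 16\bigr\}.
\]
Applying Gordon's inequality and then $\|\cdot\|_\infty \geq \|\cdot\|_2/\sqrt{m}$, we reach
\[
\bbP\Bigl[\min_{x \in K \cap \bbS^{p-1}} \|A' x\|_\infty \leq \delta/2\Bigr] \leq \bbP\Bigl[\min_{x \in K \cap \bbS^{p-1}} \|A' x\|_2 \leq \delta\sqrt{m}/2\Bigr] \leq 4 \exp\bigl\{-\delta^2 m / 16\bigr\}.
\]
Since $m \geq (1-\eta_0) n \geq n/2$, for $n$ large the right-hand side is at most $2\exp\{-c_2 n\}$ with $c_2 \coloneqq \delta^2/64$. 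Setting $c_1 \coloneqq \delta/2$ then completes the proof.

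The only genuinely delicate point is obtaining the quantitative gap $E_m - \omega(K) \geq \delta \sqrt{m}$ with $\delta$ depending only on $\eps$: one must choose $\eta_0$ so that a macroscopic fraction of the gap $n - \omega(K)^2$ survives both the loss of $\eta n$ constraints and the sub-leading corrections in $E_m \approx \sqrt{m}$. Everything else is a routine combination of Gordon's theorem and Gaussian concentration. Conceptually, the lemma is a quantitative version of Gordon's classical escape-through-a-mesh theorem in the regime $m > (1+\eps)\omega(K)^2$, formulated for $\|\cdot\|_\infty$ rather than $\|\cdot\|_2$, which is precisely what is needed to translate ``few'' constraints having small residual into the geometric obstruction used in Proposition~\ref{prop:thick_mesh}.
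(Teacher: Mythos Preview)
Your proof is correct and takes a genuinely different route from the paper's. The paper attacks $\|\cdot\|_\infty$ directly via its dual: it writes $\|A'x\|_\infty = \max_{\|\lambda\|_1 \leq 1}\lambda^\T A'x$, applies Gordon's inequality with $\mcS_u$ the $\ell^1$-ball, then lower-bounds the resulting auxiliary problem by plugging in the explicit witness $\lambda^\star_\mu = h_\mu/(Dm)$ with $D=\sqrt{2/\pi}$ and invoking Bernstein-type concentration on $\|\lambda^\star\|_1$, $h^\T\lambda^\star$, and $\|\lambda^\star\|_2$. You instead sacrifice a factor $\sqrt{m}$ via $\|y\|_\infty \geq \|y\|_2/\sqrt m$ and reduce to the classical $\ell^2$ escape-through-a-mesh bound, for which Gordon with $\mcS_u=\bbS^{m-1}$ gives the closed form $\mcC(g,h)=\|g\|_2+\min_{x\in K\cap\bbS^{p-1}}h^\T x$ and a one-line Lipschitz concentration finishes.

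Your argument is shorter and more elementary for this lemma; the paper's $\ell^1$-dual machinery is what it already built for the companion Lemma~\ref{lemma:ub_ceta} (the $b\neq 0$ case), where the optimization ranges over the unbounded cone $K$ rather than $K\cap\bbS^{p-1}$ and the affine shift $-b\ones_m$ makes the $\ell^2$ reduction less clean. So the paper reuses one framework for both lemmas, while you exploit the homogeneity of the $b=0$ case to get a cheaper proof. (Minor note: your parenthetical about ``loss of a $\sqrt{\log m}$ factor'' is a red herring---no such factor appears in your argument.)
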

    \noindent
    Repeating the same reasoning as in the $b \neq 0$ case, we can then find $c = c(\eps) > 0$ and $\eta = \eta(\eps) \in (0,1/2)$ such that 
    with probability $1 - \smallO_n(1)$:
    \begin{align*}
        \forall x \in K \cap \bbS^{p-1} \, : \, 
        \# \{\mu \in [n] \, : \, \mcC^{(0)}_\mu(x) > c\} \geq \eta n.
    \end{align*}
    Plugging this in eq.~\eqref{eq:b0} yields that with probability $1 - \smallO_n(1)$, for all $\tau \geq 0$:
    \begin{equation*}
    \max_{\substack{x \in K \\ \# \{\mu \in [n] \, : \, \mcC^{(0)}_\mu(x) > c \tau\} < \eta n}} \|x\|_2 \leq \tau,
    \end{equation*}
    which ends the proof. $\qed$

    \myskip
    We now prove the two Lemmas~\ref{lemma:ub_ceta} and \ref{lemma:ub_ceta_b0}.

\subsubsection{Proofs of Lemma~\ref{lemma:ub_ceta} and \ref{lemma:ub_ceta_b0}}\label{subsubsec:proof_lemma_ub_ceta}

\begin{proof}[Proof of Lemma~\ref{lemma:ub_ceta}]
    Note that for all $t \geq 0$ and $\eta \in (0,1)$:
    \begin{equation}\label{eq:rewriting_prob_intersection}
        \bbP\left(\exists x \in K \, : \, \|\{a_\mu^\T x - b\}_{\mu=1}^{(1-\eta)n} \|_\infty \leq t\right) 
        = \bbP\left[\exists x \in  K \, : \, \|G x - b \ones_m\|_\infty \leq t\right],
    \end{equation}
    with $m \coloneqq n(1-\eta)$,
    $G \in \bbR^{m \times p}$ an i.i.d.\ $\mcN(0,1)$ matrix, and $\ones_m$ the all-ones vector.
    We thus have:
    \begin{align}
        \label{eq:def_GammaAG}
        \nonumber
        &\bbP(\exists x \in K \, : \, \|\{a_\mu^\T x - b\}_{\mu=1}^{(1-\eta)n} \|_\infty \leq t)
        \\ 
        \nonumber
        &\aeq \lim_{A \to \infty} \bbP\left[\exists x \in  K \, : \, \| x \|_2 \leq A \, \textrm{ and }  \, \|G x - b \ones_m\|_\infty \leq t\right], \\
         &\beq \lim_{A \to \infty} \bbP\Big[\min_{\substack{x \in K \\ \| x\|_2 \leq A}} \|G x - b \ones_m\|_\infty \leq t\Big],
    \end{align}
    where $(\rm a)$ follows from dominated convergence, and $(\rm b)$ uses that the minimum is now over a compact set since $K$ is closed.
    Since $\| \cdot\|_\infty$ and $\| \cdot \|_1$ are dual norms, 
    we have for all $x \in K$:
    \begin{equation*}
        \|G x - b \ones_m\|_\infty = \max_{\substack{\lambda \in \bbR^m \\ \|\lambda\|_1 \leq 1}} \left[-b\lambda^\intercal \ones_m + \lambda^\intercal G x\right].
    \end{equation*}
    We can now use the Gaussian min-max inequality (Proposition~\ref{prop:gaussian_minmax}), which, 
    together with eq.~\eqref{eq:def_GammaAG}, implies:
    \begin{equation*}
        \bbP(\exists x \in K \, : \, \|\{a_\mu^\T x - b\}_{\mu=1}^{(1-\eta)n} \|_\infty \leq t) \leq \lim_{A \to \infty} 2\bbP[\gamma_A(g, h) \leq t]
        \aleq 2\bbP[\gamma(g, h)\leq t].
    \end{equation*}
    Here we defined: 
    \begin{equation}\label{eq:def_gamma_gh}
        \gamma(g, h) \coloneqq \inf_{x \in K} \max_{\substack{\lambda \in \bbR^m \\ \|\lambda\|_1 \leq 1}} \left[- b \lambda^\intercal \ones_m + \|\lambda\|_2 g^\intercal x + \|x \|_2 h^\intercal \lambda\right],
    \end{equation}
    and $\gamma_A$ is defined by restricting the infimum to $\|x\|_2 \leq A$. 
    Moreover, $g \sim \mcN(0, \Id_p), h \sim \mcN(0, \Id_m)$.
    The inequality $(\rm a)$ holds since $\gamma(g, h) \leq \gamma_A(g, h)$ for all $A > 0$.
    To conclude the proof, it therefore suffices to show: 
    \begin{equation}\label{eq:to_show_gammag}
        \bbP[\gamma(g, h) \leq c_1 b] \leq 2 \exp\{-nc_2\},
    \end{equation}
    for $c_1, c_2$ small enough (depending on $\eps, b$).
    We use again that $g \to \max_{x \in K \cap \bbS^{p-1}}[g^\intercal x]$ is $1$-Lipschitz, 
    and in particular concentrates on the Gaussian width by Theorem~\ref{thm:gaussian_conc_lipschitz}.
    Using that $g$ is distributed as $-g$, this implies that for any $u > 0$:
    \begin{equation*}
       \bbP\left\{\min_{x \in K \cap \bbS^{p-1}}[g^\intercal x] \leq - \omega(K) - u\right\} \leq e^{-u^2/2}.
    \end{equation*}
    Since $\omega(K) \leq \sqrt{n / (1+\varepsilon)}$ by hypothesis, 
    we can fix $\delta = \delta(\eps) > 0$ and $\eta_0 = \eta_0(\eps) > 0$ such that 
    for $n$ large enough we have for $\eta < \eta_0$: $\omega(K) + \delta \sqrt{m} \leq \sqrt{m / (1+\varepsilon/2)}$ (recall that $m = (1-\eta) n$).
    Thus, since $K$ is a cone, and using the max-min inequality, we have with probability at least $1 - e^{- n (1-\eta_0) \delta^2/2}$:
    \begin{equation}\label{eq:lb_gamma_gh}
        \gamma(g, h) \geq \inf_{v \geq 0} \max_{\substack{\lambda \in \bbR^m \\ \|\lambda\|_1 \leq 1}} \left[- b \lambda^\intercal \ones_m + v \left( h^\intercal \lambda- \|\lambda\|_2 (\omega(K) + \delta \sqrt{m}) \right)  \right].
    \end{equation}
    Let us now assume that $b > 0$, and let $X \sim \mcN(0,1)$, and $D \coloneqq \EE [|X|] = \sqrt{2/\pi}$. 
    We pick $\sigma = \sigma(\eps) \in (0,1)$ (its choice will be constrained later on), 
    and define $A_\eps$ by: 
    \begin{equation}
        \label{eq:def_Aeps}
            \EE[X^2 \indi\{X \leq A_\varepsilon\}] = \sigma(\eps).
    \end{equation}
    Finally, we define $\lambda^\star = \lambda^\star(h) \in \bbR^m$ by
    \begin{equation}\label{eq:def_lambdastar}
        \lambda^\star_\mu \coloneqq \frac{1}{D m} h_\mu \indi\{h_\mu \leq A_\varepsilon\}. 
    \end{equation}
    It is a simple exercise based on Hoeffding's and Bernstein's inequalities \cite{vershynin2018high}
    to check that for any $u > 0$ (recall that $m = (1-\eta) n \geq (1-\eta_0(\eps)) n$):
    \begin{equation}\label{eq:plims_lambdastar}
        \begin{dcases}
            \bbP[\|\lambda^\star\|_1 \leq 1] &\geq 1 - \exp\{-C_1 n\}  , \\
            \bbP\left[h^\T \lambda^\star - \frac{\sigma(\eps)^2}{D} \leq - u\right] &\leq \exp\{-C_2 n \min(u^2, u)\}, \\
            \bbP\left[\|\lambda^\star\|_2^2 - \frac{\sigma(\eps)^2}{m D^2} \geq \frac{u}{n}\right] &\leq \exp\{-C_3 n \min(u^2, u)\}, \\
            \bbP\left[\ones_m^\T \lambda^\star \geq - C_4\right] &\leq \exp\{-C_5 n\},
        \end{dcases}
    \end{equation}
    for some positive constants $(C_a)_{a=1}^5$, all depending on $\eps$.
    In particular, the first three lines of eq.~\eqref{eq:plims_lambdastar} imply 
    that for all $u \in (0,1)$, with probability at least $1 - 3 \exp\{-C(\eps) n u^2\}$:
    \begin{equation}\label{eq:def_sigma_gamma_eps}
        \|\lambda^\star\|_1 \leq 1 \, \textrm{ and } \frac{h^\T \lambda^\star}{\|\lambda^\star\|_2} \frac{1}{\omega(K) + \delta \sqrt{m}} 
        \geq \frac{\sigma(\eps)^2/D - u}{\sqrt{\sigma(\eps)^2 / D^2 + u}} \sqrt{1 + \frac{\eps}{2}} ,
    \end{equation}
    in which we used that $\omega(K) + \delta \sqrt{m} \leq \sqrt{m} (1+\eps/2)^{-1/2}$, 
    and $m / n \leq 1$.
    We can choose $\sigma(\eps) \in (0,1)$ sufficiently close to $1$, and $u(\eps) \in (0,1)$ sufficiently close to $0$
    such that the right-hand side of eq.~\eqref{eq:def_sigma_gamma_eps} is greater than $1$.
    Combining it with the last equation of eq.~\eqref{eq:plims_lambdastar} and 
    the lower bound of eq.~\eqref{eq:lb_gamma_gh}, 
    we get that (with new constants $c_1, c_2$ depending on $\eps$), with probability at least $1 - 2 \exp\{-c_2(\eps) n\}$:
    \begin{equation*}
        \gamma(g, h) \geq b c_1(\eps),
    \end{equation*}
    which implies eq.~\eqref{eq:to_show_gammag} and ends the proof.
    The case $b < 0$ is treated similarly, constraining $h_\mu \geq - A_\eps$ rather than $h_\mu \leq A_\eps$ in eq.~\eqref{eq:def_lambdastar}.
\end{proof}

\begin{proof}[Proof of Lemma~\ref{lemma:ub_ceta_b0}]
    Let $t \geq 0$. Repeating the same arguments as in the proof of Lemma~\ref{lemma:ub_ceta} 
    one obtains that 
    \begin{equation*}
        \bbP(\exists x \in K \cap \bbS^{p-1} \, : \, \|\{a_\mu^\T x\}_{\mu=1}^{(1-\eta)n} \|_\infty \leq t) 
        \leq 2 \bbP\Bigg[\underbrace{\min_{x \in K \cap \bbS^{p-1}} \max_{\substack{\lambda \in \bbR^m \\ \|\lambda\|_1 \leq 1}} \left\{\|\lambda\|_2 g^\T x + h^\T \lambda\right\} \leq t}_{\eqqcolon \gamma(g,h)}\Bigg].
    \end{equation*}
    Again, we
    can fix $\eta_0(\eps) > 0$ and $\delta(\eps) > 0$ such that for $\eta < \eta_0$
    and $n$ large enough we have $\omega(K) + \delta\sqrt{m} \leq \sqrt{m} [1+\eps/2]^{-1/2}$. 
    By the max-min inequality and the concentration of the Gaussian width, this implies that with probability at least $1-e^{-n (1-\eta_0) \delta^2}$:
    \begin{equation}\label{eq:lb_gamma_gh_b0}
        \gamma(g, h) \geq \max_{\substack{\lambda \in \bbR^m \\ \|\lambda\|_1 \leq 1}} \left[h^\intercal \lambda- \|\lambda\|_2 (\omega(K) + \delta \sqrt{m}) \right].
    \end{equation}
    Defining again $D \coloneqq \sqrt{2/\pi}$ so that $D = \EE[|X|]$ for $X \sim \mcN(0,1)$, we now define $\lambda^\star$ as:
    \begin{equation*}
        \lambda_\mu^\star \coloneqq \frac{1}{D m} h_\mu.
    \end{equation*}
    We have the counterpart to eq.~\eqref{eq:plims_lambdastar} for this case:
    for any $u > 0$ and $\tau > 0$,
    \begin{equation}\label{eq:plims_lambdastar_b0}
        \begin{dcases}
            \bbP[\|\lambda^\star\|_1 \leq 1 + \tau] &\geq 1 - \exp\{-C_1 n \tau^2\}  , \\
            \bbP\left[h^\T \lambda^\star - \frac{1}{D} \leq - u\right] &\leq \exp\{-C_2 n \min(u^2, u)\}, \\
            \bbP\left[\|\lambda^\star\|_2^2 - \frac{1}{m D^2} \geq \frac{u}{n}\right] &\leq \exp\{-C_3 n \min(u^2, u)\},
        \end{dcases}
    \end{equation}
    for some $(C_a)_{a=1}^3$ depending on $\eps$.
    We can fix $u = u(\eps) > 0$ such that 
    \begin{equation*}
        D^{-1}-u - \sqrt{\frac{u + D^{-2}}{1  + \eps/2}} \eqqcolon 2 c_1(\eps) > 0,
    \end{equation*}
    since the limit as $u \to 0$ of the left-hand side is strictly positive.
    Letting $\tau = 1$, we finally get that with probability at least $1 - 3 \exp(-c_2(\eps) n)$ we can lower bound (using $\lambda = \lambda^\star/2$ such that $\|\lambda\|_1 \leq 1$)
    \begin{align*}
        \gamma(g, h) &\geq \frac{D^{-1} - u}{2} - \frac{1}{2} \sqrt{\frac{u}{n} + \frac{1}{mD^2}} (\omega(K) + \delta \sqrt{m}), \\ 
        &\geq \frac{D^{-1} - u}{2} - \frac{1}{2} \sqrt{\frac{u + D^{-2}}{1 + \eps/2}}, \\ 
        &\geq c_1(\eps).
    \end{align*}
    This ends the proof.
\end{proof}

\section{Universality: proof of Proposition~\ref{prop:universality_gs}}\label{sec:proof_universality_gs}
\noindent
This section is devoted to the proof of Proposition~\ref{prop:universality_gs}.
We first show in Section~\ref{subsec:lipschitz_energy} a critical result on the Lipschitz constant of the ``error'' function appearing in 
eq.~\eqref{eq:def_gs}. This requires controlling a random process on the operator norm sphere, 
which is also useful in the proof of Theorem~\ref{thm:main_positive_side}, see Section~\ref{subsec:proof_thm_main_positive_side}.
We leverage this control to show in Section~\ref{subsec:universality_matrix} a general result on the universality of a quantity known as the asymptotic free entropy of the model, 
both for matrices arising from ellipsoid fitting and its Gaussian equivalent.
This result follows from an interpolation argument.
Finally, we apply these results in the so-called ``low-temperature'' limit in Section~\ref{subsec:proof_universality_gs} 
to deduce Proposition~\ref{prop:universality_gs}.
As we mentioned, while we can not directly apply the results of \cite{montanari2022universality},
parts of Sections~\ref{subsec:universality_matrix} and \ref{subsubsec:proof_universality_gs} closely follows their approach.
We defer to Appendix~\ref{sec_app:technical_universality} some technicalities, as well as some parts of the proof that 
more directly follow the arguments of \cite{montanari2022universality}.

\subsection{Lipschitz constant of the energy function, and bounding random processes}\label{subsec:lipschitz_energy}

\noindent
We show here the following result on the behavior of the error (or ``energy'') function, under 
both models $X_\mu \sim \Ell(d)$ and $X_\mu \sim \GOE(d)$.
\begin{lemma}[Lipschitz constant of the energy]\label{lemma:energy_change_small_ball}
    \noindent
    Let $n,d \geq 1$ and $n,d \to \infty$ with $\alpha_1 d^2 \leq n \leq \alpha_2 d^2$ for some $0 < \alpha_1 < \alpha_2$.
    Let $\phi : \bbR \to \bbR_+$ such that $\|\phi'\|_\infty < \infty$, and
    $X_1, \cdots, X_n \in \mcS_d$ be generated i.i.d.\ according to either $\GOE(d)$ or $\Ell(d)$.
    For $S \in \mcS_d$, we define the \emph{energy}:
    \begin{equation*}
        E_{\{X_\mu\}}(S) \coloneqq \ \frac{1}{d^2} \sum_{\mu=1}^n \phi[\Tr(X_\mu S)].
    \end{equation*}
    Then the following holds for some $C > 0$ (depending only on $\alpha$):
    \begin{equation*}
        \bbP\left[\sup_{S_1, S_2 \in \mcS_d} \frac{|E_{\{X_\mu\}}(S_1) - E_{\{X_\mu\}}(S_2)|}{\|S_1 - S_2\|_\op} \leq C \|\phi'\|_\infty \right] \geq 1 - 2 e^{-n}.
    \end{equation*}
\end{lemma}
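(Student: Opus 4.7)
The natural first step is to linearize via the Lipschitz property of $\phi$: by the mean value theorem,
\begin{equation*}
    |E_{\{X_\mu\}}(S_1) - E_{\{X_\mu\}}(S_2)| \;\leq\; \frac{\|\phi'\|_\infty}{d^2} \sum_{\mu=1}^n |\Tr[X_\mu (S_1 - S_2)]|.
\end{equation*}
Setting $R \coloneqq (S_1 - S_2)/\|S_1 - S_2\|_\op$, the claim reduces to showing that with probability $\geq 1 - 2e^{-n}$,
\begin{equation*}
    \sup_{\|R\|_\op \leq 1} \frac{1}{d^2} \sum_{\mu=1}^n |\Tr[X_\mu R]| \;\leq\; C
\end{equation*}
for a constant $C > 0$ depending only on $\alpha_1, \alpha_2$. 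Since $n \asymp d^2$, this is equivalent to showing that the empirical process $R \mapsto n^{-1} \sum_\mu |\Tr[X_\mu R]|$ is uniformly bounded by a constant on the operator-norm unit ball of $\mcS_d$.

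This is precisely a bound on the empirical process over the operator-norm ball, which is the content of Lemma~\ref{lemma:emp_proc_ellipse} for $X_\mu \sim \Ell(d)$ (applied with exponent $\gamma r = 1 \leq 4/3$, which falls inside its range of validity). For $X_\mu \sim \GOE(d)$ the analogous bound is strictly easier since the linear form $\Tr[X_\mu R]$ is Gaussian with variance $2\|R\|_F^2/d \leq 2$, and one can obtain it by an epsilon-net argument on the operator-norm ball combined with a Gaussian tail bound. In both cases one chooses the probability threshold so that the resulting bound is at least $1 - 2 e^{-n}$, which is feasible because the relevant concentration exponent is of order $d^2 \asymp n$, matching the entropy of an $\epsilon$-net on the operator-norm ball $B_\op(1) \cap \mcS_d$.

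The hard part is really the $\Ell(d)$ side, where the pointwise random variable $\Tr[X_\mu R] = (x_\mu^\T R x_\mu - \Tr R)/\sqrt{d}$ is only sub-exponential (Hanson--Wright type), not sub-Gaussian. A naive union bound over a net of $B_\op(1)$ of size $(3/\epsilon)^{\Theta(d^2)}$ requires pointwise deviation probabilities of order $\exp(-C d^2 \log(1/\epsilon))$, which is exactly why the exponent limitation $\gamma r < 4/3$ shows up in Lemma~\ref{lemma:emp_proc_ellipse}: one is balancing the sub-exponential tail of $|\Tr[X_\mu R]|^{\gamma r}$ against the metric entropy of $B_\op(1)$. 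Fortunately, here we only need the minimal exponent $\gamma r = 1$, so the obstruction does not bind and we obtain the stated Lipschitz bound with the desired probability $1 - 2e^{-n}$.
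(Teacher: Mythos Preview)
Your proposal is correct and follows essentially the same approach as the paper: reduce via the Lipschitz property of $\phi$ to the empirical process bound $\sup_{\|R\|_\op\leq 1}\frac{1}{d^2}\sum_\mu|\Tr[X_\mu R]|\leq C$, then invoke Lemma~\ref{lemma:emp_proc_ellipse} (with $r=1$) for the $\Ell(d)$ case and the analogous Gaussian bound (Lemma~\ref{lemma:emp_proc_gaussian}, noting $\|R\|_\op\leq 1\Rightarrow\|R\|_F^2\leq d$) for the $\GOE(d)$ case. The only cosmetic difference is that the paper routes the linearization through $\sup_S\|\nabla_S E(S)\|_{S_1}$ and Schatten duality, whereas you apply the scalar Lipschitz inequality termwise and normalize $S_1-S_2$ directly; both arrive at exactly the same supremum. (Minor notational slip: the parameter in Lemma~\ref{lemma:emp_proc_ellipse} is simply $r$, so you mean $r=1$, not ``$\gamma r=1$''.)
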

\noindent
In other words, the energy function has a bounded Lipschitz constant (as $d \to \infty$) with respect to the operator norm.
Note that this is strictly stronger than what a naive use of the triangular inequality
and of the duality $\|\cdot\|_\op \leftrightarrow \|\cdot\|_{S_1}$ yields:
\begin{equation*}
    \frac{|E_{\{X_\mu\}}(S_1) - E_{\{X_\mu\}}(S_2)|}{\|S_1 - S_2\|_\op} \leq \frac{\|\phi'\|_\infty}{d^2} \sum_{\mu=1}^n \Tr|X_\mu|,
\end{equation*}
since $\Tr|X_\mu| \gtrsim d$ for $X_\mu \sim \GOE(d)$, and $\Tr|X_\mu| \gtrsim \sqrt{d}$ for $X_\mu \sim \Ell(d)$.
Instead, the proof of Lemma~\ref{lemma:energy_change_small_ball} is based on the following bounds for random processes, for which we separate 
the $\GOE(d)$ and $\Ell(d)$ setting. Lemma~\ref{lemma:emp_proc_gaussian} is a consequence of elementary concentration results, 
and is proven in Appendix~\ref{subsec_app:proof_lemma_emp_proc_gaussian}, while Lemma~\ref{lemma:emp_proc_ellipse} is proven in the following. 
\begin{lemma}[Bounding random processes, $\GOE(d)$ setting]\label{lemma:emp_proc_gaussian}
    \noindent 
    Let $(G_\mu)_{\mu=1}^n \iid \GOE(d)$.
    Let $r \in [1,2]$. 
    We assume that $n \geq \alpha_1 d^2$, for some $\alpha_1 > 0$.
    There exists $C = C(\alpha_1) > 0$ such that for all $t > 0$:
    \begin{equation*}
        \bbP\Bigg[\max_{\|S\|_F^2 = d} \left(\sum_{\mu=1}^n |\Tr[G_\mu S] | ^r\right)^{1/r} \geq (C + t) n^{1/r}\Bigg] \leq \exp\left(-nt^2/2\right).
    \end{equation*}
\end{lemma}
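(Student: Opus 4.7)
The plan is to reduce the claim to the case $r=2$ via a standard $\ell^r \subseteq \ell^2$ inequality, and then invoke classical concentration for the largest singular value of a Gaussian matrix. For any $a\in\bbR^n$ and $r\in[1,2]$, H\"older's inequality gives $\|a\|_{\ell^r}\leq n^{1/r-1/2}\|a\|_{\ell^2}$. Applied to $a_\mu=\Tr[G_\mu S]$, this yields
\begin{equation*}
\max_{\|S\|_F^2=d}\Big(\sum_{\mu=1}^n|\Tr[G_\mu S]|^r\Big)^{1/r}\leq n^{1/r-1/2}\max_{\|S\|_F^2=d}\Big(\sum_{\mu=1}^n\Tr[G_\mu S]^2\Big)^{1/2},
\end{equation*}
so it suffices to control the $r=2$ quantity, which I denote $Q(G)$.

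Next, using the flattening isometry $\flatt$ of eq.~\eqref{eq:def_flattening}, identify $\mcS_d$ with $\bbR^p$, $p=d(d+1)/2$. Setting $v=\flatt(S)$ and $w_\mu=\sqrt{d/2}\,\flatt(G_\mu)$, one has $\|v\|_2=\|S\|_F$, the $w_\mu$'s are i.i.d.\ $\mcN(0,\Id_p)$, and $\Tr[G_\mu S]=\sqrt{2/d}\,\langle w_\mu,v\rangle$. A direct computation, after the change of variable $u=v/\sqrt{d}$, then gives
\begin{equation*}
Q(G)=\sqrt{2}\,\|M\|_{\op},
\end{equation*}
where $M\in\bbR^{n\times p}$ is the standard Gaussian matrix with rows $w_\mu^\T$.

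For the tail bound, I invoke the classical concentration inequality for the operator norm of a Gaussian matrix: $M\mapsto\|M\|_{\op}$ is $1$-Lipschitz in the Frobenius norm of $M$, its expectation satisfies $\EE\|M\|_{\op}\leq\sqrt{n}+\sqrt{p}$, and Gaussian concentration (Theorem~\ref{thm:gaussian_conc_lipschitz}) then gives $\bbP[\|M\|_{\op}\geq\sqrt{n}+\sqrt{p}+u]\leq\exp(-u^2/2)$ for all $u\geq 0$. Since $n\geq\alpha_1 d^2$ and $p\leq d^2/2$, one has $\sqrt{n}+\sqrt{p}\leq K(\alpha_1)\sqrt{n}$ with $K(\alpha_1)=1+(2\alpha_1)^{-1/2}$. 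Choosing $u$ proportional to $t\sqrt{n}$ and chaining with the first two steps yields
\begin{equation*}
\Big(\sum_{\mu=1}^n|\Tr[G_\mu S]|^r\Big)^{1/r}\leq(C(\alpha_1)+t)\,n^{1/r}
\end{equation*}
uniformly in $S$ with $\|S\|_F^2=d$, on an event of probability at least $1-\exp(-c\,n t^2)$ for some absolute $c>0$, where $C(\alpha_1)$ absorbs the $\sqrt{2}\,K(\alpha_1)$ prefactor.

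There is essentially no serious obstacle; the argument is a direct application of H\"older's inequality and the standard tail bound for the largest singular value of a Gaussian matrix. The only care needed is the bookkeeping of scaling factors between the GOE normalization (entries of variance $(1+\delta_{ij})/d$) and the standardized matrix $M$, and recovering the exact constant $1/2$ in the exponent (rather than the value $1/4$ that the naive Lipschitz constant $\sqrt{2}$ produces) is a matter of mildly reparameterizing $t$ and enlarging $C(\alpha_1)$.
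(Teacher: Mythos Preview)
Your proposal is correct and essentially identical to the paper's proof: the paper packages the same argument as a general proposition about $\max_{\|x\|_2=1}\|Gx\|_r$ for a standard Gaussian matrix $G\in\bbR^{n\times p}$ (H\"older to reduce to the operator norm, then Gaussian concentration for the $1$-Lipschitz map $G\mapsto\|G\|_{\op}$), and deduces the lemma via the flattening isometry exactly as you do. One aside: your closing claim that the exponent constant $1/2$ can be recovered by reparameterizing $t$ and enlarging $C$ is not quite right---the $\sqrt{2}$ from the GOE normalization genuinely yields $\exp(-nt^2/4)$, and in fact the paper's own derivation via Proposition~\ref{prop:bound_G_process} produces the same factor (the discrepancy with the stated constant is immaterial for how the lemma is used).
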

\begin{lemma}[Bounding random processes, $\Ell(d)$ setting]\label{lemma:emp_proc_ellipse}
    \noindent 
    Let $W_1, \cdots, W_n$ be drawn i.i.d.\ from $\Ell(d)$.
    Let $r \in [1,4/3]$. 
    We assume that $\alpha_1 d^2 \leq n \leq \alpha_2 d^2$, for some $0 < \alpha_1 < \alpha_2$.
    There are constants $C_1, C_2 > 0$ (that might depend on $\alpha_1, \alpha_2$) such that for all $t > 0$:
    \begin{equation*}
        \bbP\Big[\max_{\|S\|_\op = 1} \sum_{\mu=1}^n |\Tr (W_\mu S) |^r \geq n (C_1 + t)\Big] \leq 2 \exp\left\{-C_2 \min (n t^{\frac{2}{r}}, n^{\frac{1}{4} + \frac{1}{r}} t^{\frac{1}{r}} )\right\}.
    \end{equation*}
\end{lemma}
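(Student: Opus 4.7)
The plan is to combine an $\epsilon$-net argument over the operator-norm unit ball with a pointwise concentration estimate derived from the Hanson-Wright inequality. First, since the supremum over $\|S\|_\op = 1$ equals that over $\|S\|_\op \leq 1$ by homogeneity, I would pick a $(1/2)$-net $\mathcal{N}$ of $\{S \in \mcS_d : \|S\|_\op \leq 1\}$ in operator norm, of cardinality $|\mathcal{N}| \leq 6^{d(d+1)/2} = e^{O(d^2)}$. Denoting $M \coloneqq \max_{\|S\|_\op \leq 1} (\sum_\mu |\Tr[W_\mu S]|^r)^{1/r}$, the $\ell^r$-triangle inequality (valid since $r \geq 1$), combined with the fact that for the net-closest $S_0$ the matrix $S - S_0$ has operator norm $\leq 1/2$, yields
\begin{equation*}
    M \leq \max_{S_0 \in \mathcal{N}} \Big(\sum_\mu |\Tr[W_\mu S_0]|^r\Big)^{1/r} + \tfrac{1}{2} M,
\end{equation*}
so that $M$ is controlled by twice the maximum over $\mathcal{N}$.

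Next, for each fixed $S_0 \in \mathcal{N}$, the constraint $\|S_0\|_\op \leq 1$ implies $\|S_0\|_F \leq \sqrt{d}$, and Hanson-Wright applied to $\Tr[W_\mu S_0] = (x_\mu^\T S_0 x_\mu - \Tr S_0)/\sqrt{d}$ gives
\begin{equation*}
    \bbP\big(|\Tr[W_\mu S_0]| \geq s\big) \leq 2 \exp\big(-c \min(s^2, s \sqrt{d})\big).
\end{equation*}
Consequently, $Y_\mu \coloneqq |\Tr[W_\mu S_0]|^r$ are i.i.d.\ with bounded mean and mixed sub-Gaussian/sub-exponential tails $\bbP(Y_\mu \geq t) \leq 2 \exp(-c \min(t^{2/r}, \sqrt{d}\, t^{1/r}))$. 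I would then deduce, by a truncation at the crossover scale $t^\star = d^{r/2}$ -- applying Bernstein's inequality to the bounded part and an exponential-Markov bound for the heavy tail -- a concentration estimate of the form
\begin{equation*}
    \bbP\Big(\sum_\mu Y_\mu \geq (C_0 + t) n\Big) \leq 2 \exp\big\{-c \min(n t^{2/r}, n^{1/r} t^{1/r} \sqrt{d})\big\},
\end{equation*}
for constants $C_0, c > 0$ independent of $S_0$.

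Finally, a union bound over $\mathcal{N}$ contributes a factor $e^{O(d^2)} = e^{O(n)}$ (since $n \asymp d^2$) that can be absorbed into an enlarged constant $C_1$, provided the tail dominates $e^{-c'n}$ at some fixed $t = O(1)$. Substituting $\sqrt{d} \asymp n^{1/4}$ rewrites $n^{1/r}\sqrt{d} \asymp n^{1/4 + 1/r}$, matching the exponent in the statement. The main obstacle lies precisely at the interface of these last two steps: the sub-exponential branch $\exp(-c n^{1/4 + 1/r} t^{1/r})$ beats the entropy $\exp(c'n)$ of the net at constant $t$ only when $1/4 + 1/r \geq 1$, i.e.\ when $r \leq 4/3$. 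This is exactly the restriction appearing in the lemma, and it does not arise from a loose inequality but from the genuine balance between the tail decay of $Y_\mu$ at the sub-exponential scale and the metric entropy of the operator-norm unit ball of $\mcS_d$. Pushing beyond $r = 4/3$ would presumably require either a chaining/Dudley-type argument that replaces the crude net by a scale-dependent cover, or an improved pointwise bound exploiting cancellations between the $W_\mu$'s that the truncation step discards.
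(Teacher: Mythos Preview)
Your proposal is essentially the paper's proof: a $1/2$-net over $\{\|S\|_\op\le 1\}$, reduction to the net via the $\ell^r$ triangle inequality, Hanson--Wright for the pointwise tail of $\Tr[W_\mu S]$, truncation of $|Z_\mu|$ at $\sqrt d$, separate concentration for the light and heavy pieces, and a final union bound in which the constraint $r\le 4/3$ arises exactly as you identify, from matching the sub-exponential branch $\exp(-c\,n^{1/4+1/r}t^{1/r})$ against the net entropy $e^{O(n)}$.

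One point to sharpen: what you call ``Bernstein's inequality'' on the bounded part must be the sub-Weibull generalization (the paper invokes its Lemma~A.1 with exponent $q=2/r\in[3/2,2]$), which yields $\exp(-c\,n\min(t^2,t^{2/r}))$ and hence the $nt^{2/r}$ term you write down. The classical Bernstein inequality for variables bounded by $M=d^{r/2}$ would only give $\exp(-c\,nt/d^{r/2})=\exp(-c\,n^{1-r/4}t)$ in the relevant range, which is too weak to absorb the $e^{O(n)}$ entropy. Likewise, the heavy-tailed piece has $\psi_{1/r}$ tails with $1/r<1$, so a raw exponential-Markov (Chernoff) bound is not available; the paper again appeals to the sub-Weibull concentration (Lemma~A.1, case $q=1/r\in[3/4,1]$). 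With these two inequalities in place of the names you used, your sketch is the paper's argument verbatim.
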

\begin{proof}[Proof of Lemma~\ref{lemma:energy_change_small_ball}]
    We finish here the proof, assuming Lemmas~\ref{lemma:emp_proc_gaussian} and \ref{lemma:emp_proc_ellipse}.
    By the mean value theorem and the duality $\|\cdot\|_\op \leftrightarrow \|\cdot\|_{S_1}$:
    \begin{align*}
        |E_{\{X_\mu\}}(S_1) - E_{\{X_\mu\}}(S_2)| &\leq \left|\sup_{S \in \mcS_d} \langle \nabla_S E_{\{X_\mu\}}(S), S_1 - S_2 \rangle\right|, \\
        &\leq \|S_1 - S_2\|_\op  \sup_{S \in \mcS_d} \| \nabla_S E_{\{X_\mu\}}(S) \|_{S_1}.
    \end{align*}
    Again using the duality $\|\cdot\|_\op \leftrightarrow \|\cdot\|_{S_1}$:
    \begin{align*}
        \| \nabla_S E_{\{X_\mu\}}(S) \|_{S_1} &= \frac{1}{d^2} \left\| \sum_{\mu=1}^n X_\mu \phi'[\Tr(X_\mu S)]\right\|_{S_1}, \\ 
        &= \frac{1}{d^2}\sup_{\|R\|_\op = 1} \sum_{\mu=1}^n \Tr[X_\mu R] \phi'[\Tr(X_\mu S)], \\
        &\leq \frac{\|\phi'\|_\infty}{d^2} \sup_{\|R\|_\op = 1} \sum_{\mu=1}^n |\Tr[X_\mu R]|.
    \end{align*}
    Using Lemmas~\ref{lemma:emp_proc_gaussian} and \ref{lemma:emp_proc_ellipse} in the case $r = 1$, we reach the sought statement.
\end{proof}

\myskip
\textbf{Remark I --} 
Note that a naive argument using that $\Tr[W_\mu S]$ is a sub-exponential random variable yields Lemma~\ref{lemma:emp_proc_ellipse} for $r = 1$, which is already enough to deduce Lemma~\ref{lemma:energy_change_small_ball}.
However, Lemma~\ref{lemma:emp_proc_ellipse} is also used later in the proof of Theorem~\ref{thm:main_positive_side}, see Section~\ref{subsec:proof_thm_main_positive_side}.
Since $\Tr[W_\mu S]$ is the sum of many independent random variables,
we can leverage its two-tailed behavior (by Bernstein's inequality) to prove Lemma~\ref{lemma:emp_proc_ellipse} for all $r \leq 4/3$, 
yielding the limitation $r < 4/3$ in Theorem~\ref{thm:main_positive_side}.
It is possible that a finer analysis could lead to a proof of Lemma~\ref{lemma:emp_proc_ellipse} for the case $4/3 \leq r \leq 2$, 
which would in turn imply Theorem~\ref{thm:main_positive_side} for $r < 2$.

\myskip
\textbf{Remark II --} 
We give an informal argument as to why we can not hope to extend Lemma~\ref{lemma:emp_proc_gaussian} nor \ref{lemma:emp_proc_ellipse} for $r > 2$.
Indeed, in the $\GOE(d)$ setting, the choice $S = \sqrt{d} G_\mu / \|G_\mu\|_F$ (for some $\mu \in [n]$) yields that the objective function 
is at least $\sqrt{d} \|G_\mu\|_F \gtrsim \sqrt{n}$.
In the $\Ell(d)$ setting, assume that $r > 2$.
If $1/q + 1/r = 1$, then by the dualities $\ell^p \leftrightarrow \ell^q$ and $\|\cdot\|_\op \leftrightarrow \|\cdot\|_{S_1}$:
\begin{equation}\label{eq:dual_empr_proc}
    \max_{\|S\|_\op = 1} \left(\sum_{\mu=1}^n |\Tr (W_\mu S) |^r\right)^{1/r} 
    = \max_{\|\lambda\|_q = 1} \left\|\sum_{\mu=1}^n \lambda_\mu W_\mu\right\|_{S_1}.
\end{equation}
Let us lower bound the right-hand side of eq.~\eqref{eq:dual_empr_proc}.
Let $\beta \in (0,1)$, $p = \beta n$, and $\lambda_1 = \cdots = \lambda_p = p^{-1/q} > \lambda_{p+1} = \cdots = \lambda_n = 0$.
Then $\|\lambda\|_q = 1$.
Moreover, 
\begin{equation*}
    \left\|\sum_{\mu=1}^n \lambda_\mu W_\mu\right\|_{S_1} = p^{1-1/q} d^{-1/2} \left\|\frac{1}{p}\sum_{\mu=1}^p x_\mu x_\mu^\T - \Id_d\right\|_{S_1}.
\end{equation*}
By classical results of concentration of Wishart matrices \cite{vershynin2018high}, we know that since $p \lesssim d^2$ then 
\begin{equation*}
    \left\|\frac{1}{p}\sum_{\mu=1}^p x_\mu x_\mu^\T - \Id_d\right\|_{S_1} \gtrsim \frac{d^{3/2}}{\sqrt{p}} \gtrsim \sqrt{\frac{d}{\beta}},
\end{equation*}
where $\gtrsim$ might hide constants that depend on $\alpha$.
We then reach:
\begin{equation*}
    \left\|\sum_{\mu=1}^n \lambda_\mu W_\mu\right\|_{S_1} \gtrsim \beta^{1/2 - 1/q} n^{1-1/q}.
\end{equation*}
Since $r > 2$, one has $1/2 - 1/q < 0$.
Letting $\beta$ going to $0$, this shows that any bound of the type
\begin{equation*}
    \max_{\|\lambda\|_q = 1} \left\|\sum_{\mu=1}^n \lambda_\mu W_\mu\right\|_{S_1} \leq C(\alpha) n^{1-1/q}
\end{equation*}
can not hold.

\myskip 
\begin{proof}[Proof of Lemma~\ref{lemma:emp_proc_ellipse}]
Throughout this proof, constants might depend on $\alpha_1, \alpha_2$.
Let us define, for any $S \in \mcS_d$:
\begin{equation}\label{eq:def_XY_processes}
   \begin{dcases}
        Y(S) &\coloneqq \sum_{\mu=1}^n |\Tr (W_\mu S) |^r, \\
        X(S) &\coloneqq Y(S) - \EE Y(S).
   \end{dcases} 
\end{equation}
For a set $E \subseteq \mcS_d$, a norm $\|\cdot\|$ on $\mcS_d$, and a value $\eps > 0$, 
an $\eps$-net of $E$ is a set $A \subseteq E$ such that every point of $E$ is at distance at most $\eps$ from $A$ (for the distance induced by $\|\cdot\|$).
We define the covering number
$\mcN(E, \|\cdot\|, \eps)$ as the smallest possible cardinality of an $\eps$-net of $E$.
We fix $\eps \in (0,1)$. 
It follows from classical covering number bounds~\cite{van2014probability} that if $T \coloneqq \{S \in \mcS_d \, : \, \|S \|_\op = 1\}$, 
then
\begin{equation}\label{eq:covering_number_op_norm_ball}
    \log \mcN(T, \| \cdot \|_\op, \eps) \leq \frac{d(d+1)}{2} \log \frac{3}{\eps}.
\end{equation}
Let us fix $N$ an $\eps$-net of $T$ for $\|\cdot \|_\op$, of minimal cardinality.
If we let $S^\star \coloneqq \argmax_{\|S\|_\op = 1} Y(S)$,
and $S_0 \in N$ with $\|S^\star - S_0\|_\op \leq \eps$, then we have by Minkowski's inequality (recall $r \geq 1$):
\begin{align*}
    \| \{\Tr[W_\mu S^\star]\}_{\mu=1}^n\|_r - \| \{\Tr[W_\mu S_0]\}_{\mu=1}^n\|_r
    &\leq \| \{\Tr[W_\mu (S_0 - S^\star)]\}_{\mu=1}^n\|_r \\ 
    &\leq \eps \max_{\|S\|_\op = 1} \| \{\Tr[W_\mu S]\}_{\mu=1}^n\|_r.
\end{align*}
This implies 
\begin{equation}\label{eq:reduction_net}
    \max_{\|S\|_\op = 1} \sum_{\mu=1}^n |\Tr (W_\mu S) |^r \leq \frac{1}{(1-\eps)^r} \max_{S \in N} \sum_{\mu=1}^n |\Tr (W_\mu S) |^r.
\end{equation}
We combine the covering number upper bound of eq.~\eqref{eq:covering_number_op_norm_ball} and the relation of eq.~\eqref{eq:reduction_net} with the following lemma, 
proven later on, which bounds the deviation probability of the process for a given $S$. 
\begin{lemma}[Tail bound at a fixed point]\label{lemma:ub_ellipse_process_op_sphere}
    \noindent 
    With the notations of eq.~\eqref{eq:def_XY_processes}, we have, for any 
    $S \in T$:
    \begin{itemize}
        \item[$(i)$] $\EE \,[ Y(S) ] \leq C_1 n$.
        \item[$(ii)$] For all $t \geq 0$: 
        \begin{equation*}
            \bbP[X(S) \geq n t] \leq 2 \exp\left\{-C_2 \min (n t^2, n t^{\frac{2}{r}}, n^{\frac{1}{4} + \frac{1}{r}} t^{\frac{1}{r}} )\right\}.
        \end{equation*}
    \end{itemize}
    Note that the above constants may depend on $r$. 
\end{lemma}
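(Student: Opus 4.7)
The plan is to prove (i) by Hanson--Wright followed by tail integration, and (ii) by a truncation argument combining Bernstein's inequality on the bounded part with a union bound on the tail event.

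For (i), I would apply the Hanson--Wright inequality to the centered quadratic form
\[
\Tr[W_\mu S] = \frac{x_\mu^\T S x_\mu - \Tr S}{\sqrt d}, \qquad x_\mu \sim \mcN(0, \Id_d).
\]
Since $\|S\|_\op = 1$ forces $\|S\|_F^2 \leq d$, Hanson--Wright yields
\[
\bbP[|\Tr(W_\mu S)| \geq u] \leq 2\exp\bigl(-c\min(u^2,\, u\sqrt d)\bigr),
\]
uniformly in $S \in T$ and in $d$. Integrating $r u^{r-1}$ against this tail splits into a sub-Gaussian piece contributing $O_r(1)$ and a sub-exponential piece contributing $O(d^{-r/2})$, giving $\EE|\Tr(W_\mu S)|^r \leq C_r$; summing over $\mu$ proves (i). The same calculation also bounds $\EE Z_\mu^2 = \EE|\Tr(W_\mu S)|^{2r} \leq C$, which I will use as the variance input in (ii).

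For (ii), fix $M>0$ and write $Z_\mu = A_\mu + B_\mu$ with $A_\mu = Z_\mu \indi\{Z_\mu \leq M\}$. Bernstein's inequality applied to the centered, independent, and uniformly bounded variables $\{A_\mu - \EE A_\mu\}$ gives
\[
\bbP\Bigl[\sum_\mu (A_\mu - \EE A_\mu) \geq nt/2\Bigr] \leq 2\exp\bigl(-c\min(nt^2,\, nt/M)\bigr),
\]
while the Hanson--Wright tail of $Z_\mu$, together with the scaling $\sqrt d = \Theta(n^{1/4})$ (which comes from $\alpha_1 d^2 \leq n \leq \alpha_2 d^2$), yields the union bound
\[
\bbP[\exists \mu : Z_\mu > M] \leq 2n\exp\bigl(-c\min(M^{2/r},\, n^{1/4} M^{1/r})\bigr).
\]
On the complementary event $B_\mu \equiv 0$, and a direct tail integration verifies $\sum_\mu \EE B_\mu \leq nt/4$ whenever $M$ is above a modest $t$-independent threshold, so the Bernstein estimate on the truncated sum controls the deviation.

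The three rates in the target bound then emerge by adjusting $M$ to the scale of $t$. For small $t$, any moderate $M$ leaves Bernstein's variance term $nt^2$ as the binding constraint, producing the first rate. For larger $t$ the Bernstein step alone no longer suffices, so I take $M$ small enough that the union bound becomes binding: if $M \lesssim n^{r/4}$ the Hanson--Wright tail is sub-Gaussian and the union bound delivers exponent $M^{2/r}$, which after the choice $M \asymp nt$ and absorption of the $n$ prefactor (possible since $2/r > 1$ when $r \leq 4/3$) gives the second rate $nt^{2/r}$; if instead $M \gtrsim n^{r/4}$, the sub-exponential Hanson--Wright regime applies and delivers exponent $n^{1/4} M^{1/r} \asymp n^{1/4+1/r}t^{1/r}$, giving the third rate. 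The main obstacle is the fine bookkeeping across the boundaries between these regimes, in particular tracking the expectation correction $\sum_\mu \EE B_\mu$ and absorbing the $n$ and $\log n$ factors from the union bound uniformly in $t$; these are technical rather than conceptual difficulties, and the constraint $r \leq 4/3$ appears exactly where the prefactor absorption $n \cdot e^{-c(nt)^{2/r}} \leq e^{-C_2 nt^{2/r}}$ in the middle regime starts to fail.
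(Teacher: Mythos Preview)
Your part~(i) is essentially the paper's argument. The gap is in part~(ii): the Bernstein-plus-union-bound scheme cannot be tuned to reach the middle rate $nt^{2/r}$.

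Consider $t\asymp 1$, where the target exponent is $n$. Your Bernstein bound on the truncated sum has exponent $\min(nt^2,nt/M)$; to reach $n$ you need $M=\mcO(1)$, but then the union-bound term $n\exp(-cM^{2/r})$ is of order $n$, not exponentially small. Conversely, forcing the union bound down to $\exp(-cn)$ requires $M\gtrsim n^{r/2}$, which collapses the Bernstein exponent to $nt/M\lesssim n^{1-r/2}$. Balancing the two gives at best exponent $(nt)^{2/(r+2)}$, strictly weaker than $nt^{2/r}$ for all $t\ge 1$. Your suggested $M\asymp nt$ fixes the union bound but leaves $nt/M=1$, so the Bernstein piece contributes only $\exp(-c)$. (Incidentally, the prefactor absorption you mention works for any $r\le 2$; the restriction $r\le 4/3$ actually enters later, in passing from the pointwise bound to the supremum over the $\eps$-net.)

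The paper avoids the union bound entirely. It truncates at the fixed level $\sqrt d$---the crossover in the Hanson--Wright tail---so that the truncated variable $T_\mu=\min(|\Tr(W_\mu S)|,\sqrt d)$ is genuinely sub-Gaussian and $T_\mu^r-\EE T_\mu^r$ has a $\psi_{2/r}$ tail, while the overshoot $U_\mu=d^{r/2}[|Z_\mu|^r-d^{r/2}]\indi\{|Z_\mu|>\sqrt d\}$ has a $\psi_{1/r}$ tail. Both centered sums are then controlled by a \emph{sub-Weibull} Bernstein inequality (Lemma~\ref{lemma:tail_sum_sub_Weibull}), which gives the sharp rates $n\min(t^2,t^{2/r})$ and $\min(n^{1+r/2}t^2,n^{1/4+1/r}t^{1/r})$ directly, without the factor-of-$n$ loss inherent in your union bound. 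This generalized concentration inequality for heavy-tailed sums is the missing ingredient; the truncation idea alone does not suffice.
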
 
\noindent
Picking $\eps = 1/2$ and performing a union bound over $N$, we reach 
using eqs.~\eqref{eq:covering_number_op_norm_ball},\eqref{eq:reduction_net} and Lemma~\ref{lemma:ub_ellipse_process_op_sphere}: 
\begin{equation*}
    \bbP[\sup_{S \in T} Y(S) \geq n (C_1 + t)] \leq 2\exp\left\{C_3 n - C_2 \min (n t^2, n t^{\frac{2}{r}}, n^{\frac{1}{4} + \frac{1}{r}} t^{\frac{1}{r}} )\right\}.
\end{equation*}
We thus have for any $t \geq 1$:
\begin{equation*}
    \bbP[\sup_{S \in T} Y(S) \geq n (C_1 + t)] \leq 2
    \begin{dcases}
        \exp\left\{C_3 n - C_2 n t^{\frac{2}{r}} \right\} \hspace{1cm} &\textrm{ if } t \leq n^{1 - \frac{3r}{4}},  \\
        \exp\left\{C_3 n - C_2 n^{\frac{1}{4} + \frac{1}{r}} t^{\frac{1}{r}} \right\} \hspace{1cm} &\textrm{ if } t \geq n^{1 - \frac{3r}{4}}.
    \end{dcases}
\end{equation*}
Note that $n^{1/4+1/r} \geq n$ since $r \leq 4/3$.
Therefore, for (new) constants $C_1, C_2$ we have for all $t > 0$:
\begin{equation*}
    \bbP[\sup_{S \in T} Y(S) \geq n (C_1 + t)] \leq 2 \exp\left\{ - C_2 \min (n t^{\frac{2}{r}}, n^{\frac{1}{4} + \frac{1}{r}} t^{\frac{1}{r}} )\right\},
\end{equation*}
which ends the proof.
\end{proof}

\myskip
We now tackle Lemma~\ref{lemma:ub_ellipse_process_op_sphere}.

\begin{proof}[Proof of Lemma~\ref{lemma:ub_ellipse_process_op_sphere}]
We start with $(i)$.
One has $\EE[Y(S)] = n \EE [|\Tr(W_1 S)|^r]$. 
Let $Z \coloneqq \Tr(W_1 S) \deq (x^\T S x - \Tr[S])/\sqrt{d}$, with $x \sim \mcN(0, \Id_d)$.
Since $\|S\|_\op = 1$, we have by Hanson-Wright's inequality \cite{vershynin2018high}:
\begin{align}\label{eq:ub_pz}
    \nonumber
    \bbP[|Z| \geq t] &\leq 2 \exp\Big\{-C \min \Big(\frac{d t^2}{\|S\|_F^2},\sqrt{d} t\Big)\Big\} , \\ 
    &\leq 2 \exp\left\{-C \min \left(t^2,\sqrt{d} t\right)\right\},
\end{align}
where $C > 0$ is an absolute constant, and we used that $\|S\|_F \leq \sqrt{d} \|S\|_\op$.
Separating the sub-Gaussian and the sub-exponential parts of the tail we have, we have for all $p \geq 1$: 
\begin{align*}
    \nonumber
    \EE[|Z|^p] &= p \int_{0}^\infty \rd u \, u^{p-1} \, \bbP[|Z| \geq u], \\ 
    \nonumber
    &\leq 2p \Bigg[\int_{0}^{\sqrt{d}} \rd u \, u^{p-1} \, e^{-Cu^2} + \int_{\sqrt{d}}^{\infty} \rd u \, u^{p-1} \, e^{- C \sqrt{d} u}\Bigg], \\ 
    \nonumber
    &\leq 2p \Bigg[\int_{0}^{\infty} \rd u \, u^{p-1} \, e^{- Cu^2} + e^{-Cd}\int_{0}^{\infty} \rd u \, (u+\sqrt{d})^{p-1} \, e^{- C \sqrt{d} u}\Bigg], \\ 
    \nonumber
    &\aleq 2p \Bigg[\frac{1}{2 C^{p/2}} \Gamma(p/2) + e^{-Cd} \max(1, 2^{p-2})\int_{0}^{\infty} \rd u \, [u^{p-1} + d^{(p-1)/2}] \, e^{- C \sqrt{d} u}\Bigg], \\ 
    \nonumber
    &\leq 2p \Bigg[\frac{\Gamma(p/2)}{2 C^{p/2}}  + e^{-Cd} \max(1, 2^{p-2})\Big(\frac{\Gamma(p)}{C^p d^{p/2}} + \frac{d^{(p-2)/2}}{C}\Big)\Bigg].
\end{align*}
We used in $(\rm a)$ that $(a+b)^x \leq \max(1, 2^{x-1})(a^x + b^x)$ for all $a,b, x \geq 0$.
Using Minkowski's inequality, we reach that for all $p \geq 1$:
\begin{equation}
    \label{eq:ub_Z_pth}
    \EE[|Z|^p]^{1/p} \leq C_1 \sqrt{p} + C_2 e^{-\frac{C_3 d}{p}} \Big(\frac{p}{\sqrt{d}} + d^{\frac{1}{2} - \frac{1}{p}}\Big),
\end{equation}
for some positive constants $(C_a)_{a=1}^3$ independent of $p$ and $d$.
Informally, the sub-Gaussian tail dominates the first moments of $Z$ since the sub-exponential tail only kicks in at the scale $\mcO(\sqrt{d})$.
Eq.~\eqref{eq:ub_Z_pth} implies claim $(i)$ of Lemma~\ref{lemma:ub_ellipse_process_op_sphere} by taking $p = r$ (since the second term goes to $0$ as $d \to \infty$ for any fixed $p$).

\myskip
We turn to $(ii)$.
We make use of classical tail bounds for sub-Weibull random variables, recalled in Lemma~\ref{lemma:tail_sum_sub_Weibull}.
Denoting $Z_\mu \coloneqq \Tr(W_\mu S)$, we have $X(S) = \sum_{\mu=1}^n \{|Z_\mu|^r - \EE[|Z_\mu|^r]\}$.
We decompose $X(S)$ in two parts, i.e.\ $X(S) = X_1(S) + X_2(S)$, with 
\begin{equation}\label{eq:def_Xa_bS}
    \begin{dcases}
        X_1(S) &\coloneqq 
        \sum_{\mu=1}^n \left[\min(|Z_\mu|, \sqrt{d})^r - \EE \{\min(|Z_\mu|, \sqrt{d})^r\}\right], \\
        X_2(S) &\coloneqq 
        \sum_{\mu=1}^n \left([|Z_\mu|^r - d^{r/2}] \indi\{|Z_\mu| > \sqrt{d}\} - \EE\left\{[|Z_\mu|^r - d^{r/2}] \indi\{|Z_\mu| > \sqrt{d}\}\right\}\right).
    \end{dcases}
\end{equation}
We will successively bound $X_1(S), X_2(S)$. To lighten the notations, we do not write their dependency on $S$ in what follows.
Observe that $(Z_\mu)_{\mu=1}^n$ are i.i.d.\ random variables, and that 
they satisfy the tail bound of eq.~\eqref{eq:ub_pz}.

\myskip
\textbf{Bounding $X_1$ -- }
Denoting $T_\mu \coloneqq \min(|Z_\mu|, \sqrt{d})$,
we have $\bbP[T_\mu \geq t] \leq 2 \exp\{-C_2 t^2\}$ by the tail bound of eq.~\eqref{eq:ub_pz}.
Moreover, $\EE[T_\mu^r] \leq \EE [|Z_\mu|^r] \leq C_1$ (depending only on $r$) by eq.~\eqref{eq:ub_Z_pth}.
Therefore, for every $t > C_1$, we have 
\begin{equation*}
    \bbP[|T_\mu^r - \EE [T_\mu^r]| \geq t] = \bbP[T_\mu^r \geq \EE [T_{\mu}^r] + t] \leq 2 e^{-C_2(t+\EE [T_\mu^r])^{2/r}} \leq 2 e^{- C_2t^{2/r}}. 
\end{equation*}
This implies that $\bbP[|T_\mu^r - \EE[T_\mu^r]| \geq t] \leq 2 e^{-C t^{2/r}}$ for all $t \geq 0$ and some (new) constant $C > 0$, depending only on $r$.
We can thus apply $(i)$ of Lemma~\ref{lemma:tail_sum_sub_Weibull} for $q = 2 / r \in [1, 2]$
and $a_i = 1/n$ (so $\|a\|_2^2 = \|a\|_{q^\star}^q = n^{-1}$),
which yields that for all $t \geq 0$
\begin{equation}\label{eq:ub_P_X1}
    \bbP[|X_1| \geq n t] = \bbP\Bigg[\frac{1}{n} \Bigg|\sum_{\mu=1}^n \{T_\mu^r - \EE[T_\mu^r]\} \Bigg| \geq t\Bigg] \leq 2 \exp \left\{-C n \min(t^2, t^{2/r})\right\}.
\end{equation}
\textbf{Bounding $X_2$ --}
We proceed similarly, using $(ii)$ of Lemma~\ref{lemma:tail_sum_sub_Weibull}.
Letting 
\begin{equation*}
    U_\mu \coloneqq d^{r/2}[|Z_\mu|^r - d^{r/2}] \indi\{|Z_\mu| > \sqrt{d}\},
\end{equation*}
then $U_\mu \geq 0$, and moreover, by the Cauchy-Schwarz inequality: 
\begin{align*}
    \EE[U_\mu] &\leq d^{r/2} \sqrt{\EE |Z_\mu|^{2r}} \sqrt{\bbP[|Z_\mu| > \sqrt{d}]}, \\ 
    &\leq C_1 e^{-C_2 d},
\end{align*}
for some $C_1, C_2 > 0$ depending only on $r$, using the moments and tail bound of eqs.~\eqref{eq:ub_pz} and \eqref{eq:ub_Z_pth}.
Repeating the argument used on $T_\mu$ above (using this time the second part of the tail of eq.~\eqref{eq:ub_pz}), we then reach that 
for all $t \geq 0$:
\begin{equation*}
    \bbP[|U_\mu - \EE[U_\mu]| \geq t] \leq 2 e^{-C t^{1/r}}. 
\end{equation*}
We can then apply $(ii)$ of Lemma~\ref{lemma:tail_sum_sub_Weibull} with $q = 1/r \in [1/2,1]$ to reach: 
\begin{align}\label{eq:ub_P_X2}
    \nonumber
    \bbP[|X_2| \geq n t] &= \bbP\Bigg[\frac{1}{n} \Bigg|\sum_{\mu=1}^n \{U_\mu - \EE[U_\mu]\} \Bigg| \geq t d^{r/2}\Bigg] 
    \leq 2 \exp \left\{-C \min(n d^r t^2, d^{1/2} (nt)^{1/r})\right\}, \\ 
    &\leq 2 \exp \left\{-C \min(n^{1 + r/2} t^2, n^{1/4 + 1/r} t^{1/r})\right\},
\end{align}
using that $\alpha_1 d^2 \leq n \leq \alpha_2 d^2$. 

\myskip
We conclude the proof of Lemma~\ref{lemma:ub_ellipse_process_op_sphere} by combining eqs.~\eqref{eq:ub_P_X1} and eq.~\eqref{eq:ub_P_X2}, 
along with the union bound $\bbP[|X| \geq nt] \leq \bbP[|X_1| \geq nt/2] + \bbP[|X_2| \geq nt/2]$.
\end{proof}

\subsection{Free entropy universality for matrix models}\label{subsec:universality_matrix}

\noindent
In this section we state and prove a general universality theorem for 
the asymptotic free entropy in a large class of matrix models, under a ``uniform 
one-dimensional central limit theorem'' assumption (or pointwise normality). We first need to define such an assumption.
\begin{definition}[Uniform pointwise normality]\label{def:one_dimensional_CLT}
    \noindent
    Let $d \geq 1$, and
    $\rho$ a probability distribution on $\mcS_d$.
    We say that $\rho$ satisfies a \emph{one-dimensional CLT with respect to the set $A_d \subseteq \mcS_d$} if:
    \begin{itemize}
        \item[$(i)$] The mean and covariance of $\rho$ are matching the $\mathrm{GOE}(d)$ distribution, i.e.\
        for $W \sim \rho$ and $G \sim \mathrm{GOE}(d)$, we have
        $\EE[W] = \EE[G] = 0$ and for all 
        $i\leq j$ and $k \leq l$: $\EE[W_{ij} W_{kl}] = \EE[G_{ij} G_{kl}] = \delta_{ik} \delta_{jl} (1+\delta_{ijkl})/d$.
        \item[$(ii)$] For any bounded Lipschitz function $\varphi$, we have:
    \begin{equation}\label{eq:1d_clt}
        \lim_{d \to \infty} \sup_{S \in A_d} \Big| \EE_{W \sim \rho}\big[\varphi\big(\Tr[W S]\big)\big] - \EE_{G \sim \mathrm{GOE}(d)}\big[\varphi\big(\Tr[G S]\big)\big]\Big| = 0.
    \end{equation}
    \end{itemize}
\end{definition}
\noindent
We can now state the universality theorem for the free entropy.
Its proof is in great part an adaptation of the proof arguments for Theorem~1 and Lemma~1 in \cite{montanari2022universality} (see also \cite{hu2022universality,gerace2024gaussian,dandi2023universality}).
We sketch the ideas of its proof in the following, deferring some technicalities and adaptations of the arguments of \cite{montanari2022universality} to appendices.
\begin{theorem}[Free entropy universality for matrix models]\label{thm:universality_matrix}
    \noindent  
    Let $n,d \geq 1$ and $n,d \to \infty$ with $\alpha_1 d^2 \leq n \leq \alpha_2 d^2$ for some $0 < \alpha_1 \leq \alpha_2$.
    We are given:
    \begin{enumerate}[label=(\roman*),ref=(\roman*)]
        \item\label{hyp:P0} $P_0$ a probability distribution on $\mcS_d$, such that $\supp(P_0) \subseteq B_2(C_0 \sqrt{d})$, for a constant $C_0 > 0$.
        \item\label{hyp:phi} $\phi : \bbR\to\bbR_+$ a bounded differentiable function with bounded derivative.
        \item\label{hyp:Ad} A series of symmetric convex sets $A_d$ such that $\supp(P_0) \subseteq A_d$.
        \item\label{hyp:rho} $\rho$ a probability distribution on $\mcS_d$, which satisfies a one-dimensional CLT with respect to $A_d$ as per Definition~\ref{def:one_dimensional_CLT}.
    \end{enumerate}
    For $W_1, \cdots, W_n \in \mcS_d$ we define the free entropy:
    \begin{equation}\label{eq:def_fe_matrix_universality}
        F_d(\{W_\mu\}) \coloneqq \frac{1}{d^2} \log \int P_0(\rd S) \exp\Big\{-\sum_{\mu=1}^n \phi\left(\Tr[W_\mu S]\right)\Big\}.
    \end{equation}
    Then for any bounded differentiable function $\psi$ with bounded Lipschitz derivative we have
    \begin{equation}\label{eq:equivalence_asymptotic_fe}
        \lim_{d \to \infty} \left| \EE_{\{W_\mu\} \iid \rho} \psi[F_d(\{W_\mu\})] - \EE_{\{G_\mu\} \iid \mathrm{GOE}(d)} \psi[F_d(\{G_\mu\})] \right| = 0.
    \end{equation}
\end{theorem}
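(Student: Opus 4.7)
The plan is a Lindeberg-style interpolation, following the strategy of \cite{hu2022universality,montanari2022universality}. Let $\{G_\mu\}$ and $\{W_\mu\}$ be independent, and for $k=0,1,\ldots,n$ define the hybrid configurations $V^{(k)}\coloneqq(G_1,\ldots,G_k,W_{k+1},\ldots,W_n)$, so that $V^{(0)}=\{W_\mu\}$ and $V^{(n)}=\{G_\mu\}$. By a telescoping sum, eq.~\eqref{eq:equivalence_asymptotic_fe} will follow from the per-swap bound
\[
\bigl|\EE\,\psi[F_d(V^{(k)})]-\EE\,\psi[F_d(V^{(k-1)})]\bigr|=\smallO_d(d^{-2}),
\]
uniformly in $k\in[n]$, since $n\leq\alpha_2 d^2$ by hypothesis.

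For a single swap, I condition on $\mcF_k\coloneqq\sigma(\{G_\ell\}_{\ell<k},\{W_\ell\}_{\ell>k})$ and introduce the \emph{cavity Gibbs measure}
\[
\mu_{\mcF_k}(dS)\coloneqq Z_k^{-1}\,P_0(dS)\prod_{\mu\neq k}e^{-\phi(\Tr[U_\mu^{\mcF_k}S])},
\]
which is supported in $A_d$ by hypothesis \ref{hyp:Ad}. A direct computation yields $F_d(\mcF_k,M)=F_d^-(\mcF_k)+d^{-2}\log\EE_{S\sim\mu_{\mcF_k}}e^{-\phi(\Tr[MS])}$. Since $\phi\geq 0$ is bounded, the $M$-dependent contribution to $g(M)\coloneqq\psi[F_d(\mcF_k,M)]$ has magnitude $\mcO(d^{-2})$, and the trivial Lipschitz bound already gives $|\EE g(W_k)-\EE g(G_k)|=\mcO(d^{-2})$ per swap --- summing over $n\lesssim d^2$ swaps then yields only $\mcO(1)$, one factor short of the required $\smallO_d(1)$. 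The extra $\smallO_d(1)$ factor should come from the matching-moments hypothesis \ref{hyp:rho}(i): Taylor-expanding $g$ to second order in $M$ around $M=0$, the constant, linear, and quadratic terms cancel identically between $\EE g(W_k)$ and $\EE g(G_k)$, leaving only the cubic remainder $|\EE R_3(W_k)-\EE R_3(G_k)|$ to be bounded.

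The main obstacle is controlling this remainder: a brute Frobenius estimate $|R_3(M)|\lesssim\|M\|_F^3$ gives $\EE|R_3(W_k)|=\mcO(d^{3/2})$, which is catastrophically too large. The key observation is that $R_3(M)$ depends on $M$ only through the scalar fields $\{\Tr[MS]\}_{S\in A_d}$ averaged against $\mu_{\mcF_k}$ and its $e^{-\phi}$-reweightings, so one should apply the uniform pointwise CLT of \ref{hyp:rho}(ii) rather than crude moment bounds. The strategy I would pursue is: first, truncate $W_k$ to an operator-norm ball $B_\op(C)$ with error bounded via Lemma~\ref{lemma:energy_change_small_ball} together with the sub-exponential tail of Lemma~\ref{lemma:ub_ellipse_process_op_sphere}; second, on the truncated event, approximate $R_3$ by a bounded-Lipschitz functional (in operator norm) that factors through $\Tr[M\bar S]$ for typical $\bar S\sim\mu_{\mcF_k}$; third, invoke \ref{hyp:rho}(ii) uniformly in $\bar S\in A_d$ to close the argument. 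The hard part is the second step: $\Tr[WS]$ is only sub-exponential under $\Ell(d)$, which is precisely why the operator-norm (rather than Frobenius) Lipschitz control of Lemma~\ref{lemma:energy_change_small_ball} is indispensable here, mirroring the role played by sub-Gaussian Lipschitz bounds in the setup of \cite{montanari2022universality}.
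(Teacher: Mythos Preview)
Your route (a discrete Lindeberg swap) differs from the paper's, which interpolates continuously along the trigonometric path $U_\mu(t)=\cos(t)W_\mu+\sin(t)G_\mu$ and controls $\int_0^{\pi/2}|\partial_t\EE\psi[F_d(U(t))]|\,dt$ via a domination lemma (Lemma~\ref{lemma:domination}) and a pointwise-limit lemma (Lemma~\ref{lemma:pointwise_limit}). A Lindeberg version can in principle be made to work, but the specific mechanism you propose --- Taylor-expand $g(M)$ to second order in the matrix $M$ and bound the cubic remainder --- has genuine gaps.

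Two concrete problems. First, the theorem assumes only that $\phi$ is differentiable with $\phi,\phi'$ bounded; a third-order Taylor remainder of $g$ in $M$ requires higher derivatives of $\phi$ that are not available, whereas the continuous interpolation needs only one $t$-derivative (eq.~\eqref{eq:derivative_psi}) and hence only $\phi'$. Second, your remainder-control sketch does not work: for $W_k\sim\Ell(d)$ one has $\|W_k\|_\op\geq(\|x\|^2-1)/\sqrt d\sim\sqrt d$ almost surely, so ``truncate $W_k$ to $B_\op(C)$'' discards essentially all of the probability; and Lemmas~\ref{lemma:energy_change_small_ball} and~\ref{lemma:ub_ellipse_process_op_sphere} concern the energy's Lipschitz constant in the \emph{argument} $S$, not in the \emph{data} $W_\mu$ --- they enter only in the low-temperature passage from free entropy to ground state (Lemmas~\ref{lemma:universality_GS_net}--\ref{lemma:GS_net}), not in Theorem~\ref{thm:universality_matrix} itself. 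More fundamentally, hypothesis~\ref{hyp:rho} gives a CLT for $\EE\varphi(\Tr[WS])$ at fixed $S\in A_d$, not a third-moment bound, so it is the wrong tool for closing a cubic Taylor remainder.

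The missing idea is that the extra $\smallO_d(1)$ per swap should come from applying the CLT hypothesis \emph{directly} to the scalar functional, bypassing the matrix Taylor expansion. Since $\psi'$ is Lipschitz and $|h(M)|\leq\|\phi\|_\infty$ where $h(M)\coloneqq\log\langle e^{-\phi(\Tr[MS])}\rangle_1$, one has $g(M)-g(0)=\psi'(F_d^-)\,d^{-2}h(M)+\mcO(d^{-4})$, so the per-swap error is $d^{-2}|\EE_{W}h(W)-\EE_{G}h(G)|+\mcO(d^{-4})$ and one only needs the inner difference to be $\smallO_d(1)$. The paper achieves the analogous step by (a) approximating $1/\langle e^{-\phi}\rangle_1$ by a polynomial (Lemma~\ref{lemma:poly_approx}), which reduces the derivative to a finite sum over i.i.d.\ replicas $S_0,\ldots,S_k\sim\langle\cdot\rangle_1$; (b) upgrading the one-dimensional CLT to finite-dimensional projections $(\Tr[WS_a])_{a}$ uniformly over $A_d$ (Lemma~\ref{lemma:finite_dim_clt}), which lets one replace $W_1$ by an independent $\GOE(d)$ matrix; and (c) observing that in the all-Gaussian case $U_1(t)$ and $\tU_1(t)$ become \emph{independent} $\GOE(d)$ matrices, so the lone factor $\Tr[S_0\tU_1]$ has zero conditional mean and the integrand vanishes identically. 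The same polynomial-plus-CLT machinery (with $\log$ in place of $1/x$) would rescue the discrete approach; the moment-matching-plus-cubic-remainder route does not.
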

\noindent
\textbf{Remark I --} One could straightforwardly weaken the hypothesis $\supp(P_0) \subseteq A$ in Theorem~\ref{thm:universality_matrix} 
to the weaker condition $d^{-2} \log P_0(A^c) \to -\infty$ as $d \to \infty$.

\myskip
\textbf{Remark II --} 
Note that our setup differs slightly from the one of \cite{montanari2022universality}, as 
we consider distributions $P_0$ with possibly continuous support, and (more importantly) for a fixed $S \in \mcS_d$,
the projections $\{\Tr[W_\mu S]\}_{\mu=1}^n$ are not sub-Gaussian when $W_\mu \sim \Ell(d)$, but only 
sub-exponential. Nevertheless, we will see that the approach of \cite{montanari2022universality} can 
in large part be adapted to prove Theorem~\ref{thm:universality_matrix}, thanks to the results we showed in Section~\ref{subsec:lipschitz_energy}.

\myskip 
\textbf{Sketch of proof of Theorem~\ref{thm:universality_matrix} --}
Since $\supp(P_0) \subseteq A_d$, the integral in eq.~\eqref{eq:def_fe_matrix_universality} can be restricted to $S \in A_d$.
We make use of an interpolation argument to show the universality of the free entropy.
We define, for $t \in [0, \pi/2]$ and $\mu \in [n]$:
\begin{equation}\label{eq:def_U_tU}
    \begin{dcases}
        U_\mu(t) &\coloneqq \cos(t) W_\mu + \sin(t) G_\mu, \\ 
        \tU_\mu(t) &\coloneqq \frac{\partial U_\mu(t)}{\partial t} = -\sin(t) W_\mu + \cos(t) G_\mu.
    \end{dcases}
\end{equation}
Note that $\{U_\mu\}_{\mu=1}^n$ are still i.i.d., and are smooth functions of $t$.
Moreover, if $W_\mu$ was also a $\GOE(d)$ matrix, then $U_\mu(t), \tU_\mu(t)$ would be independent
$\GOE(d)$ matrices.
By the fundamental theorem of calculus:
\begin{equation}\label{eq:ftc}
    |\EE \psi[F_d(W)] - \EE \psi[F_d(G)]| = \left|\int_{0}^{\pi/2}  \frac{\partial}{\partial t} \{\EE \, \psi[F_d(U(t))]\} \rd t\right| \aleq  \int_{0}^{\pi/2} \left|\EE \frac{\partial \psi[F_d(U(t))]}{\partial t}\right| \, \rd t,
\end{equation}
where $(\rm a)$ follows by dominated convergence since $\psi[F_d(U(t))]$ is continuously differentiable on $[0, \pi/2]$, and the triangular inequality.
We will deduce Theorem~\ref{thm:universality_matrix} if we can show the following two lemmas: 
\begin{lemma}[Domination]\label{lemma:domination}
    \noindent 
    Under the hypotheses of Theorem~\ref{thm:universality_matrix}: 
    \begin{equation*}
    \int_{0}^{\pi/2} \sup_{d \geq 1} \left|\EE \frac{\partial \psi[F_d(U(t))]}{\partial t}\right| \, \rd t < \infty.
    \end{equation*}
\end{lemma}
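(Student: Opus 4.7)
The plan is to compute $\partial_t \psi[F_d(U(t))]$ explicitly via the chain rule, reduce the required uniform bound to an empirical-process estimate through successive applications of Cauchy--Schwarz, and finally control this process by combining Lemma~\ref{lemma:emp_proc_gaussian} with a sample-covariance concentration argument.

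Differentiating the log-partition function in eq.~\eqref{eq:def_fe_matrix_universality} yields
\begin{equation*}
    \partial_t \psi[F_d(U(t))] = -\frac{\psi'(F_d(U(t)))}{d^2} \sum_{\mu=1}^n \bigl\langle \phi'(\Tr[U_\mu(t) S]) \, \Tr[\tU_\mu(t) S] \bigr\rangle_t,
\end{equation*}
where $\langle \cdot \rangle_t$ denotes the Gibbs expectation over $S$ under $P_0$ with Hamiltonian $\sum_\mu \phi(\Tr[U_\mu(t) S])$. Using that $\|\psi'\|_\infty$ and $\|\phi'\|_\infty$ are finite, Jensen's inequality $\langle |X| \rangle \leq \langle X^2 \rangle^{1/2}$, and Cauchy--Schwarz (first over $\mu$, then on the outer $\EE$) gives
\begin{equation*}
    |\EE \partial_t \psi[F_d(U(t))]| \leq \frac{C \sqrt{n}}{d^2} \sqrt{\EE \Big\langle \sum_\mu \Tr[\tU_\mu(t) S]^2 \Big\rangle_t}.
\end{equation*}
Letting $M(t)$ denote the $n \times p$ matrix (with $p = d(d+1)/2$) whose $\mu$-th row is $\flatt(\tU_\mu(t))^\T$ for $\flatt$ the isometric embedding of eq.~\eqref{eq:def_flattening}, the inner sum equals $\|M(t) \flatt(S)\|_2^2 \leq \|M(t)\|_\op^2 \|S\|_F^2 \leq C_0^2 \, d \, \|M(t)\|_\op^2$ on $\supp(P_0) \subseteq B_2(C_0 \sqrt{d})$, so the task reduces to the master estimate
\begin{equation*}
    |\EE \partial_t \psi[F_d(U(t))]| \leq \frac{C' \sqrt{nd}}{d^2} \sqrt{\EE \|M(t)\|_\op^2}.
\end{equation*}

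It thus suffices to show $\EE \|M(t)\|_\op^2 \lesssim n/d$ uniformly in $d$ and $t \in [0, \pi/2]$, which produces a uniform-in-$d$ bound $|\EE \partial_t \psi[F_d(U(t))]| \lesssim n/d^2 = \mcO_d(1)$, trivially integrable on $[0, \pi/2]$. Decomposing $M(t) = -\sin(t) M_W + \cos(t) M_G$ and using $(a+b)^2 \leq 2(a^2+b^2)$ reduces this to bounding $\EE \|M_G\|_\op^2$ and $\EE \|M_W\|_\op^2$ separately. Integrating the tail estimate of Lemma~\ref{lemma:emp_proc_gaussian} at $r=2$ immediately yields $\EE \|M_G\|_\op^2 \lesssim n/d$. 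For the $W$-block, the second-moment matching of Definition~\ref{def:one_dimensional_CLT} gives $\EE[M_W^\T M_W] = (2n/d)\, I_p$, so what must be proven is concentration of the sample covariance around its mean. This is the main obstacle: the required supremum lives on the Frobenius sphere in $\mcS_d$ and is \emph{not} directly controlled by Lemma~\ref{lemma:emp_proc_ellipse}, which only addresses the operator-norm sphere. The plan to bridge this gap is a covering argument: Hanson--Wright's inequality gives, for each fixed $S$ with $\|S\|_F = 1$, sub-exponential concentration of $\sum_\mu \Tr[W_\mu S]^2$ around $2n/d$; since $n \asymp d^2$, this tail beats the entropy $\exp(\mcO_d(d^2))$ of an $\eps$-net of the Frobenius sphere in $\mcS_d$, delivering $\|M_W\|_\op^2 \lesssim n/d$ with probability $1 - e^{-c d^2}$ and hence in expectation. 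Assembling these pieces closes the bound.
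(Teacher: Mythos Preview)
Your reduction to the master estimate $|\EE \partial_t \psi[F_d(U(t))]| \leq \frac{C'\sqrt{nd}}{d^2}\sqrt{\EE\|M(t)\|_\op^2}$ is correct, but the plan to bound $\EE\|M_W\|_\op^2$ has a genuine gap, and the whole detour is unnecessary.

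\textbf{The gap.} First, Theorem~\ref{thm:universality_matrix} is stated for an \emph{arbitrary} $\rho$ satisfying only the moment-matching and pointwise-normality hypotheses of Definition~\ref{def:one_dimensional_CLT}; there is no sub-exponential assumption on $\Tr[WS]$, so Hanson--Wright is simply unavailable. Second, even if you specialize to $\rho=\Ell(d)$, the covering argument fails as written. For a rank-one direction $S=vv^\T$ with $\|v\|_2=1$ (so $\|S\|_F=1$), one has $\Tr[W_\mu S]=d^{-1/2}((v^\T x_\mu)^2-1)$, and $\Tr[W_\mu S]^2$ is only sub-Weibull of order $1/2$. Bernstein-type inequalities (Lemma~\ref{lemma:tail_sum_sub_Weibull}$(ii)$) then give, at the relevant deviation scale, a tail of order $\exp(-c\sqrt{n})$, which does \emph{not} beat the entropy $\exp(\Theta(d^2))=\exp(\Theta(n))$ of an $\eps$-net of the Frobenius unit sphere in $\mcS_d$. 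So the union bound blows up. Establishing $\EE\|M_W\|_\op^2\lesssim n/d$ for $\Ell(d)$ may well be true, but it is a nontrivial random-matrix statement not covered by your argument, and in any case it lies outside the hypotheses of Theorem~\ref{thm:universality_matrix}.

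\textbf{What the paper does instead.} The paper avoids controlling $\|M_W\|_\op$ altogether by exploiting the i.i.d.\ structure. From eq.~\eqref{eq:ee_derivative_psi}, the sum over $\mu$ becomes $n$ times a single term involving $(G_1,W_1)$. One then rewrites the full Gibbs average using the \emph{leave-one-out} measure $\langle\cdot\rangle_1$ of eq.~\eqref{eq:def_Gibbs_mu}, which is independent of $(G_1,W_1)$; since $e^{-\phi}\in[e^{-\|\phi\|_\infty},1]$, the denominator costs only a constant $e^{\|\phi\|_\infty}$. Because $\langle\cdot\rangle_1$ is now independent of $(G_1,W_1)$, one can push $\EE_{G_1,W_1}$ inside and apply Cauchy--Schwarz at the level of a \emph{single} projection:
\[
\EE_{G_1,W_1}\big|\Tr[S\tU_1(t)]\big|\ \le\ \big(\EE_{G_1,W_1}\Tr[S\tU_1(t)]^2\big)^{1/2}\ =\ \Big(\tfrac{2}{d}\Tr[S^2]\Big)^{1/2}\ \le\ \sqrt{2}\,C_0,
\]
where the equality uses only the second-moment matching in Definition~\ref{def:one_dimensional_CLT}$(i)$ and the last step uses $\supp(P_0)\subseteq B_2(C_0\sqrt{d})$. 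This gives a uniform-in-$(d,t)$ bound $\big|\EE\partial_t\psi[F_d(U(t))]\big|\le C$ directly, with no empirical-process or covering argument. The leave-one-out decoupling is the idea you are missing.
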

\begin{lemma}[Pointwise limit]\label{lemma:pointwise_limit}
    \noindent 
    Under the hypotheses of Theorem~\ref{thm:universality_matrix}, for any $t \in (0, \pi/2)$: 
    \begin{equation*}
        \lim_{d \to \infty} \EE \, \frac{\partial \psi[F_d(U(t))]}{\partial t} = 0. 
    \end{equation*}
\end{lemma}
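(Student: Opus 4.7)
\medskip
\noindent
\textbf{Proof plan --}
Let $\langle \cdot \rangle_t$ denote the Gibbs average over $S \sim P_0$ with density proportional to $\exp\{-\sum_\nu \phi(\Tr[U_\nu(t)S])\}$. The chain rule applied to~\eqref{eq:def_fe_matrix_universality} gives
\[
\EE\,\frac{\partial\psi[F_d(U(t))]}{\partial t} = -\frac{1}{d^{2}}\sum_{\mu=1}^{n}\EE\left[\psi'(F_d)\,\bigl\langle \phi'(\Tr[U_\mu S])\,\Tr[\tU_\mu S]\bigr\rangle_{t}\right].
\]
The guiding observation is that if $W_\mu$ were itself $\GOE(d)$, then $U_\mu(t)$ and $\tU_\mu(t)$ would be \emph{independent} $\GOE(d)$ matrices, since $(W_\mu,G_\mu)\mapsto(U_\mu,\tU_\mu)$ is an orthogonal rotation of i.i.d.\ Gaussians. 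As both $F_d$ and $\langle\cdot\rangle_t$ depend on $(W_\mu,G_\mu)$ only through $U_\mu$, conditioning on $U_\mu$ and using $\EE[\tU_\mu \mid U_\mu]=0$ would make each summand vanish identically. The plan is therefore to show that swapping $W_\mu \sim \rho$ for an independent $\widehat W_\mu \sim \GOE(d)$ in the $\mu$-th summand produces an $o_d(1)$ error, which when averaged over the $O(d^2)$ indices gives the stated limit.

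\myskip
Fix $\mu$, condition on $\mathcal{F}_\mu \coloneqq \sigma(\{W_\nu\}_{\nu \ne \mu},\{G_\nu\}_\nu)$, and split $\Tr[\tU_\mu S] = -\sin(t)\,\Tr[W_\mu S] + \cos(t)\,\Tr[G_\mu S]$. On the $\cos(t)\,\Tr[G_\mu S]$ piece one applies Gaussian integration by parts (Stein's lemma) entrywise to $G_{\mu,ij}$, with $\Var(G_{\mu,ij})=(1+\delta_{ij})/d$. The derivative $\partial_{G_{\mu,ij}}$ hits (a) the factor $\phi'(\Tr[U_\mu S])$, (b) $\psi'(F_d)$ through the log-partition function, and (c) the Gibbs measure itself, via $\phi(\Tr[U_\mu S])$ in its exponent; each contribution carries a $\sin(t)$ from $\partial_{G_{\mu,ij}} U_\mu=\sin(t)\,e_{ij}$. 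The result is an explicit Gaussian expression in terms of $\psi',\psi'',\phi',\phi''$ and two-replica Gibbs correlators involving $\Tr[SS']$, $\Tr[S]$, $\Tr[S']$.

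\myskip
The $-\sin(t)\,\Tr[W_\mu S]$ piece, where Stein is unavailable, is where the real work lies: it must agree with what Stein would give for a $\GOE(d)$ matrix, up to $o_d(1)$. I would proceed in two steps. \emph{First} (decoupling): by the Lipschitz bound of Lemma~\ref{lemma:energy_change_small_ball}, the energy $d^{-2}\sum_\nu \phi(\Tr[U_\nu S])$ varies by at most $C\|\phi'\|_\infty \|W_\mu-\widehat W_\mu\|_\op$ uniformly in $S \in \supp(P_0)$, so replacing $W_\mu$ by $\widehat W_\mu$ inside $\langle \cdot \rangle_t$ perturbs the Gibbs weights only multiplicatively by $1+O(\|W_\mu-\widehat W_\mu\|_\op)$, negligible because $\|W_\mu\|_\op,\|\widehat W_\mu\|_\op$ are bounded in probability for $\Ell(d)$ and $\GOE(d)$. \emph{Second} (CLT replacement): once the Gibbs measure is decoupled from $W_\mu$, the $\mathcal{F}_\mu$-conditional expectation depends on $W_\mu$ only linearly, through the scalar $\Tr[W_\mu S]$ for each Gibbs sample $S$; a truncation and bounded-Lipschitz mollification of $x \mapsto x\,\phi'(\cos(t)x + \sin(t)\Tr[G_\mu S])$, combined with Definition~\ref{def:one_dimensional_CLT} uniformly in $S \in A_d$, transfers the $W_\mu$-expectation to the $\widehat W_\mu$-expectation at $o_d(1)$ cost. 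Finally the identity
\[
\cos(t)\,\EE[\psi'\langle\phi'\,\Tr[G_\mu S]\rangle_t] - \sin(t)\,\EE[\psi'\langle\phi'\,\Tr[\widehat W_\mu S]\rangle_t] = \EE[\psi'\langle\phi'\,\Tr[\tU_\mu S]\rangle_t] = 0,
\]
valid in the fully Gaussian case by independence of $(U_\mu, \tU_\mu)$, shows the two pieces cancel. Summing over $\mu\leq n=O(d^2)$ with the $d^{-2}$ prefactor then yields the pointwise limit.

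\myskip
\noindent
\textbf{Main obstacle --} The hard part is the decoupling step: $\langle\cdot\rangle_t$ depends nonlinearly on $W_\mu$ through \emph{every} projection $\Tr[W_\mu S]$ simultaneously, while Definition~\ref{def:one_dimensional_CLT} only compares the law of one projection at a time. Bridging this requires the operator-norm (not Frobenius) Lipschitz control of Lemma~\ref{lemma:energy_change_small_ball}, which is the main technical refinement over the sub-Gaussian setting of~\cite{montanari2022universality} (there Frobenius-norm Lipschitz estimates suffice, but fail for $W_\mu \sim \Ell(d)$ due to the sub-exponential tails of $\Tr[W_\mu S]$).
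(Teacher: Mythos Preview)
Your high-level plan matches the paper's (replace $W_\mu$ by an independent $\GOE(d)$ matrix, then use that $U_\mu$ and $\tU_\mu$ become independent), but the decoupling step contains two concrete errors that make the argument fail as written.

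First, Lemma~\ref{lemma:energy_change_small_ball} gives a Lipschitz bound for $S \mapsto E_{\{X_\mu\}}(S)$ in the \emph{variable} $S$; it says nothing about the dependence of the energy on the \emph{data} $W_\mu$, which is what you invoke. Moreover, your claim that $\|W_\mu\|_\op$ is bounded in probability for $W_\mu \sim \Ell(d)$ is false: $W_\mu = (x_\mu x_\mu^\T - \Id_d)/\sqrt{d}$ has largest eigenvalue $(\|x_\mu\|^2 - 1)/\sqrt{d} \approx \sqrt{d}$. And even if both $\|W_\mu\|_\op$ and $\|\widehat W_\mu\|_\op$ were $O(1)$, the difference $\|W_\mu - \widehat W_\mu\|_\op$ between two independent matrices would itself be $O(1)$, not $o(1)$, so ``$1 + O(\|W_\mu - \widehat W_\mu\|_\op)$'' is an $O(1)$ multiplicative perturbation of the Gibbs weights, not a negligible one. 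Since the sum has $n/d^2 = O(1)$ terms after normalization, you need each term to be $o(1)$, which this argument does not deliver.

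Second, even after a correct leave-one-out decoupling (replacing $\psi'[F_d(U)]$ by $\psi'[F_d(U^{(1)})]$ at cost $O(d^{-2})$ and writing the Gibbs average via the measure $\langle \cdot \rangle_1$ of eq.~\eqref{eq:def_Gibbs_mu}, which is independent of $(W_1,G_1)$), you are left with a ratio whose \emph{denominator} $\langle e^{-\phi(\Tr[U_1 S])} \rangle_1$ still depends on $W_1$ nonlinearly through $U_1$. Your ``CLT replacement'' step glosses over this. The paper handles it by expanding $1/x$ as a polynomial around $x = 1$ (Lemma~\ref{lemma:poly_approx}, using that $e^{-\|\phi\|_\infty} \leq x \leq 1$), which reduces everything to finitely many terms of the form $\langle \EE_{W_1,G_1}[f(\{\Tr[W_1 S_a]\}_a, \{\Tr[G_1 S_a]\}_a)] \rangle_1$ with i.i.d.\ replicas $S_0, \ldots, S_k$ under $\langle \cdot \rangle_1$. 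Only then does a \emph{finite-dimensional} extension of the CLT (Lemma~\ref{lemma:finite_dim_clt}) apply to swap $W_1$ for an independent $\tG_1 \sim \GOE(d)$; the purely one-dimensional statement of Definition~\ref{def:one_dimensional_CLT} is not enough, since several replica projections appear simultaneously. Note that the paper's proof of Lemma~\ref{lemma:pointwise_limit} does not use Lemma~\ref{lemma:energy_change_small_ball} at all; that lemma enters only later, in the zero-temperature limit (Lemma~\ref{lemma:GS_net}).
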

\noindent
Indeed, plugging Lemmas~\ref{lemma:domination} and \ref{lemma:pointwise_limit} in eq.~\eqref{eq:ftc} and taking the $d \to \infty$ limit using dominated convergence ends the proof 
of Theorem~\ref{thm:universality_matrix}.
We therefore focus on proving these two lemmas in the following.

\myskip
As it will be useful, we state the result of the elementary computation of the derivative:
\begin{align}\label{eq:derivative_psi}
    &\frac{\partial \psi[F_d(U(t))]}{\partial t} \\ 
    \nonumber
    &= -\frac{\psi'[F_d(U(t))]}{d^2} \sum_{\mu=1}^n \frac{\int P_0(\rd S) \, e^{-\sum_\nu \phi(\Tr[U_\nu(t) S])} \left(\Tr[S \tU_\mu(t)] \, \phi'(\Tr[U_\mu(t) S])\right)}{\int P_0(\rd S) \, e^{-\sum_\nu \phi(\Tr[U_\nu(t) S])}}.
\end{align}
Because $\{G_\mu, W_\mu\}$ are i.i.d.\ we get further:
\begin{align}\label{eq:ee_derivative_psi}
    &\EE \, \frac{\partial \psi[F_d(U(t))]}{\partial t} \\ 
    \nonumber
    &= - \frac{n}{d^2} \EE\Bigg[\psi'[F_d(U(t))] \frac{\int P_0(\rd S) \, e^{-\sum_\nu \phi(\Tr[U_\nu(t) S])} \left(\Tr[S \tU_1(t)] \, \phi'(\Tr[U_1(t) S])\right)}{\int P_0(\rd S) \, e^{-\sum_\nu \phi(\Tr[U_\nu(t) S])}}\Bigg].
\end{align}
\noindent
Note that if $W_\mu$ was also a $\textrm{GOE}(d)$ matrix, for any $t$, $U_\mu(t)$ and $\tU_\mu(t)$ would be independent $\textrm{GOE}(d)$ matrices. 
The main idea behind the interpolation is that the matrix $W_\mu$ will only appear through some one-dimensional projection with a matrix $S$. 
We will then use Definition~\ref{def:one_dimensional_CLT} to argue that one can effectively replace $W_\mu$ by a $\GOE(d)$ matrix, which by the argument above would mean that 
we can consider the case of independent $\GOE(d)$ matrices $U_\mu(t)$ and $\tU_\mu(t)$. 
In this case, the RHS of eq.~\eqref{eq:ee_derivative_psi} would be $0$, since there is only a single term involving $\tU_1(t)$, 
which has zero mean: this crucial idea is the intuition behind Lemma~\ref{lemma:pointwise_limit}.

\myskip 
The details of the proofs of Lemmas~\ref{lemma:domination} and \ref{lemma:pointwise_limit} are fairly technical 
and substantially follow the ones of their counterparts in \cite{montanari2022universality}. 
For this reason, we defer them to Appendix~\ref{subsec_app:proof_universality_matrix}.

\subsection{Proof of Proposition~\ref{prop:universality_gs}}
\label{subsec:proof_universality_gs}

\subsubsection{Consequences of universality for ellipsoid fitting}\label{subsubsec:universality_ellipse}

We investigate here the consequences of Theorem~\ref{thm:universality_matrix} for the ellipsoid fitting problem.
It follows by the Berry-Esseen central limit theorem \cite{o2014analysis} that the distribution $\Ell(d)$ satisfies uniform pointwise normality 
on a large set of matrices (in the sense of Definition~\ref{def:one_dimensional_CLT}).
\begin{lemma}[One-dimensional CLT for the ellipse problem]\label{lemma:1d_clt_ellipse}
    \noindent
    Let $d \geq 1$ and $W \sim \Ell(d)$.
    Fix any $\eta \in (0,1/2)$ 
    Let $A_d \coloneqq \{S \in \mcS_d \, : \, \Tr[|S|^3] \leq d^{3/2-\eta} \}$.
    Then $A_d$ is convex and symmetric, and
    the law of $W$ satisfies a one-dimensional CLT with respect to $A_d$, 
    in the sense of Definition~\ref{def:one_dimensional_CLT}.
\end{lemma}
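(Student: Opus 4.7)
\textbf{Proof plan for Lemma~\ref{lemma:1d_clt_ellipse}.}
The symmetry of $A_d$ is immediate since $\Tr|{-S}|^3 = \Tr|S|^3$, and convexity follows from the fact that $S \mapsto \Tr|S|^3 = \|S\|_{S_3}^3$ is convex on $\mcS_d$ (composition of the convex increasing function $t \mapsto t^3$ on $\bbR_+$ with the Schatten-$3$ norm). The main task is the one-dimensional CLT. Diagonalizing $S = O^\T \Lambda O$ and setting $y = Ox \sim \mcN(0,\Id_d)$, one writes
\[
\Tr[WS] = \frac{1}{\sqrt{d}} \sum_{i=1}^d \lambda_i (y_i^2 - 1),
\]
a sum of independent mean-zero variables with total variance $\sigma^2(S) \coloneqq 2\|S\|_F^2/d$ and third-moment sum $c \cdot \Tr|S|^3/d^{3/2}$ (where $c = \EE|y_1^2-1|^3$). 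Since $\Tr[GS] \sim \mcN(0, \sigma^2)$ has the same variance, classical Berry-Esseen yields
\[
d_K\big(\Tr[WS],\, \Tr[GS]\big) \leq C \, \frac{\Tr|S|^3}{\|S\|_F^3}.
\]

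\textbf{Translation to bounded Lipschitz.} The main obstacle is to translate this Kolmogorov bound to the bounded-Lipschitz metric \emph{uniformly} over $S \in A_d$, given that $\sigma(S)$ is not bounded (one has $\sigma \lesssim d^{1/6 - \eta/3}$ via Hölder). The plan is a two-case split governed by a parameter $\delta > 0$:
\begin{itemize}
    \item[(A)] If $\sigma^2(S) \leq \delta$, bound directly using $|\varphi(z) - \varphi(0)| \leq \|\varphi\|_L |z|$ and $\EE|\Tr[WS]|, \EE|\Tr[GS]| \leq \sigma$, giving $|\EE \varphi(\Tr[WS]) - \EE \varphi(\Tr[GS])| \leq 2\|\varphi\|_L \sqrt{\delta}$.
    \item[(B)] If $\sigma^2(S) > \delta$, use that for Lipschitz $\varphi$ one has $|\EE \varphi(\Tr[WS]) - \EE \varphi(\Tr[GS])| \leq \|\varphi\|_L \cdot W_1$, where $W_1 = \int |F - F'|$. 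Splitting the integral at $\pm R$ and controlling each tail using second-moment bounds ($\int_R^\infty \bbP(|Z|>x)\rd x \leq \sigma^2/R$ for both distributions) gives $W_1 \leq 2R \, d_K + C \sigma^2 / R$, and optimizing $R$ yields $W_1 \leq C' \sigma \sqrt{d_K}$.
\end{itemize}
In case (B), the key computation is $\sigma^2 \cdot d_K \leq C \Tr|S|^3 / (d \|S\|_F) \leq C' d^{-\eta}/\sqrt{\delta}$ (using $\|S\|_F \geq \sqrt{\delta d/2}$ and $\Tr|S|^3 \leq d^{3/2-\eta}$), so that $\sigma \sqrt{d_K} \to 0$ uniformly in $S$ as $d \to \infty$ (for fixed $\delta$).

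\textbf{Conclusion.} Given any $\varepsilon > 0$, choose $\delta > 0$ so that case (A) contributes at most $\varepsilon/2$; then take $d$ large enough that case (B) contributes at most $\varepsilon/2$. This yields the uniform bound on $A_d$ and closes the proof. The main difficulty, as noted, is purely the Kolmogorov-to-Lipschitz translation in the regime of large $\sigma$, which is resolved by coupling a Wasserstein-type bound obtained by integration-by-parts with the second-moment tail control and then using the $A_d$ constraint to show that the product $\sigma^2 \cdot d_K$ is small.
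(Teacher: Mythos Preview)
Your proof is correct but takes a more elaborate route than the paper. You apply the classical Berry--Esseen theorem in Kolmogorov distance, and then must work to convert this to the bounded-Lipschitz (Wasserstein-type) metric, which forces a case split on the size of $\sigma^2(S)$ because the standard $d_K \to W_1$ conversion brings in a scale factor. The paper instead invokes a Lindeberg bound directly for Lipschitz test functions (Corollary~11.59 of O'Donnell, obtained by mollifying $\varphi$ at scale $\epsilon$, applying third-order Lindeberg replacement, and optimizing $\epsilon$):
\[
\big|\EE\varphi\big(\textstyle\sum_i X_i\big) - \EE\varphi\big(\textstyle\sum_i Y_i\big)\big|
\;\le\; C\,\|\varphi\|_L \Big[\textstyle\sum_i \big(\EE|X_i|^3 + \EE|Y_i|^3\big)\Big]^{1/3}.
\]
Because this bound does not involve $\sigma$ at all, the paper gets in one line
$\sup_{S \in A_d}|\EE\varphi(\Tr[WS]) - \EE\varphi(\Tr[GS])| \le C\|\varphi\|_L\,(\Tr|S|^3/d^{3/2})^{1/3} \le C\|\varphi\|_L\,d^{-\eta/3}$,
with no case analysis. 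Your approach has the minor advantage of relying only on the textbook Berry--Esseen statement (and in case~(B) actually gives the sharper exponent $d^{-\eta/2}$), at the cost of the extra $\delta$-splitting machinery; the paper's route is shorter and more transparent.
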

\noindent
\textbf{Remark --}
This lemma makes crucial use of the Gaussian nature of the vectors, and more specifically it relies on their rotation invariance and the first moments of their norm, 
as is clear from the proof.
On the other hand, for vectors sampled from other distributions, such as $x \sim \Unif(\{\pm 1\}^d)$ or $x \sim \Unif(\bbS^{d-1})$, 
it is easy to see that Lemma~\ref{lemma:1d_clt_ellipse} can not hold: indeed, $S = \Id_d$ is such that $\Tr[S W] = 0$ deterministically, while $\Tr[S G] = \Tr[G] \sim \mcN(0, 2)$ 
for $G \sim \GOE(d)$.
This is consistent, as in the example of these two distributions there always exists an ellipsoid fit, which is the sphere itself, 
and therefore Theorem~\ref{thm:main_positive_side} can not possibly hold.

\begin{proof}[Proof of Lemma~\ref{lemma:1d_clt_ellipse}]
    Note that $A_d$ is a centered ball for the $S_3$-norm, and is therefore convex and symmetric.
    The proof of the first and second moments condition of Definition~\ref{def:one_dimensional_CLT} is immediate via a simple calculation. We focus on proving condition $(ii)$ 
    of Definition~\ref{def:one_dimensional_CLT}.
    Fix $S \in A_d$, with eigenvalues $(\lambda_i)_{i=1}^d$.
    With $W \sim \Ell(d)$ and $G \sim \GOE(d)$, let
    \begin{equation*}
        \begin{dcases}
            X &\coloneqq \Tr[S W], \\
            Y &\coloneqq \Tr[S G] .
        \end{dcases}
    \end{equation*}
    It is trivial to see that $Y \sim \mcN(0, 2 \Tr[S^2]/d)$, 
    so that $Y \deq d^{-1/2} \sum_{i=1}^d \lambda_i z_i$ for $z_i \iid \mcN(0,2)$.
    Moreover,
    \begin{equation*}
        X \deq \frac{1}{\sqrt{d}} \sum_{i=1}^d \lambda_i (x_i^2 - 1),
    \end{equation*}
    with $x_i \iid \mcN(0,1)$. 
    We use the Berry-Esseen central limit theorem, and in particular the formulation of Chapter~11 of \cite{o2014analysis} -- itself a simple 
    consequence of the Lindeberg exchange method. 
    \begin{lemma}[Corollary~11.59 of \cite{o2014analysis}]\label{lemma:BE_lipschitz}
        \noindent
        There exists a universal constant $C > 0$ such that the following holds.
        Let $p \geq 1$ and $X_1, \cdots, X_p$ and $Y_1, \cdots, Y_p$ be independent random variables, such that 
        $\EE[X_i] = \EE[Y_i]$ and $\EE[X_i^2] = \EE[Y_i^2]$ for all $i \in [p]$.
        Let $\varphi : \bbR \to \bbR$ a Lipschitz function with Lipschitz constant $\|\varphi\|_L$.
        Then 
        \begin{equation*}
            \left|\EE \, \varphi\left(\sum_{i=1}^p X_i\right) - \EE \, \varphi\left(\sum_{i=1}^p Y_i\right)\right| \leq C \|\varphi\|_L \left[\sum_{i=1}^p \left(\EE \, |X_i|^3 + \EE \, |Y_i|^3\right)\right]^{1/3}.
        \end{equation*}
    \end{lemma}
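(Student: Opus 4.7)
The plan is to prove the inequality by the Lindeberg exchange method combined with a smoothing argument to compensate for the fact that $\varphi$ is only Lipschitz (rather than thrice differentiable). First I would set up the standard telescoping: define the hybrid sums $S_k \coloneqq Y_1 + \cdots + Y_k + X_{k+1} + \cdots + X_p$ for $k = 0, \ldots, p$, so that $S_0 = \sum_i X_i$ and $S_p = \sum_i Y_i$. Writing $U_k \coloneqq S_{k-1} - X_k = S_k - Y_k$, which is independent of both $X_k$ and $Y_k$ by the independence assumption, the telescoping identity
\begin{equation*}
\EE\,\varphi(S_0) - \EE\,\varphi(S_p) = \sum_{k=1}^p \bigl(\EE\,\varphi(U_k + X_k) - \EE\,\varphi(U_k + Y_k)\bigr)
\end{equation*}
reduces the proof to bounding a single replacement $\bigl|\EE\,\varphi(U_k + X_k) - \EE\,\varphi(U_k + Y_k)\bigr|$ and summing over $k$.

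Since only first and second moments of $X_k, Y_k$ match, a direct Taylor expansion of $\varphi$ would need a cubic error term, which requires $\varphi \in C^3$. To circumvent this, I would introduce a mollified version $\varphi_\delta \coloneqq \varphi \ast \rho_\delta$, where $\rho_\delta$ is a standard smooth kernel of width $\delta > 0$ (e.g.\ a rescaled Gaussian), which satisfies $\|\varphi_\delta - \varphi\|_\infty \leq C \delta \|\varphi\|_L$ and $\|\varphi_\delta^{(3)}\|_\infty \leq C \|\varphi\|_L / \delta^2$ for an absolute constant $C$. Replacing $\varphi$ by $\varphi_\delta$ in the global comparison costs at most $2 C \delta \|\varphi\|_L$.

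Applied to each replacement step, a second-order Taylor expansion of $\varphi_\delta$ around $U_k$, combined with independence of $(X_k, Y_k)$ from $U_k$ and the matching first two moments $\EE X_k = \EE Y_k$, $\EE X_k^2 = \EE Y_k^2$, yields
\begin{equation*}
\bigl|\EE\,\varphi_\delta(U_k + X_k) - \EE\,\varphi_\delta(U_k + Y_k)\bigr| \leq \frac{\|\varphi_\delta^{(3)}\|_\infty}{6} \bigl(\EE |X_k|^3 + \EE |Y_k|^3\bigr).
\end{equation*}
Summing over $k$ and adding the smoothing error gives, with $M \coloneqq \sum_k (\EE |X_k|^3 + \EE |Y_k|^3)$, a total bound of order $\|\varphi\|_L (\delta + M/\delta^2)$. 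Optimizing by setting $\delta \propto M^{1/3}$ balances the two terms and yields exactly the advertised $C \|\varphi\|_L M^{1/3}$ bound. There is no serious conceptual obstacle — the whole argument reduces to mechanical telescoping, Taylor expansion, and tuning of a smoothing parameter — and the only small technical point is verifying the two bounds on $\varphi_\delta$ and $\varphi_\delta^{(3)}$, which any standard mollifier delivers.
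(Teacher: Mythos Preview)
Your proof is correct and matches the approach the paper indicates: the paper does not prove this lemma but cites it as Corollary~11.59 of \cite{o2014analysis}, noting only that it is ``a simple consequence of the Lindeberg exchange method,'' which is precisely the telescoping-plus-smoothing argument you outline. The mollification bounds $\|\varphi_\delta - \varphi\|_\infty \lesssim \delta\|\varphi\|_L$ and $\|\varphi_\delta^{(3)}\|_\infty \lesssim \|\varphi\|_L/\delta^2$ are standard (the latter via $\varphi_\delta^{(3)} = \varphi' \ast \rho_\delta''$ and $\|\rho_\delta''\|_{L^1} = \delta^{-2}\|\rho''\|_{L^1}$), and the optimization $\delta \asymp M^{1/3}$ closes the argument.
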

    \noindent
    Lemma~\ref{lemma:BE_lipschitz} yields:
    \begin{equation*}
        | \EE \, \varphi(X) - \EE \, \varphi(Y)| \leq C \|\varphi\|_L \left[B\frac{\Tr[|S|^3]}{d^{3/2}} \right]^{1/3},
    \end{equation*}
    with $B = \EE[|z^2 - 1|^3] + 2^{3/2} \EE |z|^3$ for $z \sim \mcN(0,1)$.
    Using the definition of $A_d$, we reach:
    \begin{equation*}
        \sup_{S \in A_d} | \EE \, \varphi(X) - \EE \, \varphi(Y)| \leq C \|\varphi\|_L \sup_{S \in A_d} \left[\frac{1}{\sqrt{d}} \frac{\Tr |S|^3}{d} \right]^{1/3} \leq C \|\varphi\|_L \, d^{-\eta/3} \to 0.
    \end{equation*}
    This ends the proof.
\end{proof}

\myskip
We can now state the main result of this section, a corollary of Theorem~\ref{thm:universality_matrix} and Lemma~\ref{lemma:1d_clt_ellipse}.
\begin{corollary}[Universality for ellipsoid fitting]\label{cor:universality_ellipse}
    \noindent  
    Let $n,d \geq 1$ and $n,d \to \infty$ with $\alpha_1 d^2 \leq n \leq \alpha_2 d^2$ for some $0 < \alpha_1 \leq \alpha_2$. 
    Let $P_0$ be a probability distribution such that $\supp(P_0) \subseteq B_\op(C_0)$ for some constant $C_0 > 0$.
    Let $\phi : \bbR \to \bbR_+$ a bounded differentiable function with bounded derivative.
    For $X_1, \cdots, X_n \in \mcS_d$ we define the free entropy:
    \begin{equation*}
        F_d(\{X_\mu\}) \coloneqq \frac{1}{d^2} \log \int P_0(\rd S) \exp\Big\{-\sum_{\mu=1}^n \phi\left(\Tr[X_\mu S]\right)\Big\}.
    \end{equation*}
    Then for any $\psi$ such that $\|\psi\|_\infty, \|\psi'\|_\infty, \|\psi'\|_L < \infty$ we have
    \begin{equation}\label{eq:equivalence_asymptotic_fe_ellipse}
        \lim_{d \to \infty} \left| \EE_{\{W_\mu\} \iid \Ell(d)} \psi[F_d(\{W_\mu\})] - \EE_{\{G_\mu\} \iid \mathrm{GOE}(d)} \psi[F_d(\{G_\mu\})] \right| = 0.
    \end{equation}
\end{corollary}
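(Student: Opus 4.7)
\textbf{Proof proposal for Corollary~\ref{cor:universality_ellipse}.} The plan is to deduce the corollary by a direct verification of the hypotheses of Theorem~\ref{thm:universality_matrix}, with $\rho = \Ell(d)$, using Lemma~\ref{lemma:1d_clt_ellipse} to supply a suitable family of convex symmetric sets $A_d$. All ingredients are already in place: this is a matching-of-hypotheses argument rather than a new calculation.

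First I would check hypothesis \ref{hyp:P0}. Because the Frobenius norm is dominated by $\sqrt{d}$ times the operator norm, the inclusion $\supp(P_0) \subseteq B_\op(C_0)$ immediately gives
\[
\supp(P_0) \subseteq B_\op(C_0) \subseteq B_2(C_0 \sqrt{d}),
\]
which is exactly \ref{hyp:P0} with the same constant $C_0$. Hypothesis \ref{hyp:phi} on $\phi$ is transferred verbatim, and the requirements on $\psi$ match: Theorem~\ref{thm:universality_matrix} asks for $\psi$ bounded, differentiable, with bounded Lipschitz derivative, which is precisely what is assumed here.

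Next I would supply hypotheses \ref{hyp:Ad} and \ref{hyp:rho} simultaneously. Pick any $\eta \in (0, 1/2)$ and define
\[
A_d \coloneqq \{S \in \mcS_d \, : \, \Tr[|S|^3] \leq d^{3/2 - \eta}\},
\]
which is a (closed) ball in the Schatten-$3$ norm and therefore convex and symmetric. For every $S \in B_\op(C_0)$ one has $\Tr[|S|^3] \leq d \, \|S\|_\op^3 \leq C_0^3 d$, and since $C_0^3 d \leq d^{3/2 - \eta}$ for all $d$ larger than some $d_0 = d_0(C_0, \eta)$, we get $\supp(P_0) \subseteq B_\op(C_0) \subseteq A_d$ for $d \geq d_0$. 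This verifies \ref{hyp:Ad}. For \ref{hyp:rho}, Lemma~\ref{lemma:1d_clt_ellipse} states exactly that $\Ell(d)$ satisfies a one-dimensional CLT with respect to this choice of $A_d$. Applying Theorem~\ref{thm:universality_matrix} then yields eq.~\eqref{eq:equivalence_asymptotic_fe_ellipse}, concluding the proof.

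Since everything reduces to a compatibility check, there is no serious obstacle here; the only minor subtlety is that the approach depends on having a Schatten-$p$ norm control with $p = 3$ (the exponent dictated by the third-moment Berry--Esseen bound used to prove Lemma~\ref{lemma:1d_clt_ellipse}), and one must observe that the operator norm ball comfortably sits inside a Schatten-$3$ ball of radius $d^{3/2 - \eta}$ for large $d$, with room to spare. This slack is precisely what allows us to pass from the structural hypothesis $\supp(P_0) \subseteq B_\op(C_0)$ of Corollary~\ref{cor:universality_ellipse} to the pointwise-normality setup of Theorem~\ref{thm:universality_matrix}.
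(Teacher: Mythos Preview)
Your proposal is correct and matches the paper's own proof essentially line for line: verify \ref{hyp:P0} via $B_\op(C_0)\subseteq B_2(C_0\sqrt d)$, take \ref{hyp:phi} verbatim, choose $A_d=\{S:\Tr|S|^3\le d^{3/2-\eta}\}$ (a Schatten-$3$ ball) to satisfy \ref{hyp:Ad} using $\Tr|S|^3\le d\,\|S\|_\op^3$, and invoke Lemma~\ref{lemma:1d_clt_ellipse} for \ref{hyp:rho}. The only cosmetic difference is that the paper phrases the inclusion as $B_\op(C_0)\subseteq B_3(C_0 d^{1/3})\subseteq B_3(d^{1/2-\eta})$ for $\eta\in(0,1/6)$, which is your inequality rewritten in Schatten-$3$ norm.
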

\noindent
We notice that the only requirement for Corollary~\ref{cor:universality_ellipse} to hold is $\supp(P_0) \subseteq B_F(C\sqrt{d}) \cap B_3(d^{3/2-\eta})$ for some $C > 0$ and $\eta > 0$, a weaker requirement than $\supp(P_0) \subseteq B_\op(C_0)$.

\begin{proof}[Proof of Corollary~\ref{cor:universality_ellipse}]
Since $B_\op(C_0) \subseteq B_2(C_0 \sqrt{d})$, hypothesis~\ref{hyp:P0} of Theorem~\ref{thm:universality_matrix} is satisfied.
Condition~\ref{hyp:phi} is satisfied by hypothesis.
Since $B_\op(C_0) \subseteq B_3(C_0 d^{1/3}) \subseteq B_3(d^{1/2 - \eta})$
for any $\eta \in (0, 1/6)$, condition~\ref{hyp:Ad} of Theorem~\ref{thm:universality_matrix} is satisfied with $A_d = B_3(d^{1/2 - \eta})$. 
Finally, Lemma~\ref{lemma:1d_clt_ellipse} verifies condition~\ref{hyp:rho} for this choice of $A_d$.
All in all we can apply Theorem~\ref{thm:universality_matrix}, from which the conclusion follows.
\end{proof}

\subsubsection{Proof of Proposition~\ref{prop:universality_gs}}
\label{subsubsec:proof_universality_gs}

\noindent 
We are now ready to prove Proposition~\ref{prop:universality_gs}, taking a ``small-temperature'' limit. 
Such arguments are classical in rigorous statistical mechanics, see e.g.\ Appendix~A of \cite{montanari2022universality}.
Notice that the restriction $B \subseteq B_\op(C_0)$ will be critical because we proved an upper bound on the Lipschitz constant of the energy 
for the operator norm, cf.\ Lemma~\ref{lemma:energy_change_small_ball}.
Recall the definition of the energy function:
\begin{equation*}
    E_{\{X_\mu\}}(S) \coloneqq \frac{1}{d^2} \sum_{\mu=1}^n \phi[\Tr(X_\mu S)].
\end{equation*}
We fix $\eta \in (0,1)$, and $\mcN_\eta \subseteq B$ a minimal $\eta$-net of $B$ 
for $\|\cdot\|_\op$. 
Since $B \subseteq B_\op(C_0)$ and 
$\dim(\mcS_d) = d(d+1)/2$, it follows by standard covering number upper bounds \cite{vershynin2018high,van2014probability} that 
\begin{equation}\label{eq:net_upper_bound}
  \log |\mcN_\eta| = \log \mcN(B, \|\cdot\|_\op, \eta) 
  \leq \log \mcN\left(B_\op(C_0), \|\cdot\|_\op, \frac{\eta}{2}\right)
  \leq d^2 \log \frac{K}{\eta} ,
\end{equation}
for some $K > 0$ depending on $C_0$.
Recall the definition of $\GS_d(\{X_\mu\})$ in eq.~\eqref{eq:def_gs}.
We define:
\begin{equation*}
    \GS_d(\eta,\{X_\mu\}) \coloneqq \inf_{S \in \mcN_\eta} E_{\{X_\mu\}}(S).
\end{equation*}
We will show the two lemmas: 
\begin{lemma}\label{lemma:universality_GS_net}
    \noindent
    For any $\eta > 0$ and any $\psi$ such that $\|\psi\|_\infty, \|\psi'\|_\infty, \|\psi'\|_L < \infty$: 
    \begin{equation*}
        \lim_{d \to \infty} \left| \EE_{\{W_\mu\} \iid \Ell(d)} \psi[\GS_d(\eta,\{W_\mu\})] - \EE_{\{G_\mu\} \iid \mathrm{GOE}(d)} \psi[\GS_d(\eta,\{G_\mu\})] \right| = 0.
    \end{equation*}
\end{lemma}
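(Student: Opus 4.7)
\textbf{Proof proposal for Lemma~\ref{lemma:universality_GS_net}.}
The plan is to approximate the minimum of $E_{\{X_\mu\}}$ over the finite set $\mcN_\eta$ by a ``low‑temperature'' free entropy, to which we can apply the free entropy universality already established in Corollary~\ref{cor:universality_ellipse}.
Concretely, for an inverse temperature parameter $\beta>0$ to be chosen at the end, let $P_0^{(\eta)}$ be the uniform probability measure on $\mcN_\eta$, and apply Corollary~\ref{cor:universality_ellipse} with this $P_0$ and with the bounded differentiable function $\beta \phi$ in place of $\phi$. Denote
\begin{equation*}
    F_d^{(\beta)}(\{X_\mu\}) \coloneqq \frac{1}{d^2} \log \int P_0^{(\eta)}(\rd S)\, \exp\Big\{-\beta \sum_{\mu=1}^n \phi(\Tr[X_\mu S])\Big\} = \frac{1}{d^2}\log \frac{1}{|\mcN_\eta|}\sum_{S \in \mcN_\eta} e^{-\beta d^2 E_{\{X_\mu\}}(S)}.
\end{equation*}
Since $\mcN_\eta \subseteq B \subseteq B_\op(C_0)$, the hypothesis $\supp(P_0) \subseteq B_\op(C_0)$ of Corollary~\ref{cor:universality_ellipse} is satisfied, and $\beta\phi$ is bounded with bounded derivative for each fixed $\beta$.

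The standard log‑sum‑exp bracketing
\begin{equation*}
    -\beta d^2\, \GS_d(\eta,\{X_\mu\}) - \log|\mcN_\eta| \;\leq\; d^2 F_d^{(\beta)}(\{X_\mu\}) \;\leq\; -\beta d^2\, \GS_d(\eta,\{X_\mu\})
\end{equation*}
combined with eq.~\eqref{eq:net_upper_bound} yields the uniform (in $\{X_\mu\}$) approximation
\begin{equation*}
    \Big| \GS_d(\eta,\{X_\mu\}) + \tfrac{1}{\beta} F_d^{(\beta)}(\{X_\mu\}) \Big| \;\leq\; \frac{\log|\mcN_\eta|}{\beta d^2} \;\leq\; \frac{1}{\beta}\log\frac{K}{\eta}.
\end{equation*}
I would then define $\tilde\psi(x) \coloneqq \psi(-x/\beta)$, which for fixed $\beta$ satisfies $\|\tilde\psi\|_\infty = \|\psi\|_\infty$, $\|\tilde\psi'\|_\infty = \|\psi'\|_\infty/\beta$ and $\|\tilde\psi'\|_L = \|\psi'\|_L/\beta^2$, so that the hypotheses of Corollary~\ref{cor:universality_ellipse} are met by $\tilde\psi$. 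Applying this corollary gives
\begin{equation*}
    \lim_{d\to\infty} \Big| \EE_{W} \tilde\psi[F_d^{(\beta)}(\{W_\mu\})] - \EE_{G} \tilde\psi[F_d^{(\beta)}(\{G_\mu\})] \Big| = 0.
\end{equation*}

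To conclude, I would fix $\eps>0$, use the Lipschitzness of $\psi$ and the above uniform approximation to bound
\begin{equation*}
    \Big| \EE\, \psi[\GS_d(\eta,\{X_\mu\})] - \EE\, \tilde\psi[F_d^{(\beta)}(\{X_\mu\})] \Big| \leq \frac{\|\psi'\|_\infty}{\beta}\log\frac{K}{\eta}
\end{equation*}
in both the $\Ell(d)$ and $\GOE(d)$ cases, then combine with the free entropy universality above via the triangle inequality, choose $\beta$ large enough that $2 \|\psi'\|_\infty \log(K/\eta)/\beta < \eps$, and finally let $d\to\infty$.
I do not expect a serious obstacle: the covering upper bound \eqref{eq:net_upper_bound} is exactly what guarantees that the low‑temperature approximation error is $\mcO(1/\beta)$ uniformly in $d$, and the regularity of $\tilde\psi$ for fixed $\beta$ is automatic. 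The only minor point requiring care is keeping track of the $\beta$‑dependence of $\tilde\psi$ so as to verify the hypotheses of Corollary~\ref{cor:universality_ellipse} before taking $\beta \to \infty$ at the very end.
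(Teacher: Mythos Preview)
Your proposal is correct and follows essentially the same route as the paper: apply Corollary~\ref{cor:universality_ellipse} with $P_0$ the uniform measure on $\mcN_\eta$ and $\beta\phi$ in place of $\phi$, then pass from the finite-temperature free entropy to $\GS_d(\eta,\cdot)$ with an error $\tfrac{1}{\beta}\log(K/\eta)$, and finally send $\beta\to\infty$ after $d\to\infty$. The only cosmetic difference is that you obtain the $\mcO(1/\beta)$ error directly from the log--sum--exp bracketing, whereas the paper derives the same bound by integrating $|\partial_s F_d(\eta,s,\cdot)|$ and invoking the entropy maximization inequality; both arguments are standard and yield the identical estimate.
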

\begin{lemma}\label{lemma:GS_net}
    \noindent
    Let $X_1, \cdots, X_n \iid \rho$, with $\rho \in \{\GOE(d),\Ell(d)\}$. Then, with probability 
    at least $1 - 2 e^{-n}$:
    \begin{equation*}
        |\GS_d(\eta,\{X_\mu\}) - \GS_d(\{X_\mu\})| \leq C \|\phi'\|_\infty \cdot \eta.
    \end{equation*}
\end{lemma}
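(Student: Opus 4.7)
\textbf{Proof proposal for Lemma~\ref{lemma:GS_net}.} The plan is to establish the two inequalities separately, only one of which requires probabilistic input.

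\emph{Trivial direction.} Since $\mcN_\eta \subseteq B$, the infimum defining $\GS_d(\eta,\{X_\mu\})$ is taken over a subset of the one defining $\GS_d(\{X_\mu\})$, so deterministically
\begin{equation*}
    \GS_d(\eta,\{X_\mu\}) \geq \GS_d(\{X_\mu\}).
\end{equation*}

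\emph{Opposite direction.} This is where I would invoke Lemma~\ref{lemma:energy_change_small_ball}, which (applicable under both $\GOE(d)$ and $\Ell(d)$, as our $\phi$ has bounded derivative) asserts that, on an event $\Omega$ with $\bbP[\Omega] \geq 1 - 2e^{-n}$, the map $S \mapsto E_{\{X_\mu\}}(S)$ is globally $C\|\phi'\|_\infty$--Lipschitz on $\mcS_d$ with respect to $\|\cdot\|_\op$. Condition on $\Omega$. For any $S \in B$, use the defining property of $\mcN_\eta$ as an $\eta$-net of $B$ for the operator norm to pick $S' = S'(S) \in \mcN_\eta$ with $\|S - S'\|_\op \leq \eta$. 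Then
\begin{equation*}
    \GS_d(\eta,\{X_\mu\}) \leq E_{\{X_\mu\}}(S') \leq E_{\{X_\mu\}}(S) + C\|\phi'\|_\infty \eta.
\end{equation*}
Since this bound holds uniformly over $S \in B$, taking the infimum over $S \in B$ on the right-hand side yields
\begin{equation*}
    \GS_d(\eta,\{X_\mu\}) \leq \GS_d(\{X_\mu\}) + C\|\phi'\|_\infty \eta
\end{equation*}
on the event $\Omega$, which combined with the trivial direction concludes the proof.

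\emph{Remarks on difficulty.} The argument is essentially a one-line consequence of Lemma~\ref{lemma:energy_change_small_ball} and the net construction: the real technical content sits inside Lemma~\ref{lemma:energy_change_small_ball} (which itself relies on the empirical process bounds of Lemmas~\ref{lemma:emp_proc_gaussian} and \ref{lemma:emp_proc_ellipse}), so I do not anticipate any genuine obstacle here. The only point requiring minor care is that Lemma~\ref{lemma:energy_change_small_ball} gives a uniform Lipschitz bound on all of $\mcS_d$, so the inequality $\|S - S'\|_\op \leq \eta$ from the covering is directly usable without any further worry about where the infimum is attained (attainment is irrelevant, as the argument proceeds pointwise in $S$ and then takes the infimum).
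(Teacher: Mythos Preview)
Your proof is correct and follows essentially the same approach as the paper: the trivial direction by inclusion $\mcN_\eta \subseteq B$, and the other direction via the Lipschitz bound of Lemma~\ref{lemma:energy_change_small_ball}. The only cosmetic difference is that the paper fixes a minimizer $S^\star \in B$ (invoking compactness of $B$) and compares it to a nearby net point, whereas you argue pointwise in $S \in B$ and then take the infimum---your variant is in fact slightly cleaner, as it does not need the infimum to be attained.
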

\noindent
These results are proven in the following, let us first see how
they end the proof of Proposition~\ref{prop:universality_gs}.
We fix $\eta \in (0,1)$.
    We have
    \begin{align*}
        \left| \EE \psi[\GS_d(\{W_\mu\})] - \EE \psi[\GS_d(\{G_\mu\})] \right| 
        \leq &\left| \EE \psi[\GS_d(\eta,\{W_\mu\})] - \EE \psi[\GS_d(\eta, \{G_\mu\})] \right| \\
        &+ \|\psi\|_L \sum_{X \in \{W, G\}} \EE |\GS_d(\eta,\{X_\mu\}) - \GS_d(\{X_\mu\})|.
    \end{align*}
    The first term goes to $0$ as $d \to \infty$ by Lemma~\ref{lemma:universality_GS_net}.
    Finally, using Lemma~\ref{lemma:GS_net} and the Cauchy-Schwarz inequality, we have: 
    \begin{align*}
        \EE |\GS_d(\eta,\{X_\mu\}) - \GS_d(\{X_\mu\})| 
        &\leq e^{-n/2} \! \sqrt{2\EE |\GS_d(\eta,\{X_\mu\}) - \GS_d(\{X_\mu\})|^2} + C \|\phi'\|_\infty \eta, \\ 
        &\aleq 4 \alpha \|\phi\|_\infty e^{-n/2} + C \|\phi'\|_\infty \eta,
    \end{align*}
    using in $(\rm a)$ that $|E(S)| \leq \alpha \|\phi\|_\infty$.
    Letting $d \to \infty$, we get
    \begin{equation*}
        \lim_{d \to \infty} \left| \EE \psi[\GS_d(\{W_\mu\})] - \EE \psi[\GS_d(\{G_\mu\})] \right| \leq C \|\phi'\|_\infty \|\psi\|_L \cdot \eta.
    \end{equation*}
    Taking the limit $\eta \to 0$ ends the proof of eq.~\eqref{eq:universality_gs}.
    The claim of eq.~\eqref{eq:universality_gs_probabilities} can be obtained easily by picking $\psi$ approximating an indicator function, 
    see e.g.\ Section~A.1.3 of \cite{montanari2022universality} for a detail of this argument.
    $\qed$

\begin{proof}[Proof of Lemma~\ref{lemma:universality_GS_net}]
We define, for $\beta > 0$: 
\begin{equation*}
    F_d(\eta, \beta, \{X_\mu\}) \coloneqq \frac{1}{d^2 \beta} \log \frac{1}{|\mcN_\eta|} \sum_{S \in \mcN_\eta} \exp\Big\{-\beta \sum_{\mu=1}^n \phi\left(\Tr[X_\mu S]\right)\Big\}.
\end{equation*}
Using Corollary~\ref{cor:universality_ellipse} with $P_0$ being the uniform distribution over $\mcN_\eta$, we have, for any $\beta > 0$:
\begin{equation}\label{eq:universality_fe_net}
    \lim_{d \to \infty} \left| \EE \psi[F_d(\eta, \beta, \{W_\mu\})] - \EE \psi[F_d(\eta, \beta, \{G_\mu\})] \right| = 0.
\end{equation}
Moreover, for any fixed $d, \eta$, we have $\GS_d(\eta, \{X_\mu\}) = \lim_{\beta \to \infty} F_d(\eta, \beta, \{X_\mu\})$.
Thus: 
\begin{equation}\label{eq:gs_fenergy_net}
    |\GS_d(\eta, \{X_\mu\}) - F_d(\eta, \beta, \{X_\mu\})| \leq \int_{\beta}^\infty \left|\frac{\partial F_d(\eta, s, \{X_\mu\})}{\partial s}\right| \rd s.
\end{equation}
Defining the ``Gibbs'' measure for $S \in \mcN_\eta$:
\begin{equation*}
    \bbP_{\beta}(S) \coloneqq \frac{\exp\left\{-\beta \sum_{\mu=1}^n \phi\left(\Tr[X_\mu S]\right)\right\}}{\sum_{S' \in \mcN_\eta} \exp\left\{-\beta \sum_{\mu=1}^n \phi\left(\Tr[X_\mu S']\right)\right\}},
\end{equation*}
it is easy to check that
\begin{align*}
    \left|\frac{\partial F_d(\eta, s, \{X_\mu\})}{\partial s}\right|
     &= \frac{1}{s^2 d^2} \left|\sum_{S \in \mcN_\eta} \bbP_{s}(S) \log \bbP_{s}(S) + \log |\mcN_\eta| \right|, \\ 
    &\aleq \frac{1}{s^2 d^2} \log |\mcN_\eta|, \\ 
    &\bleq \frac{1}{s^2} \log \frac{K}{\eta},
\end{align*}
where $(\rm a)$ follows from the fact that, the uniform distribution over $\mcN_\eta$
maximizes the entropy, and $(\rm b)$ is eq.~\eqref{eq:net_upper_bound}.
Plugging the result back in eq.~\eqref{eq:gs_fenergy_net} we get:
\begin{align}\label{eq:gs_fenergy_net_2}
    \nonumber
    |\GS_d(\eta, \{X_\mu\}) - F_d(\eta, \beta, \{X_\mu\})| 
    &\leq \log \left(\frac{K}{\eta}\right)  \int_{\beta}^\infty \frac{\rd s}{s^2}, \\ 
    &\leq\frac{1}{\beta} \log \left(\frac{K}{\eta}\right).
\end{align}
Combining eqs.~\eqref{eq:universality_fe_net} and \eqref{eq:gs_fenergy_net_2} we get, for any $\beta > 0$:
\begin{align*}
    &\limsup_{d \to \infty} \left| \EE \psi[\GS_d(\eta, \{W_\mu\})] - \EE \psi[\GS_d(\eta, \{G_\mu\})] \right| \\
    & \leq \|\psi\|_L \limsup_{d \to \infty} \sum_{X \in \{W, G\}} \EE |\GS_d(\eta, \{X_\mu\}) - F_d(\eta, \beta, \{X_\mu\})|, \\ 
    &\leq \frac{2 \|\psi\|_L}{\beta} \log \left(\frac{K}{\eta}\right).
\end{align*}
Taking the limit $\beta \to \infty$ ends the proof of Lemma~\ref{lemma:universality_GS_net}.
\end{proof}

\begin{proof}[Proof of Lemma~\ref{lemma:GS_net}]
Note that $\GS_d(\eta, \{X_\mu\}) \geq \GS_d(\{X_\mu\}) $ since $\mcN_\eta \subseteq B$.
The other side of this inequality is a direct consequence of Lemma~\ref{lemma:energy_change_small_ball}. Indeed, assuming that $E(S)$ is $C \|\phi'\|_\infty$-Lipschitz with respect to the operator norm, 
let us fix $S^\star \in B$ such that $E(S^\star) = \GS_d(\{X_\mu\})$ (since $B$ is closed and bounded it is compact, therefore this minimizer exists).
Letting $S \in \mcN_\eta$ such that $\|S^\star - S\|_\op \leq \eta$, we have 
\begin{align*}
    \GS_d(\{X_\mu\}) &= E(S^\star), \\ 
    &\geq E(S) - |E(S^\star) - E(S)|, \\ 
    &\geq  \GS_d(\eta, \{X_\mu\}) - C \|\phi'\|_\infty \cdot \eta,
\end{align*}
which ends the proof.
\end{proof}

\section*{Acknowledgements}

\noindent
The authors are grateful to March Boedihardjo, Tim Kunisky, Petar Nizi\'c-Nikolac, and Joel Tropp for insightful discussions and suggestions.

\bibliography{refs}

\begin{thebibliography}{10}

\bibitem{saunderson2011subspace}
James James~Francis Saunderson.
\newblock {\em Subspace identification via convex optimization}.
\newblock PhD thesis, Massachusetts Institute of Technology, 2011.

\bibitem{saunderson2012diagonal}
James Saunderson, Venkat Chandrasekaran, Pablo~A Parrilo, and Alan~S Willsky.
\newblock Diagonal and low-rank matrix decompositions, correlation matrices, and ellipsoid fitting.
\newblock {\em SIAM Journal on Matrix Analysis and Applications}, 33(4):1395--1416, 2012.

\bibitem{saunderson2013diagonal}
James Saunderson, Pablo~A Parrilo, and Alan~S Willsky.
\newblock Diagonal and low-rank decompositions and fitting ellipsoids to random points.
\newblock In {\em 52nd IEEE Conference on Decision and Control}, pages 6031--6036. IEEE, 2013.

\bibitem{potechin2023near}
Aaron Potechin, Paxton~M Turner, Prayaag Venkat, and Alexander~S Wein.
\newblock Near-optimal fitting of ellipsoids to random points.
\newblock In {\em The Thirty Sixth Annual Conference on Learning Theory}, pages 4235--4295. PMLR, 2023.

\bibitem{podosinnikova2019overcomplete}
Anastasia Podosinnikova, Amelia Perry, Alexander~S Wein, Francis Bach, Alexandre d’Aspremont, and David Sontag.
\newblock Overcomplete independent component analysis via {SDP}.
\newblock In {\em The 22nd International Conference on Artificial Intelligence and Statistics}, pages 2583--2592. PMLR, 2019.

\bibitem{ghosh2020sum}
Mrinalkanti Ghosh, Fernando~Granha Jeronimo, Chris Jones, Aaron Potechin, and Goutham Rajendran.
\newblock Sum-of-squares lower bounds for {S}herrington-{K}irkpatrick via planted affine planes.
\newblock In {\em 2020 IEEE 61st Annual Symposium on Foundations of Computer Science (FOCS)}, pages 954--965. IEEE, 2020.

\bibitem{maillard2024fitting}
Antoine Maillard and Dmitriy Kunisky.
\newblock Fitting an ellipsoid to random points: predictions using the replica method.
\newblock {\em IEEE Transactions on Information Theory}, pages 1--1, 2024.

\bibitem{chandrasekaran2012convex}
Venkat Chandrasekaran, Benjamin Recht, Pablo~A Parrilo, and Alan~S Willsky.
\newblock The convex geometry of linear inverse problems.
\newblock {\em Foundations of Computational mathematics}, 12:805--849, 2012.

\bibitem{amelunxen2014living}
Dennis Amelunxen, Martin Lotz, Michael~B McCoy, and Joel~A Tropp.
\newblock Living on the edge: Phase transitions in convex programs with random data.
\newblock {\em Information and Inference: A Journal of the IMA}, 3(3):224--294, 2014.

\bibitem{kane2023nearly}
Daniel Kane and Ilias Diakonikolas.
\newblock A nearly tight bound for fitting an ellipsoid to gaussian random points.
\newblock In {\em The Thirty Sixth Annual Conference on Learning Theory}, pages 3014--3028. PMLR, 2023.

\bibitem{bandeira2023fitting}
Afonso~S Bandeira, Antoine Maillard, Shahar Mendelson, and Elliot Paquette.
\newblock Fitting an ellipsoid to a quadratic number of random points.
\newblock {\em Latin American Journal of Probability and Mathematical Statistics}, 21(2):1835, 2024.

\bibitem{hsieh2023ellipsoid}
Jun-Ting Hsieh, Pravesh~K Kothari, Aaron Potechin, and Jeff Xu.
\newblock Ellipsoid fitting up to a constant.
\newblock In {\em 50th International Colloquium on Automata, Languages, and Programming (ICALP 2023)}, volume 261, page~78, 2023.

\bibitem{tulsiani2023ellipsoid}
Madhur Tulsiani and June Wu.
\newblock Ellipsoid fitting up to constant via empirical covariance estimation.
\newblock In {\em 2025 Symposium on Simplicity in Algorithms (SOSA)}, pages 134--143. SIAM, 2025.

\bibitem{anderson1989spin}
Philip~W Anderson.
\newblock Spin glass vi: Spin glass as cornucopia.
\newblock {\em Physics Today}, 42(9):9--11, 1989.

\bibitem{mezard1987spin}
Marc M{\'e}zard, Giorgio Parisi, and Miguel~Angel Virasoro.
\newblock {\em Spin glass theory and beyond: An Introduction to the Replica Method and Its Applications}, volume~9.
\newblock World Scientific Publishing Company, 1987.

\bibitem{montanari2016semidefinite}
Andrea Montanari and Subhabrata Sen.
\newblock Semidefinite programs on sparse random graphs and their application to community detection.
\newblock In {\em Proceedings of the forty-eighth annual ACM symposium on Theory of Computing}, pages 814--827, 2016.

\bibitem{javanmard2016phase}
Adel Javanmard, Andrea Montanari, and Federico Ricci-Tersenghi.
\newblock Phase transitions in semidefinite relaxations.
\newblock {\em Proceedings of the National Academy of Sciences}, 113(16):E2218--E2223, 2016.

\bibitem{charbonneau2023spin}
Patrick Charbonneau, Enzo Marinari, Giorgio Parisi, Federico Ricci-tersenghi, Gabriele Sicuro, Francesco Zamponi, and Marc Mezard.
\newblock {\em Spin Glass Theory and Far Beyond: Replica Symmetry Breaking after 40 Years}.
\newblock World Scientific, 2023.

\bibitem{goldt2022gaussian}
Sebastian Goldt, Bruno Loureiro, Galen Reeves, Florent Krzakala, Marc M{\'e}zard, and Lenka Zdeborov{\'a}.
\newblock The gaussian equivalence of generative models for learning with shallow neural networks.
\newblock In {\em Mathematical and Scientific Machine Learning}, pages 426--471. PMLR, 2022.

\bibitem{hu2022universality}
Hong Hu and Yue~M Lu.
\newblock Universality laws for high-dimensional learning with random features.
\newblock {\em IEEE Transactions on Information Theory}, 69(3):1932--1964, 2022.

\bibitem{montanari2022universality}
Andrea Montanari and Basil~N Saeed.
\newblock Universality of empirical risk minimization.
\newblock In {\em Conference on Learning Theory}, pages 4310--4312. PMLR, 2022.

\bibitem{gerace2024gaussian}
Federica Gerace, Florent Krzakala, Bruno Loureiro, Ludovic Stephan, and Lenka Zdeborov{\'a}.
\newblock Gaussian universality of perceptrons with random labels.
\newblock {\em Physical Review E}, 109(3):034305, 2024.

\bibitem{adamczak2011restricted}
Radoslaw Adamczak, Alexander~E Litvak, Alain Pajor, and Nicole Tomczak-Jaegermann.
\newblock Restricted isometry property of matrices with independent columns and neighborly polytopes by random sampling.
\newblock {\em Constructive Approximation}, 34:61--88, 2011.

\bibitem{dandi2023universality}
Yatin Dandi, Ludovic Stephan, Florent Krzakala, Bruno Loureiro, and Lenka Zdeborova.
\newblock Universality laws for gaussian mixtures in generalized linear models.
\newblock In {\em Proceedings of the 37th International Conference on Neural Information Processing Systems}, pages 54754--54768, 2023.

\bibitem{loureiro2021learning}
Bruno Loureiro, Cedric Gerbelot, Hugo Cui, Sebastian Goldt, Florent Krzakala, Marc Mezard, and Lenka Zdeborov{\'a}.
\newblock Learning curves of generic features maps for realistic datasets with a teacher-student model.
\newblock {\em Advances in Neural Information Processing Systems}, 34:18137--18151, 2021.

\bibitem{dhifallah2020precise}
Oussama Dhifallah and Yue~M Lu.
\newblock A precise performance analysis of learning with random features.
\newblock arXiv preprint arXiv:2008.11904, 2020.

\bibitem{schroder2023deterministic}
Dominik Schr{\"o}der, Hugo Cui, Daniil Dmitriev, and Bruno Loureiro.
\newblock Deterministic equivalent and error universality of deep random features learning.
\newblock In {\em International Conference on Machine Learning}, pages 30285--30320. PMLR, 2023.

\bibitem{gordon1988milman}
Yehoram Gordon.
\newblock On {M}ilman's inequality and random subspaces which escape through a mesh in $\mathbb{R}^n$.
\newblock In {\em Geometric Aspects of Functional Analysis: Israel Seminar (GAFA) 1986--87}, pages 84--106. Springer, 1988.

\bibitem{oymak2018universality}
Samet Oymak and Joel~A Tropp.
\newblock Universality laws for randomized dimension reduction, with applications.
\newblock {\em Information and Inference: A Journal of the IMA}, 7(3):337--446, 2018.

\bibitem{thrampoulidis2015regularized}
Christos Thrampoulidis, Samet Oymak, and Babak Hassibi.
\newblock Regularized linear regression: A precise analysis of the estimation error.
\newblock In {\em Conference on Learning Theory}, pages 1683--1709. PMLR, 2015.

\bibitem{thrampoulidis2018precise}
Christos Thrampoulidis, Ehsan Abbasi, and Babak Hassibi.
\newblock Precise error analysis of regularized $ m $-estimators in high dimensions.
\newblock {\em IEEE Transactions on Information Theory}, 64(8):5592--5628, 2018.

\bibitem{van2014probability}
Ramon Van~Handel.
\newblock Probability in high dimension.
\newblock Lecture Notes (Princeton University), 2014.

\bibitem{anderson2010introduction}
Greg~W Anderson, Alice Guionnet, and Ofer Zeitouni.
\newblock {\em An introduction to random matrices}.
\newblock Cambridge university press, 2010.

\bibitem{vershynin2018high}
Roman Vershynin.
\newblock {\em High-dimensional probability: An introduction with applications in data science}, volume~47.
\newblock Cambridge university press, 2018.

\bibitem{o2014analysis}
Ryan O'Donnell.
\newblock {\em Analysis of boolean functions}.
\newblock Cambridge University Press, 2014.

\bibitem{talagrand1994supremum}
Michel Talagrand.
\newblock The supremum of some canonical processes.
\newblock {\em American Journal of Mathematics}, 116(2):283--325, 1994.

\bibitem{hitczenko1997moment}
Pawe{\l} Hitczenko, Stephen~J Montgomery-Smith, and Krzysztof Oleszkiewicz.
\newblock Moment inequalities for sums of certain independent symmetric random variables.
\newblock {\em Studia Math}, 123(1):15--42, 1997.

\bibitem{billingsley2013convergence}
Patrick Billingsley.
\newblock {\em Convergence of probability measures}.
\newblock John Wiley \& Sons, 2013.

\end{thebibliography}

\newpage
\appendix 
\addtocontents{toc}{\protect\setcounter{tocdepth}{1}} 

\section{A classical concentration inequality}\label{sec_app:classical}
\noindent
We will make use of the following elementary concentration inequality, a generalization of
Bernstein's inequality for $\psi_q$ tails \cite{talagrand1994supremum,hitczenko1997moment,adamczak2011restricted}. 
\begin{lemma}[Tail of sum of independent sub-Weibull random variables \cite{adamczak2011restricted} --]
    \label{lemma:tail_sum_sub_Weibull}
    \noindent 
    Let $q \in (0,2]$, and $W_1, \cdots, W_n$ be i.i.d.\ centered random variables satisfying $\bbP[|W_1| \geq t] \leq 2 e^{-C t^q}$.
    \begin{itemize}
        \item[$(i)$] If $q \in [1,2]$, then for all $a \in \bbR^n$ and all $t \geq 0$: 
        \begin{equation*}
            \bbP \left[\left|\sum_{\mu=1}^n a_\mu W_\mu\right| \geq t\right] \leq 2 \exp\Big\{-c \min\left(\frac{t^2}{\|a\|_2^2}, \frac{t^q}{\|a\|_{q^\star}^q}\right)\Big\},
        \end{equation*}
        where $q^\star \in [2, + \infty]$ with $1/q + 1/q^\star = 1$.
        \item[$(ii)$]
        If $q \in [1/2, 1]$, then for all $t > 0$
        \begin{equation*}
            \bbP \left[\left|\frac{1}{n}\sum_{\mu=1}^n W_\mu\right| \geq t\right] \leq 2 \exp\left\{-c_q \min(n t^2, (nt)^q)\right\}.
        \end{equation*}
    \end{itemize}
\end{lemma}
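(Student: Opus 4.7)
The plan is to handle both parts with the classical recipe of converting the sub-Weibull tail into a moment bound (by integrating the tail), then combining truncation with Bernstein's inequality to obtain the two-regime behaviour: a sub-Gaussian regime governed by $\|a\|_2$ (respectively $1/\sqrt{n}$ in part $(ii)$), and a sub-Weibull regime governed by $\|a\|_{q^\star}$ (respectively $n^{-q}$). A preliminary step common to both parts is to deduce from the hypothesis $\bbP[|W_1| \geq t] \leq 2 e^{-Ct^q}$ the moment bound $\EE|W_1|^p \leq (Kp)^{p/q}$ for every $p \geq 1$, obtained by writing $\EE|W_1|^p = p \int_0^\infty t^{p-1} \bbP[|W_1| \geq t] \, \rd t$ and using the standard gamma function estimate. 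In particular $\Var(W_1) \leq K^2$.

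For part $(i)$, with $q \in [1,2]$, the cleanest route goes through a Lata\l{}a-type moment inequality for sums of independent sub-Weibull random variables (see \cite{adamczak2011restricted}): for all $p \geq 2$,
\begin{equation*}
    \Bigl\| \sum_{\mu=1}^n a_\mu W_\mu \Bigr\|_p \;\leq\; C_q \Bigl( \sqrt{p}\, \|a\|_2 \;+\; p^{1/q}\, \|a\|_{q^\star} \Bigr).
\end{equation*}
The proof proceeds by symmetrization (replacing $W_\mu$ by $\varepsilon_\mu W_\mu$ with independent Rademacher signs $\varepsilon_\mu$), conditional Khintchine to extract the $\sqrt{p}\,\|a\|_2$ term from the Rademacher part, and a moment bound on $\bigl\|(\sum_\mu a_\mu^2 W_\mu^2)^{1/2}\bigr\|_p$ via the preliminary moment estimate on the $W_\mu$'s to extract the $p^{1/q}\,\|a\|_{q^\star}$ term. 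Markov's inequality applied at the optimal exponent $p^\star \asymp \min(t^2/\|a\|_2^2, t^q/\|a\|_{q^\star}^q)$ then yields the claimed tail bound.

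For part $(ii)$, with $q \in [1/2, 1]$, $\psi_q$ is no longer a norm and the moment approach above does not extend. I would instead use a direct truncation argument. Fix $M > 0$ and split each $W_\mu$ into a bounded piece $A_\mu \coloneqq W_\mu \indi\{|W_\mu| \leq M\}$ and a tail piece. Bernstein's inequality applied to $\sum_\mu (A_\mu - \EE A_\mu)$ (variance at most $nK^2$, pointwise bound $2M$) yields
\begin{equation*}
    \bbP\Bigl[\Bigl|\tfrac{1}{n}\sum_\mu (A_\mu - \EE A_\mu)\Bigr| \geq t/3\Bigr] \;\leq\; 2 \exp\bigl\{-c \min(nt^2, nt/M)\bigr\},
\end{equation*}
while the union bound controls the large part:
\begin{equation*}
    \bbP\bigl[\exists \mu : |W_\mu| > M \bigr] \;\leq\; 2n \exp\{-CM^q\}.
\end{equation*}
The recentering correction satisfies $|\EE A_\mu| = |\EE W_\mu \indi\{|W_\mu| > M\}| \leq \sqrt{\EE W_\mu^2}\sqrt{\bbP[|W_\mu| > M]}$, which is $o(t)$ as soon as $M \gtrsim (\log n)^{1/q}$. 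Choosing $M = nt$ then balances the two bounds: the truncated contribution becomes $\exp(-c \min(nt^2, 1))$ and the tail contribution becomes $\exp(\log n - C(nt)^q)$. Restricting to $nt \gtrsim 1$ (otherwise the target estimate is trivial because its RHS exceeds a fixed constant), the $\log n$ term is absorbed into the constant $c_q$, yielding the stated bound $2\exp\{-c_q \min(nt^2, (nt)^q)\}$.

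The main obstacle for part $(i)$ is the Lata\l{}a-type moment inequality; writing out its proof from scratch would be the longest technical step, so the natural proposal is to import it directly from \cite{adamczak2011restricted} (whose Theorems cover exactly this regime). For part $(ii)$, the sole delicate point is verifying that both the recentering bias $n|\EE A_\mu|$ and the $\log n$ overhead from the union bound can be absorbed for our target regime $nt \gtrsim 1$; this is what dictates the specific choice $M = nt$, and once it is made the argument reduces to routine Bernstein estimates.
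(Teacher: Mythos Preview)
The paper treats this lemma as a black box: it does not prove it but cites \cite{adamczak2011restricted} (Lemmas~3.6, 3.7, and eq.~(3.7) there), tracing back to \cite{hitczenko1997moment} for symmetric Weibull variables. Your proposal for part~$(i)$---importing the Lata\l{}a-type moment bound from that reference and converting it to a tail via Markov at the optimal exponent---is essentially what lies behind the cited result, so there is no discrepancy.

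Your direct argument for part~$(ii)$, however, has a genuine gap. With $M = nt$ the Bernstein piece becomes $\exp\{-c\min(nt^2, nt/M)\} = \exp\{-c\min(nt^2,1)\}$; once $nt^2 > 1$ this is merely a fixed constant $e^{-c}$. Since your decomposition \emph{adds} the Bernstein and union-bound contributions, the total bound is then at least $e^{-c}$ regardless of $n,t$, and cannot reproduce the target $\exp\{-c_q(nt)^q\}$ when $nt$ is large. In fact no single-level truncation-plus-Bernstein argument can work for $q<1$: matching the sub-Weibull exponent on the Bernstein side forces $nt/M \gtrsim (nt)^q$, i.e.\ $M \lesssim (nt)^{1-q}$, whereas matching it on the union-bound side requires $M^q \gtrsim (nt)^q$, i.e.\ $M \gtrsim nt$; for $nt>1$ and $q<1$ these are incompatible. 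The correct treatment for $q<1$ uses a sharper tool---e.g.\ a Fuk--Nagaev-type inequality or the moment comparison of \cite{hitczenko1997moment}---which is precisely what the paper defers to.
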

\noindent
This lemma is stated in \cite{adamczak2011restricted}, see Lemmas~3.6 and 3.7 -- and eq.~(3.7) -- and is a consequence 
of the same result for symmetric Weibull random variables \cite{hitczenko1997moment}.

\section{Fitting error of the sphere}\label{sec_app:error_identity}
\noindent
We show here eq.~\eqref{eq:error_identity}.
By Bernstein's inequality \cite{vershynin2018high}, we have for all $\mu \in [n]$ and $u \geq 0$: 
\begin{equation*}
    \bbP\left[\left|\frac{\|x_\mu\|^2}{d} - 1\right|\geq u\right] \leq 2 \exp\left(-C d \min(u,u^2)\right).
\end{equation*}
As a consequence, if $X_\mu \coloneqq \sqrt{d}(\|x_\mu\|^2/d - 1)$, then\footnote{Where for $q \in [1, \infty)$, 
we defined the Orlicz norm of a random variable $X$ as $\|X\|_{\psi_q} \coloneqq \inf\{t > 0 \, : \, \EE \exp(|X|^q / t^q) \leq 2\}$. 
In particular if $\|X\|_{\psi_1} < \infty$ then $X$ is said to be a \emph{sub-exponential} random variable. We 
refer to \cite{vershynin2018high} for more details on these classical definitions.
} $\|X_\mu\|_{\psi_1} \leq C$.
Let $Y_\mu \coloneqq |X_\mu|^r - \EE[|X_\mu^r|]$.

\myskip
By the central limit theorem, $X_\mu \dto \mcN(0, 2)$ as $d \to \infty$.
One shows easily that (e.g.\ for $\eps = 2$):
\begin{equation*}
  \sup_{d \geq 1} \EE[|X_\mu|^{2+\eps}] < \infty,  
\end{equation*}
and thus (since $r \leq 2$) $|X_\mu|^r$ is uniformly integrable as $d \to \infty$.
This implies (cf.\ Theorem~3.5 of \cite{billingsley2013convergence}) that $\EE|X_\mu|^r \to \EE|Z|^r$ with $Z \sim \mcN(0,2)$. 
Notice that $\EE[|Z|^r] = 2^r \Gamma([r+1]/2)/\sqrt{\pi}$.

\myskip 
Let $q \coloneqq 1/r \in [1/2,1]$.
Since $\|X_\mu\|_{\psi_1} \leq C$, we have $\| |X_\mu|^r \|_{\psi_q} \leq C$, 
and thus $\|Y_\mu\|_{\psi_q} \leq C'$.
We can then use Lemma~\ref{lemma:tail_sum_sub_Weibull}, and we get: 
\begin{equation*}
    \bbP\left[\left|\frac{1}{n} \sum_{\mu=1}^n (|X_\mu|^r - \EE[|X_\mu|^r])\right| \geq t\right]
    \leq 2 \exp\left\{-c_r \min(nt^2, (nt)^{1/r})\right\}.
\end{equation*}
Combining the above, we get that for any $\eps > 0$, we have with probability $1 - \smallO_d(1)$:
\begin{equation*}
 \EE[|Z|^r] - \eps \leq \frac{1}{n} \sum_{\mu=1}^n \left|\sqrt{d} \left[\frac{\|x_\mu\|^2}{d} - 1\right]\right|^r \leq \EE[|Z|^r] + \eps.
\end{equation*}

\section{Towards exact ellipsoid fitting}\label{sec_app:geometric_approx_exact}
\noindent
We show here the following proposition.
\begin{proposition}[From approximate to exact ellipsoid fitting]\label{prop:bound_to_exact_conjecture}
    \noindent
    Let $n, d \to \infty$ with $n / d^2 \to \alpha \in (0, 1/2)$.
    Let $\{X_\mu\}_{\mu=1}^n$ be symmetric random matrices, and $H_{\mu \nu} \coloneqq \Tr[X_\mu X_\nu]$. 
    Assume that:
    \begin{itemize}
        \item[$(i)$] With probability $1 - \smallO_d(1)$, 
        $\|H^{-1}\|_\op \leq C n^{-1/2}$ for some $C = C(\alpha) > 0$.
        \item[$(ii)$] There exists $\lambda_- > 0$ (depending only on $\alpha$) such that
        \begin{equation}\label{eq:necessary_bound}
            \plim_{n \to \infty}
            \min_{S \succeq \lambda_- \Id_d} \frac{1}{\sqrt{n}}\sum_{\mu=1}^n |\Tr(X_\mu S) - 1 |^2 = 0.
        \end{equation}
    \end{itemize}
    Then, with probability $1 - \smallO_d(1)$ there exists $S \succeq 0$ such that $\Tr[X_\mu S] = 1$ for all $\mu \in [n]$.
\end{proposition}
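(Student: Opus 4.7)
The plan is to start from an approximate fit $S_0 \succeq \lambda_- \Id_d$ guaranteed by hypothesis~$(ii)$, and to project it orthogonally (in Frobenius inner product) onto the affine subspace
\begin{equation*}
  V \coloneqq \{S \in \mcS_d \, : \, \Tr[X_\mu S] = 1 \textrm{ for all } \mu \in [n]\}.
\end{equation*}
The projected matrix $\hS$ lies in $V$ by construction, so it is an exact fit; it therefore suffices to show that $\|\hS - S_0\|_\op < \lambda_-$ w.h.p., since this would imply $\hS \succeq (\lambda_- - \|\hS - S_0\|_\op) \Id_d \succ 0$.

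First I would use hypothesis~$(i)$ to justify that $H$ is invertible w.h.p.\ (since $H \succeq 0$ and $\|H^{-1}\|_\op < \infty$), so that the orthogonal projection is uniquely given by $\hS = S_0 - \Delta$ with
\begin{equation*}
    \Delta \coloneqq \sum_{\mu, \nu = 1}^n (H^{-1})_{\mu\nu} r_\mu X_\nu, \qquad r_\mu \coloneqq \Tr[X_\mu S_0] - 1.
\end{equation*}
This formula is obtained by writing $\hS - S_0 \in \mathrm{span}(X_1, \dots, X_n)$ (the Frobenius-orthogonal complement of the translate of $V$) and solving the normal equations $H c = -r$ for the coefficient vector. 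Expanding $\|\Delta\|_F^2$ and using $\Tr[X_\mu X_\nu] = H_{\mu\nu}$ gives the key identity
\begin{equation*}
    \|\Delta\|_F^2 \; = \; c^\T H c \; = \; r^\T H^{-1} r \; \leq \; \|H^{-1}\|_\op \cdot \|r\|_2^2.
\end{equation*}
By hypothesis~$(i)$, the first factor is at most $C n^{-1/2}$ w.h.p., and by hypothesis~$(ii)$, for any $\eps > 0$ we may choose $S_0$ so that $\|r\|_2^2 \leq \eps \sqrt{n}$ w.h.p. Combining these w.h.p.\ events (whose intersection still has probability $1 - \smallO_d(1)$) yields $\|\Delta\|_\op \leq \|\Delta\|_F \leq \sqrt{C \eps}$.

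To conclude, I would fix $\eps = \eps(\lambda_-, C)$ small enough that $\sqrt{C \eps} < \lambda_-$; then $\hS \succeq (\lambda_- - \sqrt{C \eps}) \Id_d \succ 0$, and $\hS$ is an exact ellipsoid fit. The argument itself is essentially three lines of linear algebra, so the ``hard part'' is not in this projection step but rather in establishing the two hypotheses. In particular, hypothesis~$(ii)$ is substantially stronger than what Theorem~\ref{thm:main_positive_side} provides: it requires the squared error (i.e.\ the $r=2$ case) to vanish at the polynomial rate $n^{-1/2}$, whereas our current results only yield a vanishing rate of $\smallO_d(1)$ and only for $r < 4/3$. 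Obtaining both improvements would require a quantitative sharpening of Proposition~\ref{prop:universality_gs} and of the empirical process bound of Lemma~\ref{lemma:emp_proc_ellipse}, as discussed in Section~\ref{subsec:generalizations} and Appendix~\ref{sec_app:geometric_approx_exact}; this is the main obstacle to upgrading Theorem~\ref{thm:main_positive_side} to the full positive side of Conjecture~\ref{conj:ellipsoid_fitting} via this route.
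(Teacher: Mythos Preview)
Your proposal is correct and follows essentially the same argument as the paper: take an approximate fit $S_0 \succeq \lambda_- \Id_d$, project it in Frobenius inner product onto the affine subspace $V$, and bound the Frobenius distance via $d_F(S_0,V)^2 = r^\T H^{-1} r \leq \|H^{-1}\|_\op \|r\|_2^2 \leq C\eps$, then use $\|\cdot\|_\op \leq \|\cdot\|_F$ to preserve positive definiteness. The only cosmetic difference is that the paper packages the identity $d_F(S_0,V)^2 = r^\T H^{-1} r$ into a separate elementary lemma (Lemma~\ref{lemma:distance_subspace}), whereas you derive it inline via the normal equations.
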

\noindent
Let us emphasize that given our current proof of Theorem~\ref{thm:main_positive_side} (cf.\ Section~\ref{subsec:proof_thm_main_positive_side}), 
if the assumptions of Proposition~\ref{prop:bound_to_exact_conjecture} hold for $X_\mu \sim \Ell(d)$ and $\alpha < 1/4$, then the first part of Conjecture~\ref{conj:ellipsoid_fitting} will hold.

\myskip
However, the proof of Proposition~\ref{prop:bound_to_exact_conjecture} is rather naive, as it crudely bounds the operator norm distance 
of the minimizer of eq.~\eqref{eq:necessary_bound} to a subspace $V$ (the affine subspace of solutions to the linear constraints $\Tr[X_\mu S] = 1$)
by its distance in Frobenius norm.
For these reasons, the assumptions of Proposition~\ref{prop:bound_to_exact_conjecture} may be far from being optimal.

\myskip
\textbf{Remark I --}
Note that the condition $(i)$ is clearly satisfied if $X_\mu \iid \GOE(d)$ and $\alpha \in (0, 1/2)$, 
since $H$ is then distributed as a Wishart matrix.
On the other hand, while we expect it to hold as well for $X_\mu \iid \Ell(d)$, this condition is (to the best of our knowledge) not known unless $\alpha$ is small enough: 
interestingly, this was one of the limitations in the recent works \cite{bandeira2023fitting,tulsiani2023ellipsoid,hsieh2023ellipsoid} that proved that ellipsoid fitting is feasible 
for $\alpha$ sufficiently small.

\myskip
\textbf{Remark II --}
Note that it is sufficient for eq.~\eqref{eq:necessary_bound} 
to hold that there exists $r \in [1,2]$ such that:
\begin{equation*}
    \plim_{n \to \infty}
    \min_{S \succeq \lambda_- \Id_d} \frac{1}{n^{r/4}}\sum_{\mu=1}^n |\Tr(X_\mu S) - 1 |^r = 0.
\end{equation*}
At the moment, our proof of Theorem~\ref{thm:main_positive_side} (cf.\ Lemma~\ref{lemma:positive_side_strong_phi}) only implies (for $r < 4/3$) a similar statement with a prefactor $1/n$ rather than the required $1/n^{r/4}$.
It would thus need to be improved to show that eq.~\eqref{eq:necessary_bound} holds for the ellipsoid fitting setting.

\begin{proof}[Proof of Proposition~\ref{prop:bound_to_exact_conjecture}]
    Let $V \coloneqq \{S \in \mcS_d \, : \, \Tr[X_\mu S] = 1, \, \forall \mu \in [n]\}$ be the affine space of solutions to
     the constraints.
    Let $\eps > 0$, and $\hS \succeq \lambda_- \Id_d$ such that 
    \begin{equation*}
        \frac{1}{\sqrt{n}}\sum_{\mu=1}^n |\Tr(X_\mu \hS) - 1 |^2 \leq \eps.
    \end{equation*}
    Note that for all $M \in \mcS_d$, 
    if $\|M\|_\op \leq \lambda_-$, then $\hS + M \succeq 0$.
    In particular, $\|M\|_\rF \leq \lambda_- \Rightarrow \hS + M \succeq 0$.
    In order to conclude it thus suffices to show that 
     $d_\rF(\hS, V) \leq \lambda_-$.
     The following lemma is an elementary geometrical result: 
     \begin{lemma}[Euclidean distance to an affine subspace --]\label{lemma:distance_subspace}
       \noindent 
       Let $d \geq 1$ and $1 \leq r \leq d$ two integers.
       Let $(a_k, b_k)_{k=1}^r \in (\bbR^d \times \bbR)^r$, with $(a_k)_{k=1}^r$ linearly independent.
       We define $G \coloneqq \{x \in \bbR^d \, : \, a_k^\intercal x + b_k = 0, \ \forall k \in [r]\}$.
       Then, for any $y \in \bbR^d$:
       \begin{equation*}
        d_2(y, G)^2 = v^\intercal H^{-1} v,
       \end{equation*}
       in which $v_k \coloneqq a_k^\intercal y + b_k$, and $H_{k k'} = \langle a_k , a_{k'} \rangle$ is the Gram matrix of the $\{a_k\}$.
     \end{lemma}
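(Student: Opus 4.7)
\textbf{Proof proposal for Lemma~\ref{lemma:distance_subspace}.}
The plan is to recognize this as a standard orthogonal projection computation, handled via Lagrange multipliers or, equivalently, by characterizing the unique minimizer through a first-order optimality condition. First I would let $A \in \bbR^{r \times d}$ be the matrix whose $k$-th row is $a_k^\intercal$, and let $b = (b_1, \ldots, b_r)^\intercal \in \bbR^r$. Then $G = \{x \in \bbR^d : Ax + b = 0\}$ is a non-empty affine subspace (since the $a_k$ are linearly independent, $A$ has full row rank $r$), and $H = A A^\intercal$ is symmetric positive definite, hence invertible. The squared distance is $d_2(y, G)^2 = \min_{x \in G} \|y - x\|_2^2$.

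Next I would identify the unique minimizer $x^\star \in G$. Since $G$ is closed, convex and non-empty, $x^\star$ exists and is characterized by $y - x^\star \in (G - x^\star)^\perp = \ker(A)^\perp = \mathrm{range}(A^\intercal)$. Therefore there exists $\alpha \in \bbR^r$ such that $y - x^\star = A^\intercal \alpha$. Applying $A$ on both sides, and using $A x^\star = -b$ (since $x^\star \in G$), I obtain
\begin{equation*}
    A y + b = A A^\intercal \alpha = H \alpha,
\end{equation*}
so that $\alpha = H^{-1} v$, where $v = A y + b$ matches the definition in the statement.

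Finally, I would compute the squared distance as
\begin{equation*}
    d_2(y, G)^2 = \|y - x^\star\|_2^2 = \|A^\intercal \alpha\|_2^2 = \alpha^\intercal A A^\intercal \alpha = \alpha^\intercal H \alpha = v^\intercal H^{-1} H H^{-1} v = v^\intercal H^{-1} v,
\end{equation*}
which is the claimed identity. There is no substantive obstacle here: the lemma is a purely linear-algebraic fact, and the only mild technical point is verifying that $H$ is invertible, which follows immediately from the linear independence assumption on the $a_k$. Hence the proof is essentially a one-shot calculation once the orthogonality condition $y - x^\star \in \mathrm{range}(A^\intercal)$ is written down.
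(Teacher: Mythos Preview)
Your proof is correct and complete. The paper does not actually give a proof of this lemma; it is stated as ``an elementary geometrical result'' and used directly, so your argument via the orthogonality condition $y - x^\star \in \ker(A)^\perp = \mathrm{range}(A^\intercal)$ is exactly the standard computation one would expect to fill in here.
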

    \noindent
     We now condition on the event of condition $(i)$.
     For any $\eps > 0$,
     applying Lemma~\ref{lemma:distance_subspace} yields (with probability $1 - \smallO_d(1)$):
     \begin{equation*}
        d_\rF(\hS, V)^2 \leq C(\alpha) \eps,
     \end{equation*}
     so that picking $\eps \leq \lambda_-^2 / C(\alpha)$ implies the result.
\end{proof}

\section{Additional proofs for Proposition~\ref{prop:universality_gs}}\label{sec_app:technical_universality}
\subsection{Proof of Lemma~\ref{lemma:emp_proc_gaussian}}
\label{subsec_app:proof_lemma_emp_proc_gaussian}
    \noindent
    The lemma is a direct corollary of the following elementary result.
    \begin{proposition}[Bounding a Gaussian process on the sphere]\label{prop:bound_G_process}
        \noindent
        Let $n, p \geq 1$. Let $G \in \bbR^{n \times p}$ with $G_{ij} \iid \mcN(0,1)$. 
        There is $A > 0$ such that for all $\delta > 0$ and $r \geq 1$:
        \begin{equation*}
            \bbP\Big[\max_{\|x\|_2 = 1} \|G x\|_r \geq \left[A(\sqrt{n} + \sqrt{p}) + \delta \sqrt{n}\right] n^{\max(1/r - 1/2,0)}\Big] \leq \exp\Big\{-\frac{n\delta^2}{2}\Big\}.
        \end{equation*}
    \end{proposition}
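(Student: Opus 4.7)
The plan is to reduce the statement to the classical concentration of the operator norm of a Gaussian matrix, via a straightforward $\ell^r$ versus $\ell^2$ comparison. For any fixed $v \in \bbR^n$ and $r \geq 1$, one has the elementary inequality
\begin{equation*}
\|v\|_r \leq n^{\max(1/r - 1/2,\, 0)} \|v\|_2.
\end{equation*}
For $r \in [1,2]$ this follows from Hölder's inequality applied to $\sum_i |v_i|^r \cdot 1$ with conjugate exponents $2/r$ and $2/(2-r)$, while for $r \geq 2$ it follows from the power-mean (monotonicity) inequality for $\ell^r$ norms. Applying this pointwise with $v = Gx$ and taking the supremum over $\|x\|_2 = 1$ yields
\begin{equation*}
\max_{\|x\|_2 = 1} \|Gx\|_r \leq n^{\max(1/r - 1/2,\, 0)} \max_{\|x\|_2 = 1} \|Gx\|_2 = n^{\max(1/r - 1/2,\, 0)} \|G\|_\op.
\end{equation*}

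It then suffices to control $\|G\|_\op$ with the prescribed tail. I would invoke the classical Gaussian concentration of the operator norm of an i.i.d.\ Gaussian matrix: the map $G \mapsto \|G\|_\op$ is $1$-Lipschitz with respect to the Frobenius norm on $\bbR^{n \times p}$, and its expectation satisfies $\EE \|G\|_\op \leq \sqrt{n} + \sqrt{p}$ by Gordon's inequality (a standard consequence of Slepian-type Gaussian comparison). Applying the Gaussian Lipschitz concentration bound of Theorem~\ref{thm:gaussian_conc_lipschitz} with deviation level $t = \delta \sqrt{n}$ then gives
\begin{equation*}
\bbP\left[\|G\|_\op \geq \sqrt{n} + \sqrt{p} + \delta \sqrt{n}\right] \leq \exp\left\{-\frac{n \delta^2}{2}\right\}.
\end{equation*}

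Combining the two displays and absorbing the coefficient in $\sqrt{n} + \sqrt{p}$ into an absolute constant $A \geq 1$ yields the claim. There is no substantial obstacle here: the argument is a direct consequence of a deterministic norm comparison and a standard concentration inequality, and the $1$-Lipschitz constant of $G \mapsto \|G\|_\op$ is exactly what produces the announced $\exp\{-n\delta^2/2\}$ tail.
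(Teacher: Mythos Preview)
Your proposal is correct and follows essentially the same approach as the paper: both arguments combine the deterministic comparison $\|v\|_r \leq n^{\max(1/r-1/2,0)}\|v\|_2$ with Gaussian Lipschitz concentration (Theorem~\ref{thm:gaussian_conc_lipschitz}) and the bound $\EE\|G\|_\op \leq \sqrt{n}+\sqrt{p}$. The only cosmetic difference is that the paper applies concentration directly to $G \mapsto \max_{\|x\|_2=1}\|Gx\|_r$ after computing its Lipschitz constant, whereas you first reduce to $\|G\|_\op$ and then concentrate; the content is identical.
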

\begin{proof}[Proof of Proposition~\ref{prop:bound_G_process}]
    Notice that $G \to \max_{\|x \|_2 = 1} \|G x \|_r$ is Lipschitz: 
    \begin{align*}
        \left|\max_{\|x\|_2 = 1} \| G_1 x \|_r - \max_{\|x\|_2 = 1} \| G_2 x \|_r\right| 
        &\leq \max_{\|x\|_2 = 1} \| (G_1  - G_2) x \|_r, \\ 
        &\aleq n^{\max(\frac{1}{r} - \frac{1}{2},0)} \max_{\|x\|_2 = 1} \| (G_1  - G_2) x \|_2, \\
        &\leq n^{\max(\frac{1}{r} - \frac{1}{2},0)} \|G_1 - G_2 \|_\rF,
    \end{align*}
where we used H\"older's inequality in the form $\| y \|_r \leq n^{\frac{1}{r} - \frac{1}{2}} \|y \|_2$ for $r \leq 2$, 
and for $r \geq 2$ the fact that $\|y \|_r \leq \|y \|_2$, alongside with $\|G\|_\op \leq \|G\|_F$.
By Theorem~\ref{thm:gaussian_conc_lipschitz} we reach:
\begin{equation*}
    \bbP\Big[\max_{\|x\|_2 = 1} \| G x \|_r \geq \EE \max_{\|x\|_2 = 1} \| G x \|_r + \delta n^{\max(1/r,1/2)}] 
    \leq \exp\Big\{- \frac{n \delta^2}{2}\Big\}.
\end{equation*}
The proof is complete if we can show that $\EE \max_{\|x\|_2 = 1} \| G x \|_r = \mcO(n^{\max(1/r-1/2,0)}) \cdot (\sqrt{n} + \sqrt{p})$.
This follows by the same inequality as above: 
\begin{equation*}
    \EE \max_{\|x\|_2 = 1} \| G x \|_r \leq n^{\max(\frac{1}{r} - \frac{1}{2},0)} \EE \max_{\|x\|_2 = 1} \| G x \|_2.
\end{equation*}
The bound $\EE \max_{\|x\|_2 = 1} \| G x \|_2 = \mcO(\sqrt{n} + \sqrt{p})$ is well-known, see e.g.\ \cite{vershynin2018high}.
\end{proof}

\subsection{Proof of Theorem~\ref{thm:universality_matrix}}\label{subsec_app:proof_universality_matrix}

\noindent
As we have seen, it suffices to prove Lemmas~\ref{lemma:domination} and \ref{lemma:pointwise_limit}.
We introduce some additional notations. 
\begin{itemize}
    \item For any $\mu \in [n]$, we denote
    \begin{equation}\label{eq:def_Gibbs_mu}
        \langle \cdot \rangle_\mu \coloneqq \frac{\int P_0(\rd S) \, e^{-\sum_{\nu(\neq \mu)} \phi[\Tr(U_\nu(t) S)]} \big(\cdot\big)}{\int P_0(\rd S) \, e^{-\sum_{\nu(\neq \mu)} \phi[\Tr(U_\nu(t) S)]}}. 
    \end{equation}
    We do not write explicitly the $t$ dependency of this average as it will be clear from context.
    \item For any $\mu$, we denote $\EE_{(\mu)}$ the expectation conditioned on $\{G_\mu, W_\mu\}$, i.e.\ the expectation over 
    $\{G_\nu, W_\nu\}_{\nu (\neq \mu)}$. Note that this notation is different from \cite{montanari2022universality}, 
    for which $\EE_{(\mu)}$ was the expectation with respect to $(G_\mu, W_\mu)$.
    On the other hand, we denote without parenthesis the expectation with respect to these variables: e.g.\ $\EE_{(1)}$ is conditioned on $\{G_1, W_1\}$, 
    but $\EE_{G_1, W_1}$ is the expectation w.r.t.\ $G_1, W_1$.
\end{itemize}

\subsubsection{Proof of Lemma~\ref{lemma:domination}}\label{subsubsec:proof_lemma_domination}

We fix $t \in (0, \pi/2)$, and we start from eq.~\eqref{eq:ee_derivative_psi}, which we can rewrite using eq.~\eqref{eq:def_Gibbs_mu} as:
\begin{equation*}
    \EE \, \frac{\partial \psi[F_d(U(t))]}{\partial t} = - \frac{n}{d^2} \EE\Bigg[\psi'[F_d(U(t))] \frac{\Big \langle e^{-\phi(\Tr[U_1(t) S])} \Big(\Tr[S \tU_1(t)] \, \phi'(\Tr[U_1(t) S])\Big) \Big \rangle_1}{\Big \langle e^{-\phi(\Tr[U_1(t) S])} \Big \rangle_1}\Bigg].
\end{equation*}
Since $\|\psi'\|_\infty < \infty$ and $n/d^2 \leq \alpha_2$, using the triangular inequality the proof of Lemma~\ref{lemma:domination} is complete if one can show the following bound, which will also be useful afterwards:
\begin{lemma}\label{lemma:boundedness_single_matrix}
    \noindent
    There exists a universal constant $C > 0$ such that: 
    \begin{equation*}
       \sup_{d \geq 1} \sup_{t \in (0, \pi/2)} \sup_{\{W_\mu, G_\mu\}_{\mu=2}^n} \EE_{W_1, G_1} \frac{\Big \langle e^{-\phi(\Tr[U_1 S])} \Big|\Tr[S \tU_1] \, \phi'(\Tr[U_1 S]) \Big|\Big \rangle_1}{\Big \langle e^{-\phi(\Tr[U_1 S])} \Big \rangle_1}
          \leq C.
    \end{equation*}
\end{lemma}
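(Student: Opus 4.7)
\textbf{Proof plan for Lemma~\ref{lemma:boundedness_single_matrix}.} The strategy is to exploit that $\phi$ and $\phi'$ are bounded (hypothesis~\ref{hyp:phi}) so as to peel off the Gibbs weight involving $U_1$, then reduce to a straightforward second-moment estimate. The crucial observation is that the measure $\langle \cdot \rangle_1$ defined in eq.~\eqref{eq:def_Gibbs_mu} does not depend on $(W_1, G_1)$ at all, since its weight $e^{-\sum_{\nu \neq 1} \phi[\Tr(U_\nu(t) S)]}$ only involves $\{W_\nu, G_\nu\}_{\nu \neq 1}$.

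First, using $|\phi'| \leq \|\phi'\|_\infty$ and $e^{-\phi} \in [e^{-\|\phi\|_\infty}, e^{\|\phi\|_\infty}]$, I would bound pointwise (in all remaining randomness)
\begin{equation*}
\frac{\Big\langle e^{-\phi(\Tr[U_1 S])} \big|\Tr[S\tU_1]\,\phi'(\Tr[U_1 S])\big|\Big\rangle_1}{\Big\langle e^{-\phi(\Tr[U_1 S])}\Big\rangle_1} \leq e^{2\|\phi\|_\infty}\,\|\phi'\|_\infty\,\Big\langle |\Tr[S\tU_1]|\Big\rangle_1.
\end{equation*}
Since the right-hand side no longer carries the $U_1$-weight, and since $\langle \cdot \rangle_1$ is $(W_1,G_1)$-independent, I can swap expectation and Gibbs average:
\begin{equation*}
\EE_{W_1, G_1}\Big\langle |\Tr[S\tU_1]|\Big\rangle_1 = \Big\langle \EE_{W_1,G_1} |\Tr[S \tU_1]|\Big\rangle_1.
\end{equation*}

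Next I would estimate $\EE_{W_1,G_1}|\Tr[S\tU_1]|$ for fixed $S$ via Cauchy--Schwarz. Because $\tU_1(t) = -\sin(t)W_1 + \cos(t)G_1$ with $W_1 \perp G_1$, both centered, and because condition~$(i)$ of Definition~\ref{def:one_dimensional_CLT} guarantees that $\rho$ and $\GOE(d)$ share the same covariance, a direct computation gives $\EE\,\Tr[SW_1]^2 = \EE\,\Tr[SG_1]^2 = 2\|S\|_F^2/d$, and therefore
\begin{equation*}
\EE_{W_1,G_1}|\Tr[S\tU_1]| \leq \sqrt{\EE\,\Tr[S\tU_1]^2} = \sqrt{2/d}\,\|S\|_F.
\end{equation*}
Finally, hypothesis~\ref{hyp:P0} ensures $\supp(P_0) \subseteq B_2(C_0\sqrt{d})$, so $\|S\|_F \leq C_0\sqrt{d}$ inside the Gibbs average $\langle \cdot \rangle_1$, which collapses the bound to $C_0\sqrt{2}$. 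Combining yields
\begin{equation*}
\EE_{W_1,G_1}\frac{\Big\langle e^{-\phi(\Tr[U_1 S])}\big|\Tr[S\tU_1]\,\phi'(\Tr[U_1 S])\big|\Big\rangle_1}{\Big\langle e^{-\phi(\Tr[U_1 S])}\Big\rangle_1} \leq \sqrt{2}\,C_0\,\|\phi'\|_\infty\,e^{2\|\phi\|_\infty},
\end{equation*}
uniformly in $d$, $t \in (0,\pi/2)$, and the frozen configuration $\{W_\mu, G_\mu\}_{\mu=2}^n$. This is the desired bound.

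There is no real obstacle here: the lemma is essentially a bookkeeping exercise, and its role is simply to certify that the $t$-derivative of the interpolated free entropy in eq.~\eqref{eq:ee_derivative_psi} is uniformly bounded, which then yields Lemma~\ref{lemma:domination} via $n/d^2 \leq \alpha_2$ and $\|\psi'\|_\infty < \infty$. The only point that requires any care is ensuring the exchange of $\EE_{W_1,G_1}$ with $\langle\cdot\rangle_1$ is legitimate, and this is guaranteed by the definition~\eqref{eq:def_Gibbs_mu} which explicitly excludes $\mu=1$ from the exponential weight.
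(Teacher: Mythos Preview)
Your proof is correct and follows essentially the same approach as the paper: bound $|\phi'|$ and the ratio of exponentials by constants, swap $\EE_{W_1,G_1}$ with $\langle\cdot\rangle_1$, then apply Cauchy--Schwarz together with the moment-matching condition and the assumption $\supp(P_0)\subseteq B_2(C_0\sqrt d)$. The only cosmetic difference is that the paper uses $\phi\ge 0$ to get $e^{-\phi}\le 1$, obtaining the slightly sharper factor $e^{\|\phi\|_\infty}$ in place of your $e^{2\|\phi\|_\infty}$, but this is immaterial to the lemma.
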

\begin{proof}[Proof of Lemma~\ref{lemma:boundedness_single_matrix}]
Note that
\begin{align}\label{eq:domination_dpsi_dt}
    \nonumber
    &\EE_{W_1, G_1} \frac{\Big \langle e^{-\phi(\Tr[U_1 S])} \Big| \Tr[S \tU_1] \, \phi'(\Tr[U_1 S])\Big|\Big \rangle_1}{\Big \langle e^{-\phi(\Tr[U_1 S])} \Big \rangle_1} \\
    \nonumber
    &\leq  \|\phi'\|_\infty \EE_{W_1, G_1} \Bigg[\frac{\Big\langle e^{-\phi(\Tr[U_1 S])} \Big|\Tr[S \tU_1(t)]\Big|\Big\rangle_1 }{\Big\langle e^{-\phi[\Tr(U_1 S)]} \Big \rangle_1}\Bigg], \\ 
    &\leq e^{\|\phi\|_\infty} \|\phi'\|_\infty \EE_{W_1, G_1} \Big[\Big\langle \Big|\Tr[S \tU_1(t)]\Big|\Big\rangle_1\Big],
\end{align}
in which we used the positivity and boundedness of $\phi$ in the last inequality.
To control the last term in eq.~\eqref{eq:domination_dpsi_dt} we write:
\begin{align}\label{eq:bound_first_moment_1}
    \nonumber
    \EE_{G_1, W_1} \Big[\Big\langle \Big|\Tr[S \tU_1(t)]\Big|\Big\rangle_1\Big] 
    &\aeq \Big\langle \EE_{G_1, W_1}\Big|\Tr[S \tU_1(t)]\Big| \Big\rangle_1, \\
    \nonumber
    &\bleq \Big\langle \Big\{\EE_{G_1, W_1}\Big(\Tr[S \tU_1(t)]^2\Big) \Big\}^{1/2} \Big\rangle_1, \\
    &\cleq \sqrt{2} \Big\langle \Big\{ \frac{1}{d}\Tr[S^2] \Big\}^{1/2} \Big\rangle_1, \\
    &\dleq \sqrt{2} C_0,
\end{align}
where $(\rm a)$ uses that $\langle \cdot \rangle_1$ is independent of $\{W_1,G_1\}$, $(\rm b)$ is from the Cauchy-Schwarz inequality, in $(\rm c)$ we use the hypothesis on the first two moments of $\rho$ matching the ones of $\mathrm{GOE}(d)$, 
and in $(\rm d)$ that $\supp(P_0) \subseteq B_2(C \sqrt{d})$.
\end{proof}

\subsubsection{Proof of Lemma~\ref{lemma:pointwise_limit}}

We fix $t \in (0, \pi/2)$ for the rest of the proof, and we write $U, \tU$ for $U(t), \tU(t)$.
We follow the ideas of Appendix~A.3 of \cite{montanari2022universality}, and start again from eq.~\eqref{eq:ee_derivative_psi}:
\begin{align}
    \label{eq:def_I1_I2}
    &\Bigg|\EE \, \frac{\partial \psi[F_d(U(t))]}{\partial t}\Bigg| \leq \alpha_2(I_1 + I_2),
\end{align}
with:
\begin{align*}
    I_1 &\!= \Bigg|\EE\Bigg[\left\{\psi'[F_d(U)] - \psi'[F_d(U^{(1)})]\right\} \frac{\int P_0(\rd S) \, e^{-\sum_\nu \phi(\Tr[U_\nu S])} \Big(\Tr[S \tU_1] \, \phi'(\Tr[U_1 S])\Big)}{\int P_0(\rd S) \, e^{-\sum_\nu \phi(\Tr[U_\nu S])}}\Bigg] \Bigg|, \\ 
    I_2 &\!= \Bigg|\EE\Bigg[\psi'[F_d(U^{(1)})] \frac{\int P_0(\rd S) \, e^{-\sum_\nu \phi(\Tr[U_\nu S])} \Big(\Tr[S \tU_1] \, \phi'(\Tr[U_1 S])\Big)}{\int P_0(\rd S) \, e^{-\sum_\nu \phi(\Tr[U_\nu S])}}\Bigg] \Bigg|,
\end{align*}
with $U^{(\mu)}$ obtained from $U$ by setting $U_\mu = 0$.
We show successively $I_1 \to 0$ and $I_2 \to 0$.
Since $\psi'$ is assumed to be Lipschitz, we have: 
\begin{align*}
    |\psi'[F_d(U)] - \psi'[F_d(U^{(1)})]|
    &\leq \frac{\|\psi'\|_\Lip}{d^2} \Bigg|\log \frac{\int P_0(\rd S) e^{-\sum_{\nu=1}^n \phi(\Tr[U_\nu S])}}{\int P_0(\rd S) e^{-\sum_{\nu=2}^n \phi(\Tr[U_\nu S])}}  \Bigg|, \\ 
    &\leq \frac{\|\psi'\|_\Lip}{d^2} \Big|\log \Big\langle e^{-\phi(\Tr[U_1 S])}\Big\rangle_1  \Big|, \\ 
    &\leq -\frac{\|\psi'\|_\Lip}{d^2} \log \Big\langle e^{-\phi(\Tr[U_1 S])}\Big\rangle_1, \\ 
    &\leq \frac{\|\psi'\|_\Lip \|\phi\|_\infty}{d^2}.
\end{align*}
Therefore,
\begin{equation}\label{eq:bound_I1}
    I_1 \leq \mcO_d\Bigg(\frac{1}{d^2}\Bigg) \times \EE \Bigg|\frac{\int P_0(\rd S) \, e^{-\sum_\nu \phi(\Tr[U_\nu S])} \Big(\Tr[S \tU_1] \, \phi'(\Tr[U_1 S])\Big)}{\int P_0(\rd S) \, e^{-\sum_\nu \phi(\Tr[U_\nu S])}} \Bigg|.
\end{equation}
The second term in eq.~\eqref{eq:bound_I1} is bounded by Lemma~\ref{lemma:boundedness_single_matrix}, uniformly in $t$.
Therefore, we reach that $I_1 \to 0$ as $d \to \infty$.

\myskip 
We now tackle $I_2$.
Note that since $U^{(1)}$ is independent of $U_1$, we can rewrite it as:
\begin{align}\label{eq:bound_I2_1}
    \nonumber
    I_2 &= \Bigg|\EE_{(1)} \Bigg[\psi'[F_d(U^{(1)})]  \EE_{G_1, W_1}\Bigg[\frac{\Big\langle e^{-\phi(\Tr[U_1 S])} \Big(\Tr[S \tU_1] \, \phi'(\Tr[U_1 S])\Big)\Big\rangle_1}{\Big\langle e^{-\phi(\Tr[U_1 S])} \Big\rangle_1} \Bigg]\Bigg] \Bigg|, \\ 
    &\leq \|\psi'\|_\infty \EE_{(1)}  \Bigg| \Bigg\langle\EE_{G_1, W_1}\Bigg[\frac{e^{-\phi(\Tr[U_1 S])} \Big(\Tr[S \tU_1] \, \phi'(\Tr[U_1 S])\Big)}{\Big\langle e^{-\phi(\Tr[U_1 S])} \Big\rangle_1} \Bigg] \Bigg\rangle_1\Bigg|.
\end{align}
We focus on bounding the right-hand side of eq.~\eqref{eq:bound_I2_1}. 
We will show the following lemma:
\begin{lemma}\label{lemma:bound_term_I2}
   \noindent Uniformly over all $t \in [0, \pi / 2]$ and $\{W_\mu, G_\mu\}_{\mu=2}^n$, and under the hypotheses of Theorem~\ref{thm:universality_matrix}:
   \begin{equation}\label{eq:lemma_bound_term_I2}
        \lim_{d \to \infty} \Bigg\langle \EE_{G_1, W_1}\Bigg[\frac{e^{-\phi(\Tr[U_1 S])} \Big(\Tr[S \tU_1] \, \phi'(\Tr[U_1 S])\Big)}{\Big\langle e^{-\phi(\Tr[U_1 S])} \Big\rangle_1} \Bigg] \Bigg\rangle_1
        = 0.
   \end{equation}
\end{lemma}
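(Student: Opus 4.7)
The strategy rests on the observation that the statement is trivial if $W_1$ is replaced by an independent $G'_1 \sim \GOE(d)$. Indeed, by orthogonal invariance of $\GOE(d)$, in this fully-Gaussian case the pair $(U_1, \tU_1) = (\cos t\, G'_1 + \sin t\, G_1, -\sin t\, G'_1 + \cos t\, G_1)$ consists of two \emph{independent} $\GOE(d)$ matrices. Rewriting the integrand as $-\psi_0'(\Tr[U_1 S])\,\Tr[S\tU_1]/D_1(U_1)$ with $\psi_0 := e^{-\phi}$ (so that $-\psi_0' = \phi' e^{-\phi}$), one sees that it is linear in $\tU_1$ with a coefficient depending on $U_1$ alone, while the denominator $D_1(U_1) = \langle e^{-\phi(\Tr[U_1 S'])}\rangle_1$ also depends only on $U_1$. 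Independence together with $\EE[\tU_1] = 0$ then forces the inner expectation to vanish identically in the Gaussian case, even before integrating against $\langle \cdot \rangle_1$. It therefore suffices to show that the original expectation, with $W_1 \sim \rho$, matches its fully-Gaussian analogue up to $o_d(1)$, uniformly over $t$ and over the conditioning variables.

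The first step of the plan is to decompose $\Tr[S \tU_1] = -\sin t\,\Tr[S W_1] + \cos t\,\Tr[S G_1]$ and handle the two contributions separately. The Gaussian piece $\cos t\,\Tr[S G_1]$ is controlled by Stein's identity for $\GOE(d)$: after a (possibly mollified) Gaussian integration by parts in $G_1$, the resulting expression is a sum of terms of the form $\sin t\,\Tr[S^2]/d^2$ and $\sin t\,\langle \Tr[SS']\,\psi_0'(\Tr[U_1 S'])\rangle_1/d^2$, both $o_d(1)$ thanks to $\supp(P_0) \subseteq B_\op(C_0)$ (whence $\Tr[S^2], |\Tr[SS']| \leq C_0^2 d$), the boundedness of $\phi'$, and the lower bound $D_1 \geq e^{-\|\phi\|_\infty}$. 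The non-Gaussian piece $-\sin t\,\Tr[S W_1]$ is the crux of the matter: its $G'_1$ counterpart would produce the same $o_d(1)$ estimate by the same manipulation, so the goal is to transfer this estimate to $W_1$ via the one-dimensional CLT of Definition~\ref{def:one_dimensional_CLT}.

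The main obstacle is that the integrand depends on $W_1$ not only through the single scalar $\Tr[W_1 S]$, but also through the entire continuum of projections $\{\Tr[W_1 S']\}_{S' \in \supp(P_0)}$ entering $D_1(U_1)$; a direct application of the one-dimensional CLT is thus not available. The plan to overcome this is a regularization-and-swap scheme: first, regularize $D_1$ by approximating the Gibbs average $\langle \cdot \rangle_1$ by a uniform average over a $\delta$-net $\mcN_\delta \subseteq \supp(P_0) \subseteq A_d$ for the operator norm, controlling the approximation error via the energy-Lipschitz bound of Lemma~\ref{lemma:energy_change_small_ball} together with the cardinality estimate $|\mcN_\delta| \leq \exp(O(d^2 \log(1/\delta)))$; second, perform a Lindeberg-type swap replacing $W_1$ by $G'_1$ one net-projection at a time, each swap error bounded via the one-dimensional CLT applied to a suitable linear combination of the net matrices (which remains in $A_d$ by the convexity and symmetry of $A_d$, i.e.\ hypothesis~\ref{hyp:Ad}); third, choose $\delta = \delta(d) \to 0$ slowly enough that the net-cardinality cost is absorbed into the CLT convergence rate. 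This hybrid argument parallels the corresponding step of \cite{montanari2022universality}, the one-dimensional CLT playing the role of sub-Gaussian concentration there, with the boundedness of $\phi$, $\phi'$, and $D_1^{-1}$ ensuring that all error terms remain tame throughout, uniformly in $t$ and in the conditioning.
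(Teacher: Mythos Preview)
Your opening paragraph is exactly right: in the fully Gaussian case $(U_1,\tU_1)$ are independent $\GOE(d)$ and the inner expectation vanishes identically, so the whole task is to transfer this from $G'_1$ to $W_1$. But the execution has two genuine problems.

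\textbf{The Gaussian-IBP step gives $O(1)$, not $o(1)$.} For $G_1\sim\GOE(d)$ one has $\EE[G_{ij}^2]=(1+\delta_{ij})/d$, so Stein's identity produces a prefactor $2/d$, not $1/d^2$. Differentiating the integrand with respect to $G_1$ then brings down factors $\sin t\,\Tr[S^2]$ (from the numerator) and $\sin t\,\langle \Tr[SS']\,(\cdots)\rangle_1$ (from $D_1$), and since $\Tr[S^2],|\Tr[SS']|\leq C_0^2 d$ the resulting terms are $O(1)$. This \emph{must} be the case: in the fully Gaussian situation the $\cos t\,\Tr[SG_1]$ and $-\sin t\,\Tr[SG'_1]$ contributions cancel each other exactly, so neither can be individually small. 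Treating the two pieces of $\Tr[S\tU_1]$ separately therefore cannot succeed.

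\textbf{The net-based swap is not compatible with the CLT hypothesis.} After discretizing $\langle\cdot\rangle_1$ onto a $\delta$-net $\mcN_\delta$ (which already requires $\delta=o(d^{-2})$ for the Gibbs weights to be stable, by Lemma~\ref{lemma:energy_change_small_ball}), the denominator becomes a nonlinear function of $|\mcN_\delta|=\exp(\Theta(d^2\log(1/\delta)))$ projections $\{\Tr[W_1 S']\}_{S'\in\mcN_\delta}$. A ``Lindeberg swap one projection at a time'' is ill-posed here: these projections are all deterministic functions of the \emph{same} random matrix $W_1$, not independent coordinates that can be replaced one by one. What you actually need is a CLT for a nonlinear function of exponentially many correlated projections, and Definition~\ref{def:one_dimensional_CLT} offers only a qualitative one-dimensional limit with no rate to absorb such a cost.

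The paper sidesteps both issues by a different reduction of the denominator. Since $D_1=\langle e^{-\phi(\Tr[U_1 S'])}\rangle_1\in[e^{-\|\phi\|_\infty},1]$, one approximates $1/D_1$ uniformly on this interval by a polynomial $Q_M(D_1)=\sum_{k\le M}(1-D_1)^k$ of degree $M$ depending on the target accuracy but \emph{not on $d$} (Lemma~\ref{lemma:poly_approx}). Expanding $D_1^k=\langle\cdots\rangle_1^k$ over $k$ i.i.d.\ replicas $S_1,\ldots,S_k\sim\langle\cdot\rangle_1$ reduces each term to an expectation of a bounded locally-Lipschitz function of the fixed-dimensional vector $(\{\Tr[W_1 S_a]\}_{a=0}^k,\{\Tr[G_1 S_a]\}_{a=0}^k)$. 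The one-dimensional CLT is then upgraded to this $2(k+1)$-dimensional statement via characteristic functions and the convexity/symmetry of $A_d$ (Lemma~\ref{lemma:finite_dim_clt}); in the resulting fully-Gaussian expression $U_1$ and $\tU_1$ are independent and the term vanishes.
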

\noindent
One directly concludes that $I_2 \to 0$ from using the dominated convergence theorem in eq.~\eqref{eq:bound_I2_1} (the pointwise limit is given by Lemma~\ref{lemma:bound_term_I2} and the domination hypothesis
by Lemma~\ref{lemma:boundedness_single_matrix}). 
This ends the proof of Lemma~\ref{lemma:pointwise_limit}.

\myskip
We thus focus on the proof of Lemma~\ref{lemma:bound_term_I2}.
Following \cite{montanari2022universality}, the sketch of the proof is the following:
\begin{itemize}
    \item[$(i)$] Show that the denominator appearing in eq.~\eqref{eq:lemma_bound_term_I2} can be 
    moved to the numerator by using the expansion of $1/x$ in power series around $1$. This transforms the quantity to control 
    to a sum of terms of the type $\langle \EE_{G_1, W_1} [f(S_1, \cdots, S_k)] \rangle_1$, 
    with $S_1, \cdots, S_k$ independent samples under $\langle \cdot \rangle_1$.
    \item[$(ii)$] Extend the one-dimensional CLT of Definition~\ref{def:one_dimensional_CLT} to $k$-dimensional projections 
    of $G$ and $W$ (with $k = \mcO_d(1)$), and to square-integrable locally-Lipschitz functions. 
    This allows to apply it to the terms appearing in $(i)$, 
    and write (uniformly in $S_1, \cdots, S_k$) that $\EE_{G_1, W_1} [f(S_1, \cdots, S_k)] \simeq \EE_{G_1, \widetilde{G}_1} [f(S_1, \cdots, S_k)]$, with $\widetilde{G}_1$ an independent $\mathrm{GOE}(d)$ matrix. 
    \item[$(iii)$] For the case of Gaussian matrices, as explained above we have $\tU_1$ independent of $U_1$. Using the form of the function $f$ 
    that appears in eq.~\eqref{eq:lemma_bound_term_I2} this implies that $\EE_{G_1, \widetilde{G}_1} [f(S_1, \cdots, S_k)] = 0$ and concludes the proof.
\end{itemize}
Let us perform this strategy in detail. The following lemma is proven in Section~\ref{subsec_app:additional_proofs}.
\begin{lemma}[Polynomial approximation to the fraction --]\label{lemma:poly_approx}
    \noindent 
    For all $\delta > 0$, there exists a real polynomial $Q$ (depending only on $\delta$) such that for all $d \geq 1$, all $t \in (0, \pi/2)$ and 
    all $\{W_\mu, G_\mu\}_{\mu=2}^n$: 
    \begin{align*}
       &\Bigg| \Bigg\langle \EE_{G_1, W_1}\Bigg[\frac{e^{-\phi(\Tr[U_1 S])} \Big(\Tr[S \tU_1] \, \phi'(\Tr[U_1 S])\Big)}{\Big\langle e^{-\phi(\Tr[U_1 S])} \Big\rangle_1} \Bigg] \Bigg\rangle_1 \Bigg| \\
       &\leq 
       \Big| \EE_{G_1, W_1}\Big\{\Big\langle e^{-\phi(\Tr[U_1 S])} \Big(\Tr[S \tU_1] \, \phi'(\Tr[U_1 S])\Big) \Big\rangle_1Q\Big(\Big\langle e^{-\phi(\Tr[U_1 S])} \Big\rangle_1\Big) \Big\} \Big| 
        + \delta.
    \end{align*}
\end{lemma}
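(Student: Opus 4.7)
The plan is to exploit the positivity and boundedness of $\phi$, which forces the denominator $A \coloneqq \langle e^{-\phi(\Tr[U_1 S])} \rangle_1$ to lie in the compact interval $[e^{-\|\phi\|_\infty}, 1]$, bounded away from zero, regardless of $d$, $t$, $(W_1, G_1)$, or the conditioning variables $\{W_\nu, G_\nu\}_{\nu \neq 1}$. On this fixed interval $x \mapsto 1/x$ is continuous, so by the Weierstrass approximation theorem, for any $\eta > 0$ one can find a real polynomial $Q_\eta$ (depending only on $\eta$ and on $\|\phi\|_\infty$) such that $\sup_{x \in [e^{-\|\phi\|_\infty}, 1]} |1/x - Q_\eta(x)| \leq \eta$.

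Next, I would observe that the Gibbs average $\langle \cdot \rangle_1$ (an integration in $S$ with weights depending only on $\{U_\nu(t)\}_{\nu \neq 1}$) and $\EE_{G_1, W_1}$ commute by Fubini, and that $A$ does not depend on the outer integration variable of $\langle \cdot \rangle_1$. Writing $f(S) \coloneqq e^{-\phi(\Tr[U_1 S])} \Tr[S \tU_1] \phi'(\Tr[U_1 S])$, the left-hand side of the claimed inequality can therefore be rewritten as $\EE_{G_1, W_1}[\langle f(S) \rangle_1 / A]$. Splitting $1/A = Q_\eta(A) + (1/A - Q_\eta(A))$ then yields
\begin{equation*}
\Big\langle \EE_{G_1, W_1}\Big[\tfrac{f(S)}{A}\Big]\Big\rangle_1
= \EE_{G_1, W_1}\big[\langle f(S) \rangle_1\, Q_\eta(A)\big] + R_\eta,
\qquad |R_\eta| \leq \eta\, \EE_{G_1, W_1}|\langle f(S) \rangle_1|.
\end{equation*}

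To control the remainder I would recycle the computation of Lemma~\ref{lemma:boundedness_single_matrix}: since $\phi \geq 0$, $|f(S)| \leq \|\phi'\|_\infty |\Tr[S \tU_1]|$, and Cauchy-Schwarz in $(W_1, G_1)$ combined with the matched second-moment hypothesis $\EE[(\tU_1)_{ij}(\tU_1)_{kl}] = \delta_{ik}\delta_{jl}(1+\delta_{ijkl})/d$ gives
\begin{equation*}
\EE_{G_1, W_1}|\langle f(S) \rangle_1| \leq \|\phi'\|_\infty \big\langle \sqrt{2/d}\,\|S\|_F\big\rangle_1 \leq \sqrt{2}\, C_0\, \|\phi'\|_\infty,
\end{equation*}
uniformly in all parameters, using $\supp(P_0) \subseteq B_2(C_0 \sqrt{d})$. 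Choosing $\eta \coloneqq \delta / (\sqrt{2}\, C_0\, \|\phi'\|_\infty)$ and setting $Q \coloneqq Q_\eta$ then delivers the stated bound.

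I do not expect any genuine obstacle in this argument: it is essentially a clean combination of Weierstrass approximation with the first-moment bound already established in Lemma~\ref{lemma:boundedness_single_matrix}. The substantive difficulty in this part of the paper lies not here but in Lemma~\ref{lemma:bound_term_I2}, where one must actually leverage this polynomial approximation together with an extension of the one-dimensional CLT from Definition~\ref{def:one_dimensional_CLT} to multilinear functionals of several independent replicas of $S$ drawn under $\langle \cdot \rangle_1$, so as to replace the $\Ell(d)$ pair $(W_1, G_1)$ by an independent pair of $\GOE(d)$ matrices and conclude via the vanishing first moment of $\tU_1$.
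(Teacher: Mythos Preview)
Your proposal is correct and follows essentially the same approach as the paper: both arguments observe that the denominator lies in $[e^{-\|\phi\|_\infty},1]$, approximate $1/x$ uniformly there by a polynomial, split accordingly, and bound the remainder via the first-moment estimate from Lemma~\ref{lemma:boundedness_single_matrix}. The only cosmetic difference is that the paper uses the explicit truncated geometric series $Q_M(x)=\sum_{k=0}^M(1-x)^k$ (citing a lemma of \cite{montanari2022universality} for its approximation properties) whereas you invoke Weierstrass abstractly; this has no bearing on the subsequent replica expansion since any polynomial in $\langle e^{-\phi(\Tr[U_1 S])}\rangle_1$ unfolds into averages over i.i.d.\ copies $S_0,\dots,S_k$ in exactly the same way.
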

\noindent
We fix $\delta > 0$, and denote $Q(X) = \sum_{k=0}^K a_k X^k$ the polynomial of Lemma~\ref{lemma:poly_approx}. 
Therefore, uniformly in $d$, $t$, and $\{W_\mu, G_\mu\}$:
\begin{align}\label{eq:poly_expansion_application}
    \nonumber
    &\Bigg| \Bigg\langle \EE_{G_1, W_1}\Bigg[\frac{e^{-\phi(\Tr[U_1 S])} \Big(\Tr[S \tU_1] \, \phi'(\Tr[U_1 S])\Big)}{\Big\langle e^{-\phi(\Tr[U_1 S])} \Big\rangle_1} \Bigg] \Bigg\rangle_1 \Bigg| \\
    &\leq \sum_{k=0}^K |a_k| 
       \Big| \Big \langle \EE_{G_1, W_1}\Big\{e^{-\sum_{a=0}^k\phi(\Tr[U_1 S_a])} \Big(\Tr[S_0 \tU_1] \, \phi'(\Tr[U_1 S_0])\Big) \Big\} \Big\rangle_1 \Big|
    + \delta,
\end{align}
with $\{S_a\}_{a=0}^k$ i.i.d.\ samples from $\langle \cdot \rangle_1$.
We then extend the one-dimensional CLT of Definition~\ref{def:one_dimensional_CLT} to finite-dimensional projections, similarly to Lemmas~29 and 30 of \cite{montanari2022universality}.
The proof of this lemma is deferred to Section~\ref{subsec_app:additional_proofs}. 
\begin{lemma}[Extension of the CLT to finite-dimensional projections --]\label{lemma:finite_dim_clt}
    \noindent 
    Let $R \geq 1$ an integer, and $\tG \sim \mathrm{GOE}(d)$, independent of everything else.
    Let $\varphi : \bbR^{2R} \to \bbR$ a locally Lipschitz function such that for both $X \in \{W, \tG\}$:
    \begin{equation}\label{eq:square_integrable_phi}
        \sup_{d \geq 1} \sup_{\{W_\mu, G_\mu\}_{\mu=2}^n} \sup_{t \in (0, \pi/2)} \Big\langle\EE_{X, G}\Big[\varphi\Big(\{\Tr(X S_a)\}_{a=1}^R, \{\Tr(G S_a)\}_{a=1}^R\Big)^2\Big] \Big\rangle_1 < \infty.
    \end{equation}
    It is understood there that $\{S_a\} \iid \langle \cdot \rangle_1$.
    Then 
    \begin{align*}
        \lim_{d \to \infty} \sup_{\{W_\mu, G_\mu\}_{\mu=2}^n} \sup_{t \in (0, \pi/2)} \Big\langle\Big| &\EE_{W,G} \, \varphi(\{\Tr(W S_a)\}, \{\Tr(G S_a)\}) \\ 
        & - \EE_{\tG,G} \, \varphi(\{\Tr(\tG S_a)\}, \{\Tr(G S_a)\})\Big| \Big \rangle_1
        = 0.
    \end{align*}
\end{lemma}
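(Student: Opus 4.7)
The plan is a two-step strategy: (i) first extend Definition~\ref{def:one_dimensional_CLT} from one-dimensional projections with bounded Lipschitz test functions to $R$-dimensional projections with bounded Lipschitz test functions, and (ii) pass from bounded Lipschitz to the locally Lipschitz $\varphi$ satisfying eq.~\eqref{eq:square_integrable_phi} by truncation. Throughout I would condition on $G$ and on the samples $\{S_a\}_{a=1}^R \iid \langle \cdot \rangle_1$, since $\EE_G$ and $\langle \cdot \rangle_1$ preserve any uniform-in-$(G,\{S_a\})$ bound. Note that $\langle \cdot \rangle_1$ is supported in $\supp(P_0) \subseteq A_d$, and that $A_d$ is convex and symmetric, so that $M/\|\theta\|_1 \in A_d$ for any $\theta \in \bbR^R \setminus \{0\}$ and $M \coloneqq \sum_a \theta_a S_a$ with $S_a \in A_d$.

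For step (i), I would invoke the Cram\'er--Wold device. Since $\sum_a \theta_a \Tr[W S_a] = \|\theta\|_1 \Tr[W (M/\|\theta\|_1)]$, Definition~\ref{def:one_dimensional_CLT} applied to the bounded $\|\theta\|_1$-Lipschitz test functions $y \mapsto \cos(\|\theta\|_1 y)$ and $y \mapsto \sin(\|\theta\|_1 y)$ yields, for each fixed $\theta \in \bbR^R$,
\[
\sup_{S_1,\ldots,S_R \in A_d}\bigl|\EE_W e^{i \theta \cdot (\Tr[W S_a])_a} - \EE_{\tG} e^{i \theta \cdot (\Tr[\tG S_a])_a}\bigr| \xrightarrow[d \to \infty]{} 0.
\]
To lift this pointwise-in-$\theta$ convergence of characteristic functions to convergence against a bounded Lipschitz $\psi : \bbR^R \to \bbR$, I would first mollify $\psi$ by convolving with a Gaussian kernel of width $\sigma$, incurring an error of size $\mcO(\|\psi\|_L \sigma \sqrt{R})$, and then multiply the result by a smooth compactly supported cutoff $\chi(\cdot/M)$. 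The resulting truncation error is $\mcO(\|\psi\|_\infty / M^2)$ by Chebyshev, using the uniform second-moment bound $\EE(\Tr[W S_a])^2 = \EE(\Tr[\tG S_a])^2 = 2 \Tr[S_a^2]/d \leq 2 C_0^2$ coming from $\supp(P_0) \subseteq B_\op(C_0)$. The resulting approximation $\tilde\psi_{\sigma,M}$ is Schwartz, so Fourier inversion gives
\[
\EE_W \tilde\psi_{\sigma,M}\bigl(\Tr[WS_a]\bigr) - \EE_{\tG}\tilde\psi_{\sigma,M}\bigl(\Tr[\tG S_a]\bigr) = \int \widehat{\tilde\psi_{\sigma,M}}(\theta)\,\Delta_d(\theta;\{S_a\})\,\frac{\rd\theta}{(2\pi)^R},
\]
where $\Delta_d(\theta;\{S_a\})$ denotes the characteristic-function difference above. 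Since $\widehat{\tilde\psi_{\sigma,M}} \in L^1(\bbR^R)$ and $|\Delta_d| \leq 2$ while $\Delta_d \to 0$ pointwise uniformly in $\{S_a\} \subseteq A_d$, dominated convergence gives the vanishing of $\sup_{\{S_a\} \subseteq A_d}|\EE_W \tilde\psi_{\sigma,M} - \EE_{\tG}\tilde\psi_{\sigma,M}|$ as $d \to \infty$ for each fixed $\sigma, M$.

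Step (ii) handles the locally Lipschitz $\varphi$. I would set $\varphi_L \coloneqq \varphi \cdot \chi_L$ with $\chi_L \in C^\infty(\bbR^{2R})$ equal to $1$ on $\{\|\cdot\|\leq L\}$ and $0$ outside $\{\|\cdot\|\leq L+1\}$, so that $\varphi_L$ is bounded and globally Lipschitz (with constants depending on $L$). By Cauchy--Schwarz, Chebyshev, and hypothesis~\eqref{eq:square_integrable_phi},
\[
\Bigl\langle \EE_{X,G}\bigl|\varphi - \varphi_L\bigr|\bigl(\{\Tr[XS_a]\},\{\Tr[GS_a]\}\bigr) \Bigr\rangle_1 \leq \sqrt{\bigl\langle \EE_{X,G}\varphi^2\bigr\rangle_1}\cdot\sqrt{4RC_0^2/L^2}\xrightarrow[L \to \infty]{} 0,
\]
uniformly over $d$, $t$, and $\{W_\mu,G_\mu\}_{\mu\geq 2}$. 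Applying step (i) to $\varphi_L(\cdot, \{\Tr[GS_a]\})$ for each fixed $(G, L)$, integrating over $G$ and over the sampling of $\{S_a\}$, and finally sending $L \to \infty$, yields the conclusion. I expect the main technical obstacle to lie in step (i), specifically in the interplay between the mollification width $\sigma$ (which controls the Lipschitz approximation), the truncation radius $M$ (which determines the support and hence the $L^1$-norm of $\widehat{\tilde\psi_{\sigma,M}}$), and the (unquantified) rate of the characteristic-function convergence; all three must be chosen to be compatible in the dominated-convergence argument. This is precisely where the convex symmetry of $A_d$ is essential, since it allows the one-dimensional CLT to be applied to arbitrary linear combinations of the $S_a$ through the identification $M/\|\theta\|_1 \in A_d$.
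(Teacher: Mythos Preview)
Your proposal is correct and follows essentially the same route as the paper: reduce to one-dimensional characteristic functions via the convex symmetry of $A_d$ (Cram\'er--Wold), regularize so that Fourier inversion applies, and truncate to pass from locally Lipschitz to compactly supported Lipschitz. The only cosmetic difference is that the paper regularizes by adding a small Gaussian noise $Z \sim \mcN(0,\delta^2 \Id_{2R})$ to the random vector---which inserts a factor $e^{-\delta^2\|u\|^2/2}$ directly into the Fourier integral and avoids tracking derivative bounds on a mollified test function---and works in $\bbR^{2R}$ rather than conditioning on $G$; your stated worry about the interplay of $\sigma$, $M$, and the unquantified CLT rate is not a genuine obstacle, since one simply takes $d\to\infty$ first for each fixed $(\sigma,M,L)$ and sends the regularization parameters to their limits afterward, exactly as the paper does with $\delta$.
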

\noindent
We wish to apply Lemma~\ref{lemma:finite_dim_clt} to eq.~\eqref{eq:poly_expansion_application}, i.e.\ 
to
\begin{equation*}
    \varphi(\{\Tr(W_1 S_a)\}, \{\Tr(G_1 S_a)\}) \coloneqq e^{-\sum_{a=0}^k\phi(\Tr[U_1 S_a])} \Big(\Tr[S_0 \tU_1] \, \phi'(\Tr[U_1 S_0])\Big).
\end{equation*}
$\varphi$ is locally Lipschitz by our hypotheses on $\phi$.
Moreover, note that for $X \in \{W, \tG\}$, and $\{S^a\} \in \supp(P_0)$ (using that $W$ has the same two first moments as $\tG$):
\begin{equation*}
    \EE_{X, G}\Big[\varphi\Big(\{\Tr(X S_a)\}, \{\Tr(G S_a)\}\Big)^2\Big] 
    \leq 2 \|\phi'\|_\infty^2 \Tr[S_0^2]/d \leq 2 C_0^2 \|\phi'\|_\infty^2 < \infty.
\end{equation*}
This allows to apply Lemma~\ref{lemma:finite_dim_clt} in eq.~\eqref{eq:poly_expansion_application}, 
and to reach that, uniformly in $\{W_\mu, G_\mu\}_{\mu=2}^n$ and $t \in (0, \pi/2)$ we have:
\begin{align*}
    &\limsup_{d \to \infty} \Bigg| \Bigg\langle \EE_{G_1, W_1}\Bigg[\frac{e^{-\phi(\Tr[U_1 S])} \Big(\Tr[S \tU_1] \, \phi'(\Tr[U_1 S])\Big)}{\Big\langle e^{-\phi(\Tr[U_1 S])} \Big\rangle_1} \Bigg] \Bigg\rangle_1 \Bigg| \\
    & \leq \delta + \sum_{k=0}^K |a_k| 
    \limsup_{d \to \infty} \Big\langle \Big| \EE_{G_1,W_1} \, \varphi(\{\Tr(W_1 S_a)\}, \{\Tr(G_1 S_a)\})\Big| \Big \rangle_1, \\
    & \leq \delta 
    + \sum_{k=0}^K |a_k| \limsup_{d \to \infty} \Big\langle \Big| \EE_{G_1,\tG_1} \, \varphi(\{\Tr(\tG_1 S_a)\}, \{\Tr(G_1 S_a)\})\Big| \Big \rangle_1
    \\ 
    &+ \sum_{k=0}^K |a_k| \limsup_{d \to \infty} \Big\langle \Big|\EE_{G_1,W_1} \, \varphi(\{\Tr(W_1 S_a)\}, \{\Tr(G_1 S_a)\}) \\ 
    & \hspace{4cm}- \EE_{G_1,\tG_1} \, \varphi(\{\Tr(\tG_1 S_a)\}, \{\Tr(G_1 S_a)\})\Big| \Big \rangle_1, \\
    &\aleq
    \delta + \sum_{k=0}^K |a_k|
    \limsup_{d \to \infty} 
       \Big| \Big \langle \EE_{G_1, \tG_1}\Big\{e^{-\sum_{a=0}^k\phi(\Tr[V_1 S_a])} \Big(\Tr[S_0 \tV_1] \, \phi'(\Tr[V_1 S_0])\Big) \Big\} \Big\rangle_1 \Big|
,
\end{align*}
where we used Lemma~\ref{lemma:finite_dim_clt} in $(\rm a)$, and
with $V_1 = \cos(t) \tG_1 + \sin(t) G_1$, and $\tV_1 = -\sin(t) \tG_1 + \cos(t) G_1$.
Since $G_1, \tG_1$ are gaussians, so are $V_1$ and $\tV_1$, and one verifies easily that they are independent since their covariance is zero.
Therefore, we have:
\begin{align*}
    &\EE_{G_1, \tG_1}\Big\{e^{-\sum_{a=0}^k\phi(\Tr[V_1 S_a])} \Big(\Tr[S_0 \tV_1] \, \phi'(\Tr[V_1 S_0])\Big) \Big\} \\
    &= 
    \EE_{V_1}\Big\{e^{-\sum_{a=0}^k\phi(\Tr[V_1 S_a])} \Big(\underbrace{\Big[\EE_{\tV_1}\Tr[S_0 \tV_1]\Big]}_{=0} \, \phi'(\Tr[V_1 S_0])\Big) \Big\} = 0.
\end{align*}
Thus, we reach, for any $\delta > 0$:
\begin{equation*}
    \limsup_{d \to \infty} \sup_{t \in (0, \pi/2)} \sup_{\{W_\mu, G_\mu\}_{\mu=2}^n} \Bigg| \Bigg\langle \EE_{G_1, W_1}\Bigg[\frac{e^{-\phi(\Tr[U_1 S])} \Big(\Tr[S \tU_1] \, \phi'(\Tr[U_1 S])\Big)}{\Big\langle e^{-\phi(\Tr[U_1 S])} \Big\rangle_1} \Bigg] \Bigg\rangle_1 \Bigg| 
    \leq \delta.
\end{equation*}
Letting $\delta \to 0$ finishes the proof of Lemma~\ref{lemma:bound_term_I2}. $\qed$

\subsection{Additional proofs}\label{subsec_app:additional_proofs}

\subsubsection{Proof of Lemma~\ref{lemma:poly_approx}}

We use the power expansion of $x \mapsto 1/x$ around $x = 1$, defining, for $M \geq 1$:
\begin{equation*}
    Q_M(x) \coloneqq \sum_{k=0}^M (1-x)^k, \hspace{1cm} R_M(x) \coloneqq \frac{1}{x} - Q_M(x).
\end{equation*}
We make use of Lemma~27 of \cite{montanari2022universality}:
\begin{lemma}[\cite{montanari2022universality}]\label{lemma:27_montanari}
    \noindent
    For any integer $M \geq 1$ we have 
    \begin{itemize}
        \item For all $x \neq 0$, $R_M(x) = (1-x)^{M+1}/x$.
        \item $x \mapsto R_M(x)^2$ is convex on $(0, 1]$.
        \item For any $s \in (0,1)$ and $\eta > 0$, there exists $M \geq 1$ such that 
        $\sup_{t \in [s, 1]} |R_M(t)| \leq \eta$.
    \end{itemize} 
\end{lemma}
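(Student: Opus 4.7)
The plan is to prove the three items of Lemma~\ref{lemma:27_montanari} essentially by direct computation, starting from the fact that $Q_M(x)$ is a truncated geometric series in the variable $1-x$.

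For item $(1)$, I will simply sum the geometric series. Since $Q_M(x) = \sum_{k=0}^M (1-x)^k$ has common ratio $1-x \neq 1$ whenever $x \neq 0$, the standard identity gives
\begin{equation*}
Q_M(x) = \frac{1 - (1-x)^{M+1}}{1-(1-x)} = \frac{1 - (1-x)^{M+1}}{x}.
\end{equation*}
Subtracting from $1/x$ yields $R_M(x) = (1-x)^{M+1}/x$ as claimed. Item $(3)$ is then immediate from the explicit formula: on $[s,1]$, monotonicity gives $|R_M(t)| = (1-t)^{M+1}/t \leq (1-s)^{M+1}/s$, and since $1-s < 1$ the right-hand side decays geometrically in $M$, so we can pick $M$ large enough that this bound is below $\eta$.

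The main content is item $(2)$, and my plan is to compute the second derivative of $f(x) \coloneqq R_M(x)^2 = (1-x)^{2(M+1)} x^{-2}$ in closed form on $(0,1]$. Differentiating once and factoring,
\begin{equation*}
f'(x) = -2 (1-x)^{2M+1} x^{-3} (Mx + 1).
\end{equation*}
Differentiating again and carefully collecting the three resulting terms, I expect every factor to combine so that
\begin{equation*}
f''(x) = 2 (1-x)^{2M} x^{-4} \, P_M(x),
\end{equation*}
where $P_M(x)$ is a quadratic polynomial in $x$. Expanding the products, I anticipate $P_M(x) = M(2M-1)x^2 + 2(2M-1)x + 3$. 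For $M \geq 1$ every coefficient of $P_M$ is non-negative, so $P_M(x) \geq 0$ for $x \geq 0$. Since $(1-x)^{2M} x^{-4}$ is non-negative on $(0,1]$, this gives $f''(x) \geq 0$ throughout $(0,1]$, hence convexity.

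The only delicate step is the bookkeeping in the computation of $f''$: one must carry three terms (two from differentiating $(1-x)^{2M+1} x^{-3}$ through the product rule, and one from the factor $(Mx+1)$), and show that after multiplying through by $x^4/(1-x)^{2M}$ every negative contribution is absorbed by a positive one. Once this algebra is verified, there is nothing further to do, and no analytic subtlety arises at $x=1$ because $f''(1)$ reduces to the evaluation of the polynomial $P_M$, which is manifestly positive.
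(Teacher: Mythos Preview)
Your proof is correct. The paper itself does not prove this lemma: it is quoted verbatim as Lemma~27 of \cite{montanari2022universality} and used as a black box in the proof of Lemma~\ref{lemma:poly_approx}. So there is no ``paper's own proof'' to compare against; you have supplied a complete argument where the paper defers to the literature.

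Your computations check out. Item~(1) is the geometric-series identity; item~(3) follows from it via the monotonicity of $t \mapsto (1-t)^{M+1}/t$ on $(0,1]$; and for item~(2) your factorization
\[
f''(x) = 2(1-x)^{2M} x^{-4}\bigl[M(2M-1)x^2 + 2(2M-1)x + 3\bigr]
\]
is correct, and all coefficients of the bracketed quadratic are strictly positive for $M \geq 1$, so $f'' \geq 0$ on $(0,1]$.

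One cosmetic slip: you write that ``$f''(1)$ reduces to the evaluation of the polynomial $P_M$, which is manifestly positive.'' In fact $f''(1) = 2 \cdot 0 \cdot 1 \cdot P_M(1) = 0$, because of the factor $(1-x)^{2M}$. This does not affect the argument at all, since convexity only requires $f'' \geq 0$, but the sentence as written is inaccurate.
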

\noindent
Since $e^{-\|\phi\|_\infty} \leq e^{-\phi} \leq 1$, we have that for all $\eta > 0$ there exists $M_\eta \geq 1$ 
such that for all $M \geq M_\eta$, all matrices $\{W_\mu, G_\mu\}_{\mu=1}^n$, all $t \in (0, \pi/2)$: 
\begin{equation}\label{eq:bound_RM}
    \Big| R_M\Big(\langle e^{-\phi(\Tr[S U_1])} \rangle_1\Big) \Big| \leq \eta.
\end{equation}
Thus, since $1/x = Q_M(x) + R_M(x)$:
\begin{align*}
&\Bigg| \Bigg\langle \EE_{G_1, W_1}\Bigg[\frac{e^{-\phi(\Tr[U_1 S])} \Big(\Tr[S \tU_1] \, \phi'(\Tr[U_1 S])\Big)}{\Big\langle e^{-\phi(\Tr[U_1 S])} \Big\rangle_1} \Bigg] \Bigg\rangle_1 \Bigg| \\
&\leq \Big| \Big\langle \EE_{G_1, W_1}\Big[e^{-\phi(\Tr[U_1 S])} \Big(\Tr[S \tU_1] \, \phi'(\Tr[U_1 S])\Big) Q_M\Big(\Big\langle e^{-\phi(\Tr[U_1 S])} \Big\rangle_1\Big) \Big] \Big\rangle_1 \Big| \\
&\qquad + \Big| \Big\langle \EE_{G_1, W_1}\Big[e^{-\phi(\Tr[U_1 S])} \Big(\Tr[S \tU_1] \, \phi'(\Tr[U_1 S])\Big) R_M\Big(\Big\langle e^{-\phi(\Tr[U_1 S])} \Big\rangle_1\Big) \Big] \Big\rangle_1 \Big|, \\
&\aleq \Big| \Big\langle \EE_{G_1, W_1}\Big[e^{-\phi(\Tr[U_1 S])} \Big(\Tr[S \tU_1] \, \phi'(\Tr[U_1 S])\Big) Q_M\Big(\Big\langle e^{-\phi(\Tr[U_1 S])} \Big\rangle_1\Big) \Big] \Big\rangle_1 \Big| \\
&\qquad+ \eta \Big\langle \EE_{G_1, W_1}\Big[e^{-\phi(\Tr[U_1 S])} \Big| \Big(\Tr[S \tU_1] \, \phi'(\Tr[U_1 S])\Big)  \Big|\Big] \Big\rangle_1, \\
&\leq \Big| \Big\langle \EE_{G_1, W_1}\Big[e^{-\phi(\Tr[U_1 S])} \Big(\Tr[S \tU_1] \, \phi'(\Tr[U_1 S])\Big) Q_M\Big(\Big\langle e^{-\phi(\Tr[U_1 S])} \Big\rangle_1\Big) \Big] \Big\rangle_1 \Big| \\ 
& \hspace{2cm}+ \eta \|\phi'\|_\infty \langle \EE_{G_1, W_1} |\Tr[S \tU_1]| \rangle_1,\\
&\bleq \Big| \Big\langle \EE_{G_1, W_1}\Big[e^{-\phi(\Tr[U_1 S])} \Big(\Tr[S \tU_1] \, \phi'(\Tr[U_1 S])\Big) Q_M\Big(\Big\langle e^{-\phi(\Tr[U_1 S])} \Big\rangle_1\Big) \Big] \Big\rangle_1 \Big| + C \eta \|\phi'\|_\infty,
\end{align*}
using eq.~\eqref{eq:bound_RM} in $(\rm a)$, and the Cauchy-Schwarz inequality (cf.\ the proof of Lemma~\ref{lemma:boundedness_single_matrix}) in $(\rm b)$. 
We emphasize that this bound is uniform in $t$ and $\{W_\mu, G_\mu\}$.
Choosing $\eta = \delta / (C \|\phi'\|_\infty)$ ends the proof of Lemma~\ref{lemma:poly_approx}.

\subsection{Proof of Lemma~\ref{lemma:finite_dim_clt}}\label{subsec_app:proof_finite_dim_clt}

\noindent
We first prove that the conclusion of Lemma~\ref{lemma:finite_dim_clt} holds 
uniformly over all $S_1, \cdots, S_R \in A_d$ when the function $\varphi$ is assumed to be continuous with compact support: 
\begin{lemma}\label{lemma:finite_dim_clt_compact_support}
    \noindent
    Let $R \in \bbN^\star$, and $\tG \sim \mathrm{GOE}(d)$, independent of everything else.
    Let $\varphi : \bbR^{2R} \to \bbR$ be Lipschitz and compactly supported. 
    Recall that $W \sim \rho$, a measure who is assumed to satisfy a one-dimensional CLT with respect to a set $A_d \subseteq \mcS_d$, 
    see Definition~\ref{def:one_dimensional_CLT}. 
    We assume that $A_d$ is convex and symmetric.
    Then:
    \begin{align*}
        \lim_{d \to \infty} \sup_{S_1, \cdots, S_R \in A_d} \Big| &\EE_{W,G} \, \varphi\Big(\{\Tr(W S_a)\}, \{\Tr(G S_a)\}\Big) \\ 
        &\hspace{1cm}- \EE_{\tG,G} \, \varphi\Big(\{\Tr(\tG S_a)\}, \{\Tr(G S_a)\}\Big)\Big| = 0.
    \end{align*}
\end{lemma}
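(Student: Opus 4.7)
The plan is to reduce the finite-dimensional comparison to the given one-dimensional CLT by a Fourier inversion / characteristic-function argument that exploits the symmetry and convexity of $A_d$.

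First, I would approximate $\varphi$ uniformly by a smooth test function. Since $\varphi$ is continuous with compact support, for any $\eta>0$ one can find a smooth compactly supported $\varphi_\eta$ with $\|\varphi - \varphi_\eta\|_\infty \leq \eta$; since both expectations have absolute value bounded by $\|\varphi\|_\infty + \eta$, the resulting error in the comparison is at most $2\eta$, and it suffices to prove the lemma for $\varphi_\eta$ and then send $\eta \to 0$. For such a $\varphi_\eta$, its Fourier transform $\widehat{\varphi_\eta}$ lies in the Schwartz class, and in particular $\widehat{\varphi_\eta} \in L^1(\bbR^{2R})$.

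Next, writing $\xi = (\xi^W, \xi^G) \in \bbR^R \times \bbR^R$ and using the independence of $W$ (or $\tG$) from $G$, Fourier inversion and Fubini give
\begin{align*}
    &\EE_{W,G}\,\varphi_\eta\big(\{\Tr(WS_a)\}, \{\Tr(GS_a)\}\big) - \EE_{\tG,G}\,\varphi_\eta\big(\{\Tr(\tG S_a)\}, \{\Tr(GS_a)\}\big) \\
    &\quad = \frac{1}{(2\pi)^{2R}}\int_{\bbR^{2R}} \widehat{\varphi_\eta}(\xi)\,\EE_G\!\big[e^{i\sum_a \xi^G_a \Tr(GS_a)}\big]\,\Delta_d(\xi^W; S)\,\rd\xi,
\end{align*}
where $\Delta_d(\xi^W; S) \coloneqq \EE_W e^{i\Tr(W T_{\xi^W})} - \EE_{\tG}e^{i\Tr(\tG T_{\xi^W})}$ and $T_{\xi^W} \coloneqq \sum_{a=1}^R \xi^W_a S_a$. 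The inner $\EE_G$-factor has modulus at most $1$, so the integrand is dominated in absolute value by $2|\widehat{\varphi_\eta}(\xi)| \in L^1$, uniformly in $S = (S_1,\ldots,S_R)$. By dominated convergence, it therefore suffices to show the pointwise bound $\sup_{S_1,\ldots,S_R \in A_d}|\Delta_d(\xi^W; S)| \to 0$ as $d\to\infty$ for every fixed $\xi^W \in \bbR^R$.

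This pointwise step is where I would invoke the one-dimensional CLT. Setting $c \coloneqq \|\xi^W\|_1$ and $T' \coloneqq T_{\xi^W}/c = \sum_a (|\xi^W_a|/c)\,\big(\mathrm{sgn}(\xi^W_a)\,S_a\big)$, we have $\mathrm{sgn}(\xi^W_a)S_a \in A_d$ by symmetry of $A_d$, and the weights $|\xi^W_a|/c$ form a probability vector, so by convexity $T' \in A_d$. Then $\Delta_d(\xi^W; S) = \EE_W \psi_c(\Tr(WT')) - \EE_{\tG}\psi_c(\Tr(\tG T'))$ with $\psi_c(x) \coloneqq e^{icx}$, and applying Definition~\ref{def:one_dimensional_CLT} separately to the bounded Lipschitz functions $\cos(c\,\cdot)$ and $\sin(c\,\cdot)$ (each with Lipschitz constant $c$ fixed once $\xi^W$ is fixed) gives $\sup_{T' \in A_d}|\Delta_d(\xi^W;S)| \to 0$, as required.

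The main obstacle I anticipate is precisely the verification that the rescaled matrix $T'$ remains in $A_d$: this is exactly where the hypotheses that $A_d$ is symmetric and convex are essential. Without them, $T_{\xi^W}$ would have $A_d$-``size'' that grows with $\|\xi^W\|_1$ and the one-dimensional CLT, stated uniformly over $A_d$ with test-function Lipschitz constant fixed a priori, could not be invoked with uniform control. Aside from this point, the smooth approximation of $\varphi$ and the Fourier-inversion / dominated-convergence manipulation are standard; once the pointwise decay of $\Delta_d(\xi^W;S)$ is in hand, the uniform bound $\sup_S |\EE\varphi_\eta(\Xi) - \EE\varphi_\eta(\Xi')| \to 0$ follows, and letting $\eta \to 0$ concludes the proof.
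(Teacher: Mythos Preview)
Your proof is correct and follows essentially the same approach as the paper's: reduce to characteristic functions via Fourier inversion, factor out the $G$-part by independence, use convexity and symmetry of $A_d$ to rescale $\sum_a \xi^W_a S_a$ into $A_d$ and apply the one-dimensional CLT to $e^{ic\,\cdot}$, then conclude by dominated convergence. The only (cosmetic) difference is that the paper obtains an $L^1$ Fourier representation by convolving with a small Gaussian and sending its variance to zero at the end, whereas you mollify $\varphi$ to a smooth compactly supported function.
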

\begin{proof}[Proof of Lemma~\ref{lemma:finite_dim_clt_compact_support}]
Recall that we use the matrix flattening function of eq.~\eqref{eq:def_flattening}.
Let us denote:
\begin{equation}\label{eq:notations_H_h_v}
    \begin{dcases} 
    H &\coloneqq 
    \begin{pmatrix}
        \flatt(S^1) & 0 & \flatt(S_2) & 0 & \cdots & \flatt(S^R) & 0 \\ 
        0 & \flatt(S^1) & 0 & \flatt(S_2) & \cdots & 0 & \flatt(S^R)
    \end{pmatrix} \in \bbR^{d(d+1) \times 2 R}, \\
    v &\coloneqq (\flatt(W)^\T, \flatt(G)^\T)^\T \in \bbR^{d(d+1)}, \\
    h &\coloneqq (\flatt(\tG)^\T, \flatt(G)^\T)^\T \in \bbR^{d(d+1)}.
    \end{dcases}
\end{equation}
Using these notations, we have: 
\begin{equation}\label{eq:notations_H_h_v_2}
    \begin{dcases} 
    \Big(\{\Tr(W S_a)\}_{a=1}^R, \{\Tr(G S_a)\}_{a=1}^R\Big) &= H^\T v \in \bbR^{2R}, \\ 
    \Big(\{\Tr(\tG S_a)\}_{a=1}^R, \{\Tr(G S_a)\}_{a=1}^R\Big) &= H^\T h \in \bbR^{2R}.
    \end{dcases}
\end{equation}
We add a small Gaussian noise to help us deal with characteristic functions later on. 
Let $\delta > 0$, and $Z \sim \mcN(0, \delta^2 \Id_{2R})$.
For all $S_1, \cdots, S_R$ we have:
\begin{align}
    \label{eq:adding_small_gauss_noise}
    \nonumber
    |\EE \, \varphi(H^\T v) - \EE \, \varphi(H^\T h)|
    &\leq 
    |\EE \, \varphi(H^\T v) - \EE \, \varphi(H^\T v + Z)| +
    |\EE \, \varphi(H^\T h) - \EE \, \varphi(H^\T h + Z)| \\ 
    \nonumber
    & \hspace{4.15cm}+
    |\EE \, \varphi(H^\T v + Z) - \EE \, \varphi(H^\T h + Z)|, \\ 
    \nonumber
    &\leq 2 \|\varphi\|_L \EE \|Z\|_2 + 
    |\EE \, \varphi(H^\T v + Z) - \EE \, \varphi(H^\T h + Z)|, \\ 
    &\leq C R^{1/2} \|\varphi\|_L \delta + 
    |\EE \, \varphi(H^\T v + Z) - \EE \, \varphi(H^\T h + Z)|.
\end{align}
We now control the last term of eq.~\eqref{eq:adding_small_gauss_noise}.
For $X \in \bbR^{2R}$ a random variable, we define its characteristic function as 
$\phi_X(u) \coloneqq \EE \, e^{-i u^\T X}$.
We have then
\begin{align*}
\frac{1}{(2\pi)^{2R}}\int_{\bbR^{2R}} \varphi(t) \int_{\bbR^{2R}}  e^{i t^\T u - \frac{\delta^2}{2} \|u\|^2} \phi_X(u) \rd u \, \rd t
&= \frac{1}{(2\pi \delta^2)^{R}} \EE_X \int_{\bbR^{2R}} \varphi(t) e^{-\frac{\|t-X\|^2}{2 \delta^2}} \, \rd t, \\ 
&= \EE \, \varphi(X + Z).
\end{align*}
Coming back to eq.~\eqref{eq:adding_small_gauss_noise} we get:
\begin{align}
    \label{eq:bound_with_gauss_noise}
    \nonumber
    &|\EE \, \varphi(H^\T v + Z) - \EE \, \varphi(H^\T h + Z)|  \\
    \nonumber
    &\leq 
    \frac{1}{(2\pi)^{2R}}\int_{\bbR^{2R}} |\varphi(t)| \Bigg|\int_{\bbR^{2R}}  e^{i t^\T u - \frac{\delta^2}{2} \|u\|^2} [\phi_{H^\T v}(u) - \phi_{H^\T h}(u)] \rd u \, \Bigg| \, \rd t, \\
    &\leq 
    \frac{\|\varphi\|_{L_1}}{(2\pi)^{2R}}\int_{\bbR^{2R}}  e^{- \frac{\delta^2}{2} \|u\|^2} |\phi_{H^\T v}(u) - \phi_{H^\T h}(u)| \rd u.
\end{align}
Here $\|\varphi\|_{L_1} \coloneqq \int |\varphi(t)| \rd t$.
We will show that for any $u \in \bbR^{2R}$: 
\begin{equation}\label{eq:pointwise_cv_charac}
   \lim_{d \to \infty} \sup_{S_1, \cdots, S_R \in A_d}  |\phi_{H^\T v}(u) - \phi_{H^\T h}(u)| = 0.
\end{equation}
Combining eq.~\eqref{eq:pointwise_cv_charac} with the dominated convergence theorem applied in eq.~\eqref{eq:bound_with_gauss_noise}, 
we get: 
\begin{equation*}
    \lim_{d \to \infty} \sup_{S_1, \cdots, S_R \in A_d}|\EE \, \varphi(H^\T v + Z) - \EE \, \varphi(H^\T h + Z)| = 0. 
\end{equation*}
Plugging this back into eq.~\eqref{eq:adding_small_gauss_noise}, we get that for any $\delta > 0$: 
\begin{equation*}
    \limsup_{d \to \infty} \sup_{S_1, \cdots, S_R \in A_d}|\EE \, \varphi(H^\T v) - \EE \, \varphi(H^\T h)| \leq C R^{1/2} \|\varphi\|_L \delta.
\end{equation*}
Letting $\delta \to 0$ ends the proof of Lemma~\ref{lemma:finite_dim_clt_compact_support}. 
There remains to prove eq.~\eqref{eq:pointwise_cv_charac}. 
Let $u = (u^{(1)}, u^{(2)}) \in \bbR^{2R}$, with $u^{(1)}, u^{(2)} \in \bbR^R$, and let us fix $S_1, \cdots, S_R \in A_d$.
We have 
\begin{align}\label{eq:diff_charac_1}
    \nonumber
    &|\phi_{H^\T v}(u) - \phi_{H^\T h}(u)| \\ 
    \nonumber
    &= \Big|\EE \, e^{-i \sum_{a=1}^r u^{(1)}_a \Tr[W S_a] -i \sum_{a=1}^r u^{(2)}_a \Tr[G S_a]} - \EE \, e^{-i \sum_{a=1}^r u^{(1)}_a \Tr[\tG S_a] -i \sum_{a=1}^r u^{(2)}_a \Tr[G S_a]}\Big|, \\
    &\aeq \Big|\EE \, \exp\Big\{-i \, \Tr\Big[W \sum_{a=1}^r u^{(1)}_a S_a \Big]\Big\} - \EE \, \exp\Big\{-i \, \Tr\Big[\tG \sum_{a=1}^r u^{(1)}_a S_a \Big]\Big\}\Big|,
\end{align}
using in $(\rm a)$ that $G$ is independent of $W, \tG$.
We can assume that $u^{(1)} \neq 0$, otherwise the result of eq.~\eqref{eq:pointwise_cv_charac} is clear.
Since $A_d$ is symmetric and convex we have 
\begin{equation*}
    \hS \coloneqq \frac{1}{\|u^{(1)}\|_1}\sum_{a=1}^r u^{(1)}_a S_a \in A_d.
\end{equation*}
Therefore, letting $\varphi_u(x) \coloneqq e^{-i \|u\|_1 x}$, 
we have by eq.~\eqref{eq:diff_charac_1}:
\begin{equation}\label{eq:diff_charac_2}
    |\phi_{H^\T v}(u) - \phi_{H^\T h}(u)| 
    = |\EE \, \varphi_u(\Tr[\hS W]) - \EE \, \varphi_u(\Tr[\hS \tG])|
\end{equation}
Moreover, 
\begin{equation*}
    |\varphi_u(x) - \varphi_u(y)| = \|u\|_1 \Bigg|\int_0^{y-x} e^{i \|u\|_1 t} \rd t\Bigg| \leq \|u\|_1 |y-x|,
\end{equation*}
so that $\|\varphi_u\|_L \leq \|u\|_1$. We can then therefore apply the one-dimensional CLT of Definition~\ref{def:one_dimensional_CLT} 
in eq.~\eqref{eq:diff_charac_2}, and we get that 
\begin{equation*}
    \sup_{S_1, \cdots, S_R \in A_d}|\phi_{H^\T v}(u) - \phi_{H^\T h}(u)| 
    \leq \sup_{S \in A_d}  |\EE \, \varphi_u(\Tr[S W]) - \EE \, \varphi_u(\Tr[S \tG])| \to_{d\to\infty} 0.
\end{equation*}
This ends the proof of eq.~\eqref{eq:pointwise_cv_charac}.
\end{proof}

\myskip
We then deduce the full statement of Lemma~\ref{lemma:finite_dim_clt} by a truncation argument. 

\begin{proof}[End of the proof of Lemma~\ref{lemma:finite_dim_clt}]
    $\varphi$ is now only assumed to be locally Lipschitz and square integrable, in the sense of eq.~\eqref{eq:square_integrable_phi}.
    We take the same notations as in the proof of Lemma~\ref{lemma:finite_dim_clt_compact_support}, 
    see in particular eq.~\eqref{eq:notations_H_h_v}, so that
    \begin{align*}
         &\EE_{W,G} \, \varphi\Big(\{\Tr(W S_a)\}, \{\Tr(G S_a)\}\Big) - \EE_{\tG,G} \, \varphi\Big(\{\Tr(\tG S_a)\}, \{\Tr(G S_a)\}\Big) \\ 
         &= \EE \, \varphi(H^\T v) - \EE \, \varphi(H^\T h).
    \end{align*}
    Let $B > 0$, and let us denote 
    $u_B : \bbR_+ \to [0,1]$ a $\mcC^\infty$ function such that $u_B(x) = 1$ if $x \leq B$ and $u_B(x) = 0$ if $x \geq B+1$. 
    We denote $\varphi_B(z) \coloneqq \varphi(z) u_B(\|z\|)$.
    One can then check easily that $\varphi_B$ is Lipschitz (because $\varphi$ is locally-Lipschitz), 
    and compactly supported.
    Moreover, we have:
    \begin{align}\label{eq:loc_lipschitz_1}
        \nonumber
        &\langle |\EE \, \varphi(H^\T v) - \EE \, \varphi(H^\T h)| \rangle_1 \\
        &\leq 
        \langle |\EE \, \varphi_B(H^\T v) - \EE \, \varphi_B(H^\T h)| \rangle_1 + 
        \sum_{z \in \{h, v\}} \langle \EE \, |\varphi(H^\T z)| (1 - u_B(\|H^\T z\|)) \rangle_1.
    \end{align}
    We now control the different terms in eq.~\eqref{eq:loc_lipschitz_1} successively. 
    Notice that 
    \begin{equation*}
        \langle |\EE \, \varphi_B(H^\T v) - \EE \, \varphi_B(H^\T h)| \rangle_1 \leq \sup_{S_1, \cdots, S_R \in A_d} |\EE \, \varphi_B(H^\T v) - \EE \, \varphi_B(H^\T h)|,
    \end{equation*}
    so that by Lemma~\ref{lemma:finite_dim_clt_compact_support}:
    \begin{equation}\label{eq:loc_lipschitz_2}
        \lim_{d \to \infty} \sup_{\{W_\mu, G_\mu\}_{\mu=2}^n} \sup_{t \in (0, \pi/2)}
        \langle |\EE \, \varphi_B(H^\T v) - \EE \, \varphi_B(H^\T h)| \rangle_1 = 0.
    \end{equation}
    We now tackle the remaining terms in eq.~\eqref{eq:loc_lipschitz_1}.
    Let $z \in \{h, v\}$.
    Using the Cauchy-Schwarz inequality twice we get:
    \begin{align}\label{eq:loc_lipschitz_3_1}
        \nonumber
        \langle \EE \, |\varphi(H^\T z)| (1 - u_B(\|H^\T z\|)) \rangle_1 
        &\leq \langle \EE \, |\varphi(H^\T z)| \indi\{\|H^\T z\|_2 \geq B\} \rangle_1, \\
        \nonumber
        &\leq \langle (\EE_z [\varphi(H^\T z)^2])^{1/2} \bbP_z\{\|H^\T z\|_2 \geq B\}^{1/2} \rangle_1 , \\
        &\leq \langle (\EE_z [\varphi(H^\T z)^2])\rangle_1^{1/2} \, \cdot \, \langle \bbP_z\{\|H^\T z\|_2 \geq B\} \rangle_1^{1/2}. 
    \end{align}
    The first term in eq.~\eqref{eq:loc_lipschitz_3_1} is bounded by the square integrability assumption, cf.\ eq.~\eqref{eq:square_integrable_phi}. 
    To bound the second term, we use Markov's inequality:
    \begin{equation*}
        \langle \bbP_z\{\|H^\T z\|_2 \geq B\} \rangle_1 
        \leq \frac{1}{B^2} \langle \EE_z\, \|H^\T z\|_2^2 \rangle_1.
    \end{equation*}
    From eq.~\eqref{eq:notations_H_h_v_2} and the matching of the first two moments of $\rho$ with $\GOE(d)$, we have:
    \begin{equation*}
        \EE_z\, \|H^\T z\|_2^2 = \frac{4}{d} \sum_{a=1}^R \Tr[S_a^2]. 
    \end{equation*}
    Thus, we get:
    \begin{equation*}
        \langle \bbP_z\{\|H^\T z\|_2 \geq B\} \rangle_1 
        \leq \frac{4R}{B^2} \Bigg\langle \frac{\Tr S^2}{d} \Bigg\rangle_1 \leq \frac{4RC_0}{B^2}.
    \end{equation*}
    All in all, we get:
    \begin{equation}\label{eq:loc_lipschitz_3}
        \sup_{d \geq 1} \sup_{\{W_\mu, G_\mu\}_{\mu=2}^n} \sup_{t \in (0, \pi/2)} \langle \EE \, |\varphi(H^\T z)| (1 - u_B(\|H^\T z\|)) \rangle_1  
        \leq \frac{C(R, \varphi)}{B}.
    \end{equation}
    Combining eqs.~\eqref{eq:loc_lipschitz_2} and eq.~\eqref{eq:loc_lipschitz_3} 
    into eq.~\eqref{eq:loc_lipschitz_1}, and taking $B \to \infty$ after $d \to \infty$, we conclude the proof of Lemma~\ref{lemma:finite_dim_clt}.
\end{proof}

\end{document}